\definecolor{darkred}{rgb}{0.7,0,0}
\tikzset{yxplane/.style={canvas is xy plane at z=#1}}
\tikzset{>=latex}
\tikzset{->-/.style={decoration={
  markings,
  mark=at position .5 with {\arrow{>}}},postaction={decorate}}}
\tikzset{-<-/.style={decoration={
  markings,
  mark=at position .5 with {\arrow{<}}},postaction={decorate}}}
\title{Foam evaluation and Kronheimer--Mrowka theories}
\author{Mikhail Khovanov} 
\address{Department of Mathematics, Columbia University, New York, NY 10027, USA}
\email{\href{mailto:khovanov@math.columbia.edu}{khovanov@math.columbia.edu}}
\author{Louis-Hadrien Robert}
\address{Université de Genève, 2-4 rue du Lièvre, Case postale 64, 1211 Genève 4, Switzerland}
\email{\href{mailto:louis-hadrien.robert@unige.ch}{louis-hadrien.robert@unige.ch}}
\date{August 29th, 2018}
\newcounter{res}[section]
\numberwithin{res}{section}
\newtheorem{prop}[res]{Proposition}
\newtheorem{theorem}[res]{Theorem}
\newtheorem{corollary}[res]{Corollary} 
\newtheorem{lemma}[res]{Lemma}
\newtheorem{conjecture}[res]{Conjecture}
\theoremstyle{definition}
\newtheorem{defn}[res]{Definition} 
\newtheorem{example}[res]{Example} 
\newtheorem{rmk}[res]{Remark} 
\newcommand{\brak}[1]{\ensuremath{\left\langle #1\right\rangle}}
\newcommand{\oplusop}[1]{{\mathop{\oplus}\limits_{#1}}}
\let\oldtocsection=\tocsection
\let\oldtocsubsection=\tocsubsection
\renewcommand{\tocsection}[2]{\hspace{0em}\oldtocsection{#1}{#2}}
\renewcommand{\tocsubsection}[2]{\hspace{1em}\oldtocsubsection{#1}{#2}}
\def\R{\mathbb R}
\def\Z{\mathbb Z}
\renewcommand\SS{\ensuremath{\mathbb{S}}}
\def\lra{\longrightarrow}
\def\Hom{\mathrm{Hom}}
\def\Id{\mathrm{Id}}
\def\lra{\longrightarrow}
\def\kk{\mathbf{k}}
\def\kf{\mathbf{k}'}
\def\lF{\langle F\rangle}
\def\lG{\langle \Gamma\rangle}
\def\lFc{\langle F,c\rangle }
\def\HMV{\mathrm{H}_{\mathrm{MV}}}
\def\adm{\mathrm{adm}}
\def\gdim{\mathrm{gdim}}
\def\rk{\mathrm{rk}}
\newcommand\Fo{\mathrm{Fo}}
\newcommand{\imagesfolder}{.}
\newcommand{\NB}[1]{\ensuremath{\vcenter{\hbox{#1}}}}
\newcommand{\tube}[1][0.7]{
\NB{\tikz[scale=#1]{
\begin{scope}
\draw (0,1) circle (0.5cm and 0.25cm);
\draw (0.5,-1) arc (0:-180:0.5cm and 0.25cm);
\draw[densely dotted] (0.5,-1) arc (0:180:0.5cm and 0.25cm);
\draw (0.5, -1) -- (0.5,1);
\draw (-.5, -1) -- (-.5,1);
\end{scope}}
}\!}
\newcommand{\cupxxxx}{
\begin{scope}
\draw (0,1) circle (0.5cm and 0.25cm);
\draw (0.5,1) arc (0:-180:0.5cm and 0.7cm);
\end{scope}}
\newcommand{\capxxxx}{
\begin{scope}
\draw (0.5,-1) arc (0:-180:0.5cm and 0.25cm);
\draw[densely dotted] (0.5,-1) arc (0:180:0.5cm and 0.25cm);
\draw (0.5,-1) arc (0:180:0.5cm and 0.7cm);
\end{scope}}
\newcommand{\captwo}[1][0.7]{
\NB{\tikz[scale=#1]{
\begin{scope}
\capxxxx
\fill (-0.1, -0.6) circle (0.5mm);
\fill (0.1, -0.6) circle (0.5mm);
\end{scope}}
}\!}
\newcommand{\capone}[1][0.7]{
\NB{\tikz[scale=#1]{
\begin{scope}
\capxxxx
\fill (0, -0.6) circle (0.5mm);
\end{scope}}
}\!}
\newcommand{\capzero}[1][0.7]{
\NB{\tikz[scale=#1]{
\begin{scope}
\capxxxx
\end{scope}}
}\!}
\newcommand{\cuptwo}[1][0.7]{
\NB{\tikz[scale=#1]{
\begin{scope}
\cupxxxx
\fill (-0.1, 0.6) circle (0.5mm);
\fill (0.1, 0.6) circle (0.5mm);
\end{scope}}
}\!}
\newcommand{\cupone}[1][0.7]{
\NB{\tikz[scale=#1]{
\begin{scope}
\cupxxxx
\fill (0, 0.6) circle (0.5mm);
\end{scope}}
}\!}
\newcommand{\cupzero}[1][0.7]{
\NB{\tikz[scale=#1]{
\begin{scope}
\cupxxxx
\end{scope}}
}\!}
\newcommand{\thsxxxx}{
\begin{scope}
\draw (0,1) circle (0.5cm and 0.25cm);
\draw (0.5,-1) arc (0:-180:0.5cm and 0.25cm);
\draw[densely dotted] (0.5,-1) arc (0:180:0.5cm and 0.25cm);
\draw (0.5,-1) arc (0:180:0.5cm and 0.7cm);
\draw (0.5,1) arc (0:-180:0.5cm and 0.7cm);
\end{scope}}
\newcommand{\thstwozero}[1][0.7]{
\NB{\tikz[scale=#1]{
\begin{scope}
\thsxxxx
\fill (-0.1, 0.6) circle (0.5mm);
\fill (0.1, 0.6) circle (0.5mm);
\end{scope}}
}\!}
\newcommand{\thsoneone}[1][0.7]{
\NB{\tikz[scale=#1]{
\begin{scope}
\thsxxxx
\fill (0, 0.6) circle (0.5mm);
\fill (0, -0.6) circle (0.5mm);
\end{scope}}
}\!}
\newcommand{\thszerotwo}[1][0.7]{
\NB{\tikz[scale=#1]{
\begin{scope}
\thsxxxx
\fill (-0.1, -0.6) circle (0.5mm);
\fill (0.1, -0.6) circle (0.5mm);
\end{scope}}
}\!}
\newcommand{\thsonezero}[1][0.7]{
\NB{\tikz[scale=#1]{
\begin{scope}
\thsxxxx
\fill (0, 0.6) circle (0.5mm);
\end{scope}}
}\!}
\newcommand{\thszeroone}[1][0.7]{
\NB{\tikz[scale=#1]{
\begin{scope}
\thsxxxx
\fill (0, -0.6) circle (0.5mm);
\end{scope}}
}\!}
\newcommand{\thszerozero}[1][0.7]{
\NB{\tikz[scale=#1]{
\begin{scope}
\thsxxxx
\end{scope}}
}\!}
\newcommand{\digonI}[1][0.7]{
\NB{\tikz[scale=#1]{
\begin{scope}
\draw (-1,1) -- (-0.5,1) .. controls +(0.25,-0.25) and +(-0.25,-0.25) .. (0.5,1) -- (1,1);
\draw (-1,-1) -- (-0.5,-1) .. controls +(0.25,-0.25) and +(-0.25,-0.25) .. (0.5,-1) -- (1,-1);
\draw (-0.5,1) .. controls +(0.25,0.25) and +(-0.25,0.25) .. (0.5,1);
\draw[densely dotted] (-0.5,-1) .. controls +(0.25,0.25) and +(-0.25,0.25) .. (0.5,-1);
\draw (0.5, -1) -- (0.5,1);
\draw (-.5, -1) -- (-.5,1);
\draw (1, -1) -- (1,1);
\draw (-1, -1) -- (-1,1);
\end{scope}}
}\!}
\newcommand{\thbxxxx}{
\draw (-1,1) -- (-0.5,1) .. controls +(0.25,-0.25) and +(-0.25,-0.25) .. (0.5,1) -- (1,1);
\draw (-1,-1) -- (-0.5,-1) .. controls +(0.25,-0.25) and +(-0.25,-0.25) .. (0.5,-1) -- (1,-1);
\draw (-0.5,1) .. controls +(0.25,0.25) and +(-0.25,0.25) .. (0.5,1);
\draw[densely dotted] (-0.5,-1) .. controls +(0.25,0.25) and +(-0.25,0.25) .. (0.5,-1);
\draw (1, -1) -- (1,1);
\draw (-1, -1) -- (-1,1);
\draw (0.5,-1) arc (0:180:0.5cm and 0.7cm);
\draw (0.5,1) arc (0:-180:0.5cm and 0.7cm);
}
\newcommand{\thbonezero}[1][0.7]{
\NB{\tikz[scale=#1]{
\begin{scope}
\thbxxxx
\fill (0, 1) circle (0.5mm);
\end{scope}}
}\!}
\newcommand{\thbzeroone}[1][0.7]{
\NB{\tikz[scale=#1]{
\begin{scope}
\thbxxxx
\fill (0, -1) circle (0.5mm);
\end{scope}}
}\!}
\newcommand{\digoncupxxxx}{
\draw (-1,1) -- (-0.5,1) .. controls +(0.25,-0.25) and +(-0.25,-0.25) .. (0.5,1) -- (1,1);
\draw (-0.5,1) .. controls +(0.25,0.25) and +(-0.25,0.25) .. (0.5,1);
\draw (1, 1) -- (1,0) -- (-1, 0) -- (-1,1);
\draw (0.5,1) arc (0:-180:0.5cm and 0.7cm);
}
\newcommand{\digoncupzero}[1][0.7]{
\NB{\tikz[scale=#1]{
\begin{scope}
\digoncupxxxx
\end{scope}}
}\!}
\newcommand{\digoncupone}[1][0.7]{
\NB{\tikz[scale=#1]{
\begin{scope}
\digoncupxxxx
\fill (0, 1) circle (0.5mm);
\end{scope}}
}\!}
\newcommand{\digoncapxxxx}{
\draw (-1,-1) -- (-0.5,-1) .. controls +(0.25,-0.25) and +(-0.25,-0.25) .. (0.5,-1) -- (1,-1);
\draw[densely dotted] (-0.5,-1) .. controls +(0.25,0.25) and +(-0.25,0.25) .. (0.5,-1);
\draw (1, -1) -- (1,0) -- (-1, 0) -- (-1,-1);
\draw (0.5,-1) arc (0:180:0.5cm and 0.7cm);
}
\newcommand{\digoncapzero}[1][0.7]{
\NB{\tikz[scale=#1]{
\begin{scope}
\digoncapxxxx
\end{scope}}
}\!}
\newcommand{\digoncapone}[1][0.7]{
\NB{\tikz[scale=#1]{
\begin{scope}
\digoncapxxxx
\fill (0, -1) circle (0.5mm);
\end{scope}}
}\!}
\newcommand{\squareI}[1][0.7]{
\NB{
\tikz[scale=#1]{
\tdplotsetmaincoords{70}{30}
  \begin{scope}[tdplot_main_coords]
  	\draw (-0.5, -0.5, 1) -- (0.5, -0.5, 1) -- (0.5, 0.5, 1) -- (-0.5, 0.5, 1) -- (-0.5, -0.5, 1);
    \draw (-0.5, -0.5, 1) -- (-1,-1, 1);
  	\draw ( 0.5, -0.5, 1) -- ( 1,-1, 1);
  	\draw (-0.5,  0.5, 1) -- (-1, 1, 1);
  	\draw ( 0.5,  0.5, 1) -- ( 1, 1, 1);
  	\draw (-0.5, -0.5,-1) -- (0.5, -0.5,-1) -- (0.5, 0.5,-1) -- (-0.5, 0.5,-1) -- (-0.5, -0.5,-1);
  	\draw (-0.5, -0.5,-1) -- (-1,-1,-1);
  	\draw ( 0.5, -0.5,-1) -- ( 1,-1,-1);
  	\draw (-0.5,  0.5,-1) -- (-1, 1,-1);
  	\draw ( 0.5,  0.5,-1) -- ( 1, 1,-1);
  	\draw (-0.5, -0.5, 1) -- +(0,0,-2);
  	\draw ( 0.5, -0.5, 1) -- +(0,0,-2);
  	\draw (-0.5,  0.5, 1) -- +(0,0,-2);
  	\draw ( 0.5,  0.5, 1) -- +(0,0,-2);
  	\draw (-1, -1, 1) -- +(0,0,-2);
  	\draw ( 1, -1, 1) -- +(0,0,-2);
  	\draw (-1,  1, 1) -- +(0,0,-2);
  	\draw ( 1,  1, 1) -- +(0,0,-2);   
  \end{scope}
}
}
}
\newcommand{\squaresmoothone}[1][0.7]{
\NB{
\tikz[scale=#1]{
\tdplotsetmaincoords{70}{30}
  \begin{scope}[tdplot_main_coords]
  	\draw (-0.5, -0.5, 1) -- (0.5, -0.5, 1) -- (0.5, 0.5, 1) -- (-0.5, 0.5, 1) -- (-0.5, -0.5, 1);
    \draw (-0.5, -0.5, 1) -- (-1,-1, 1);
  	\draw ( 0.5, -0.5, 1) -- ( 1,-1, 1);
  	\draw (-0.5,  0.5, 1) -- (-1, 1, 1);
  	\draw ( 0.5,  0.5, 1) -- ( 1, 1, 1);
  	\draw (-0.5, -0.5,-1) -- (0.5, -0.5,-1) -- (0.5, 0.5,-1) -- (-0.5, 0.5,-1) -- (-0.5, -0.5,-1);
  	\draw (-0.5, -0.5,-1) -- (-1,-1,-1);
  	\draw ( 0.5, -0.5,-1) -- ( 1,-1,-1);
  	\draw (-0.5,  0.5,-1) -- (-1, 1,-1);
  	\draw ( 0.5,  0.5,-1) -- ( 1, 1,-1);
  	\draw (-1, -1, 1) -- +(0,0,-2);
  	\draw ( 1, -1, 1) -- +(0,0,-2);
  	\draw (-1,  1, 1) -- +(0,0,-2);
  	\draw ( 1,  1, 1) -- +(0,0,-2);   
    \draw (-0.5, -0.5, 1) .. controls +(0,0,-1) and +(0,0,-1) .. (-0.5, 0.5, 1) coordinate[pos= 0.5] (T1);
  	\draw ( 0.5,  0.5, 1) .. controls +(0,0,-1) and +(0,0,-1) .. ( 0.5,-0.5, 1) coordinate[pos= 0.5] (T2);
  	\draw (-0.5, -0.5,-1) .. controls +(0,0, 1) and +(0,0, 1) .. (-0.5, 0.5,-1) coordinate[pos= 0.5] (B1);
  	\draw ( 0.5,  0.5,-1) .. controls +(0,0, 1) and +(0,0, 1) .. ( 0.5,-0.5,-1) coordinate[pos= 0.5] (B2);
    \draw[very thin] (T1) -- (T2);
    \draw[very thin] (B1) -- (B2);
\end{scope}
}
}
}
\newcommand{\squaresmoothtwo}[1][0.7]{
\NB{
\tikz[scale=#1]{
\tdplotsetmaincoords{70}{30}
  \begin{scope}[tdplot_main_coords]
  	\draw (-0.5, -0.5, 1) -- (0.5, -0.5, 1) -- (0.5, 0.5, 1) -- (-0.5, 0.5, 1) -- (-0.5, -0.5, 1);
    \draw (-0.5, -0.5, 1) -- (-1,-1, 1);
  	\draw ( 0.5, -0.5, 1) -- ( 1,-1, 1);
  	\draw (-0.5,  0.5, 1) -- (-1, 1, 1);
  	\draw ( 0.5,  0.5, 1) -- ( 1, 1, 1);
  	\draw (-0.5, -0.5,-1) -- (0.5, -0.5,-1) -- (0.5, 0.5,-1) -- (-0.5, 0.5,-1) -- (-0.5, -0.5,-1);
  	\draw (-0.5, -0.5,-1) -- (-1,-1,-1);
  	\draw ( 0.5, -0.5,-1) -- ( 1,-1,-1);
  	\draw (-0.5,  0.5,-1) -- (-1, 1,-1);
  	\draw ( 0.5,  0.5,-1) -- ( 1, 1,-1);
  	\draw (-1, -1, 1) -- +(0,0,-2);
  	\draw ( 1, -1, 1) -- +(0,0,-2);
  	\draw (-1,  1, 1) -- +(0,0,-2);
  	\draw ( 1,  1, 1) -- +(0,0,-2);   
    \draw (-0.5, -0.5, 1) .. controls +(0,0,-1) and +(0,0,-1) .. ( 0.5,-0.5, 1) coordinate[pos= 0.5] (T1);
  	\draw ( 0.5,  0.5, 1) .. controls +(0,0,-1) and +(0,0,-1) .. (-0.5, 0.5, 1) coordinate[pos= 0.5] (T2);
  	\draw (-0.5, -0.5,-1) .. controls +(0,0, 1) and +(0,0, 1) .. ( 0.5,-0.5,-1) coordinate[pos= 0.5] (B1);
  	\draw ( 0.5,  0.5,-1) .. controls +(0,0, 1) and +(0,0, 1) .. (-0.5, 0.5,-1) coordinate[pos= 0.5] (B2);
    \draw[very thin] (T1) -- (T2);
    \draw[very thin] (B1) -- (B2);
\end{scope}
}
}
}
\newcommand{\squareTOsmoothone}[1][0.7]{
\NB{
\tikz[scale=#1]{
\tdplotsetmaincoords{70}{30}
  \begin{scope}[tdplot_main_coords]
  	\draw (-0.5, -0.5,-1) -- (0.5, -0.5,-1) -- (0.5, 0.5,-1) -- (-0.5, 0.5,-1) -- (-0.5, -0.5,-1);
  	\draw (-0.5, -0.5,-1) -- (-1,-1,-1);
  	\draw ( 0.5, -0.5,-1) -- ( 1,-1,-1);
  	\draw (-0.5,  0.5,-1) -- (-1, 1,-1);
  	\draw ( 0.5,  0.5,-1) -- ( 1, 1,-1);
  	\draw (-1, -1, 0.5) -- +(0,0,-1.5);
  	\draw ( 1, -1, 0.5) -- +(0,0,-1.5);
  	\draw (-1,  1, 0.5) -- +(0,0,-1.5);
  	\draw ( 1,  1, 0.5) -- +(0,0,-1.5);
        \draw (-1, -1, 0.5) .. controls +( 0.4, 0.4, 0) and +( 0.4,-0.4, 0) .. (-1, 1, 0.5);
        \draw ( 1,  1, 0.5) .. controls +(-0.4,-0.4, 0) and +(-0.4, 0.4, 0) .. ( 1,-1, 0.5);
  	\draw[very thin] (-0.5, -0.5,-1) .. controls +(0,0, 1) and +(0,0, 1) .. (-0.5, 0.5,-1) coordinate[pos= 0.5] (B1);
  	\draw[very thin] ( 0.5,  0.5,-1) .. controls +(0,0, 1) and +(0,0, 1) .. ( 0.5,-0.5,-1) coordinate[pos= 0.5] (B2);
        \draw (B1) -- (B2);
\end{scope}
}
}
}
\newcommand{\squareTOsmoothtwo}[1][0.7]{
\NB{
\tikz[scale=#1]{
\tdplotsetmaincoords{70}{30}
  \begin{scope}[tdplot_main_coords]
  	\draw (-0.5, -0.5,-1) -- (0.5, -0.5,-1) -- (0.5, 0.5,-1) -- (-0.5, 0.5,-1) -- (-0.5, -0.5,-1);
  	\draw (-0.5, -0.5,-1) -- (-1,-1,-1);
  	\draw ( 0.5, -0.5,-1) -- ( 1,-1,-1);
  	\draw (-0.5,  0.5,-1) -- (-1, 1,-1);
  	\draw ( 0.5,  0.5,-1) -- ( 1, 1,-1);
  	\draw (-1, -1, 0.5) -- +(0,0,-1.5);
  	\draw ( 1, -1, 0.5) -- +(0,0,-1.5);
  	\draw (-1,  1, 0.5) -- +(0,0,-1.5);
  	\draw ( 1,  1, 0.5) -- +(0,0,-1.5);
        \draw (-1, -1, 0.5) .. controls +( 0.4, 0.4, 0) and +(-0.4, 0.4, 0) .. ( 1,-1, 0.5);
        \draw ( 1,  1, 0.5) .. controls +(-0.4,-0.4, 0) and +( 0.4,-0.4, 0) .. (-1, 1, 0.5);
  	\draw[very thin] (-0.5, -0.5,-1) .. controls +(0,0, 1) and +(0,0, 1) .. ( 0.5,-0.5,-1) coordinate[pos= 0.5] (B1);
  	\draw[very thin] ( 0.5,  0.5,-1) .. controls +(0,0, 1) and +(0,0, 1) .. (-0.5, 0.5,-1) coordinate[pos= 0.5] (B2);
        \draw (B1) -- (B2);
\end{scope}
}
}
}
\newcommand{\smoothoneTOsquare}[1][0.7]{
\NB{
\tikz[scale=#1]{
\tdplotsetmaincoords{70}{30}
  \begin{scope}[tdplot_main_coords]
  	\draw (-0.5, -0.5, 1) -- (0.5, -0.5, 1) -- (0.5, 0.5, 1) -- (-0.5, 0.5, 1) -- (-0.5, -0.5, 1);
        \draw (-0.5, -0.5, 1) -- (-1,-1, 1);
  	\draw ( 0.5, -0.5, 1) -- ( 1,-1, 1);
  	\draw (-0.5,  0.5, 1) -- (-1, 1, 1);
   	\draw ( 0.5,  0.5, 1) -- ( 1, 1, 1);
  	\draw (-1, -1, -0.5) -- +(0,0,1.5);
  	\draw ( 1, -1, -0.5) -- +(0,0,1.5);
  	\draw (-1,  1, -0.5) -- +(0,0,1.5);
  	\draw ( 1,  1, -0.5) -- +(0,0,1.5);
        \draw (-1, -1, -0.5) .. controls +( 0.4, 0.4, 0) and +( 0.4,-0.4, 0) .. (-1, 1, -0.5);
        \draw ( 1,  1, -0.5) .. controls +(-0.4,-0.4, 0) and +(-0.4, 0.4, 0) .. ( 1,-1, -0.5);
        \draw[very thin] (-0.5, -0.5, 1) .. controls +(0,0,-1) and +(0,0,-1) .. (-0.5, 0.5, 1) coordinate[pos= 0.5] (T1);
   	\draw[very thin] ( 0.5,  0.5, 1) .. controls +(0,0,-1) and +(0,0,-1) .. ( 0.5,-0.5, 1) coordinate[pos= 0.5] (T2);
        \draw (T1) -- (T2);
\end{scope}
}
}
}
\newcommand{\smoothtwoTOsquare}[1][0.7]{
\NB{
\tikz[scale=#1]{
\tdplotsetmaincoords{70}{30}
  \begin{scope}[tdplot_main_coords]
  	\draw (-0.5, -0.5, 1) -- (0.5, -0.5, 1) -- (0.5, 0.5, 1) -- (-0.5, 0.5, 1) -- (-0.5, -0.5, 1);
        \draw (-0.5, -0.5, 1) -- (-1,-1, 1);
  	\draw ( 0.5, -0.5, 1) -- ( 1,-1, 1);
  	\draw (-0.5,  0.5, 1) -- (-1, 1, 1);
  	\draw ( 0.5,  0.5, 1) -- ( 1, 1, 1);
  	\draw (-1, -1, -0.5) -- +(0,0, 1.5);
  	\draw ( 1, -1, -0.5) -- +(0,0, 1.5);
  	\draw (-1,  1, -0.5) -- +(0,0, 1.5);
  	\draw ( 1,  1, -0.5) -- +(0,0, 1.5);
        \draw (-1, -1, -0.5) .. controls +( 0.4, 0.4, 0) and +(-0.4, 0.4, 0) .. ( 1,-1, -0.5);
        \draw ( 1,  1, -0.5) .. controls +(-0.4,-0.4, 0) and +( 0.4,-0.4, 0) .. (-1, 1, -0.5);
        \draw[very thin] (-0.5, -0.5, 1) .. controls +(0,0,-1) and +(0,0,-1) .. (-0.5, 0.5, 1) coordinate[pos= 0.5] (T1);
    	\draw[very thin] ( 0.5,  0.5, 1) .. controls +(0,0,-1) and +(0,0,-1) .. ( 0.5,-0.5, 1) coordinate[pos= 0.5] (T2);
        \draw (T1) -- (T2);
\end{scope}
}
}
}
\newcommand{\triangleTOvertex}[1][0.7]{
\NB{
\tikz[scale=#1]{
\tdplotsetmaincoords{70}{130}
\begin{scope}[scale = 1.5, tdplot_main_coords]
  \tikzset{yxplane/.style={canvas is xy plane at z=#1}}
    \coordinate (AT) at ({cos(  0)}, {sin(  0)}, 0.3);
    \coordinate (BT) at ({cos(120)}, {sin(120)}, 0.3);
    \coordinate (CT) at ({cos(240)}, {sin(240)}, 0.3);
     \coordinate (OT) at ({0},{0}, 0.3);
     \draw (OT) -- (AT);
     \draw (OT) -- (BT);
     \draw (OT) -- (CT);
    \coordinate (AB) at ({cos(  0)}, {sin(  0)},-1);
    \coordinate (BB) at ({cos(120)}, {sin(120)}, -1);
    \coordinate (CB) at ({cos(240)}, {sin(240)}, -1);
    \coordinate (aB) at ({0.5*cos(  0)}, {0.5*sin(  0)}, -1);
    \coordinate (bB) at ({0.5*cos(120)}, {0.5*sin(120)}, -1);
    \coordinate (cB) at ({0.5*cos(240)}, {0.5*sin(240)}, -1);
    \draw (aB) -- (bB);
    \draw (bB) -- (cB);
    \draw (cB) -- (aB);
    \draw (aB) -- (AB);
    \draw (bB) -- (BB);
    \draw (cB) -- (CB);
  \coordinate (M1) at (0,0, -0.3);
  \coordinate (M2) at (0,0,  0.3);
  \draw (M1) -- (M2);
  \draw (AB) -- (AT);
  \draw (BB) -- (BT);
  \draw (CB) -- (CT);
  \draw (aB) -- (M1);
  \draw (bB) -- (M1);
  \draw (cB) -- (M1);
  \draw (AT) -- (M2);
  \draw (BT) -- (M2);
  \draw (BT) -- (M2);
\end{scope}
}}}
\newcommand{\vertexTOtriangle}[1][0.7]{
\NB{
\tikz[scale=#1]{
\tdplotsetmaincoords{70}{130}
\begin{scope}[scale = 1.5, tdplot_main_coords]
  \tikzset{yxplane/.style={canvas is xy plane at z=#1}}
    \coordinate (AT) at ({cos(  0)}, {sin(  0)}, 1);
    \coordinate (BT) at ({cos(120)}, {sin(120)}, 1);
    \coordinate (CT) at ({cos(240)}, {sin(240)}, 1);
    \coordinate (aT) at ({0.5*cos(  0)}, {0.5*sin(  0)},1);
    \coordinate (bT) at ({0.5*cos(120)}, {0.5*sin(120)},1);
    \coordinate (cT) at ({0.5*cos(240)}, {0.5*sin(240)},1);
    \draw (aT) -- (bT);
    \draw (bT) -- (cT);
    \draw (cT) -- (aT);
     \draw (aT) -- (AT);
     \draw (bT) -- (BT);
     \draw (cT) -- (CT);
    \coordinate (AB) at ({cos(  0)}, {sin(  0)},-0.3);
    \coordinate (BB) at ({cos(120)}, {sin(120)}, -0.3);
    \coordinate (CB) at ({cos(240)}, {sin(240)}, -0.3);
  \coordinate (M1) at (0,0, -0.3);
  \coordinate (M2) at (0,0,  0.3);
  \draw (M1) -- (M2);
  \draw (AB) -- (AT);
  \draw (BB) -- (BT);
  \draw (CB) -- (CT);
  \draw (AB) -- (M1);
  \draw (BB) -- (M1);
  \draw (CB) -- (M1);
  \draw (aT) -- (M2);
  \draw (bT) -- (M2);
  \draw (cT) -- (M2);
\end{scope}
}}}
\newcommand{\vertexIdottedxxx}{
\tdplotsetmaincoords{70}{130}
\begin{scope}[tdplot_main_coords]
\begin{scope}[yxplane=1]
    \coordinate (OT) at ( 0, 0);
    \coordinate (AT) at ({cos(  0)}, {sin(  0)});
    \coordinate (BT) at ({cos(120)}, {sin(120)});
    \coordinate (CT) at ({cos(240)}, {sin(240)});
    \draw (OT) -- (AT);
    \draw (OT) -- (BT);
    \draw (OT) -- (CT);
  \end{scope}
\begin{scope}[yxplane=-1]
    \coordinate (OB) at (0, 0);
    \coordinate (AB) at ({cos(  0)}, {sin(  0)});
    \coordinate (BB) at ({cos(120)}, {sin(120)});
    \coordinate (CB) at ({cos(240)}, {sin(240)});
    \draw (OB) -- (AB);
    \draw (OB) -- (BB);
    \draw (OB) -- (CB);
  \end{scope}
  \begin{scope}[yxplane=-0.85]
    \coordinate (AM) at ({cos(  0)*0.8}, {sin(  0)*0.8});
  \end{scope}
  \begin{scope}[yxplane=0]
    \coordinate (BM) at ({cos(120)*0.5}, {sin(120)*0.5});
  \end{scope}
  \begin{scope}[yxplane=0.85]
    \coordinate (CM) at ({cos(240)*0.6}, {sin(240)*0.6});
  \end{scope}
   \draw (OB) -- (OT);
   \draw (AB) -- (AT);
   \draw (BB) -- (BT);
   \draw (CB) -- (CT);
\end{scope}
}
\newcommand{\vertexIdottedzerozeroone}[1][0.7]{
\NB{
\tikz[scale=#1]{
\vertexIdottedxxx
\fill (CM) circle (0.7mm);
}}}
\newcommand{\vertexIdottedonetrianglezero}[1][0.7]{
\NB{
\tikz[scale=#1]{
\vertexIdottedxxx
\node[scale = 0.7] at (BM) {$\triangle$} ;
\fill (AM) circle (0.7mm);
}}}
\newcommand{\vertexIdottedzerotriangleone}[1][0.7]{
\NB{
\tikz[scale=#1]{
\vertexIdottedxxx
\node[scale = 0.7] at (BM) {$\triangle$} ;
\fill (CM) circle (0.7mm);
}}}
\newcommand{\vertexInodot}[1][0.7]{
\NB{
\tikz[scale=#1]{
\vertexIdottedxxx
}}}
\newcommand{\vertexIdottedzeroonezero}[1][0.7]{
\NB{
\tikz[scale=#1]{
\vertexIdottedxxx
\fill (BM) circle (0.7mm);
}}}
\newcommand{\vertexIdottedonezerozero}[1][0.7]{
\NB{
\tikz[scale=#1]{
\vertexIdottedxxx
\fill (AM) circle (0.7mm);
}}}
\newcommand{\decorateddisk}[3]{
\NB{
\tikz[scale=#3]{
\begin{scope}
  \coordinate (A) at (0,0);
  \coordinate (B) at (2,0);
  \coordinate (C) at (2.5,0.5);
  \coordinate (D) at (0.5,0.5);
  \node[scale = #2] (M) at (1.25, 0.25) {$#1$};
  \draw (C) -- (D) -- (A) -- (B) -- (C);
\end{scope}}}}
\newcommand{\define}{\stackrel{\mbox{\scriptsize{def}}}{=}}
\newcommand{\catF}{\mathsf{Foams}}
\newcommand{\D}{\ensuremath{\mathscr{D}}}
\newcommand{\qdim}{\ensuremath{\mathrm{qdim}}}
\begin{document} 

\begin{abstract}  We introduce and study combinatorial equivariant analogues of the Kronheimer--Mrowka homology theory of planar trivalent graphs. 
\end{abstract}

\maketitle
\baselineskip 14pt

\tableofcontents

\section{Introduction} 
\label{sec:intro}

This paper uses an unoriented version of the
Robert--Wagner foam evaluation formula~\cite{RW1}, specialized to three colors, to
construct and study combinatorial relatives of  Kronheimer--Mrowka homology theories 
for planar unoriented trivalent graphs. 
 Kronheimer--Mrowka defined their homology $J^{\sharp}$ in much greater generality, for 
trivalent graphs embedded in oriented 3-manifolds~\cite{KM1, KM2, KM3}.
Their theory comes 
from the $SO(3)$ gauge theory for 3-orbifolds. As a special case, it gives a functorial 
homology theory for trivalent graphs embedded in $\R^3$; such graphs generalize 
unoriented knots and links. Further restricting to graphs in $\R^2$ and using the 
embedding $\R^2\subset \R^3$, Kronheimer and Mrowka obtain a homology theory
for planar trivalent graphs.  

This homology theory is defined over the two-element field $\kk$ and consists of a single vector space, without an additional grading. 
Conjecturally, for a planar graph $K$, the dimension  $\dim_{\kk}(J^{\sharp}(K))$ is equal to 
its number of Tait colorings, which are 3-colorings of edges 
of $K$ such that edges that share a vertex carry distinct colors. 
Kronheimer--Mrowka nonvanishing results, utilizing Gabai's sutured theory, and the 
conjecture would imply the four-color theorem~\cite{ApHa,ApHaKo,ApHaBook, Thomas}, providing an alternative approach 
to the theorem and relating it to gauge theory and topology in low dimensions. 

Robert--Wagner foam evaluation formula \cite{RW1}
relates to a different kind of link homology, namely to a family of bigraded homology groups for links in $\R^3$ that has $sl(n)$ specializations of the HOMFLYPT polynomial as its Euler characteristic. Their formula allows to build equivariant $sl(n)$ homology for 
links with components labeled by arbitrary fundamental representations, starting 
with the values of closed $sl(n)$-foams. As shown by Ehrig, Tubbenhauer, and Wedrich 
\cite{FunctorialitySLN}, it also leads to a proof of the full functoriality of these 
$sl(n)$ link homologies. 

In this paper we specialize and extend the 
Robert--Wagner formula to provide a combinatorial approach to Kronheimer--Mrowka 
homology theory for planar graphs
and define an equivariant combinatorial 
version of the theory. In our combinatorial definition there are no complexes present, and homology (or state space) of a graph is given as the quotient of a free $R$-module by the kernel of a bilinear 
form on it, for a certain commutative ring $R$. 

One considers cobordisms, also called foams, in $\R^3$ between 
trivalent planar graphs, or webs. A closed foam $F$ in $\R^3$ is a cobordism from the empty 
web to itself. Specializing the Robert--Wagner formula to three colors and extending it to 
unoriented graphs and cobordisms requires reducing coefficients modulo two, as in the 
Kronheimer--Mrowka framework, and working over the two-element field $\kk\simeq \mathbb{F}_2$. 
A portion of the orientability property is retained, since foam 
3-colorings that we consider give rise to orientable surfaces when facets colored by 
any one out of the three colors are dropped from $F$. Evaluation of closed foams 
requires extending the ground field $\kk$ to the ring $R=\kk[E_1,E_2,E_3]$ of symmetric polynomials in 
three variables $X_1,X_2,X_3$ over $\kk$, producing what is usually called an equivariant theory (here $E_1, E_2, E_3$ are the elementary symmetric functions in $X_1, X_2, X_3$). 
In the absence 
of geometric interpretation, equivariance refers to  
the homology of the empty web being isomorphic to the equivariant  $SO(3)$ cohomology of a point 
(with coefficients in $\kk$), and to similar isomorphisms for the simplest planar graphs. 

In the equivariant theory, closed foams evaluate to elements of $R$, via the formula (\ref{eq:eval}) in Section~\ref{subsec:pf_eval}. This formula and many of its implications can be written in greater generality, for pre-foams. 
Pre-foams are compact two-dimensional CW-complexes with points that can have neighborhoods of the three types shown in Figure~\ref{fig:3localmodel}. 

Foams are pre-foams that are equipped with an embedding into $\R^3$. We consider colorings of facets of a pre-foam into three colors such that along any singular edge all three colors meet (pre-admissible colorings). We then single out a class of \emph{admissible} pre-foams, with the condition that bicolored surfaces for each 
pre-admissible coloring are orientable. The pre-foam underlying a foam is always admissible. 

We define a version of Robert--Wagner evaluation 
formula, our formula (\ref{eq:eval}),
to assign a rational symmetric function $\brak{\Gamma}$ in variables $X_1, X_2, X_3$ to a pre-foam $\Gamma$. We 
show that, for admissible pre-foams, $\brak{\Gamma}$ is a polynomial, so takes values in $R$. 

In Section~\ref{sec:rel-btwn-ev} we derive a number of local skein formulas for evaluation of pre-foams and foams. 

Ring $R$ of symmetric functions in three variables, where evaluations take values, maps 
surjectively onto the field $\kk$, by killing 
everything in $R$ in positive degrees. This leads to the corresponding evaluation $\brak{F}_{\kk}$ of closed foams, now taking values in $\kk$. 
In Section~\ref{sec:combinatorialKM} we compare Kronheimer--Mrowka's conjectural algorithm for pre-foam evaluation~\cite{KM1} with the evaluation $\brak{F}_{\kk}$ and show that the two evaluations coincide on foams (that is, on pre-foams embedded in $\R^3$). Consequently, we prove Kronheimer--Mrowka Conjecture~8.9 in~\cite{KM1}, restricted to the case of foams, see Theorem~\ref{thm:JflatFoams}. We also give an example of a pre-foam not embeddable in $\R^3$ for which the algorithm does not result in a well-defined value, implying that the conjecture does not hold in full generality for all pre-foams. Restriction to foams is a very natural assumption, and Kronheimer--Mrowka essentially restrict to this case in the discussion following their conjecture.      
    
We use the foam evaluation formula (\ref{eq:eval}) to associate a graded $R$-module $\brak{\Gamma}$, called \emph{the state space of}
$\Gamma$, to a planar trivalent graph $\Gamma$. 
This is a standard construction, see  \cite{BHMV,SL3,RW1}, where generators of $\brak{\Gamma}$ are all possible foams from the empty graph to $\Gamma$, and a linear combination of generators is zero iff composing 
these generators with any foam from $\Gamma$ to the empty graph, evaluating resulting closed foams to elements of $R$, and forming the corresponding linear combination always produces zero.  

In Proposition~\ref{prop:fin-gen} of Section~\ref{sec:fin-gen} we prove that $R$-module $\brak{\Gamma}$ is finitely-generated 
for any planar graph $\Gamma$. 
In Section~\ref{sub:dir_sum} we derive direct sum decompositions for 
$\brak{\Gamma}$ to simplify it when a planar graph $\Gamma$ has a facet with at most four edges. In particular, this allows to decompose $\brak{\Gamma}$ for any bipartite graph, showing that in this case it coincides with the state space of Mackaay--Vaz~\cite{MaV} equivariant $sl(3)$ link homology modulo two. 

It's unclear whether $\brak{\Gamma}$ is a free graded module for any $\Gamma$. We consider base changes, that is, homomorphisms $\psi: R\lra S$ from $R$ to rings $S$. The bilinear form that defines $\brak{\Gamma}$ is modified to get a bilinear form over $S$, such that the quotient by the kernel is the $S$-state space of $\Gamma$, denoted $\brak{\Gamma}_S$. 

Suitable base changes give rise to simpler theories. In Section~\ref{sec:facet_neg} we consider base change 
$\psi_{\D}: R \lra R[\D^{-1}]$ given by inverting the 
discriminant $\D = E_1 E_2 + E_3 \in R$ of the 
polynomial $X^3 + E_1 X^2 + E_2 X + E_3$. 

Kronheimer--Mrowka 4-periodic complex~\cite[Diagram (21)]{KM1} exists in our set-up as well, see Section~\ref{sec:four_end}. 
We prove that the base change $\psi_{\D}$ makes 
this complex exact and that 
the corresponding state spaces $\lG_{\D}$, which 
are $R[\D^{-1}]$-modules, are projective of 
rank equal to the number of Tait colorings of 
web (planar trivalent graph) $\Gamma$.

A naive conjecture for state spaces $\lG$ is that they are free graded $R$-modules
of rank equal to the number of Tait colorings of $\Gamma$, but the authors are not confident enough to propose it, and have not verified it even for the dodecahedral graph, the one-skeleton of the dodecahedron stretched out on the plane. 

\vspace{0.1in}

{\bf Acknowledgments}  M.K. was partially supported by NSF grants DMS-1406065, DMS-1664240, and DMS-1807425. L.-H.R. was supported by NCCR SwissMAP, funded by the Swiss National Science Foundation. We are grateful to the Simons Center for Geometry and Physics for hosting Categorification in Mathematical Physics workshop in April 2018 which started this collaboration.

\section{Pre-foams and their evaluations}
\label{sec:prefoams}

\subsection{Pre-foams and colorings} \ 

An (open) tripod $T$ is a topological space obtained by identifying three copies of the 
semi-open interval $[0,1)$ along the three $0$ points. A tripod has a singular point 
and three intervals emanating from it. The subspace $\{0\}\times (0,1)$ in the direct product $T\times (0,1)$ of a tripod and an open interval is called 
a \emph{seam} or a \emph{singular edge} of the product. 

\begin{defn}
A (closed) \emph{pre-foam} $F$ is a compact 2-dimensional CW-complex with a PL-structure such that each 
point has an open  neighborhood that is either an open disc, the product of 
a tripod and an open interval, or 
the cone over 1-skeleton of a tetrahedron, see Figure~\ref{fig:3localmodel}. 
\end{defn}

\begin{figure}[ht]
\centering
\begin{tikzpicture}
\input{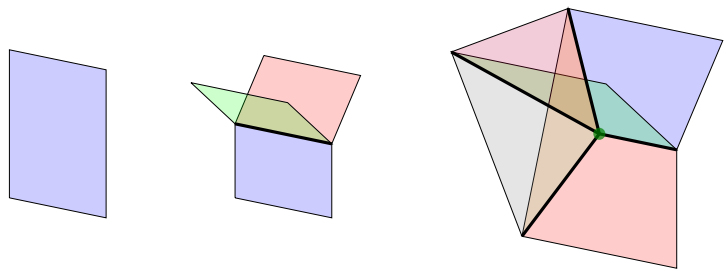}
\end{tikzpicture}
\caption{The three local models of a pre-foam. Facets are colored just  to 
make the figure clearer. The seams are depicted in bold black. Standard neighborhoods of a smooth point, a 
seam point, and a seam vertex are depicted from left to right.}
\label{fig:3localmodel}
\end{figure}

We call points of the first type \emph{regular} or \emph{smooth} points of $F$, 
points of the second type \emph{seam points}, and points of the third type \emph{seam 
vertices}. 

The subspace of seam points and seam vertices in $F$ is denoted $s(F)$. 
It's a topological space and can also be thought of as a four-valent graph  
that may contain circles (seams that close on themselves). The vertices of the graph 
$s(F)$ are the seam vertices of $F$. We use $s(F)$ to denote both this subspace of $F$ 
and the corresponding graph. The set of seam vertices is denoted $v(F)$, and 
connected components of $s(F)\setminus v(F)$ are called \emph{seams}. Each seam 
is homeomorphic to either an open interval or a circle. 

The space 
$F\setminus s(F)$ is an open surface and we denote by $f(F)$ the set of 
its connected  components, also called \emph{facets} of $F$. It is a finite set. 
A connected component may be a compact 
surface (which then does not bound any seams) or a non-compact surface, 
which is a facet bounding one or more seams. 

Standard neighborhood $N(v)$ of a seam vertex $v$ can be visualized as the cone over the 1-skeleton 
of a tetrahedron. The six connected components of $N(v)\setminus s(F)$ are six 
portions of facets of $F$ that contain $v$ in the closure. We call these six components 
the \emph{corners} at $v$. 

Closed pre-foam $F$ may be decorated by a finite collections of points (dots). 
Dots can float freely on any facet of $F$ but cannot cross seams or enter seam vertices. 
A collection of several dots on a facet may be denoted by a single dot with the label the number 
of dots it represents.  

A \emph{coloring} $c$ of $F$ is a map $f(F)\lra \{1,2,3\}$, that is, an assignment of a number 
from $1$ to $3$ to each facet of $F$. 

A coloring $c$ is called \emph{pre-admissible} if the three facets at each seam of $F$ are 
colored by three distinct colors. A \emph{colored pre-foam} is a pair $(F,c)$ where $F$ is a pre-foam and $c$ a  
pre-admissible coloring of $F$. A facet of a colored pre-foam whose color is $i$ is called an \emph{$i$-facet}.

\begin{figure}[ht]
	\centering
    	\begin{tikzpicture}
    		\input{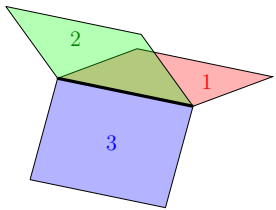}
    	\end{tikzpicture}
        \caption{A pre-admissible coloring in a neighborhood of a seam point. All three colors appear.}\label{fig:colatseam}
\end{figure}

To give an example of a pre-foam without pre-admissible colorings, we can take a tripod times 
an interval, $T\times [0,1]$, and select a homeomorphism $h$ between $T\times \{0\}$ and 
$T\times \{ 1\}$ that nontrivially permutes the three legs of $T$ using either a transposition 
or a 3-cycle. Gluing $T\times [0,1]$ onto itself via this homeomorphism produces 
a surface $S'$ with a singular circle. Adding either two disks (in case $h$ is a transposition
of the three edges) 
or one disk (when $h$ is a  3-cycle) to $S'$ produces a pre-foam $S$ with a singular circle 
and either two or one facets, homeomorphic to open disks. This pre-foam has no 
pre-admissible colorings. 

If $v$ is a seam vertex of $F$ and $c$ a pre-admissible coloring, the six 
corners at $v$ are colored by three colors so that opposite 
corners carry identical colors. 

\begin{figure}[ht]
\centering
\begin{tikzpicture}
\input{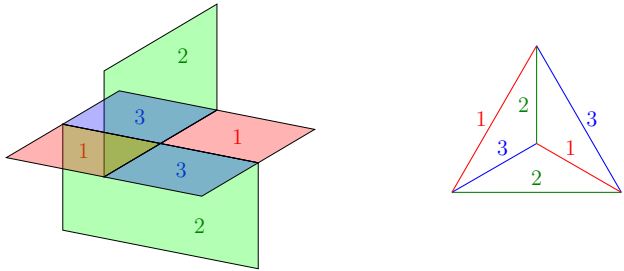}
\end{tikzpicture}
\caption{On the left: a pre-admissible coloring of a foam in the neighborhood of a seam vertex. On the right: the induced Tait coloring of the graph obtained by intersecting the foam with a small 2-sphere centered at the seam vertex. A Tait coloring of a graph is a 3-coloring of edges so 
that no two edges of the same color share a vertex. 
}
\label{fig:colatsingpoint}
\end{figure}

For each $1\le i < j \le 3$ denote by $F_{ij}(c)$ the closure of the union of $i$- 
and $j$-colored facets of $(F,c)$. Also let 
$F_{ji}(c) = F_{ij}(c)$. 

\begin{prop} The set $F_{ij}(c)$ is a closed compact surface that contains the 
graph $s(F)$. 
\end{prop}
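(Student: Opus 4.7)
The strategy is a local-model analysis at each point of $F_{ij}(c)$, using the three neighborhood types from Figure~\ref{fig:3localmodel}. Compactness is immediate (closed subset of the compact pre-foam $F$), so the content is to show that $F_{ij}(c)$ is a 2-manifold without boundary and that it contains every singular point of $F$.

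First I would dispose of the smooth points. If $p$ lies on an $i$- or $j$-colored facet, a small disk neighborhood of $p$ in $F$ lies entirely in $F_{ij}(c)$, so locally $F_{ij}(c)$ is a disk.

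Next I would handle a seam point $p \in s(F)\setminus v(F)$. A neighborhood of $p$ in $F$ is $T\times(-\varepsilon,\varepsilon)$, and the three facets meeting along the seam carry three distinct colors by pre-admissibility. Exactly two of them are colored $i$ and $j$, while the third carries the remaining color $k \notin \{i,j\}$. Intersecting with $F_{ij}(c)$ removes the $k$-colored leg and keeps the two other legs together with the seam, producing a neighborhood homeomorphic to $[-1,1]\times(-\varepsilon,\varepsilon)$, i.e.\ an open disk. The seam itself is contained in this neighborhood.

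The main step is the seam vertex $v$. Here $N(v)$ is the cone on the $1$-skeleton of a tetrahedron $\Delta$: the four cone rays correspond to the four seams meeting at $v$, and the six triangular faces of the cone correspond to the six corners. A pre-admissible coloring of $F$ restricts to a Tait coloring of $\Delta$ (as in Figure~\ref{fig:colatsingpoint}). In a Tait coloring of $K_4 = \mathrm{sk}_1(\Delta)$, each color class is a perfect matching, so exactly two corners are colored $i$ and exactly two are colored $j$. Crucially, the four $i$- or $j$-colored edges of $K_4$, being the union of two disjoint perfect matchings on four vertices, form a Hamiltonian $4$-cycle in $K_4$. Therefore $F_{ij}(c) \cap N(v)$ is the cone on this $4$-cycle, which is a topological disk whose interior contains $v$. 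This shows $v \in F_{ij}(c)$ and that $F_{ij}(c)$ is a $2$-manifold near $v$.

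Combining the three local pictures, $F_{ij}(c)$ is everywhere locally Euclidean of dimension $2$ with no boundary points, hence a closed surface; compactness follows from $F$ being compact and $F_{ij}(c)$ being closed by definition. Finally, every seam point and seam vertex lies in $F_{ij}(c)$ by the local pictures above, so $s(F) \subset F_{ij}(c)$. The main obstacle is really the combinatorial check at the seam vertex, specifically the fact that two disjoint perfect matchings in $K_4$ assemble into a single cycle rather than two separate ones; this is what prevents a ``pinch point'' in $F_{ij}(c)$ at $v$.
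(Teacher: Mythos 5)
Your proof is correct and follows essentially the same route as the paper: a local check at smooth points, seam points, and seam vertices showing $F_{ij}(c)$ is locally $\R^2$ and contains $s(F)$. The only difference is that you spell out the seam-vertex case combinatorially (the $i$- and $j$-colored edges of the induced Tait coloring of $K_4$ forming a Hamiltonian $4$-cycle, so the local model is a cone on a circle), whereas the paper appeals to the uniqueness of the pre-admissible coloring near a seam vertex and its Figure~\ref{fig:singinFij} — a welcome bit of extra detail, not a different argument.
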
 

\begin{proof}
Each seam of $F$ is adjacent to an $i$-facet and a $j$-facet, 
so that $s(F)\subset F_{ij}(c)$ and each seam point has a neighborhood in $F_{ij}(c)$ homeomorphic to $\R^2$. Points of $F_{ij}(c)$ that are not seam vertices in $F$ also  
have neighborhoods homeomorphic to $\R^2$. Likewise, each seam vertex $v$ of $F$, 
necessarily in $F_{ij}(c)$, has a neighborhood in  $F_{ij}(c)$ homeomorphic 
to $\R^2$, since a pre-admissible coloring of a neighborhood of a seam vertex is unique 
up to permutation of the colors, 
and the intersection of $F_{ij}(c)$ with the standard open neighborhood of $v$ is 
homeomorphic to $\R^2$. 
\end{proof}

\begin{figure}[ht]
	\begin{tikzpicture}
		\input{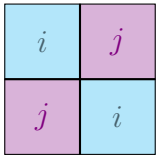}
	\end{tikzpicture}
    \caption{Neighborhood of a seam vertex in the surface $F_{ij}(c)$.}
    \label{fig:singinFij}
\end{figure}

A coloring $c$ is called \emph{admissible} if the surfaces $F_{ij}(c)$ are orientable 
for all $1\le i< j\le 3$. We denote by $\adm(F)$ the set of admissible colorings of $F$. 

If $F$ has an admissible coloring and a connected component which is a surface $S$, 
then $S$ is orientable. If a pre-foam $F$ is an unorientable surface, then $F$ has 
pre-admissible colorings but no admissible coloring. 

\begin{prop} 
\label{prop:admis2bipartite}
If $F$ has an admissible coloring, the graph $s(F)$ is bipartite. 
\end{prop}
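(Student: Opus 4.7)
The plan is to exhibit an explicit $2$-coloring of $v(F)$ built from the orientations furnished by admissibility. Fix orientations of the surfaces $F_{12}(c)$, $F_{13}(c)$, $F_{23}(c)$. For $i<j$ and each seam $e$, orient $e$ so that, using the chosen orientation of $F_{ij}(c)$, the $i$-facet lies on the left; call this tangent orientation $\tau_{ij}(e)$. Every seam thus carries three orientations $\tau_{12}(e),\tau_{13}(e),\tau_{23}(e)$. For a seam vertex $v$ incident to $e$, set $\tau_{ij}(e,v):=+1$ if $\tau_{ij}(e)$ points outward from $v$ and $-1$ otherwise, and put
\[
P(e,v):=\tau_{12}(e,v)\,\tau_{13}(e,v)\,\tau_{23}(e,v)\in\{\pm 1\}.
\]

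The main step, and the principal obstacle in the argument, is to show that $P(e,v)$ does not depend on the seam $e$ at $v$, so that one obtains a well-defined sign $P(v)\in\{\pm 1\}$. The link of $v$ in $F$ is $K_4$, whose four vertices label the four ends of seams at $v$; its six edges carry a Tait coloring whose three color classes are three perfect matchings $M_1,M_2,M_3$. A direct inspection of the disk neighborhood of $v$ in $F_{ij}(c)$, whose oriented boundary is the $4$-cycle $M_i\cup M_j$ with corners alternately colored $i$ and $j$, shows that the partition of the four seam-ends at $v$ into $\tau_{ij}$-outward and $\tau_{ij}$-inward coincides with the matching $M_k$, where $\{i,j,k\}=\{1,2,3\}$. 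Any two distinct vertices of $K_4$ are joined by a unique edge of a unique color, so any two distinct seam-ends at $v$ lie in the same $M_k$-pair for exactly one $k$. Consequently the triples $(\tau_{12}(e,v),\tau_{13}(e,v),\tau_{23}(e,v))$ attached to two distinct seam-ends at $v$ agree in exactly one coordinate and disagree in the other two, and so their products in $\{\pm 1\}$ coincide. This proves $P(v)$ is well-defined.

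Finally, if $e$ connects $v$ to $v'$ then each $\tau_{ij}(e)$ is outward at $v$ precisely when it is inward at $v'$, so $\tau_{ij}(e,v)=-\tau_{ij}(e,v')$ for all three pairs and hence $P(v)=-P(v')$. The fibres $P^{-1}(\pm 1)$ therefore form a proper $2$-coloring of $s(F)$, which is the desired bipartition; the argument also rules out self-loops in $s(F)$, since a self-loop $e$ at $v$ would contribute two seam-ends at $v$ with opposite status triples, contradicting the local analysis.
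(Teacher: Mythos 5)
Your proof is correct, but it differs substantially from the paper's argument, so a comparison is worthwhile. The paper proves bipartiteness indirectly, by showing every cycle in $s(F)$ has even length: it takes a tubular neighborhood $N$ of a cycle $C$, classifies the seam vertices of $C$ into three ``types'' according to which colored edge passes through uninterrupted, observes that $F_{23}(c)\cap N$ must be an annulus rather than a M\"obius band by admissibility, and then counts the vertices contributed to the two boundary circles of that annulus to show the number of type-$1$ vertices is even (and similarly for types $2$ and $3$ by color symmetry). Your argument is more direct: you manufacture an explicit bipartition $P\colon v(F)\to\{\pm 1\}$ out of the orientation data, by taking the product over $\{i,j\}$ of the coorientation signs of the seam-ends. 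The central point is your well-definedness lemma, that the sign triple $\bigl(\tau_{12}(e,v),\tau_{13}(e,v),\tau_{23}(e,v)\bigr)$ changes in exactly two of its three coordinates when one replaces $e$ by another seam-end $e'$ at $v$; this follows from your observation that the outward/inward dichotomy for $\tau_{ij}$ at $v$ is precisely the perfect matching $M_k$ in the link $K_4$, together with the fact that two vertices of $K_4$ lie in a common $M_k$-pair for exactly one $k$. I verified that claim directly in the local disk model of $F_{ij}(c)$ at $v$ (where the four alternating $i,j$ sectors force two radii to be $\tau_{ij}$-outward and two inward, and these coincide with the two $M_k$-pairs), so your ``direct inspection'' is sound, though you could have spelled it out in one more sentence. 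Your route avoids the tubular-neighborhood and type-counting machinery entirely and has the added merit of exhibiting a canonical bipartition (canonical up to the choice of orientations of the surfaces $F_{ij}(c)$), rather than merely verifying the even-cycle criterion; it also handles self-loops and parallel edges automatically. The paper's route is closer in spirit to its later Kempe-move analysis, which is presumably why the authors chose it, but for the proposition at hand yours is arguably cleaner.
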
 

\begin{proof}
In our convention, a bipartite graph may have circles (edges that close upon themselves). To prove that $s(F)$ is bipartite, it is enough to prove that any cycle in $s(F)$ has an even number of vertices. 
Let $c$ be an admissible coloring of $F$, and $C:=(v_1,v_2,\dots, v_{n})$ be a cycle in $C(F)$. The cycle $C$ consists of edges 
$e_1, \dots, e_n$, with $e_i$ connecting vertices $v_i$ and $v_{i+1}$, indices taken modulo $n$. 

We consider a tubular neighborhood $N$ of $C$ in the pre-foam. 
The boundary $\partial N$ is a trivalent graph $G$ with a Tait coloring $c_G$ induced by $c$. Along an edge of $C$, the graph $G$ consists of three \enquote{parallel} edges colored by three distinct colors. The structure of $G$ in a neighborhood of a seam vertex $v_i \in C$ is depicted in Figure~\ref{fig:cyclenbhvertex}. As they approach $v_i$, two out of the three \enquote{parallel} edges terminate in trivalent vertices that are connected by an edge (an arc in the lower half of the tube in Figure~\ref{fig:cyclenbhvertex}). Then two new edges 
start out at these vertices to continue in parallel with the cycle $C$. 

\begin{figure}[ht]
\centering
\begin{tikzpicture}
\input{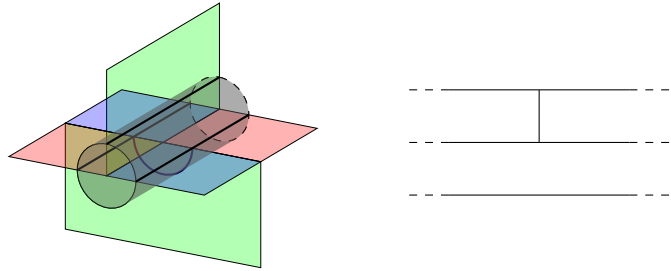}
\end{tikzpicture}
\caption{On the left: intersection of $N$ with the pre-foam $F$ in a neighborhood of a vertex of $C$. On the right: the graph $G$ in a neighborhood of a vertex of $C$.}\label{fig:cyclenbhvertex}
\end{figure}

With the coloring $c$ and the induced Tait coloring $c_G$ of $G$ fixed, 
the vertices of the cycle $C$ can be partitioned into three types, 
determined by the color of the edge that continues uninterrupted past the vertex, see Figure~\ref{fig:3types}. The edge connecting two 
vertices of $G$ corresponding to the vertex in 
the cycle is colored by the same color as the uninterrupted edge. 

\begin{figure}[ht]
\centering
\begin{tikzpicture}[xscale= 1.2, yscale=0.7]
\begin{scope}
  \coordinate (A1) at (-1,-1);
  \coordinate (A2) at (-1, 0);
  \coordinate (A3) at (-1, 1);
  \coordinate (B1) at ( 1,-1);
  \coordinate (B2) at ( 1, 0);
  \coordinate (B3) at ( 1, 1);
  \coordinate (C2) at ( 0, 0);
  \coordinate (C3) at ( 0, 1);
  \draw[red]                    (A1) -- (B1) node[pos= 0.5, below, scale = 0.7, red] {$1$};
  \draw[dashed, red]            (A1) -- ++(-0.5,0);
  \draw[dashed, red]            (B1) -- ++( 0.5,0);
  \draw[green!50!black]         (A2) -- (C2) node[pos= 0.5, below, scale = 0.7,green!50!black] {$2$};
  \draw[dashed, green!50!black] (A2) -- ++(-0.5,0);
  \draw[blue]                   (A3) -- (C3) node[pos= 0.5, above,scale = 0.7, blue] {$3$};
  \draw[dashed, blue]           (A3) -- ++(-0.5,0);
  \draw[red]                    (C2) -- (C3) node[pos= 0.5, left ,scale = 0.7, red] {$1$};
  \draw[green!50!black]         (C3) -- (B3) node[pos= 0.5, above,scale = 0.7, green!50!black] {$2$};
  \draw[dashed, green!50!black] (B3) -- ++( 0.5,0);
  \draw[blue]                   (C2) -- (B2) node[pos= 0.5, below, scale = 0.7, blue] {$3$};
  \draw[dashed, blue]           (B2) -- ++( 0.5,0);
\end{scope}

\begin{scope}[xshift = 4cm]
  \coordinate (A1) at (-1,-1);
  \coordinate (A2) at (-1, 0);
  \coordinate (A3) at (-1, 1);
  \coordinate (B1) at ( 1,-1);
  \coordinate (B2) at ( 1, 0);
  \coordinate (B3) at ( 1, 1);
  \coordinate (C2) at ( 0, 0);
  \coordinate (C3) at ( 0, 1);
  \draw[green!50!black]         (A1) -- (B1) node[pos= 0.5, below, scale = 0.7, green!50!black] {$2$};
  \draw[dashed, green!50!black] (A1) -- ++(-0.5,0);
  \draw[dashed, green!50!black] (B1) -- ++( 0.5,0);
  \draw[blue]                   (A2) -- (C2) node[pos= 0.5, below, scale = 0.7,blue] {$3$};
  \draw[dashed, blue]           (A2) -- ++(-0.5,0);
  \draw[red]                    (A3) -- (C3) node[pos= 0.5, above,scale = 0.7, red] {$1$};
  \draw[dashed, red]            (A3) -- ++(-0.5,0);
  \draw[green!50!black]         (C2) -- (C3) node[pos= 0.5, left ,scale = 0.7, green!50!black] {$2$};
  \draw[blue]                   (C3) -- (B3) node[pos= 0.5, above,scale = 0.7, blue] {$3$};
  \draw[dashed, blue]           (B3) -- ++( 0.5,0);
  \draw[red]                    (C2) -- (B2) node[pos= 0.5, below, scale = 0.7, red] {$1$};
  \draw[dashed, red]            (B2) -- ++( 0.5,0);
\end{scope}

\begin{scope}[xshift = 8cm]
  \coordinate (A1) at (-1,-1);
  \coordinate (A2) at (-1, 0);
  \coordinate (A3) at (-1, 1);
  \coordinate (B1) at ( 1,-1);
  \coordinate (B2) at ( 1, 0);
  \coordinate (B3) at ( 1, 1);
  \coordinate (C2) at ( 0, 0);
  \coordinate (C3) at ( 0, 1);
  \draw[blue]                   (A1) -- (B1) node[pos= 0.5, below, scale = 0.7, blue] {$3$};
  \draw[dashed, blue]           (A1) -- ++(-0.5,0);
  \draw[dashed, blue]           (B1) -- ++( 0.5,0);
  \draw[red]                    (A2) -- (C2) node[pos= 0.5, below, scale = 0.7,red] {$1$};
  \draw[dashed, red]            (A2) -- ++(-0.5,0);
  \draw[green!50!black]         (A3) -- (C3) node[pos= 0.5, above,scale = 0.7, green!50!black] {$2$};
  \draw[dashed, green!50!black] (A3) -- ++(-0.5,0);
  \draw[blue]                   (C2) -- (C3) node[pos= 0.5, left ,scale = 0.7, blue] {$3$};
  \draw[red]                    (C3) -- (B3) node[pos= 0.5, above,scale = 0.7, red] {$1$};
  \draw[dashed, red]            (B3) -- ++( 0.5,0);
  \draw[green!50!black]         (C2) -- (B2) node[pos= 0.5, below, scale = 0.7, green!50!black] {$3$};
  \draw[dashed, green!50!black] (B2) -- ++( 0.5,0);
\end{scope}
\end{tikzpicture}
\caption{The three possible induced colorings of $G$ in a neighborhood of a seam vertex $v$ of $C$. This determines the type of the seam vertex $v$ (with respect to the coloring $c$ and the cycle $C$). From left to right, the seam vertex $v$ has type $1$, $2$ and $3$.}
\label{fig:3types}
\end{figure}
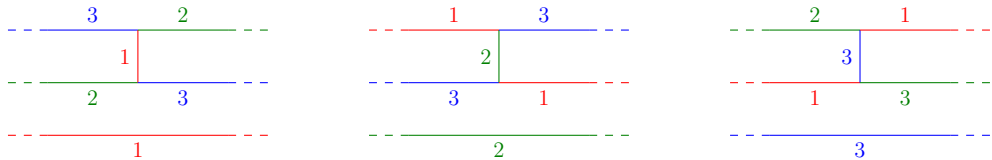

\begin{figure}[ht]
\centering
\begin{tikzpicture}
\begin{scope}
\input{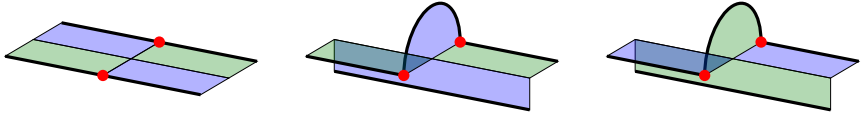}
\end{scope}

\end{tikzpicture}
\caption{The surface $\Sigma$ in the neighborhood of vertices of type 1, 2, 3 going from left to right. The boundary of $\Sigma$ is depicted in bold black.}
\label{fig:3typescol}
\end{figure}

We consider the intersection $\Sigma=F_{23}(c)\cap N$. This surface deformation retracts onto the cycle $C$, so it is either an annulus or a Möbius band. Since $c$ is admissible, it is an annulus. Its boundary has two connected components, and both  components consist of a collection of edges attached along some vertices (depicted in red in Figure~\ref{fig:3typescol}). 

To each seam vertex of type $2$ and $3$ there correspond two vertices on the boundary of $\Sigma$ that belong to same boundary component, see Figure~\ref{fig:3typescol}. 
To each seam vertex in $C$ of type $1$ there correspond two vertices on the boundary of $\Sigma$: one on each of the boundary components, see the leftmost picture on Figure~\ref{fig:3typescol}. Since on the boundary components the colors of the edges alternate, each of the two components has an even number of vertices. This proves that $C$ has an even number of seam vertices of type $1$. 

Permuting the colors in the previous argument shows that there is an even number of seam vertices in $C$ of type $2$  (resp. of type $3$). Hence $C$ has an even number of vertices.
\end{proof}

\begin{defn} A pre-foam $F$ is called \emph{admissible} if any pre-admissible coloring of $F$ 
is admissible. 
\end{defn} 

\begin{rmk}
The empty pre-foam $\emptyset$ has a unique coloring, which is admissible. 
\end{rmk}

\begin{rmk} \label{rmk:Klein}
Here is an example of a pre-foam which has an admissible coloring and a pre-admissible but 
not admissible coloring. Glue four disks to four parallel disjoint loops in a Klein bottle so as
to form a pre-foam $F$ with $8$ facets, including four disks and four annuli. The closure of the union 
of the four annuli is the original Klein bottle. 

The pre-foam $F$ has a pre-admissible but not admissible coloring, given by coloring the disks with $1$ and the remaining four annuli by $2$ and $3$ alternatively. For an admissible coloring, color the disks by $3$, $3$, $1$ and $1$ in this order and complete by coloring the annuli with $1$, $2$, $3$ and $2$, starting with the annuli adjoint 
to the 3-colored disks. This is depicted in Figure~\ref{fig:kleinbottle}.
\begin{figure}
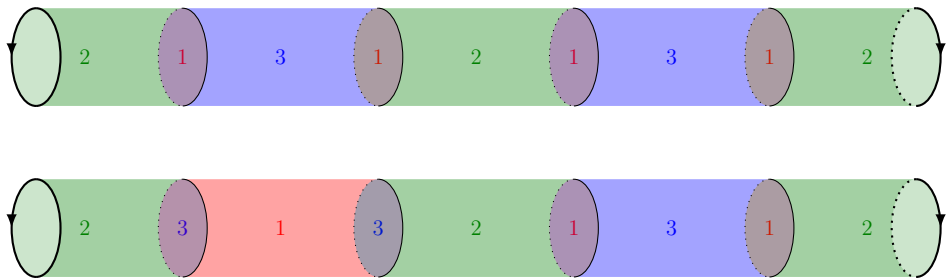

\centering
\tikz[scale = 1.3]{\begin{scope}[xscale = 0.5, yscale = 0.5]
  \begin{scope}
  \begin{scope}[black!50!green]
    \fill[opacity = 0.2] (-1,1) -- (2,1) arc (90:270:0.5 and 1) -- (-1, -1) arc (270:90:0.5 and 1);
    \draw[black, thick, ->- ] (-1,1) arc (90:270:0.5 and 1);
    \fill[opacity = 0.2] (-1,1) -- (2,1) arc (90:-90:0.5 and 1) --  (-1, -1) arc (-90:90:0.5 and 1);
    \draw[thick, black] (-1,1) arc (90:-90:0.5 and 1);
    \node[scale =0.7] at (0,0) {$2$};
    \fill[red, opacity = 0.2] (2,0) circle (0.5 and 1) node [scale = 0.7,red, opacity =1] {$1$};
  \end{scope}
  \begin{scope}[blue, xshift = 4cm]
    \fill[opacity = 0.2] (-2,1) -- (2,1) arc (90:270:0.5 and 1) -- (-2, -1) arc (270:90:0.5 and 1);
    \draw[ black, dotted] (-2,1) arc (90:270:0.5 and 1);
    \fill[opacity = 0.2] (-2,1) -- (2,1) arc (90:-90:0.5 and 1) -- (-2, -1) arc (-90:90:0.5 and 1);
    \draw[ black] (-2,1) arc (90:-90:0.5 and 1);
    \node[scale =0.7] at (0,0) {$3$};
    \fill[red, opacity = 0.2] (2,0) circle (0.5 and 1) node [scale = 0.7,red, opacity =1] {$1$};
  \end{scope}
  \begin{scope}[black!50!green,  xshift = 8cm]
    \fill[opacity = 0.2] (-2,1) -- (2,1) arc (90:270:0.5 and 1) -- (-2, -1) arc (270:90:0.5 and 1);
    \draw[ black, dotted] (-2,1) arc (90:270:0.5 and 1);
    \fill[opacity = 0.2] (-2,1) -- (2,1) arc (90:-90:0.5 and 1) -- (-2, -1) arc (-90:90:0.5 and 1);
    \draw[ black] (-2,1) arc (90:-90:0.5 and 1);
    \node[scale =0.7] at (0,0) {$2$};
    \fill[red, opacity = 0.2] (2,0) circle (0.5 and 1) node [scale = 0.7,red, opacity =1] {$1$};
  \end{scope}
  \begin{scope}[blue,  xshift = 12cm]
    \fill[opacity = 0.2] (-2,1) -- (2,1) arc (90:270:0.5 and 1) -- (-2, -1) arc (270:90:0.5 and 1);
    \draw[ black, dotted] (-2,1) arc (90:270:0.5 and 1);
    \fill[opacity = 0.2] (-2,1) -- (2,1) arc (90:-90:0.5 and 1) -- (-2, -1) arc (-90:90:0.5 and 1);
    \draw[ black] (-2,1) arc (90:-90:0.5 and 1);
    \node[scale =0.7] at (0,0) {$3$};
    \fill[red, opacity = 0.2] (2,0) circle (0.5 and 1) node [scale = 0.7,red, opacity =1] {$1$};
  \end{scope}
  \begin{scope}[black!50!green,  xshift = 16cm]
    \fill[opacity = 0.2] (-2,1) -- (1,1) arc (90:270:0.5 and 1) -- (-2, -1) arc (270:90:0.5 and 1);
    \draw[ black, dotted] (-2,1) arc (90:270:0.5 and 1);
    \fill[opacity = 0.2] (-2,1) -- (1,1) arc (90:-90:0.5 and 1) -- (-2, -1) arc (-90:90:0.5 and 1);
    \draw[ black] (-2,1) arc (90:-90:0.5 and 1);
    \node[scale =0.7] at (0,0) {$2$};
    \draw[thick, black, ->-] (1,1) arc (90:-90:0.5 and 1);
    \draw[thick, black, dotted] (1,1) arc (90:270:0.5 and 1);
  \end{scope}
  \end{scope}

 \begin{scope}[yshift = -3.5cm ]
  \begin{scope}[black!50!green]
    \fill[opacity = 0.2] (-1,1) -- (2,1) arc (90:270:0.5 and 1) -- (-1, -1) arc (270:90:0.5 and 1);
    \draw[black, thick, ->- ] (-1,1) arc (90:270:0.5 and 1);
    \fill[opacity = 0.2] (-1,1) -- (2,1) arc (90:-90:0.5 and 1) --  (-1, -1) arc (-90:90:0.5 and 1);
    \draw[thick, black] (-1,1) arc (90:-90:0.5 and 1);
    \node[scale =0.7] at (0,0) {$2$};
    \fill[blue, opacity = 0.2] (2,0) circle (0.5 and 1) node [scale = 0.7,blue, opacity =1] {$3$};
  \end{scope}
  \begin{scope}[red, xshift = 4cm]
    \fill[opacity = 0.2] (-2,1) -- (2,1) arc (90:270:0.5 and 1) -- (-2, -1) arc (270:90:0.5 and 1);
    \draw[ black, dotted] (-2,1) arc (90:270:0.5 and 1);
    \fill[opacity = 0.2] (-2,1) -- (2,1) arc (90:-90:0.5 and 1) -- (-2, -1) arc (-90:90:0.5 and 1);
    \draw[ black] (-2,1) arc (90:-90:0.5 and 1);
    \node[scale =0.7] at (0,0) {$1$};
    \fill[blue, opacity = 0.2] (2,0) circle (0.5 and 1) node [scale = 0.7,blue, opacity =1] {$3$};
  \end{scope}
  \begin{scope}[black!50!green,  xshift = 8cm]
    \fill[opacity = 0.2] (-2,1) -- (2,1) arc (90:270:0.5 and 1) -- (-2, -1) arc (270:90:0.5 and 1);
    \draw[ black, dotted] (-2,1) arc (90:270:0.5 and 1);
    \fill[opacity = 0.2] (-2,1) -- (2,1) arc (90:-90:0.5 and 1) -- (-2, -1) arc (-90:90:0.5 and 1);
    \draw[ black] (-2,1) arc (90:-90:0.5 and 1);
    \node[scale =0.7] at (0,0) {$2$};
    \fill[red, opacity = 0.2] (2,0) circle (0.5 and 1) node [scale = 0.7,red, opacity =1] {$1$};
  \end{scope}
  \begin{scope}[blue,  xshift = 12cm]
    \fill[opacity = 0.2] (-2,1) -- (2,1) arc (90:270:0.5 and 1) -- (-2, -1) arc (270:90:0.5 and 1);
    \draw[ black, dotted] (-2,1) arc (90:270:0.5 and 1);
    \fill[opacity = 0.2] (-2,1) -- (2,1) arc (90:-90:0.5 and 1) -- (-2, -1) arc (-90:90:0.5 and 1);
    \draw[ black] (-2,1) arc (90:-90:0.5 and 1);
    \node[scale =0.7] at (0,0) {$3$};
    \fill[red, opacity = 0.2] (2,0) circle (0.5 and 1) node [scale = 0.7,red, opacity =1] {$1$};
  \end{scope}
  \begin{scope}[black!50!green,  xshift = 16cm]
    \fill[opacity = 0.2] (-2,1) -- (1,1) arc (90:270:0.5 and 1) -- (-2, -1) arc (270:90:0.5 and 1);
    \draw[ black, dotted] (-2,1) arc (90:270:0.5 and 1);
    \fill[opacity = 0.2] (-2,1) -- (1,1) arc (90:-90:0.5 and 1) -- (-2, -1) arc (-90:90:0.5 and 1);
    \draw[ black] (-2,1) arc (90:-90:0.5 and 1);
    \node[scale =0.7] at (0,0) {$2$};
    \draw[thick, black, ->-] (1,1) arc (90:-90:0.5 and 1);
    \draw[thick, black, dotted] (1,1) arc (90:270:0.5 and 1);
  \end{scope}
  \end{scope}

\end{scope}}
\caption{On the top: a pre-admissible but not admissible coloring of the pre-foam $F$, on the bottom: an admissible coloring of the pre-foam $F$. On top and on bottom, the two bold oriented circles are meant to identified.
}\label{fig:kleinbottle}
\end{figure}
\end{rmk}

In what follows, we will be mainly interested in admissible pre-foams. It is worth noticing that the seam graph of an admissible pre-foam has an even number of vertices, since it is a 4-regular bipartite graph. 

Denote by $|Y|$ the cardinality of a set $Y$, so that $|d(F)|$ and $|v(F)|$ is the number of dots, respectively the number of seam vertices of a pre-foam $F$. 

\begin{defn} \label{def:degree}
The \emph{degree} $\deg(F)$ of a pre-foam $F$ is an integer given by 
\begin{align*}
\deg(F) &= 2 \ |d(F)| - 2 \ \chi(F) - \chi(s(F))  \\
 &= 2\ |d(F)| - 2 \sum_{f \in f(F)} \chi(f) + 3\ |v(F)|.
\end{align*}
\end{defn}
$\chi(f)$ is the Euler characteristic of the open facet $f$. 
The second expression follows from the identities
\begin{align*}
 \chi(F) & = \sum_{f \in f(F)} \chi(f) + \chi(s(F)), \\
\chi(s(F)) &  = - |v(F)|,
\end{align*}
since $s(F)$ has twice as many (non circular) edges as vertices.

\begin{rmk}\label{rmk:degree}
Suppose that  a foam $F$ carries no dots 
and admits a pre-admissible coloring $c$. Then 
\begin{align} \label{eq:degfromsurfaces}
\deg(F)= - \left( \chi(F_{12}(c)) + \chi(F_{13}(c)) + \chi(F_{23}(c)) \right).
\end{align}
Indeed, we have
\begin{align*}
\chi(F_{12}(c))= \chi(s(F)) + \sum_{\substack{f\in f(F) \\ \textrm{$f$ colored by $1$ or $2$}}} \chi(f) ,
\end{align*}
likewise for $\chi(F_{13}(c))$ and $\chi(F_{23}(c))$. The identity~(\ref{eq:degfromsurfaces}) follows.
 
Note that $\deg(F)$ is even if and only if $F$ has an even number of seam vertices. In particular, 
 if $F$ has an admissible coloring then $\deg(F) \in 2\Z$. 

For an admissible foam $F$, 
\begin{align} \label{eq:degfromsurfaces2}
\deg(F)= 2 \ |d(F)| - \left( \chi(F_{12}(c)) + \chi(F_{13}(c)) + \chi(F_{23}(c)) \right),
\end{align}
for any admissible coloring $c$. 
\end{rmk}

\subsection{Kempe moves for pre-foam colorings}\ 

For a given admissible pre-foam $F$, the group $S_3$ acts naturally on $\adm(F)$ by permuting 
the colorings. As we will now see, there are other, more local, coloring modifications. In this subsection, $i$, $j$ and $k$ denote the three elements of $\{1,2,3\}$, but not necessarily on this order.

\begin{defn}
\label{def:kempemove}
Let us consider an admissible pre-foam $F$ and a coloring $c$ of $F$. The surface $F_{jk}(c)$ may have several connected components, let $\Sigma$ be one of them. We can define a coloring $c'$ of $F$ by swapping the colors $j$ and $k$ in all the facets of $F$ which are contained in $\Sigma$. 
We say that $c$ and $c'$ are related by a \emph{$jk$-Kempe move along $\Sigma$}. 
\end{defn}

\begin{rmk} 
\begin{enumerate}
\item 	One could define Kempe moves for non-admissible pre-foams, however, when performing such a move, 
one may not remain in the set of admissible colorings, see Remark~\ref{rmk:Klein} and Figure~\ref{fig:kleinbottle} for an example 
of an admissible coloring related by a Kempe move to a pre-admissible but not admissible coloring. 
\item Note that a $jk$-Kempe move does not change the set of facets of $F$ colored by $i$.
\end{enumerate}
\end{rmk}

\begin{lemma}\label{lem:colKempe}
	Let $F$ be an admissible pre-foam, and $S$ a subset of facets of $F$. Consider the set $\adm(F, i=S)$ of all colorings of $F$ such that the set of facets colored by $i$ is exactly $S$. Suppose that $\adm(F, i=S)$ is not empty. Then $F\setminus \bigcup_{f\in S} f$ is a surface $\Sigma$ with some number $n$ of connected components 
(necessarily orientable). The set  $\adm(F, i=S)$ contains $2^n$ colorings, and they are all related 
to one another by finite sequences of $jk$-Kempe moves along connected components of $\Sigma$.
\end{lemma}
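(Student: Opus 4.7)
The plan is to fix any base coloring $c \in \adm(F, i=S)$ (which exists by assumption) with $\{j,k\} = \{1,2,3\}\setminus\{i\}$, and first identify $\Sigma$ with the surface $F_{jk}(c)$. Every point of $F$ that does not lie on an $i$-facet lies either on a $j$- or $k$-facet or on $s(F)$, so $\Sigma$ is precisely the closure of the union of the $j$- and $k$-facets, which is $F_{jk}(c)$. By the proposition above, this is a closed compact surface containing $s(F)$, and since $c$ is admissible (a consequence of admissibility of $F$) it is orientable; hence its connected components $\Sigma_1,\dots,\Sigma_n$ are orientable closed surfaces.

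For each $I\subset\{1,\dots,n\}$ I would define $c^I$ to be the coloring obtained from $c$ by swapping $j$ and $k$ on every facet of $F$ lying in $\bigcup_{\ell\in I}\Sigma_\ell$, leaving all other facets (in particular the $i$-facets in $S$) unchanged. To see $c^I \in \adm(F,i=S)$, I would verify pre-admissibility seam by seam: at each seam the $i$-facet is untouched, and the two $\{j,k\}$-facets there lie in the same component $\Sigma_\ell$ (being joined through that seam inside $F_{jk}(c)$), so either both or neither of them is swapped, and in either case the seam still sees all three colors. Since $F$ is admissible, pre-admissibility implies admissibility, so $c^I\in \adm(F,i=S)$, and by construction $c^I$ is reached from $c$ by the $|I|$ Kempe moves along the $\Sigma_\ell$ with $\ell\in I$. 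Conversely, given any $c' \in \adm(F,i=S)$, I would show $c'=c^I$ for some $I$: on each $\Sigma_\ell$ the restriction $c'|_{\Sigma_\ell}$ takes values in $\{j,k\}$ and is, by pre-admissibility, a proper two-coloring of the dual graph $\Gamma_\ell$ whose vertices are the facets of $\Sigma_\ell$ and whose edges are its seams (the $i$-facet at each such seam forces the two $\{j,k\}$-facets to disagree). Using connectivity of $\Sigma_\ell$, the graph $\Gamma_\ell$ is connected, so its proper two-coloring is unique up to swapping $j$ and $k$; hence $c'|_{\Sigma_\ell}$ either equals $c|_{\Sigma_\ell}$ or is its $(jk)$-swap, giving the required $I$. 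The count $|\adm(F,i=S)|=2^n$ and the transitivity under Kempe moves then follow immediately.

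The only delicate point, and the one I expect to be the main obstacle, is the connectivity of the dual graph $\Gamma_\ell$ of each $\Sigma_\ell$: I would prove it by a small transversality argument, perturbing a continuous path in the connected surface $\Sigma_\ell$ between interior points of two facets so as to avoid the finitely many seam vertices of $F$ contained in $\Sigma_\ell$ and to meet the remaining seam arcs transversely, thereby producing a walk in $\Gamma_\ell$. All other verifications are purely local at seams, together with one appeal to the hypothesis that $F$ itself is admissible.
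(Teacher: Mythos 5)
Your proof is correct and follows essentially the same route as the paper: identify $\Sigma$ with $F_{jk}(c)$ for a base coloring $c$, observe that a coloring in $\adm(F,i=S)$ is determined on each component up to swapping $j$ and $k$, and realize all $2^n$ choices by Kempe moves. You simply fill in details the paper leaves implicit (the seam-by-seam check of pre-admissibility and the connectivity of the dual graph of each $\Sigma_\ell$), which is fine.
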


\begin{proof}
That $\Sigma = F\setminus \bigcup_{f\in S} f$ is a surface is clear, since if $c$ is an element of $\adm(F, i=S)$ then $F\setminus \bigcup_{f\in S} f= F_{jk}(c)$. Let us denote by $\Sigma_1, \dots \Sigma_n$ the connected components of $\Sigma$ and for each $a$ in $\{1,\dots, n\}$, choose a facet $f_a$ contained in $\Sigma_a$. A coloring in $\adm(F, i=S)$ is completely determined by its value on the facets $f_a$. Since a $jk$-Kempe move along $\Sigma_a$ changes the color of $f_a$, there are precisely $2^n$ colorings in $\adm(F, i=S)$ and they relate to one another by finite sequences of $jk$-Kempe moves along the $\Sigma_a$'s.
\end{proof}

\begin{lemma}\label{lem:ell}
	Let $F$ be an admissible pre-foam, $c$ an admissible coloring, and $\Sigma$ a connected component of $F_{jk}(c)$. Denote by $c'$ the coloring obtained from $c$ by the $jk$-Kempe move along $\Sigma$. Then:
    \begin{enumerate}
    \item The surfaces $F_{jk}(c)$ and $F_{jk}(c')$ are equal.
   	\item The surface $F_{ij}(c')$ is the closure in $F$ of the symmetric difference of $F_{ij}(c)$ and $\Sigma$. The surface $F_{ik}(c')$ is the closure in $F$ of the symmetric difference of $F_{ik}(c)$ and $\Sigma$. 
    \item\label{it:ellSigma} There exists an integer $\ell_\Sigma(c)$, such that 
    \begin{align*}
    \chi(F_{ij}(c')) = \chi(F_{ij}(c)) + \ell_\Sigma(c)	 \quad \textrm{and} \quad
    \chi(F_{ik}(c')) =  \chi(F_{ik}(c)) - \ell_\Sigma(c).
    \end{align*}
    Moreover,  $\ell_\Sigma(c)$ only depend on $\Sigma$ and on the restriction of $c$ to $\Sigma$. 
    \end{enumerate}
\end{lemma}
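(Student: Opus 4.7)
The plan is to dispatch (1) and (2) by facet-level bookkeeping, and then deduce (3) from the open-cell Euler characteristic formula
\[
\chi(F_{ij}(c)) = \chi(s(F)) + \sum_{c(f)=i}\chi(f) + \sum_{c(f)=j}\chi(f),
\]
which is the two-color analogue of the identity recorded just after Definition~\ref{def:degree} and already used in Remark~\ref{rmk:degree}.

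First I would prove (1): a $jk$-Kempe move permutes facet colors only within $\{j,k\}$, so the set of facets colored by $j$ or $k$ is the same under $c$ and $c'$, and the closure of their union is unchanged. For (2), I enumerate the facets of $F_{ij}(c')$: the $i$-facets (unchanged), the $c$-colored $j$-facets not in $\Sigma$, and the $c$-colored $k$-facets in $\Sigma$. At the facet level this coincides with the symmetric difference $F_{ij}(c)\triangle\Sigma$. The set-theoretic symmetric difference drops only those seams and seam vertices lying in $F_{ij}(c)\cap\Sigma$, but every such seam is adjacent to an $i$-facet (which is always kept), and every $c$-colored $k$-facet of $\Sigma$ has such seams in its closure, so taking closure in $F$ restores the missing 1-skeleton and recovers $F_{ij}(c')$. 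The statement for $F_{ik}(c')$ follows by swapping the roles of $j$ and $k$.

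For (3), comparing the Euler characteristic formula displayed above for $c$ versus $c'$, only the $j$-summand differs: $c$-colored $j$-facets contained in $\Sigma$ are replaced by $c$-colored $k$-facets contained in $\Sigma$. This motivates the definition
\[
\ell_\Sigma(c) \;:=\; \sum_{\substack{f\subseteq\Sigma \\ c(f)=k}}\chi(f) \;-\; \sum_{\substack{f\subseteq\Sigma \\ c(f)=j}}\chi(f),
\]
which is manifestly an integer depending only on $\Sigma$ and on $c|_\Sigma$. The sign-reversed identity for $F_{ik}$ I would then obtain by observing that
\[
\chi(F_{ij}(c)) + \chi(F_{ik}(c)) = 2\chi(s(F)) + 2\!\!\sum_{c(f)=i}\!\chi(f) + \!\!\sum_{c(f)\in\{j,k\}}\!\chi(f)
\]
is invariant under the $jk$-Kempe move, since neither the $i$-facets nor the unordered set of $\{j,k\}$-colored facets is altered. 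The main subtlety I anticipate is the 1-skeleton bookkeeping in (2); once that is settled, (3) is automatic, and admissibility of $F$ plays only a background role, ensuring via Proposition~\ref{prop:admis2bipartite} that $\Sigma$ is a closed orientable surface.
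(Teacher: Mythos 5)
Your proof is correct and follows essentially the same route as the paper: the paper treats (1) and (2) as immediate and derives (3) from the identity $\chi(F_{ij}(c))=\chi(s(F))+\sum_{c(f)\in\{i,j\}}\chi(f)$, via formula (\ref{eq:degfromsurfaces2}), which forces $\chi(F_{ij})+\chi(F_{ik})$ to be preserved once $\chi(F_{jk})$ is. Your added value is that you extract an explicit formula
$\ell_\Sigma(c)=\sum_{f\subseteq\Sigma,\,c(f)=k}\chi(f)-\sum_{f\subseteq\Sigma,\,c(f)=j}\chi(f)$,
which makes the locality claim (that $\ell_\Sigma(c)$ depends only on $\Sigma$ and $c|_\Sigma$) manifest rather than relying on the paper's brief appeal to local computability of Euler characteristic; one small slip is that the orientability of $\Sigma$ comes directly from admissibility of the coloring $c$ (the defining condition), not from Proposition~\ref{prop:admis2bipartite}, though this plays no role in your argument.
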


\begin{rmk}
  With the above notations, $\ell_{\Sigma}(c) = - \ell_{ \Sigma}(c')$. 
\end{rmk}

\begin{proof}
The only non-trivial point is $(\ref{it:ellSigma})$, which follows directly from formula (\ref{eq:degfromsurfaces2}) in Remark~\ref{rmk:degree}. Since $\chi(F_{jk}(c)) = \chi(F_{jk }(c'))$, equality 
\[\chi(F_{ij}(c)) + \chi(F_{ik}(c)) = 
    \chi(F_{ij}(c')) + \chi(F_{ik}(c'))\] 
holds.
That $\ell_{\Sigma}(c)$ depends only on $\Sigma$ and on the restriction of $c$ to $\Sigma$ follows, since the Euler characteristic of a surface can be computed locally.   
\end{proof}

\subsection{Pre-foam evaluation} 
\label{subsec:pf_eval}\ 

Let $R' = \kk[X_1,X_2,X_3]$ be the graded ring of polynomials in three variables 
with coefficients in $\kk$ and $\deg(X_i)=2$, $1\le i \le 3$. Denote by $R$ the subring of $R'$ that consists of 
all symmetric polynomials in $X_1, X_2, X_3$. Thus, $R \cong \kk[E_1,E_2,E_3]$, 
where 
\begin{align*}
 E_1 & =  X_1 + X_2 + X_3 , \\
 E_2 & =  X_1 X_2 + X_1 X_3 + X_2 X_3, \\
 E_3 & =  X_1 X_2 X_3 
 \end{align*} 
are the three elementary symmetric functions in  $X_1, X_2, X_3$. Our degree conventions imply  that $\deg(E_i)= 2i$ for $i=1,2,3$.  

Let 
\[R''=R'[(X_1+X_2)^{-1},(X_1+X_3)^{-1}, (X_2 + X_3)^{-1}].\]
This ring is obtained by inverting elements $X_i+X_j$ of $R$, for $1\le i < j \le 3$.

The ring $R''$ contains subrings $R''_{ij} = R[(X_i+X_k)^{-1}, (X_j+X_k)^{-1}]$
given by inverting two elements out of the above three, and not inverting $X_i+X_j$. 
\begin{lemma} 
\[ R' = R''_{12} \cap R''_{13}\cap  R''_{23}.\]
\end{lemma}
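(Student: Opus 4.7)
The inclusion $R' \subseteq R''_{12} \cap R''_{13} \cap R''_{23}$ is immediate from the definitions, so the content is the reverse inclusion. My plan is to exploit the fact that $R' = \kk[X_1, X_2, X_3]$ is a UFD and that the three linear forms $X_1 + X_2$, $X_1 + X_3$, $X_2 + X_3$ are pairwise non-associate irreducibles in $R'$, so any element of the localization $R''$ has a canonical reduced form.

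First I would observe that $R''$ is precisely the localization of $R'$ at the multiplicative set generated by the three sums $X_i + X_j$. Since $R'$ is a UFD and the three sums are pairwise coprime irreducibles, every nonzero element $f \in R''$ admits a unique (up to units) presentation
\[
f = \frac{g}{(X_1+X_2)^{a} (X_1+X_3)^{b} (X_2+X_3)^{c}}
\]
with $a,b,c \geq 0$ and $g \in R'$ not divisible by any of the three sums.

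Next I would match the subrings $R''_{ij}$ against this presentation. With the convention $\{i,j,k\} = \{1,2,3\}$, the ring $R''_{ij}$ inverts $X_i+X_k$ and $X_j+X_k$ but not $X_i+X_j$. Since $g$ is coprime to $X_i+X_j$ and $(X_i+X_j)$ is a prime in $R'$, the element $f$ lies in $R''_{ij}$ if and only if the exponent attached to $X_i+X_j$ in the reduced denominator vanishes. Concretely, $f \in R''_{12}$ forces $a=0$, $f \in R''_{13}$ forces $b=0$, and $f \in R''_{23}$ forces $c=0$.

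If $f$ lies in all three intersections, then $a = b = c = 0$ and $f = g \in R'$, establishing the reverse inclusion. The only step requiring any care is the uniqueness of the reduced form, which follows from standard UFD arguments once one checks that the three sums $X_1+X_2$, $X_1+X_3$, $X_2+X_3$ are pairwise non-associate irreducibles in $R'$; this is clear since they are distinct non-proportional linear polynomials, and characteristic two causes no trouble. I do not foresee a genuine obstacle in this argument; it is essentially the observation that a UFD equals the intersection of the localizations away from each height-one prime in a finite collection.
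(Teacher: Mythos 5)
Your argument is correct and is essentially the paper's own proof spelled out: the paper disposes of the lemma in one line (``immediate from the division properties of multi-variable polynomials''), and your UFD/reduced-fraction argument with the pairwise non-associate primes $X_1+X_2$, $X_1+X_3$, $X_2+X_3$ is exactly that divisibility argument made explicit. One small wording fix: with nonnegative exponents $a,b,c$ you can only require $g$ to be coprime to the factors that actually occur in the denominator (or else allow exponents in $\Z$), since e.g.\ $f=(X_1+X_2)/(X_1+X_3)$ has numerator divisible by $X_1+X_2$; this does not affect the membership criterion or the conclusion.
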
 
That is, the ring $R'$ is the intersection of the above three rings. 
\begin{proof}Immediate from the division properties of multi-variable polynomials. 
\end{proof}
Thus, there are inclusions of rings 
\begin{align}\label{eq:ring_inc}
R \ \subset \ R' \ \subset \ \ R''_{ij}\ \subset \ R''
\end{align}

For a pre-foam $F$ and $c\in \adm(F)$, let 
\begin{align}\label{eq:P}
P(F,c) =\prod_{f  \in f(F)}  X^{d(f)}_{c(f)} 
\end{align}
be the monomial which is the product of $X_i$'s, over all facets $f$ of $F$, 
with the index $c(f)$, which is the color of the facet $f$, and the exponent $d(f)$ -- the 
number of dots on the facet $f$. 

For instance, if $(F,c)$ has two facets colored $1$ and decorated by three and no dots, respectively, 
one facet colored $2$ decorated by four dots, and two facets colored $3$ with two and 
three 
dots, respectively, then $P(F,c) = X_1^{3+0} X_2^4 X_3^{2+3} = X_1^3 X_2^4 X_3^5.$ 

With $F$ and $c$ as above, let 
\begin{align}\label{eq:Q}
Q(F,c) = \prod_{1\le i < j \le 3} (X_i + X_j)^{\frac{\chi(F_{ij}(c))}{2}}  \ \in R''. 
\end{align}
Here $\chi(S)$ denotes the Euler characteristic of a surface $S$. Since $c$ is 
admissible, closed surfaces $F_{ij}(c)$ are orientable and have even Euler characteristic. 
Consequently, $\frac{\chi(F_{ij}(c))}{2}$ is an integer. 

Now, given a pre-foam $F$ and $c\in \adm(F)$, define the \emph{evaluation} 
$\lFc$ by 
\begin{align}\label{eq:evalcol}
\lFc
= \frac{P(F,c)}{Q(F,c)} \ \in R''. 
\end{align}  
Note that, if none of the orientable surfaces $F_{ij}(c)$, for $1\le i < j \le 3$, contains a connected component which 
is a two-sphere, the integers $\chi(F_{ij}(c))$ are non-positive, and $\lFc$ belongs 
to the ring of polynomials $R'=\kk[X_1,X_2,X_3]$. It's possible for $\lFc$ to be 
a polynomial even if some components of $\chi(F_{ij}(c))$ are spheres, as long as 
the Euler characteristic of each $F_{ij}(c)$ is non-positive. 

Finally, we define the evaluation of a pre-foam $F$ as the sum of evaluations 
$\lFc$ over all admissible colorings of $F$: 
\begin{align} \lF
= \sum_{c \in \adm(F)} \lFc .
\label{eq:eval}
\end{align}
$\lF$ is an element of the ring $R''$. More precisely, it's an element of 
its $S_3$-invariant subring $(R'')^{S_3}$, under the permutation action of $S_3$ on the  
generators $X_1, X_2, X_3$. The invariance is implied by the action of $S_3$ on 
colorings, since $\sigma(\lFc) = \langle F, \sigma(c)\rangle$  for $\sigma \in S_3$. 

\begin{example}\label{exa:evaluation}
\begin{enumerate}
\item The empty pre-foam $\emptyset$ has a unique admissible coloring, and $\langle \emptyset \rangle = 1$. 
\item If $F$ has no admissible colorings, $\lF = 0$. Attaching two disks with disjoint interiors to the 2-torus standardly 
embedded in $\R^3$, one along meridian and one along longitude, yields a foam with a single seam vertex. This foam has no 
admissible colorings and evaluates to $0$ for any dot assignment. 

\item If $F$ is a $2$-sphere with $n$ dots, it has three colorings, one for each color.  
For the coloring of $F$ by color $1$, surfaces $F_{12}(c)$ and $F_{13}(c)$ are both $2$-spheres, 
while $F_{23}(c)$ is the empty surface, and 
\[\lFc = \frac{X_1^n}{(X_1+X_2)(X_1+X_3)} .\]
We have 
\begin{align*} 
\lF & =  \frac{X_1^n}{(X_1+X_2)(X_1+X_3)} + \frac{X_2^n}{(X_1+X_2)(X_2+X_3)}  + 
\frac{X_3^n}{(X_1+X_3)(X_2+X_3)} \\
  & =    \frac{X_1^n(X_2+X_3)+X_2^n(X_1+X_3)+X_3^n(X_1+X_2)}
  {(X_1+X_2)(X_1+X_3)(X_2+X_3)} \\
  & =  s_{n-2,0,0}(X_1,X_2,X_3) = h_{n-2}(X_1,X_2,X_3) = \sum_{i+j+k = n-2} X_1^i X_2^j X_3^k . 
  \end{align*} 
Adding signs to the last ratio above (which does not change the expression, since 
we are in characteristic two) makes it a ratio of an antisymmetrizer with the 
exponent $(n,1,0)$ and antisymmetrizer with the exponent $(2,1,0)$, that is, 
a Schur function $s_{\lambda}(X_1,X_2,X_3)$ for the partition
 $\lambda=(n-2,1-1,0-0)= (n-2,0,0)$. In characteristic $0$ this Schur function is the character of the $(n-2)$ symmetric power of the fundamental representation $V$ of $sl(3)$, hence equals the complete symmetric function $h_{n-2}(X_1,X_2,X_3)$. 
 
\begin{corollary} \label{cor:sphere-eval} A two-sphere with zero or one dot evaluates to $0$, a two-dotted 
 two-sphere evaluates to $1$, a three-dotted to $E_1=X_1+X_2+X_3$, and four-dotted to $E_1^2+E_2$. 
 \end{corollary}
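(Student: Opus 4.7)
The plan is simply to substitute $n=0,1,2,3,4$ into the formula
\[
\lF \;=\; h_{n-2}(X_1,X_2,X_3) \;=\; \sum_{i+j+k=n-2} X_1^i X_2^j X_3^k
\]
established for a $2$-sphere with $n$ dots in Example~\ref{exa:evaluation}(3), and then rewrite the resulting complete symmetric functions in terms of the elementary symmetric functions $E_1,E_2,E_3$.

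First I would handle $n=0$ and $n=1$. Here the exponent $n-2$ of the complete homogeneous symmetric function is negative, and by the standard convention $h_m=0$ for $m<0$; equivalently, the sum defining $h_{n-2}$ is empty. Hence both the zero-dotted and one-dotted two-sphere evaluate to $0$.

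Next, $n=2$ gives $h_0(X_1,X_2,X_3)=1$, so the two-dotted two-sphere evaluates to $1$. The case $n=3$ gives $h_1(X_1,X_2,X_3)=X_1+X_2+X_3=E_1$. Finally, $n=4$ gives
\[
h_2(X_1,X_2,X_3)\;=\;X_1^2+X_2^2+X_3^2+X_1X_2+X_1X_3+X_2X_3 .
\]
Since we are working over $\kk=\mathbb{F}_2$, we have $E_1^2=(X_1+X_2+X_3)^2=X_1^2+X_2^2+X_3^2$ (the cross terms carry coefficient $2$ and vanish), while $E_2=X_1X_2+X_1X_3+X_2X_3$. Therefore $h_2=E_1^2+E_2$, as claimed.

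There is no real obstacle here; the only thing to be careful about is the characteristic-two identity $E_1^2=X_1^2+X_2^2+X_3^2$, which makes $h_2=E_1^2+E_2$ rather than the characteristic-zero expression $E_1^2-E_2$. The conventions $h_{-1}=h_{-2}=0$ can alternatively be read off directly from formula~(\ref{eq:evalcol}): for $n\le 1$, the numerators $X_i^n$ in Example~\ref{exa:evaluation}(3) sum to a symmetric polynomial of degree $\le 1$ which is divisible by the degree-three Vandermonde-like denominator $(X_1+X_2)(X_1+X_3)(X_2+X_3)$ only if it is zero, and a direct check confirms this.
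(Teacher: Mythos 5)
Your argument is correct and matches the paper's own treatment: the corollary is stated immediately after Example~\ref{exa:evaluation}(3), which establishes the formula $\lF = h_{n-2}(X_1,X_2,X_3)$ for an $n$-dotted sphere, and the corollary is meant to be read off by substituting $n=0,\dots,4$ and expressing the complete symmetric functions in the $E_i$'s, exactly as you do (including the characteristic-two care for $h_2 = E_1^2 + E_2$ and the vanishing for $n\le 1$).
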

 
\item\label{it:theta-prefoam} A theta-prefoam $\Theta$ consists of three disks glued together along three boundary circles. 
It can be visualized as a 2-sphere with an additional disk glued in along the equatorial circle. 
A theta-foam $\Theta$ is a theta-prefoam standardly embedded in $\R^3$ (see Figure~\ref{fig:theta-foam}).

\begin{figure}
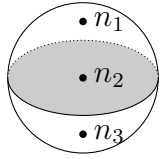

\centering
\tikz[scale=1]{\begin{scope}
  \draw (0,0) circle (1cm);
  \fill[gray!40!white] (0,0) circle (1cm and 0.5cm);
  \draw (-1, 0) arc (-180:0: 1cm and 0.5cm);
  \draw[densely dotted] (-1, 0) arc (180:0: 1cm and 0.5cm);
  \fill (0,0,0) circle (0.5mm) node[right] {$n_2$};
  \fill (0,0.75,0) circle (0.5mm) node[right] {$n_1$};
  \fill (0,-0.75,0) circle (0.5mm) node[right] {$n_3$};
\end{scope}}
\caption{A theta-foam whose facets are decorated with $n_1$, $n_2$ and $n_3$ dots.}\label{fig:theta-foam}
\end{figure}

Assume that facets of $\Theta$ are decorated by $n_1\ge n_2\ge n_3$ dots. 
We compute the evaluation 
\begin{align}\label{eq:theta-eval}
\langle \Theta \rangle \ = \ \frac{\sum_{\sigma\in S_3} X_{\sigma(1)}^{n_1}X_{\sigma(2)}^{n_2}X_{\sigma(3)}^{n_3}}{(X_1+X_2)(X_1+X_3)(X_2+X_3)} = s_{n_1-2, n_2-1, n_3}(X_1,X_2,X_3). 
\end{align}
There are six admissible colorings, with surfaces $F_{ij}(c)$ over all $i,j,c$ being 2-spheres. 
The evaluation is the Schur function 
$s_{\lambda}$, for the partition $(n_1-2, n_2-1, n_3)$. In particular, the evaluation is zero if 
any two numbers among $n_1, n_2, n_3$ are equal. If $n_1+n_2+n_3\le 3$, the only possible triple 
of dots with a nontrivial evaluation is $(2,1,0)$, which evaluates to $1\in \kk$. 
\item If $\Gamma$ is a trivalent graph, then $\Gamma\times \SS^1$ is a foam. The set of admissible colorings of this foam is naturally in bijection with the set of Tait coloring of $\Gamma$. For any coloring of this foam, the bicolored surfaces are collections of tori. Hence the evaluation of this foam is equal to the number of Tait coloring of $\Gamma$ modulo $2$. Since $S^3$ acts on coloring by permuting $1,2$ and $3$, the evaluation of $\Gamma \times \SS^1$ is $0$ unless $\Gamma$ is a (maybe empty) collection of circles. In this last case the evaluation is $1$.  
\end{enumerate}
\end{example}

\begin{theorem} \label{thm:evl-sym-pol}The evaluation $\lF$ of an admissible pre-foam $F$ is an element of the ring $R$ of symmetric 
polynomials in $X_1,X_2,X_3$, homogeneous of degree $\deg(F)$. 
\end{theorem}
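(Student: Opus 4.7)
Three properties are to be established: (i) $\langle F\rangle$ is homogeneous of degree $\deg(F)$, (ii) it is $S_3$-invariant, and (iii) it lies in $R'=\kk[X_1,X_2,X_3]$; combined, (ii) and (iii) give $\langle F\rangle\in (R')^{S_3}=R$. Property (i) is direct from~(\ref{eq:degfromsurfaces2}): any admissible $c$ satisfies $\deg\langle F,c\rangle = 2|d(F)|-\sum_{i<j}\chi(F_{ij}(c))=\deg(F)$. Property (ii) is the invariance already noted in the paragraph before the theorem, since $S_3$ acts on $\adm(F)$ and $\sigma(\langle F,c\rangle)=\langle F,\sigma c\rangle$. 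The substantial task is property (iii).

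By the lemma $R'=R''_{12}\cap R''_{13}\cap R''_{23}$ together with the $S_3$-symmetry of $\langle F\rangle$, it suffices to show $\langle F\rangle\in R''_{12}$, i.e.\ to rule out poles at $X_1+X_2=0$. Fix $(i,j,k)=(1,2,3)$ and partition $\adm(F)$ according to the set $S$ of $k$-colored facets. For each $S$ with $\adm(F,k=S)\neq\emptyset$, Lemma~\ref{lem:colKempe} identifies $\adm(F,k=S)$ with subsets $T\subseteq\{1,\ldots,n\}$, where $C_1,\ldots,C_n$ are the connected components of $\Sigma_S:=F\setminus\bigcup_{f\in S}f$ and $c_T$ colors $C_a$ by $j$ if $a\in T$ and by $i$ otherwise. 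Iterating Lemma~\ref{lem:ell} from the base coloring $c_\emptyset$, and using that $\ell_{C_a}(c)$ depends only on the color of $C_a$ (so the iterated increments are path-independent), gives
\[
\tfrac12\chi(F_{ik}(c_T))=\alpha_0+\sum_{a\in T}\delta_a,\qquad \tfrac12\chi(F_{jk}(c_T))=\beta_0-\sum_{a\in T}\delta_a,
\]
with $\delta_a:=\tfrac12\ell_{C_a}(c_\emptyset)\in\Z$ and $\alpha_0,\beta_0$ depending only on $S$, while $\tfrac12\chi(F_{ij}(c_T))=\tfrac12\chi(\Sigma_S)$ is constant over the block. A direct calculation then gives the factorization
\[
\sum_T\langle F,c_T\rangle = \frac{X_3^{d(S)}}{(X_1+X_2)^{\chi(\Sigma_S)/2}(X_1+X_3)^{\alpha_0}(X_2+X_3)^{\beta_0}}\prod_{a=1}^n\Bigl(X_1^{d(C_a)}+X_2^{d(C_a)}\Bigl(\tfrac{X_2+X_3}{X_1+X_3}\Bigr)^{\delta_a}\Bigr),
\]
where $d(S)$ and $d(C_a)$ denote total dot counts.

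Setting $X_1=X_2$ (equivalently $X_1+X_2=0$ in characteristic two) reduces $(X_2+X_3)/(X_1+X_3)$ to $1$, hence each factor of the product collapses to $X_1^{d(C_a)}+X_1^{d(C_a)}=0$; thus each factor is divisible by $X_1+X_2$ in $R''_{12}$, and the product is divisible by $(X_1+X_2)^n$. Admissibility of $F$ forces each $C_a$ to be a closed orientable surface with $\chi(C_a)\leq 2$, so $\chi(\Sigma_S)/2\leq n$ and the pole $(X_1+X_2)^{-\chi(\Sigma_S)/2}$ is absorbed by the zero of order at least $n$ in the numerator. Hence $\sum_T\langle F,c_T\rangle\in R''_{12}$ for every $S$; summing over $S$ yields $\langle F\rangle\in R''_{12}$, and $S_3$-symmetry delivers $\langle F\rangle\in R''_{13}\cap R''_{23}$ as well, so $\langle F\rangle\in R'$. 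The main obstacle is the Kempe-move bookkeeping that yields the clean factorization above; once it is in hand, the pole cancellation is an immediate consequence of the characteristic-two identity $x+x=0$.
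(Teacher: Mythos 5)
Your proof follows essentially the same route as the paper's: reduce to showing $\lF\in R''_{12}$ using the lemma $R'=R''_{12}\cap R''_{13}\cap R''_{23}$ together with $S_3$-symmetry, group the admissible colorings by the set $S$ of $3$-colored facets, parametrize each block via Kempe moves along the connected components of $\Sigma_S=F\setminus\bigcup_{f\in S}f$ (Lemmas~\ref{lem:colKempe} and~\ref{lem:ell}), and absorb the potential pole $(X_1+X_2)^{\chi(\Sigma_S)/2}$ by observing that each factor of the resulting product vanishes at $X_1=X_2$ in characteristic two and that $\chi(\Sigma_S)/2\le n$; this is precisely the content of the paper's Lemma~\ref{lem:inR12}, and your treatment of the degree and the symmetry matches the paper as well. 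One bookkeeping slip should be corrected: a connected component $C_a$ of $\Sigma_S$ is a bicolored surface, in general containing facets of both colors $1$ and $2$, so the colorings in the block are \emph{not} obtained by coloring each $C_a$ monochromatically by $1$ or $2$; they differ from the base coloring by swapping the two colors on all facets of the chosen components. Accordingly the factor attached to $C_a$ should read
\[
X_1^{p_1(a)}X_2^{p_2(a)}+X_2^{p_1(a)}X_1^{p_2(a)}\Bigl(\tfrac{X_2+X_3}{X_1+X_3}\Bigr)^{\delta_a},
\]
where $p_1(a)$ and $p_2(a)$ count the dots on the $1$- and $2$-colored facets of $C_a$ in the base coloring, rather than $X_1^{d(C_a)}+X_2^{d(C_a)}(\cdots)$. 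This does not affect your argument: the corrected factor still collapses to zero upon setting $X_1=X_2$ in characteristic two, so the divisibility by $(X_1+X_2)^n$ and hence the conclusion stand.
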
 
We view $0\in R$ as a homogeneous polynomial of an arbitrary degree. 

\begin{proof} 
The statement about the degree follows from the definition of the evaluation and Remark~\ref{rmk:degree}. Note that if the degree of a pre-foam is not even, Remark~\ref{rmk:degree} implies that $F$ has no admissible colorings and its evaluation is then automatically $0$.

That the evaluation is symmetric in $X_1$, $X_2$ and $X_3$ follows directly from the permutation action of $S_3$ on the set of admissible colorings of $F$. 

From now on, when we speak about colorings, we'll mean pre-admissible colorings. In particular, if a pre-foam is admissible, all its colorings are admissible as well. 

The theorem says that the evaluation of an admissible pre-foam lies in the subring $R$ of $R''$. 
The evaluation is clearly symmetric in $X_1, X_2, X_3$, so belongs 
to $(R'')^{S_3}$. It suffices to 
show that $\lF$ belongs to the subring $R_{12}''$ of $R''$. By $S_3$-symmetry 
we can then conclude that it belongs to the subrings $R_{13}''$, $R_{23}''$ as 
well, hence to the intersection of these three subrings with the subring 
 $(R'')^{S_3}$. Intersection of these four subrings is $R$. 
 
The argument is essentially the same as in \cite[Proposition~2.18]{RW1}. In order to be self-contained and since in our context the proof is simpler, we repeat it here. It is a direct consequence of the following lemma.
\begin{lemma}\label{lem:inR12}
Let $F$ be an admissible pre-foam and $S$ a subset of $f(F)$. Then
\[
\sum_{c\in \adm(F, 3=S)} \lFc \in R_{12}''.
\]
\end{lemma}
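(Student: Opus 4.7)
The plan is to pinpoint the source of every $(X_1+X_2)$-pole in the sum as the common surface $F_{12}(c) = \Sigma$, extract this common factor, and then cancel the residual pole by a Kempe-pairing argument that exploits characteristic two.

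Assume $\adm(F,3=S)$ is non-empty; otherwise the sum is $0 \in R_{12}''$. By Lemma~\ref{lem:colKempe}, $\Sigma := F \setminus \bigcup_{f \in S} f$ is an orientable closed surface with some $n$ connected components $\Sigma_1, \dots, \Sigma_n$, and $\adm(F,3=S)$ is a $(\Z/2)^n$-torsor under $12$-Kempe moves along the $\Sigma_a$. By Lemma~\ref{lem:ell}(1), $F_{12}(c) = \Sigma$ for every such $c$, so setting $k := \chi(\Sigma)/2$ and $Q'(c) := (X_1+X_3)^{\chi(F_{13}(c))/2}(X_2+X_3)^{\chi(F_{23}(c))/2}$, one has
\[
\sum_{c \in \adm(F,3=S)} \lFc \;=\; (X_1+X_2)^{-k}\, T, \qquad T := \sum_{c} \frac{P(F,c)}{Q'(c)}.
\]
Since $(X_1+X_3)$ and $(X_2+X_3)$ are units in $R_{12}''$, each summand of $T$, and hence $T$ itself, already lies in $R_{12}''$. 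If $k \le 0$, the prefactor $(X_1+X_2)^{-k}$ is a polynomial and the lemma is proved.

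The remaining case $k > 0$ reduces to showing $(X_1+X_2)^k \mid T$ in $R_{12}''$. The basic mechanism is pair cancellation: for two colorings $c,c'$ related by a single $12$-Kempe move along some $\Sigma_a$, Lemma~\ref{lem:ell}(\ref{it:ellSigma}) gives $\chi(F_{13}(c)) + \chi(F_{23}(c)) = \chi(F_{13}(c')) + \chi(F_{23}(c'))$, while $P(F,c')$ is obtained from $P(F,c)$ merely by swapping $X_1 \leftrightarrow X_2$ on the facets of $\Sigma_a$. Placing $P(F,c)/Q'(c) + P(F,c')/Q'(c')$ over a common denominator and specializing to $X_1 = X_2$, the two numerators become equal; in characteristic two their sum vanishes. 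Hence every such pair sum is divisible by $(X_1+X_2)$ in $R_{12}''$. Pairing the $2^n$ colorings along $\Sigma_1$ yields $T = (X_1+X_2)\, T_1$ with $T_1 \in R_{12}''$; iterating the pairing along $\Sigma_2, \dots, \Sigma_n$ yields $T = (X_1+X_2)^n T_n$. Since $\chi(\Sigma_a)/2 \le 1$ for each component (equality iff $\Sigma_a$ is a sphere), $k = \sum_a \chi(\Sigma_a)/2 \le n$, and the desired divisibility follows.

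The main obstacle is justifying the iteration: after one factor of $(X_1+X_2)$ has been cleared, $T_1$ is no longer manifestly of the form $\sum P/Q'$, so the pair-cancellation argument must be reapplied to quotients whose numerators mix contributions from two colorings. The approach I would take is induction on the subset $J \subseteq \{1,\dots,n\}$ of components already paired, with the invariant that the partial sum indexed by the remaining coordinates is divisible by $(X_1+X_2)^{|J|}$ in $R_{12}''$, and that its reduction modulo $(X_1+X_2)^{|J|+1}$ is invariant under the Kempe swap on any $\Sigma_a$ with $a \notin J$, so that the next pair sum introduces a further factor of $(X_1+X_2)$. Verifying this invariance is a direct but bookkeeping-heavy check that uses Lemma~\ref{lem:ell} and the commutativity of Kempe moves along distinct components, together with the observation that the updates to Euler characteristics and to the monomial $P(F,c)$ are supported on disjoint sets of facets.
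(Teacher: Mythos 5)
Your setup is correct and mirrors the paper's: reduce to the nonempty case via Lemma~\ref{lem:colKempe}, note that $F_{12}(c)=\Sigma$ is constant over $\adm(F,3=S)$, and aim to show divisibility of the numerator sum $T$ by $(X_1+X_2)^{\chi(\Sigma)/2}$ using $\chi(\Sigma)/2 \le n$. But you stop at precisely the point where the work is: you explicitly flag the iteration as "the main obstacle," sketch an induction over paired components with an unproved invariance claim, and leave its verification as a "bookkeeping-heavy check." That is a genuine gap, not a routine detail.

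The paper closes this gap by exploiting a fact you gesture at in your last sentence but do not use: because the dot counts and the Euler-characteristic increments $\ell_{\Sigma_a}(c)$ from Lemma~\ref{lem:ell} are supported on \emph{disjoint} sets of facets, the $(\Z/2)^n$-indexed sum over colorings does not merely admit pairwise cancellations — it \emph{factorizes} as a product. Concretely,
\[
\sum_{c\in \adm(F, 3=S)} \lFc \;=\; \frac{X_3^{p_3}\displaystyle\prod_{a=1}^n\Bigl( X_1^{p_1(a)} X_2^{p_2(a)} + X_2^{p_1(a)} X_1^{p_2(a)} \bigl(\tfrac{X_1+X_3}{ X_2+ X_3 } \bigr)^{\ell_{\Sigma_a}(c_0)} \Bigr)}{(X_1 + X_2)^{\chi(\Sigma)/2} (X_1+X_3)^{\chi(F_{13}(c_0))/2}(X_2+X_3)^{\chi(F_{23}(c_0))/2}},
\]
and each of the $n$ factors in the product is divisible by $(X_1+X_2)$ in $R_{12}''$ (set $X_1=X_2$ and use characteristic two). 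Since $\chi(\Sigma)/2 \le n$, this kills the pole in one stroke, with no induction needed. The multiplicative structure is exactly what makes "iterating the pairing" legitimate; without it, your additive pair-cancellation argument only controls a first-order vanishing and you correctly sensed the trouble. To repair your proof you would need to establish the product formula (or an equivalent multi-linearity statement), at which point your argument collapses into the paper's.
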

Indeed, we have
\[
\lF = \sum_{S\subseteq f(F)} \,\,\sum_{c\in \adm(F, 3=S)} \lFc ,
\]
and, therefore, $\lF$ is in $R_{12}''$, concluding 
proof of Theorem~\ref{thm:evl-sym-pol}.
  \end{proof}

\begin{proof}[Proof of Lemma~\ref{lem:inR12}]
If $\adm(F, 3=S)$ is empty, the statement is obvious. Suppose that $c_0$ is a coloring in $\adm(F, 3=S)$. Let us denote by $\Sigma_{1}, \dots,\Sigma_n$ the connected components of $\Sigma :=  
F_{12}(c_0) = F \setminus \bigcup_{f\in S} f$. For $a$ in $\{1, \dots, n\}$ and  $i \in \{1,2\}$ let $p_i(a)$ be the total number of dots located on facets of $\Sigma_a$ colored by $i$ (by $c_0$), and $p_3$ be the number of dots located on facets in $S$.
It follows from Lemmas~\ref{lem:colKempe} and \ref{lem:ell}(\ref{it:ellSigma}) that
\[
\sum_{c\in \adm(F, 3=S)} \lFc = \frac{\displaystyle{{X_3^{p_3}\prod_{a=1}^n\left( X_1^{p_1(a)} X_2^{p_2(a)} + X_2^{p_1(a)} X_1^{p_2(a)} \left(\frac{X_1+X_3}{ X_2+ X_3 } \right)^{\ell_{\Sigma_a(c)}} \right)}}}{(X_1 + X_2)^{{\chi(\Sigma)/2}} (X_1+X_3)^{\chi(F_{13}(c_0))/2}(X_2+X_3)^{\chi(F_{23}(c_0))/2}}
.
\]
If ${\chi(\Sigma)/2}$ is non-positive, the statement is obvious. Suppose that it is positive.
Each factor in the product is divisible by $(X_1+ X_2)$. Indeed, identifying $X_1$ and $X_2$ one gets $0$ (since the ground field has characteristic $2$). Since ${\chi(\Sigma)/2} = \sum_{a=1}^n \chi({\Sigma_a})/2 \leq n$, we can use the above factors $(X_1+X_2)$ to cancel $(X_1 + X_2)^{{\chi(\Sigma)/2}}$ in the denominator. Hence $\sum_{c\in \adm(F, 3=S)} \lFc$ is in $R_{12}''$.
\end{proof}

\subsection{Closed foams}\ 

\begin{defn} A (closed) foam $F$ is a (closed) pre-foam together with a piecewise linear 
embedding into $\R^3$. 
\end{defn} 

In this section, \emph{foam} will refer to a closed foam, and likewise for pre-foams. Later, we'll allow foams and pre-foams to have boundary and will refer to them as foams and pre-foams with boundary.  

A coloring of a foam $F$ is a coloring of the underlying pre-foam. Unless there's a
possibility of confusion, we denote the pre-foam underlying $F$ also by $F$. 

\begin{prop} Any (closed) foam $F$ is admissible. 
\end{prop}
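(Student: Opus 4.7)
The plan is to reduce admissibility of a foam to the classical fact that closed surfaces embedded in $\R^3$ are necessarily orientable. By the definition of admissibility, I need to show that every pre-admissible coloring $c$ of $F$ is admissible, i.e., that each bicolored subspace $F_{ij}(c)$, for $1 \le i < j \le 3$, is an orientable surface.

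First, I would invoke the earlier Proposition which establishes that $F_{ij}(c)$ is a closed compact surface (without boundary), built as the closure of the union of the $i$- and $j$-facets, and containing the seam graph $s(F)$ with neighborhoods that are locally homeomorphic to $\R^2$. This is purely combinatorial and makes no use of the embedding.

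Next, I would observe that the piecewise linear embedding of the pre-foam $F$ into $\R^3$ restricts to a piecewise linear embedding of the closed surface $F_{ij}(c)$ into $\R^3$. At this point I would appeal to the classical theorem that any closed (PL) surface embedded in $\R^3$ is orientable. This can be justified either by Alexander duality (the complement of a closed connected surface in $S^3$ has two components, so the surface is two-sided, hence orientable) or equivalently by noting that a closed surface in $\R^3$ is a closed codimension-one submanifold of the orientable manifold $\R^3$, and two-sidedness is equivalent to orientability in this setting. In particular, no closed non-orientable surface (such as $\R P^2$ or a Klein bottle) embeds in $\R^3$.

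Thus $F_{ij}(c)$ is orientable for every pair $(i,j)$, so $c$ is admissible. Since $c$ was an arbitrary pre-admissible coloring, $F$ is admissible. The only subtle point is justifying the two-sidedness claim, which is standard but uses the embedding in $\R^3$ in an essential way (the analogous statement fails for abstract pre-foams, as shown by the Klein bottle example in Remark~\ref{rmk:Klein}).
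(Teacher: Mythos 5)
Your proof is correct and takes essentially the same route as the paper: restrict the embedding of $F$ into $\R^3$ to each closed bicolored surface $F_{ij}(c)$ and invoke the fact that a closed surface embedded in $\R^3$ is orientable. The paper's proof is a one-liner to this effect; your extra justification via Alexander duality is fine but not needed beyond citing the standard fact.
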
 
\begin{proof} For any pre-admissible coloring $c$ of $F$, the closed surfaces $F_{ij}(c)$ are 
embedded in $\R^3$, hence orientable, implying the admissibility of $c$.
\end{proof}

Note that, for a foam $F$, the evaluation $\lF$ does not depend on the embedding 
of the pre-foam of $F$ into $\R^3$. 

\begin{corollary} The evaluation $\lF$ of a foam $F$ in $\R^3$ is a symmetric 
polynomial in $X_1,X_2,X_3$ homogeneous of degree $\deg(F)$. 
\end{corollary}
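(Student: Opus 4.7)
The plan is essentially a one-line deduction: combine the immediately preceding proposition (any closed foam is admissible) with Theorem~\ref{thm:evl-sym-pol} (the evaluation of an admissible pre-foam is a symmetric polynomial in $R$, homogeneous of degree $\deg(F)$).

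More precisely, I would argue as follows. Let $F$ be a closed foam in $\R^3$, and let $F_0$ denote its underlying pre-foam. The definition of $\lF$ depends only on $F_0$ together with its pre-admissible colorings (via the formulas (\ref{eq:P}), (\ref{eq:Q}), (\ref{eq:evalcol}), (\ref{eq:eval})), since the data entering the evaluation — the facets, seams, seam vertices, dots, and the topology of the surfaces $F_{ij}(c)$ — are intrinsic to $F_0$ and its colorings. By the proposition just proved, every pre-admissible coloring $c$ of $F_0$ is in fact admissible, because the bicolored surfaces $F_{ij}(c)$ sit embedded in $\R^3$ and hence are orientable. Thus $F_0$ is an admissible pre-foam in the sense of the definition preceding Theorem~\ref{thm:evl-sym-pol}.

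Theorem~\ref{thm:evl-sym-pol} then applies directly to $F_0$, yielding that $\lF = \langle F_0 \rangle \in R$ and is homogeneous of degree $\deg(F_0) = \deg(F)$. This completes the proof.

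There is no real obstacle here: the corollary is a packaging statement, recording that the combinatorial evaluation restricted to genuine (embedded) foams lands in the symmetric polynomial ring and respects the degree grading of Definition~\ref{def:degree}. The only minor point worth remarking on, for the reader's benefit, is the independence of $\lF$ from the chosen embedding of $F_0$ into $\R^3$ — already observed in the paragraph preceding the corollary — which ensures the corollary really is a statement about foams (not about particular embeddings).
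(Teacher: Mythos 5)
Your proposal is correct and matches the paper's (unstated but clearly intended) reasoning: the corollary follows immediately by combining the preceding proposition (every closed foam is admissible, since the bicolored surfaces embed in $\R^3$ and hence are orientable) with Theorem~\ref{thm:evl-sym-pol} applied to the underlying pre-foam. The paper gives no explicit proof precisely because the deduction is this short, and your note about embedding-independence correctly echoes the remark preceding the corollary.
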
 

The evaluation is multiplicative for the disjoint union of pre-foams, $\langle F_1\sqcup F_2 \rangle = 
\langle F_1 \rangle \cdot \langle F_2 \rangle$.

A pre-foam $F$ is called \emph{connected} if it's a connected topological space. 
Suppose that a connected pre-foam $F$ has an involutive homeomorphism $\alpha$ that respects the number of dots on each facet, fixes at least one facet pointwise, and induces a non-trivial involution on the set of facets of $F$. Then 
the induced involution on $\adm(F)$ will have no fixed 
points. Since $\lFc = \langle F,\alpha(c)\rangle$
and $\mathrm{char}(\kk)=2$, the evaluation of $F$ is equal to $0$. An example is the theta-prefoam, see Example (\ref{it:theta-prefoam}) in (\ref{exa:evaluation}), when some 
of $n_1, n_2, n_3$ are equal.

\subsection{Relations between evaluations} 
\label{sec:rel-btwn-ev}\ 

In what follows we will speak about local relations satisfied by evaluations of  admissible pre-foams. This is to be understood as follows: Given a collection of pre-foams which are all admissible and are identical except in a ball where they are given by the terms of a local relation, evaluations of these pre-foams should satisfy the given identity, with coefficients in $R$. Note that since all foams (i.e., pre-foams embedded in $\R^3$) are admissible, these relations can be also thought of as local relation on foams.

\begin{prop}[Neck-cutting relation]
  \label{prop:neckcutting}
  The following local identity holds:
  \begin{align*}
    \brak{\tube[0.7]} = & \brak{\thstwozero[0.7]} + \brak{\thsoneone[0.7]} +
    \brak{\thszerotwo[0.7]} \\ & + E_1\left(\brak{\thsonezero[0.7]}+ \brak{\thszeroone[0.7]} \right) + E_2\brak{\thszerozero[0.7]}.
    \end{align*}
\end{prop}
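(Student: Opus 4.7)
The plan is to match contributions coloring-by-coloring. Assume all pre-foams appearing in the identity are admissible and agree outside a ball $B$; then $\partial B$ meets each pre-foam in two circles $C_1,C_2$. I would parametrise restrictions of admissible colorings to the exterior by the pair $(i_1,i_2)\in\{1,2,3\}^2$ of colors adjacent to $C_1,C_2$. For the LHS (tube), only colorings with $i_1=i_2$ extend inside $B$, since the tube is a single facet (call this common color $i$). For each RHS term $F_{a,b}$ (tube replaced by two caps carrying $a$ and $b$ dots), every pair $(i_1,i_2)$ extends, with the two caps taking colors $i_1,i_2$ respectively.

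\emph{Case A} ($i_1=i_2=i$). Relative to the LHS coloring $c_L$, the RHS coloring $c_R$ of $F_{a,b}$ contributes an extra factor $X_i^{a+b}$ to $P(F,c)$ from the cap dots. The annular tube (Euler characteristic $0$) lies in the bicolored surfaces $F_{i,j}(c)$ and $F_{i,k}(c)$ (where $\{i,j,k\}=\{1,2,3\}$); in $F_{a,b}$ it is replaced by two disks of total Euler characteristic $2$, while $F_{j,k}(c)$ is unaffected. Hence $Q(F_{a,b},c_R)=(X_i+X_j)(X_i+X_k)\,Q(F_L,c_L)$. Summing the six RHS contributions produces the ratio
\[
\frac{X_i^2 + X_i^2 + X_i^2 + E_1(X_i+X_i) + E_2}{(X_i+X_j)(X_i+X_k)} = \frac{X_i^2+E_2}{(X_i+X_j)(X_i+X_k)} = 1,
\]
using $\mathrm{char}(\kk)=2$ and the identity $(X_i+X_j)(X_i+X_k)=X_i^2+X_i(X_j+X_k)+X_jX_k=X_i^2+E_2$.

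\emph{Case B} ($i_1\neq i_2$). The LHS has no extension, hence contributes $0$. The six foams $F_{a,b}$ share the same bicolored surfaces and therefore the same $Q$-factor for the coloring with caps colored $(i_1,i_2)$; the $P$-factors differ only by the cap contribution $X_{i_1}^a X_{i_2}^b$. Summing the weighted cap contributions yields
\[
X_{i_1}^2 + X_{i_1}X_{i_2} + X_{i_2}^2 + E_1(X_{i_1}+X_{i_2}) + E_2,
\]
which I would expand using $E_1(X_{i_1}+X_{i_2})=X_{i_1}^2+X_{i_2}^2 + X_{i_3}(X_{i_1}+X_{i_2})$ (where $i_3$ is the third color) to verify that it vanishes in characteristic $2$.

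Summing over exterior colorings then establishes the identity. The main difficulty is really just bookkeeping: the exterior topology can interact nontrivially with the local replacement (e.g.\ cutting the tube may split an exterior facet into two, redistributing its dots, or leave the outside facet connected), but the per-coloring ratio of contributions depends only on the local data at the ball. Thus the two algebraic identities above --- the characteristic-$2$ equality $X_i^2+E_2=(X_i+X_j)(X_i+X_k)$ in Case A, and the vanishing in Case B --- carry the essential content of the proof.
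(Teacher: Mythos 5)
Your proposal is correct and takes essentially the same route as the paper's proof: partition the colorings of the right-hand side pre-foams according to whether the two caps receive equal or distinct colors, match the equal-color colorings bijectively with colorings of the tube using $Q(G,c)=(X_i+X_j)(X_i+X_k)Q(F,c)$ and the characteristic-$2$ identity $X_i^2+E_2=(X_i+X_j)(X_i+X_k)$, and check that the distinct-color contributions cancel. The only cosmetic differences are that the paper fixes colors via $S_3$-symmetry rather than working with general $i,j,k$, and your remark that every pair $(i_1,i_2)$ extends is harmlessly imprecise (if the exterior facet joins the two circles only equal pairs occur), since the Case~B cancellation is per-coloring and unaffected.
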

\begin{proof}
Let us denote admissible pre-foam on the left-hand side by $F$. Six admissible pre-foams appearing on the right-hand side  are the same except from the dots distributed on them. We denote these pre-foams by $G_1, \dots, G_6$. A coloring $c$ of $G_1$ induces canonically a coloring of $G_2, \dots, G_6$, still denoted $c$, and any coloring of the $G_i$'s, for $i\in\{2,\dots, 6\}$, is obtained as such. 
By definition of the evaluation of pre-foams, the identity we intend to prove is equivalent to:
\begin{align*}
  \sum_{c \in \adm(F)}\!\!\brak{F,c} =  \!\!\sum_{c\in \adm(G_1)} \!\!\left(\brak{G_1, c} + \brak{G_2, c} + \brak{G_3, c} + E_1\left(\brak{G_4, c} + \brak{G_5, c}\right) + E_2\brak{G_6, c}\right).
\end{align*}
Let $c$ be a coloring of $G_1$. Denote by $\langle 
G,c\rangle'$ the sum on the right-hand side of the above equation for a fixed $c$.  
There are two types of colorings $c$: 
\begin{itemize}
\item The two half-spheres of $G_1$ have the same color. Then coloring $c$ of $G_1$ induces canonically a coloring of $F$, also denoted by $c$. Note that all colorings of $F$ are obtained in this way. Denote by $\adm_1(G_1)$ this set of colorings of $G_1$; there is a canonical bijection between 
$\adm_1(G_1)$ and $\adm(F)$. 
\item The two half-spheres have different colors. Denote by $\adm_2(G_1)$ this set of colorings of $G_1$.
\end{itemize}  
Suppose first that $c$ is in $\adm_1(G_1)$. 
Up to an $S_3$-symmetry, we may assume that the two half-spheres are colored by $1$. Then 
\begin{align*}
  \begin{cases}
    P(F,c) = P(G_6,c), \\
    P(G_1, c) = P(G_2, c) = P(G_3, c) = X_1^2 P(G_6,c),\\
    P(G_4,c) = P(G_5,c) = X_1 P(G_6,c), \\
    Q(G_6,c) = Q(F,c)(X_1+ X_2)(X_1+ X_3), \\
    Q(G_1,c )=     Q(G_2,c )=     Q(G_3,c )=     Q(G_4,c )=     Q(G_5,c )=     Q(G_6,c ).
  \end{cases}
\end{align*}
Hence, 
\begin{align*}
  \langle G, c\rangle' 
   & = \frac{\left(3X_1^2+ 2E_1X_1  + E_2\right)P(G_6,c)}{Q(G_6,c)} \\
   & = \frac{\left(X_1^2+ X_1X_2 + X_2X_3 + X_1X_3\right)P(F,c)}{Q(G_6,c)} \\
   & = \frac{(X_1 +X_2)(X_1+ X_3)P(F,c)}{(X_1 +X_2)(X_1+ X_3)Q(F,c)} \\
   & = \frac{P(F,c)}{Q(F,c)}.
\end{align*}

Suppose now that $c$ is in $\adm_2(G_1)$. Up to an  $S_3$-symmetry,  we may suppose that the upper half-sphere is colored by $1$ while the lower half-sphere is colored by $2$. Then 
\begin{align*}
  \begin{cases}
    P(G_1, c) =  X_1^2P(G_6, c), \ 
    P(G_2, c) =  X_1X_2P(G_6,c), \  
    P(G_3, c) =  X_2^2P(G_6, c), \\ 
    P(G_4, c) =  X_1P(G_6,c), \ 
    P(G_5, c) =  X_2P(G_6, c), \\
    Q(G_i,c )=   Q(G_j,c ), \ \forall i,j\in \{ 1, \dots, 6\}.
  \end{cases}
\end{align*}
Hence, 
\begin{align*}
  & \langle G, c\rangle'
   = \frac{\left(X_1^2 + X_1X_2+ X_2^2+ (X_1+ X_2+ X_3)(X_1+ X_2)  + E_2\right)P(G_6,c)}{Q(G_6,c)} \\
  & = \frac{\left((X_1+ X_2)^2 + X_1X_2 + (X_1+ X_2)^2+ X_3(X_1+ X_2)  + X_1X_2 + (X_1+ X_2)X_3  \right)P(G_6,c)}{Q(G_6,c)} \\
  &= 0
\end{align*}
and  
\begin{align*}
\sum_{c\in \adm(G_1)} \!\!\langle G, c\rangle' 
 =\! \sum_{c\in \adm_1(G_1)}\!\!\langle G, c\rangle'  
 \,+ \!\!\sum_{c\in \adm_2(G_1)}\!\! \langle G, c\rangle' 
 = \!\sum_{c\in \adm_1(G_1)}\!\! \brak{F, c} \,+ 0 
 = \!\sum_{c\in \adm(F)}\!\! \brak{F, c}.
\end{align*}
\end{proof}

\begin{prop}[Digon relation]\label{prop:digonrel}
 The following local identity holds:
 \begin{align*}
 \brak{\digonI}=  \brak{\thbonezero} + \brak{\thbzeroone}.
 \end{align*}
\end{prop}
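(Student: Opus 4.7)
The plan is to mimic the case-by-case argument used in Proposition~\ref{prop:neckcutting}. Let $F$ denote the admissible pre-foam on the left and let $H_1, H_2$ denote the two pre-foams on the right (with one dot in the appropriate position). Since all three pre-foams agree outside a ball $B$, every pre-admissible coloring is determined by its restriction to $B$ together with its restriction to the exterior; it will be enough to fix a coloring of the exterior and to verify the identity locally, summing the contributions of all admissible extensions into $B$. I would begin by recording the local colorings: inside $B$ the digon $F$ has two facets glued along two singular arcs meeting at two seam vertices, so a pre-admissible coloring of the interior is determined by the choice of a color for each of the two facets, subject to the admissibility constraint coming from the coloring of the exterior facets; in $H_1$ and $H_2$ the interior consists of a single facet (with a dot), and the coloring of that facet is forced by the exterior.

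Next I would split the analysis in two cases according to how the exterior is colored. Writing $i$ for the color forced on the single facet of $H_1, H_2$, the admissible local colorings of $F$ fall into two families: either both digon facets are colored by the same color $i$, or they are colored by the two colors $j,k\neq i$ (in one of two orders). Case~A: both digon facets carry color $i$. Then up to a common factor the local data for $F$, $H_1$, $H_2$ agree, the bicolored surfaces $F_{ij}(c)$ pick up an extra $2$-sphere contribution from the closed digon component, and a computation analogous to that in Proposition~\ref{prop:neckcutting} shows the local factor for $F$ equals the sum of the local factors for $H_1$ and $H_2$ (the two dots corresponding to the two choices of which facet of the digon to ``remember''). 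Case~B: the digon is bicolored with colors $j,k$. Then the closed bicolored surface $F_{jk}(c)$ acquires an additional $2$-sphere, while $F_{ij}(c)$ and $F_{ik}(c)$ are unchanged. Summing the two colorings in this family (swapping $j$ and $k$) gives an expression of the form $\bigl(X_j+X_k\bigr)^{-1}$ times a polynomial symmetric in $X_j,X_k$, which in characteristic $2$ is divisible by $X_j+X_k$; a direct calculation as in the second half of the proof of Proposition~\ref{prop:neckcutting} shows this contribution vanishes.

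Concretely, in Case~A one checks
\[
P(F,c) \;=\; X_i^2\, P(H_1,c) / X_i \;=\; X_i\, P(H_1,c),
\]
(and similarly for $H_2$), and
\[
Q(F,c) \;=\; Q(H_1,c)\cdot (X_i+X_j)(X_i+X_k),
\]
where the extra factors come from the additional sphere components of $F_{ij}(c)$ and $F_{ik}(c)$. Since the dot on $H_1$ (resp.\ $H_2$) sits on the upper (resp.\ lower) facet of the theta-piece, the sum $\brak{H_1,c}+\brak{H_2,c}$ yields $X_i(X_j+X_k)/((X_i+X_j)(X_i+X_k))$ times $P(H_1,c)/Q(H_1,c)$; expanding $(X_i+X_j)(X_i+X_k)=X_i^2 + X_i(X_j+X_k)+X_jX_k$ in characteristic $2$ identifies this with $\brak{F,c}$ after a short manipulation.

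The routine part is the algebraic bookkeeping of the $P$ and $Q$ factors; the only delicate point is Case~B, where I must be sure the bicolored sum in the two admissible fillings has a numerator divisible by $X_j+X_k$ so that the apparent denominator disappears and the sum is zero. Once Cases~A and~B are assembled and one sums over exterior colorings, both sides agree term by term, which establishes the identity.
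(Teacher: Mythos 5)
Your high-level plan (fix the exterior, compare local colorings, and let the extra ones cancel in characteristic $2$, as in the neck-cutting proof) is the right spirit, but the execution misreads the local structure on both sides, and the computation you build on that reading fails. First, in the left-hand foam (the identity on the digon) the two digon facets and the outer facet all meet along each of the two vertical seams, so in every pre-admissible coloring the two digon facets carry \emph{distinct} colors, both different from the outer color: your Case~A is empty, and your Case~B is not an exceptional family but is \emph{all} of the colorings of $F$. Claiming that Case~B sums to zero would force $\brak{F}=0$, which is not what the relation says. Second, the right-hand foams do not have ``a single facet with forced coloring'' in the ball: each is the composite digon $\to$ edge $\to$ digon and contains four half-bubble facets besides the outer one. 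It is these foams, not $F$, that have extra local colorings — namely those in which the two half-bubbles are colored ``oppositely'' — and it is exactly these that cancel between the two dotted terms (the dots sit diagonally, one on a front facet and one on a back facet of the other half-bubble, so in the opposite colorings the two dotted facets have the same color and the terms coincide). You have the multiplicity of colorings on the wrong side of the identity.

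The concrete bookkeeping is also inverted. The dot lives on the right-hand side, so $P(G_i,c)=X_{\ast}\,P(F,c)$ (not $P(F,c)=X_i\,P(H_1,c)$), and only one bicolored surface changes: if the digon facets are colored $j,k$, the tube in $F_{jk}(c)$ is replaced by two caps, giving the single factor $Q(G_i,c)=(X_j+X_k)\,Q(F,c)$ — there are no ``additional sphere components of $F_{ij}(c)$ and $F_{ik}(c)$,'' and no factor $(X_i+X_j)(X_i+X_k)$. With the correct identifications the matched case is immediate,
\begin{align*}
\brak{G_1,c}+\brak{G_2,c}=\frac{(X_j+X_k)\,P(F,c)}{(X_j+X_k)\,Q(F,c)}=\brak{F,c},
\end{align*}
with no need for the expansion of $(X_i+X_j)(X_i+X_k)$; your final manipulation producing $X_i(X_j+X_k)/\big((X_i+X_j)(X_i+X_k)\big)$ does not reduce to $\brak{F,c}$. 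So the argument needs to be redone with the correct local facet and coloring count: colorings of $F$ inject into colorings of the right-hand foams (the ``aligned'' ones), those contribute $\brak{F,c}$ as above, and the remaining ``anti-aligned'' colorings of the right-hand foams cancel pairwise in characteristic $2$ — which is precisely the structure of the paper's proof.
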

Note that, on the right-hand side of the relation, one dot lies on a half-bubble facet that faces the reader, while the other 
lies on a facet away from the reader.

\begin{proof}
The proof is very similar to the previous one. Let us denote by $F$ the foam on the left hand side and by $G_1$ and $G_2$ the foams on the right hand side, that differ only by placement of a dot. Any coloring $c$ of $G_1$ induces a coloring of $G_2$ (also denoted $c$), and all colorings of $G_2$ are obtained as such. For a coloring $c$ of $G_1$ there are two possibilities:
\begin{itemize}
	\item The two facets of the two half-bubbles toward the reader have the same color. In this case, $c$ induces a coloring of $F$, still denoted $c$, and all colorings of $F$ are obtained as such. We denote the set of such colorings by $\adm_{1}(G_1)$; it's in bijection with $\adm(F)$. 
    \item The two facets of the two half-bubbles toward the reader have different colors. We denote by $\adm_2(G_1)$ the set of such colorings.
\end{itemize}
Let $c$ be a coloring of $G_1$. Suppose that it is in $\adm_1(G_1)$. Up to $S_3$-symmetries,  we may suppose that the facets of the half-bubbles toward the reader are colored $1$, the other two facets of the half-bubbles are colored $2$, and the remaining "big" facet is colored $3$.
We have:
\begin{align*}
  \begin{cases}
    P(G_1, c) =  X_2P(F, c), \\
    P(G_2, c) =  X_1P(F,c), \\
    Q(G_1,c )= Q(G_2,c )=   (X_1+X_2)Q(F,c ),
    \end{cases}
\end{align*}
so that  

\begin{align*}
\brak{G_1,c} + \brak{G_2,c} = \frac{P(G_1,c)}{Q(G_1,c)} +
\frac{P(G_2,c)}{Q(G_2,c)} 
= \frac{(X_1+X_2) P(F,c)}{(X_1+X_2)Q(F,c)}  = \brak{F,c}.
\end{align*}

Suppose now that $c$ is in $\adm_2(G_1)$. Up to an $S_3$-symmetry, we may suppose that the facet of the upper half-bubble toward the reader and the facet away from the reader of the lower half-bubble are colored by $1$, the other two facets of the half-bubbles are colored by $2$, and the remaining "big" facet is colored by $3$.
Then 
\begin{align*}
  \begin{cases}
    P(G_1, c) =  X_2P(F, c), \\
    P(G_2, c) =  X_2P(F,c), \\
    Q(G_1,c )= Q(G_2,c ),
    \end{cases}
\end{align*}
so that $\brak{G_1,c} + \brak{G_2,c} = 0$,  
and we conclude exactly as is the previous proposition.
\end{proof}

\begin{prop}[Square relation]
 \label{prop:squarerel}
 The following local identity holds:
 \begin{align*}
 \brak{\squareI}=  \brak{\squaresmoothone} + \brak{\squaresmoothtwo}.
 \end{align*}
\end{prop}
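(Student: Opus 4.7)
The plan is to follow the strategy used in the proofs of Propositions~\ref{prop:neckcutting} (neck-cutting) and \ref{prop:digonrel} (digon): classify pre-admissible colorings of the foams on the right-hand side, pair them up, and show that paired evaluations either recover the left-hand side or cancel. Denote by $F$ the pre-foam containing the square configuration and by $G_1, G_2$ its two smoothings. All three pre-foams agree outside a ball $B$; within $B$, $F$ has a central square facet bounded by four facets meeting along four seams, while $G_1$ and $G_2$ replace this configuration with two ``through'' facets connecting opposite pairs of outer facets.

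First, I would analyze the local colorings of $F$ near the square. Pre-admissibility forces the central facet to carry some color $i$, and the four surrounding facets to carry alternating colors $j,k,j,k$ (with $\{i,j,k\}=\{1,2,3\}$) around the square. Each coloring $c$ of $F$ induces canonical colorings of $G_1$ and $G_2$ by keeping the colors on facets outside $B$ and letting pre-admissibility force the interior colors of the smoothings. Conversely, $G_1$ may have pre-admissible colorings whose restriction to $\partial B$ is not compatible with any coloring of $F$; these require separate treatment.

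Second, I would split $\adm(G_1)$ into two subsets $\adm_1(G_1)$ (colorings compatible with some $c\in\adm(F)$, which are in canonical bijection with $\adm(F)$, and likewise with a subset of $\adm(G_2)$) and $\adm_2(G_1)$ (the remainder). For each $c\in\adm_1(G_1)$ I would compute $P(F,c), P(G_1,c), P(G_2,c)$ and the corresponding $Q$'s by reading off the local dot placements (there are none locally in this relation) and the local change in Euler characteristics of the bicolored surfaces $F_{ab}(c)$ versus $(G_i)_{ab}(c)$; the identity $\brak{G_1,c}+\brak{G_2,c}=\brak{F,c}$ should then reduce, up to an $S_3$-symmetry fixing representative colors, to a characteristic-$2$ identity on the factors $(X_a+X_b)$. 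For $c\in\adm_2(G_1)$, I would pair $c$ with its counterpart in $\adm_2(G_2)$ and show that $\brak{G_1,c}+\brak{G_2,c}=0$ by exhibiting a common $P$ and $Q$ whose sum vanishes mod $2$.

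The main obstacle is the careful bookkeeping of Euler characteristics of the bicolored surfaces $(G_i)_{ab}(c)$ relative to $F_{ab}(c)$: the smoothing operation modifies these surfaces in ways that depend on which pairs of colors survive across the smoothed region, and there are several subcases according to the colors on the four outer facets. Once these local Euler-characteristic changes are tabulated (finitely many cases up to $S_3$-symmetry), the proof reduces to routine polynomial identities in $R''$, each of which holds over $\kk$ because $\mathrm{char}(\kk)=2$.
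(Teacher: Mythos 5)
Your overall strategy is the right one and matches the paper in spirit: classify local admissible colorings, set up bijections between colorings of $F$, $G_1$, $G_2$, and compute $P$ and $Q$ case by case. However, there are two genuine errors in your classification that would derail the argument.

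First, your claim that pre-admissibility forces the four facets surrounding the central square to carry \emph{alternating} colors $j,k,j,k$ is false. The monochrome configuration, in which all four of these facets carry the same color, is perfectly pre-admissible (the third color then appears on the remaining facets at each seam). In fact this is the most delicate case: it is the only case in which a coloring $c$ of $F$ extends to admissible colorings of \emph{both} $G_1$ and $G_2$, and the resulting identity $\brak{F,c}=\brak{G_1,c}+\brak{G_2,c}$ requires a genuine computation with the $Q$-factors (the bicolored Euler characteristics differ between $F$, $G_1$, $G_2$, producing ratios of the form $(X_a+X_b)/(X_c+X_d)$ that sum to $1$ mod $2$).

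Second, and more structurally, your claim that ``each coloring $c$ of $F$ induces canonical colorings of $G_1$ and $G_2$'' and that $\adm_1(G_1)$ is ``in canonical bijection with $\adm(F)$'' is incorrect. The correct picture, up to $S_3$-symmetry, has \emph{three} types of colorings of $F$ (monochrome, ``vertical'', ``horizontal''), plus one extra type of coloring of $G_1$ (equivalently $G_2$) that is not induced by any coloring of $F$. The vertical type of $F$ extends to an admissible coloring of $G_1$ only, giving $\brak{F,c}=\brak{G_1,c}$ directly with no $G_2$ term; the horizontal type extends to $G_2$ only, giving $\brak{F,c}=\brak{G_2,c}$; only the monochrome type extends to both. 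The extra type of coloring of $G_1$ is in bijection with an extra type of coloring of $G_2$, and these pairs cancel. Your proposed bookkeeping, which assumes every coloring of $F$ produces both a $G_1$- and a $G_2$-term and that every non-$F$ coloring of $G_1$ cancels against a non-$F$ coloring of $G_2$, would not account for the vertical and horizontal contributions at all, so the sum would come out wrong. To repair the proof you need the finer four-way case analysis described above.
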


\begin{proof}

Let us denote by $F$ the foam on the left-hand side and by $G_1$ and $G_2$ the foams on the right-hand side, respectively.
Note that locally foams $G_1$ and $G_2$ are obtained from each other by $\pi/2$  vertical axis rotation.   In order to describe colorings of these foams, we slice them along three horizontal planes: at the top,  in the middle, and at the bottom, and collect the slices in three frames of a movie:
\begin{align*}
F  \leftrightsquigarrow \NB{\tikz[scale = 0.4]{\begin{scope}
    \draw[draw= gray, line width = 2mm] (-1.5,-4.5) -- +(0,9); 
    \draw[draw= gray, line width = 2mm] ( 1.5,-4.5) -- +(0,9); 
    \draw[draw= gray, very thick] (-1.5,-4.5) -- +(3,0); 
    \draw[draw= gray, very thick] (-1.5,-1.5) -- +(3,0); 
    \draw[draw= gray, very thick] (-1.5, 1.5) -- +(3,0); 
    \draw[draw= gray, very thick] (-1.5, 4.5) -- +(3,0); 
    \draw[draw= white, dotted, line width =1.2mm]  (-1.5,-4.5) -- +(0,9); 
    \draw[draw= white, dotted, line width =1.2mm]  ( 1.5,-4.5) -- +(0,9); 
  \end{scope}
\foreach \x in {-3,0,...,3}{
  \begin{scope}[yshift=\x cm]
    \coordinate (A) at (-1,-1);
    \coordinate (B) at (1,-1);
    \coordinate (C) at (1,1);
    \coordinate (D) at (-1,1);
    \coordinate (a) at (-.5,-.5);
    \coordinate (b) at (.5,-.5);
    \coordinate (c) at (.5,.5);
    \coordinate (d) at (-.5,.5);
    \draw (A) -- (a);
    \draw (c) -- (C);
    \draw (B) -- (b);
    \draw (d) -- (D);
    \draw (c) -- (d);
    \draw (a) -- (b);
    \draw (a) -- (d);
    \draw (b) -- (c);
  \end{scope}
}

}}, \qquad 
G_1 \leftrightsquigarrow \NB{\tikz[scale = 0.4]{\begin{scope}
    \draw[draw= gray, line width = 2mm] (-1.5,-4.5) -- +(0,9); 
    \draw[draw= gray, line width = 2mm] ( 1.5,-4.5) -- +(0,9); 
    \draw[draw= gray, very thick] (-1.5,-4.5) -- +(3,0); 
    \draw[draw= gray, very thick] (-1.5,-1.5) -- +(3,0); 
    \draw[draw= gray, very thick] (-1.5, 1.5) -- +(3,0); 
    \draw[draw= gray, very thick] (-1.5, 4.5) -- +(3,0); 
    \draw[draw= white, dotted, line width =1.2mm]  (-1.5,-4.5) -- +(0,9); 
    \draw[draw= white, dotted, line width =1.2mm]  ( 1.5,-4.5) -- +(0,9); 
  \end{scope}
\foreach \x in {-3,3}{
  \begin{scope}[yshift=\x cm]
    \coordinate (A) at (-1,-1);
    \coordinate (B) at (1,-1);
    \coordinate (C) at (1,1);
    \coordinate (D) at (-1,1);
    \coordinate (a) at (-.5,-.5);
    \coordinate (b) at (.5,-.5);
    \coordinate (c) at (.5,.5);
    \coordinate (d) at (-.5,.5);
    \draw (A) -- (a);
    \draw (c) -- (C);
    \draw (B) -- (b);
    \draw (d) -- (D);
    \draw (c) -- (d);
    \draw (a) -- (b);
    \draw (a) -- (d);
    \draw (b) -- (c);
  \end{scope}
}

  \begin{scope}
    \coordinate (A) at (-1,-1);
    \coordinate (B) at (1,-1);
    \coordinate (C) at (1,1);
    \coordinate (D) at (-1,1);
    \draw (A) .. controls (-0.5, 0) and (-0.5, 0) .. (D);
    \draw (C) .. controls ( 0.5, 0) and ( 0.5, 0) .. (B);
  \end{scope}

}}, \qquad   
G_2 \leftrightsquigarrow \NB{\tikz[scale = 0.4]{\begin{scope}
    \draw[draw= gray, line width = 2mm] (-1.5,-4.5) -- +(0,9); 
    \draw[draw= gray, line width = 2mm] ( 1.5,-4.5) -- +(0,9); 
    \draw[draw= gray, very thick] (-1.5,-4.5) -- +(3,0); 
    \draw[draw= gray, very thick] (-1.5,-1.5) -- +(3,0); 
    \draw[draw= gray, very thick] (-1.5, 1.5) -- +(3,0); 
    \draw[draw= gray, very thick] (-1.5, 4.5) -- +(3,0); 
    \draw[draw= white, dotted, line width =1.2mm]  (-1.5,-4.5) -- +(0,9); 
    \draw[draw= white, dotted, line width =1.2mm]  ( 1.5,-4.5) -- +(0,9); 
  \end{scope}
\foreach \x in {-3,3}{
  \begin{scope}[yshift=\x cm]
    \coordinate (A) at (-1,-1);
    \coordinate (B) at (1,-1);
    \coordinate (C) at (1,1);
    \coordinate (D) at (-1,1);
    \coordinate (a) at (-.5,-.5);
    \coordinate (b) at (.5,-.5);
    \coordinate (c) at (.5,.5);
    \coordinate (d) at (-.5,.5);
    \draw (A) -- (a);
    \draw (c) -- (C);
    \draw (B) -- (b);
    \draw (d) -- (D);
    \draw (c) -- (d);
    \draw (a) -- (b);
    \draw (a) -- (d);
    \draw (b) -- (c);
  \end{scope}
}

  \begin{scope}
    \coordinate (A) at (-1,-1);
    \coordinate (B) at (1,-1);
    \coordinate (C) at (1,1);
    \coordinate (D) at (-1,1);
    \draw (A) .. controls (0,-0.5) and (0,-0.5) .. (B);
    \draw (C) .. controls (0, 0.5) and (0, 0.5) .. (D);
  \end{scope}

}}.
\end{align*}
Up to an $S_3$-symmetry there are three local types of colorings of $F$, denoted $\adm_m(F)$, $\adm_v(F)$ and $\adm_h(F)$, where the letter $m$ stands for \emph{m}onochrome, $v$ for \emph{v}ertical, and $h$ for \emph{h}orizontal, depending on how the four facets on the sides of $F$ are colored: either in the same color (monochrome) or 'horizontally', or 'vertically' when viewed in the cross-section presentation of the coloring:
\begin{align*}
c\in \adm_m(F) \leftrightsquigarrow \NB{\tikz[scale = 0.4]{\begin{scope}
    \draw[draw= gray, line width = 2mm] (-1.5,-4.5) -- +(0,9); 
    \draw[draw= gray, line width = 2mm] ( 1.5,-4.5) -- +(0,9); 
    \draw[draw= gray, very thick] (-1.5,-4.5) -- +(3,0); 
    \draw[draw= gray, very thick] (-1.5,-1.5) -- +(3,0); 
    \draw[draw= gray, very thick] (-1.5, 1.5) -- +(3,0); 
    \draw[draw= gray, very thick] (-1.5, 4.5) -- +(3,0); 
    \draw[draw= white, dotted, line width =1.2mm]  (-1.5,-4.5) -- +(0,9); 
    \draw[draw= white, dotted, line width =1.2mm]  ( 1.5,-4.5) -- +(0,9); 
  \end{scope}
\foreach \x in {-3,0,...,3}{
  \begin{scope}[yshift=\x cm]
    \coordinate (A) at (-1,-1);
    \coordinate (B) at (1,-1);
    \coordinate (C) at (1,1);
    \coordinate (D) at (-1,1);
    \coordinate (a) at (-.5,-.5);
    \coordinate (b) at (.5,-.5);
    \coordinate (c) at (.5,.5);
    \coordinate (d) at (-.5,.5);
    \draw[red] (A) -- (a) node[pos =0.5, left, scale= 0.5] {$1$};
    \draw[red] (c) -- (C) node[pos =0.5, right, scale= 0.5] {$1$};
    \draw[red] (B) -- (b) node[pos =0.5, right, scale= 0.5] {$1$};
    \draw[red] (d) -- (D) node[pos =0.5, left, scale= 0.5] {$1$};
    \draw[blue] (c) -- (d) node[pos =0.5, above, scale= 0.5] {$3$};
    \draw[blue] (a) -- (b) node[pos =0.5, below, scale= 0.5] {$3$};
    \draw[green!50!black] (a) -- (d) node[pos =0.5, left, scale= 0.5] {$2$};
    \draw[green!50!black] (b) -- (c) node[pos =0.5, right, scale= 0.5] {$2$};
  \end{scope}
}}}, \,\,
c\in \adm_h(F) \leftrightsquigarrow \NB{\tikz[scale = 0.4]{\begin{scope}
    \draw[draw= gray, line width = 2mm] (-1.5,-4.5) -- +(0,9); 
    \draw[draw= gray, line width = 2mm] ( 1.5,-4.5) -- +(0,9); 
    \draw[draw= gray, very thick] (-1.5,-4.5) -- +(3,0); 
    \draw[draw= gray, very thick] (-1.5,-1.5) -- +(3,0); 
    \draw[draw= gray, very thick] (-1.5, 1.5) -- +(3,0); 
    \draw[draw= gray, very thick] (-1.5, 4.5) -- +(3,0); 
    \draw[draw= white, dotted, line width =1.2mm]  (-1.5,-4.5) -- +(0,9); 
    \draw[draw= white, dotted, line width =1.2mm]  ( 1.5,-4.5) -- +(0,9); 
  \end{scope}
\foreach \x in {-3,0,...,3}{
  \begin{scope}[yshift=\x cm]
    \coordinate (A) at (-1,-1);
    \coordinate (B) at (1,-1);
    \coordinate (C) at (1,1);
    \coordinate (D) at (-1,1);
    \coordinate (a) at (-.5,-.5);
    \coordinate (b) at (.5,-.5);
    \coordinate (c) at (.5,.5);
    \coordinate (d) at (-.5,.5);
    \draw[green!50!black] (A) -- (a) node[pos =0.5, left, scale= 0.5] {$2$};
    \draw[red] (c) -- (C) node[pos =0.5, right, scale= 0.5] {$1$};
    \draw[green!50!black] (B) -- (b) node[pos =0.5, right, scale= 0.5] {$2$};
    \draw[red] (d) -- (D) node[pos =0.5, left, scale= 0.5] {$1$};
    \draw[green!50!black] (c) -- (d) node[pos =0.5, above, scale= 0.5] {$2$};
    \draw[red] (a) -- (b) node[pos =0.5, below, scale= 0.5] {$1$};
    \draw[blue] (a) -- (d) node[pos =0.5, left, scale= 0.5] {$3$};
    \draw[blue] (b) -- (c) node[pos =0.5, right, scale= 0.5] {$3$};
  \end{scope}
}}}, \,\, 
c\in \adm_v(F) \leftrightsquigarrow \NB{\tikz[scale = 0.4]{\begin{scope}
    \draw[draw= gray, line width = 2mm] (-1.5,-4.5) -- +(0,9); 
    \draw[draw= gray, line width = 2mm] ( 1.5,-4.5) -- +(0,9); 
    \draw[draw= gray, very thick] (-1.5,-4.5) -- +(3,0); 
    \draw[draw= gray, very thick] (-1.5,-1.5) -- +(3,0); 
    \draw[draw= gray, very thick] (-1.5, 1.5) -- +(3,0); 
    \draw[draw= gray, very thick] (-1.5, 4.5) -- +(3,0); 
    \draw[draw= white, dotted, line width =1.2mm]  (-1.5,-4.5) -- +(0,9); 
    \draw[draw= white, dotted, line width =1.2mm]  ( 1.5,-4.5) -- +(0,9); 
  \end{scope}
\foreach \x in {-3,0,...,3}{
  \begin{scope}[yshift=\x cm]
    \coordinate (A) at (-1,-1);
    \coordinate (B) at (1,-1);
    \coordinate (C) at (1,1);
    \coordinate (D) at (-1,1);
    \coordinate (a) at (-.5,-.5);
    \coordinate (b) at (.5,-.5);
    \coordinate (c) at (.5,.5);
    \coordinate (d) at (-.5,.5);
    \draw[red] (A) -- (a) node[pos =0.5, left, scale= 0.5] {$1$};
    \draw[green!50!black] (c) -- (C) node[pos =0.5, right, scale= 0.5] {$2$};
    \draw[green!50!black] (B) -- (b) node[pos =0.5, right, scale= 0.5] {$2$};
    \draw[red] (d) -- (D) node[pos =0.5, left, scale= 0.5] {$1$};
    \draw[blue] (c) -- (d) node[pos =0.5, above, scale= 0.5] {$3$};
    \draw[blue] (a) -- (b) node[pos =0.5, below, scale= 0.5] {$3$};
    \draw[green!50!black] (a) -- (d) node[pos =0.5, left, scale= 0.5] {$2$};
    \draw[red] (b) -- (c) node[pos =0.5, right, scale= 0.5] {$1$};
  \end{scope}
}}}.
\end{align*}
Up to an $S_3$-symmetry there are three local types of colorings of $G_1$, denoted $\adm_{m_1}(G_1)$, $\adm_{m_2}(G_1)$ and $\adm_v(G_1)$:
\begin{align*}
c\in \adm_{m_1}(G_1) \leftrightsquigarrow \NB{\tikz[scale = 0.4]{\begin{scope}
    \draw[draw= gray, line width = 2mm] (-1.5,-4.5) -- +(0,9); 
    \draw[draw= gray, line width = 2mm] ( 1.5,-4.5) -- +(0,9); 
    \draw[draw= gray, very thick] (-1.5,-4.5) -- +(3,0); 
    \draw[draw= gray, very thick] (-1.5,-1.5) -- +(3,0); 
    \draw[draw= gray, very thick] (-1.5, 1.5) -- +(3,0); 
    \draw[draw= gray, very thick] (-1.5, 4.5) -- +(3,0); 
    \draw[draw= white, dotted, line width =1.2mm]  (-1.5,-4.5) -- +(0,9); 
    \draw[draw= white, dotted, line width =1.2mm]  ( 1.5,-4.5) -- +(0,9); 
  \end{scope}
\foreach \x in {-3,3}{
  \begin{scope}[yshift=\x cm]
    \coordinate (A) at (-1,-1);
    \coordinate (B) at (1,-1);
    \coordinate (C) at (1,1);
    \coordinate (D) at (-1,1);
    \coordinate (a) at (-.5,-.5);
    \coordinate (b) at (.5,-.5);
    \coordinate (c) at (.5,.5);
    \coordinate (d) at (-.5,.5);
    \draw[red] (A) -- (a) node[pos =0.5, left, scale= 0.5] {$1$};
    \draw[red] (c) -- (C) node[pos =0.5, right, scale= 0.5] {$1$};
    \draw[red] (B) -- (b) node[pos =0.5, right, scale= 0.5] {$1$};
    \draw[red] (d) -- (D) node[pos =0.5, left, scale= 0.5] {$1$};
    \draw[blue] (c) -- (d) node[pos =0.5, above, scale= 0.5] {$3$};
    \draw[blue] (a) -- (b) node[pos =0.5, below, scale= 0.5] {$3$};
    \draw[green!50!black] (a) -- (d) node[pos =0.5, left, scale= 0.5] {$2$};
    \draw[green!50!black] (b) -- (c) node[pos =0.5, right, scale= 0.5] {$2$};
  \end{scope}
}

  \begin{scope}
    \coordinate (A) at (-1,-1);
    \coordinate (B) at (1,-1);
    \coordinate (C) at (1,1);
    \coordinate (D) at (-1,1);
    \draw[red] (A) .. controls (-0.5, 0) and (-0.5, 0) .. (D) node[pos =0.5, left, scale= 0.5] {$1$};
    \draw[red] (C) .. controls ( 0.5, 0) and ( 0.5, 0) .. (B) node[pos =0.5, right, scale= 0.5] {$1$};
  \end{scope}}}, \,\, 
c\in \adm_{m_2}(G_1) \leftrightsquigarrow \NB{\tikz[scale = 0.4]{\begin{scope}
    \draw[draw= gray, line width = 2mm] (-1.5,-4.5) -- +(0,9); 
    \draw[draw= gray, line width = 2mm] ( 1.5,-4.5) -- +(0,9); 
    \draw[draw= gray, very thick] (-1.5,-4.5) -- +(3,0); 
    \draw[draw= gray, very thick] (-1.5,-1.5) -- +(3,0); 
    \draw[draw= gray, very thick] (-1.5, 1.5) -- +(3,0); 
    \draw[draw= gray, very thick] (-1.5, 4.5) -- +(3,0); 
    \draw[draw= white, dotted, line width =1.2mm]  (-1.5,-4.5) -- +(0,9); 
    \draw[draw= white, dotted, line width =1.2mm]  ( 1.5,-4.5) -- +(0,9); 
  \end{scope}
  \begin{scope}[yshift=3 cm]
    \coordinate (A) at (-1,-1);
    \coordinate (B) at (1,-1);
    \coordinate (C) at (1,1);
    \coordinate (D) at (-1,1);
    \coordinate (a) at (-.5,-.5);
    \coordinate (b) at (.5,-.5);
    \coordinate (c) at (.5,.5);
    \coordinate (d) at (-.5,.5);
    \draw[red] (A) -- (a) node[pos =0.5, left, scale= 0.5] {$1$};
    \draw[red] (c) -- (C) node[pos =0.5, right, scale= 0.5] {$1$};
    \draw[red] (B) -- (b) node[pos =0.5, right, scale= 0.5] {$1$};
    \draw[red] (d) -- (D) node[pos =0.5, left, scale= 0.5] {$1$};
    \draw[blue] (c) -- (d) node[pos =0.5, above, scale= 0.5] {$3$};
    \draw[blue] (a) -- (b) node[pos =0.5, below, scale= 0.5] {$3$};
    \draw[green!50!black] (a) -- (d) node[pos =0.5, left, scale= 0.5] {$2$};
    \draw[green!50!black] (b) -- (c) node[pos =0.5, right, scale= 0.5] {$2$};
  \end{scope}
  \begin{scope}[yshift= -3cm]
    \coordinate (A) at (-1,-1);
    \coordinate (B) at (1,-1);
    \coordinate (C) at (1,1);
    \coordinate (D) at (-1,1);
    \coordinate (a) at (-.5,-.5);
    \coordinate (b) at (.5,-.5);
    \coordinate (c) at (.5,.5);
    \coordinate (d) at (-.5,.5);
    \draw[red] (A) -- (a) node[pos =0.5, left, scale= 0.5] {$1$};
    \draw[red] (c) -- (C) node[pos =0.5, right, scale= 0.5] {$1$};
    \draw[red] (B) -- (b) node[pos =0.5, right, scale= 0.5] {$1$};
    \draw[red] (d) -- (D) node[pos =0.5, left, scale= 0.5] {$1$};
    \draw[green!50!black] (c) -- (d) node[pos =0.5, above, scale= 0.5] {$2$};
    \draw[green!50!black] (a) -- (b) node[pos =0.5, below, scale= 0.5] {$2$};
    \draw[blue] (a) -- (d) node[pos =0.5, left, scale= 0.5] {$3$};
    \draw[blue] (b) -- (c) node[pos =0.5, right, scale= 0.5] {$3$};
  \end{scope}
  \begin{scope}
    \coordinate (A) at (-1,-1);
    \coordinate (B) at (1,-1);
    \coordinate (C) at (1,1);
    \coordinate (D) at (-1,1);
    \draw[red] (A) .. controls (-0.5, 0) and (-0.5, 0) .. (D) node[pos =0.5, left, scale= 0.5] {$1$};
    \draw[red] (C) .. controls ( 0.5, 0) and ( 0.5, 0) .. (B) node[pos =0.5, right, scale= 0.5] {$1$};
  \end{scope}}}, \,\, 
c\in \adm_v(G_1) \leftrightsquigarrow \NB{\tikz[scale = 0.4]{\begin{scope}
    \draw[draw= gray, line width = 2mm] (-1.5,-4.5) -- +(0,9); 
    \draw[draw= gray, line width = 2mm] ( 1.5,-4.5) -- +(0,9); 
    \draw[draw= gray, very thick] (-1.5,-4.5) -- +(3,0); 
    \draw[draw= gray, very thick] (-1.5,-1.5) -- +(3,0); 
    \draw[draw= gray, very thick] (-1.5, 1.5) -- +(3,0); 
    \draw[draw= gray, very thick] (-1.5, 4.5) -- +(3,0); 
    \draw[draw= white, dotted, line width =1.2mm]  (-1.5,-4.5) -- +(0,9); 
    \draw[draw= white, dotted, line width =1.2mm]  ( 1.5,-4.5) -- +(0,9); 
  \end{scope}
\foreach \x in {-3,3}{
  \begin{scope}[yshift=\x cm]
    \coordinate (A) at (-1,-1);
    \coordinate (B) at (1,-1);
    \coordinate (C) at (1,1);
    \coordinate (D) at (-1,1);
    \coordinate (a) at (-.5,-.5);
    \coordinate (b) at (.5,-.5);
    \coordinate (c) at (.5,.5);
    \coordinate (d) at (-.5,.5);
    \draw[red] (A) -- (a) node[pos =0.5, left, scale= 0.5] {$1$};
    \draw[green!50!black] (c) -- (C) node[pos =0.5, right, scale= 0.5] {$2$};
    \draw[green!50!black] (B) -- (b) node[pos =0.5, right, scale= 0.5] {$2$};
    \draw[red] (d) -- (D) node[pos =0.5, left, scale= 0.5] {$1$};
    \draw[blue] (c) -- (d) node[pos =0.5, above, scale= 0.5] {$3$};
    \draw[blue] (a) -- (b) node[pos =0.5, below, scale= 0.5] {$3$};
    \draw[green!50!black] (a) -- (d) node[pos =0.5, left, scale= 0.5] {$2$};
    \draw[red] (b) -- (c) node[pos =0.5, right, scale= 0.5] {$1$};
  \end{scope}
}

  \begin{scope}
    \coordinate (A) at (-1,-1);
    \coordinate (B) at (1,-1);
    \coordinate (C) at (1,1);
    \coordinate (D) at (-1,1);
    \draw[red] (A) .. controls (-0.5, 0) and (-0.5, 0) .. (D) node[pos =0.5, left, scale= 0.5] {$1$};
    \draw[green!50!black] (C) .. controls ( 0.5, 0) and ( 0.5, 0) .. (B) node[pos =0.5, right, scale= 0.5] {$2$};
  \end{scope}}}.
\end{align*}
Up to an $S_3$-symmetry there are three local types of colorings of $G_2$, denoted $\adm_{m_1}(G_2)$, $\adm_{m_2}(G_2)$ and $\adm_h(G_1)$:
\begin{align*}
c\in \adm_{m_1}(G_2) \leftrightsquigarrow \NB{\tikz[scale = 0.4]{\begin{scope}
    \draw[draw= gray, line width = 2mm] (-1.5,-4.5) -- +(0,9); 
    \draw[draw= gray, line width = 2mm] ( 1.5,-4.5) -- +(0,9); 
    \draw[draw= gray, very thick] (-1.5,-4.5) -- +(3,0); 
    \draw[draw= gray, very thick] (-1.5,-1.5) -- +(3,0); 
    \draw[draw= gray, very thick] (-1.5, 1.5) -- +(3,0); 
    \draw[draw= gray, very thick] (-1.5, 4.5) -- +(3,0); 
    \draw[draw= white, dotted, line width =1.2mm]  (-1.5,-4.5) -- +(0,9); 
    \draw[draw= white, dotted, line width =1.2mm]  ( 1.5,-4.5) -- +(0,9); 
  \end{scope}
\foreach \x in {-3,3} {
 \begin{scope}[yshift=\x cm]
    \coordinate (A) at (-1,-1);
    \coordinate (B) at (1,-1);
    \coordinate (C) at (1,1);
    \coordinate (D) at (-1,1);
    \coordinate (a) at (-.5,-.5);
    \coordinate (b) at (.5,-.5);
    \coordinate (c) at (.5,.5);
    \coordinate (d) at (-.5,.5);
    \draw[red] (A) -- (a) node[pos =0.5, left, scale= 0.5] {$1$};
    \draw[red] (c) -- (C) node[pos =0.5, right, scale= 0.5] {$1$};
    \draw[red] (B) -- (b) node[pos =0.5, right, scale= 0.5] {$1$};
    \draw[red] (d) -- (D) node[pos =0.5, left, scale= 0.5] {$1$};
    \draw[blue] (c) -- (d) node[pos =0.5, above, scale= 0.5] {$3$};
    \draw[blue] (a) -- (b) node[pos =0.5, below, scale= 0.5] {$3$};
    \draw[green!50!black] (a) -- (d) node[pos =0.5, left, scale= 0.5] {$2$};
    \draw[green!50!black] (b) -- (c) node[pos =0.5, right, scale= 0.5] {$2$};
  \end{scope}
}
  \begin{scope}
    \coordinate (A) at (-1,-1);
    \coordinate (B) at (1,-1);
    \coordinate (C) at (1,1);
    \coordinate (D) at (-1,1);
    \draw[red] (A) .. controls (0,-0.5) and (0,-0.5) .. (B) node[pos =0.5, below, scale= 0.5] {$1$};
    \draw[red] (C) .. controls (0, 0.5) and (0, 0.5) .. (D) node[pos =0.5, above, scale= 0.5] {$1$};
  \end{scope}}}, \,\, 
c\in \adm_{m_2}(G_1) \leftrightsquigarrow \NB{\tikz[scale = 0.4]{\begin{scope}
    \draw[draw= gray, line width = 2mm] (-1.5,-4.5) -- +(0,9); 
    \draw[draw= gray, line width = 2mm] ( 1.5,-4.5) -- +(0,9); 
    \draw[draw= gray, very thick] (-1.5,-4.5) -- +(3,0); 
    \draw[draw= gray, very thick] (-1.5,-1.5) -- +(3,0); 
    \draw[draw= gray, very thick] (-1.5, 1.5) -- +(3,0); 
    \draw[draw= gray, very thick] (-1.5, 4.5) -- +(3,0); 
    \draw[draw= white, dotted, line width =1.2mm]  (-1.5,-4.5) -- +(0,9); 
    \draw[draw= white, dotted, line width =1.2mm]  ( 1.5,-4.5) -- +(0,9); 
  \end{scope}
  \begin{scope}[yshift=3 cm]
    \coordinate (A) at (-1,-1);
    \coordinate (B) at (1,-1);
    \coordinate (C) at (1,1);
    \coordinate (D) at (-1,1);
    \coordinate (a) at (-.5,-.5);
    \coordinate (b) at (.5,-.5);
    \coordinate (c) at (.5,.5);
    \coordinate (d) at (-.5,.5);
    \draw[red] (A) -- (a) node[pos =0.5, left, scale= 0.5] {$1$};
    \draw[red] (c) -- (C) node[pos =0.5, right, scale= 0.5] {$1$};
    \draw[red] (B) -- (b) node[pos =0.5, right, scale= 0.5] {$1$};
    \draw[red] (d) -- (D) node[pos =0.5, left, scale= 0.5] {$1$};
    \draw[blue] (c) -- (d) node[pos =0.5, above, scale= 0.5] {$3$};
    \draw[blue] (a) -- (b) node[pos =0.5, below, scale= 0.5] {$3$};
    \draw[green!50!black] (a) -- (d) node[pos =0.5, left, scale= 0.5] {$2$};
    \draw[green!50!black] (b) -- (c) node[pos =0.5, right, scale= 0.5] {$2$};
  \end{scope}
  \begin{scope}[yshift= -3cm]
    \coordinate (A) at (-1,-1);
    \coordinate (B) at (1,-1);
    \coordinate (C) at (1,1);
    \coordinate (D) at (-1,1);
    \coordinate (a) at (-.5,-.5);
    \coordinate (b) at (.5,-.5);
    \coordinate (c) at (.5,.5);
    \coordinate (d) at (-.5,.5);
    \draw[red] (A) -- (a) node[pos =0.5, left, scale= 0.5] {$1$};
    \draw[red] (c) -- (C) node[pos =0.5, right, scale= 0.5] {$1$};
    \draw[red] (B) -- (b) node[pos =0.5, right, scale= 0.5] {$1$};
    \draw[red] (d) -- (D) node[pos =0.5, left, scale= 0.5] {$1$};
    \draw[green!50!black] (c) -- (d) node[pos =0.5, above, scale= 0.5] {$2$};
    \draw[green!50!black] (a) -- (b) node[pos =0.5, below, scale= 0.5] {$2$};
    \draw[blue] (a) -- (d) node[pos =0.5, left, scale= 0.5] {$3$};
    \draw[blue] (b) -- (c) node[pos =0.5, right, scale= 0.5] {$3$};
  \end{scope}
  \begin{scope}
    \coordinate (A) at (-1,-1);
    \coordinate (B) at (1,-1);
    \coordinate (C) at (1,1);
    \coordinate (D) at (-1,1);
    \draw[red] (A) .. controls (0,-0.5) and (0,-0.5) .. (B) node[pos =0.5, below, scale= 0.5] {$1$};
    \draw[red] (C) .. controls (0, 0.5) and (0, 0.5) .. (D) node[pos =0.5, above, scale= 0.5] {$1$};
  \end{scope}}}, \,\, 
c\in \adm_h(G_2) \leftrightsquigarrow \NB{\tikz[scale = 0.4]{\begin{scope}
    \draw[draw= gray, line width = 2mm] (-1.5,-4.5) -- +(0,9); 
    \draw[draw= gray, line width = 2mm] ( 1.5,-4.5) -- +(0,9); 
    \draw[draw= gray, very thick] (-1.5,-4.5) -- +(3,0); 
    \draw[draw= gray, very thick] (-1.5,-1.5) -- +(3,0); 
    \draw[draw= gray, very thick] (-1.5, 1.5) -- +(3,0); 
    \draw[draw= gray, very thick] (-1.5, 4.5) -- +(3,0); 
    \draw[draw= white, dotted, line width =1.2mm]  (-1.5,-4.5) -- +(0,9); 
    \draw[draw= white, dotted, line width =1.2mm]  ( 1.5,-4.5) -- +(0,9); 
  \end{scope}
\foreach \x in {-3,3} {
  \begin{scope}[yshift=\x cm]
    \coordinate (A) at (-1,-1);
    \coordinate (B) at (1,-1);
    \coordinate (C) at (1,1);
    \coordinate (D) at (-1,1);
    \coordinate (a) at (-.5,-.5);
    \coordinate (b) at (.5,-.5);
    \coordinate (c) at (.5,.5);
    \coordinate (d) at (-.5,.5);
    \draw[green!50!black] (A) -- (a) node[pos =0.5, left, scale= 0.5] {$2$};
    \draw[red] (c) -- (C) node[pos =0.5, right, scale= 0.5] {$1$};
    \draw[green!50!black] (B) -- (b) node[pos =0.5, right, scale= 0.5] {$2$};
    \draw[red] (d) -- (D) node[pos =0.5, left, scale= 0.5] {$1$};
    \draw[green!50!black] (c) -- (d) node[pos =0.5, above, scale= 0.5] {$2$};
    \draw[red] (a) -- (b) node[pos =0.5, below, scale= 0.5] {$1$};
    \draw[blue] (a) -- (d) node[pos =0.5, left, scale= 0.5] {$3$};
    \draw[blue] (b) -- (c) node[pos =0.5, right, scale= 0.5] {$3$};
  \end{scope}
}
  \begin{scope}
    \coordinate (A) at (-1,-1);
    \coordinate (B) at (1,-1);
    \coordinate (C) at (1,1);
    \coordinate (D) at (-1,1);
    \draw[green!50!black] (A) .. controls (0,-0.5) and (0,-0.5) .. (B) node[pos =0.5, below, scale= 0.5] {$2$};
    \draw[red] (C) .. controls (0, 0.5) and (0, 0.5) .. (D) node[pos =0.5, above, scale= 0.5] {$1$};
  \end{scope}}}.
\end{align*}
Note that we have the following canonical bijections:
\begin{align*}
&\adm_{m}(F) \simeq \adm_{m_1}(G_1)\simeq \adm_{m_1}(G_2), \qquad &&\adm_{m_2}(G_1) \simeq\adm_{m_2}(G_2) 
,\\
&\adm_{v}(F) \simeq \adm_{v}(G_1), &&\adm_{h}(F) \simeq \adm_{h}(G_2). 
\end{align*}
For $c \in \adm_{m}(F) \simeq \adm_{m_1}(G_1)\simeq \adm_{m_1}(G_2)$ we have
\begin{align*}
\begin{cases}
	P(F,c) = P(G_1,c) = P(G_2,c),\\
    Q(F,c) = \frac{X_1+X_3}{X_2+X_3} Q(G_1,c) = \frac{X_1+X_2}{X_2+X_3}Q(G_2,c),
\end{cases}
\end{align*}
and 
\begin{align*}
\brak{G_1,c} + \brak{G_2,c} = \left( \frac{X_1+ X_3}{X_2+X_3} + \frac{X_1+ X_2}{X_2+X_3}\right)\brak{F,c} = \brak{F,c}.
\end{align*}
In the above computation we assumed that the coloring $c$ was given by the movie depicted earlier. Modifying $c$ by permuting the colors changes the indices of $X$ in the computation but not the final result $\brak{G_1,c} + \brak{G_2,c} = \brak{F,c}$. 

For $c \in \adm_{v}(F) \simeq \adm_{v}(G_1)$ (resp. $c \in \adm_{h}(F) \simeq \adm_{h}(G_2)$) we have:
\begin{align*}
\begin{cases}
	P(F,c) = P(G_1,c) \quad \textrm{(resp. $P(F,c) = P(G_2,c)$)},\\
    Q(F,c) = Q(G_1,c) \quad \textrm{(resp. $Q(F,c) = Q(G_2,c)$)}.
\end{cases}
\end{align*}
This gives:
\begin{align*}
\brak{G_1,c}=\brak{F,c} \quad \left( \textrm{resp. \,}\brak{G_2,c} = \brak{F,c} \right) . 
\end{align*}

For $c \in \adm_{m_2}(G_1) \simeq \adm_{m_2}(G_2)$ we have
\[ P(G_1,c) = P(G_2,c), \quad 
    Q(G_1,c) = Q(G_2,c), \]
so that 
$\brak{G_1,c} + \brak{G_2,c} = 0.$

We sum up the case-by-case study:
\begin{align*}
\begin{cases}
 \brak{F,c} = \brak{G_1, c} + \brak{G_2, c} &\textrm{for $c \in \adm_m(F)\simeq \adm_{m_1}(G_1)\simeq \adm_{m_1}(G_2)$,}\\
 \brak{F,c} = \brak{G_1, c} &\textrm{for $c \in \adm_v(F) \simeq \adm_v(G_1)$,}\\
 \brak{F,c} = \brak{G_2, c} &\textrm{for $c \in \adm_h(F) \simeq \adm_h(G_2)$,}\\
  \brak{G_1, c} + \brak{G_2, c} = 0 &\textrm{for $c \in \adm_{m_2}(G_1) \simeq \adm_{m_2}(G_2).$}
\end{cases}
\end{align*}
This gives
	\brak{F} = \brak{G_1} + \brak{G_2}.
\end{proof}

\begin{prop}[Trivalent bubble relation]\label{prop:trivalentbubble}
 The following local identity holds:

\begin{align*}
\brak{\NB{\tikz[scale=0.5]{
\tdplotsetmaincoords{80}{130}
\begin{scope}[scale = 1.5, tdplot_main_coords]
  \tikzset{yxplane/.style={canvas is xy plane at z=#1}}
  \begin{scope}[yxplane=1]
    \coordinate (OT) at ( 0, 0);
    \coordinate (AT) at ({cos(  0)}, {sin(  0)});
    \coordinate (BT) at ({cos(120)}, {sin(120)});
    \coordinate (CT) at ({cos(240)}, {sin(240)});
    \draw (OT) -- (AT);
    \draw (OT) -- (BT);
    \draw (OT) -- (CT);
  \end{scope}
  \begin{scope}[yxplane=-1]
    \coordinate (OB) at (0, 0);
    \coordinate (AB) at ({cos(  0)}, {sin(  0)});
    \coordinate (BB) at ({cos(120)}, {sin(120)});
    \coordinate (CB) at ({cos(240)}, {sin(240)});
    \draw (OB) -- (AB);
    \draw (OB) -- (BB);
    \draw (OB) -- (CB);
  \end{scope}
  \draw (OB) -- (OT);
  \draw (AB) -- (AT);
  \draw (BB) -- (BT);
  \draw (CB) -- (CT);
\end{scope}
}}}
=
\brak{\NB{\tikz[scale=0.5]{
\tdplotsetmaincoords{80}{130}
\begin{scope}[scale = 1.5, tdplot_main_coords]
  \tikzset{yxplane/.style={canvas is xy plane at z=#1}}
  \begin{scope}[yxplane=1]
    \coordinate (OT) at ( 0, 0);
    \coordinate (AT) at ({cos(  0)}, {sin(  0)});
    \coordinate (BT) at ({cos(120)}, {sin(120)});
    \coordinate (CT) at ({cos(240)}, {sin(240)});
    \draw (OT) -- (AT);
    \draw (OT) -- (BT);
    \draw (OT) -- (CT);
  \end{scope}
  \begin{scope}[yxplane=0]
    \coordinate (OB) at (0, 0);
    \coordinate (AM) at ({0.6*cos(  0)}, {0.6*sin(  0)});
    \coordinate (BM) at ({0.6*cos(120)}, {0.6*sin(120)});
    \coordinate (CM) at ({0.6*cos(240)}, {0.6*sin(240)});
  \end{scope}
  \begin{scope}[yxplane=-1]
    \coordinate (OB) at (0, 0);
    \coordinate (AB) at ({cos(  0)}, {sin(  0)});
    \coordinate (BB) at ({cos(120)}, {sin(120)});
    \coordinate (CB) at ({cos(240)}, {sin(240)});
    \draw (OB) -- (AB);
    \draw (OB) -- (BB);
    \draw (OB) -- (CB);
  \end{scope}
  \coordinate (M1) at (0, 0, -0.7);
  \coordinate (M2) at (0, 0,  0.7);
  \draw (OB) -- (M1);
  \draw (OT) -- (M2);
  \draw (M1)  .. controls (AM) and (AM) .. (M2);
  \draw (M1)  .. controls (BM) and (BM) .. (M2);
  \draw (M1)  .. controls (CM) and (CM) .. (M2);
  \draw (AB) -- (AT);
  \draw (BB) -- (BT);
  \draw (CB) -- (CT);
\end{scope}
}}}
\end{align*}
\end{prop}

\begin{proof}
	Let us denote by $F$ the foam on the left-hand side and by $G$ the foam on the right-hand side. There is a canonical bijection between colorings of $F$ and  $G$. Denoting by $c'$ the coloring of $G$ that corresponds to a coloring $c$ of $F$, we have:
    \begin{align*}
    		P(F,c) = P(G,c'), \quad
            Q(F,c) = Q(G,c'),
    \end{align*}
so that $\brak{F,c} =\brak{G,c'}$ and 
$\brak{F} =\brak{G}$.
\end{proof}
The same argument gives the following proposition.
\begin{prop}[Vertices removal relation]\label{prop:verticesremoval}
 The following local identity holds:
 \begin{align*}
 \brak{\NB{\tikz[scale=0.5]{\tdplotsetmaincoords{75}{135}
\begin{scope}[scale = 1.5, tdplot_main_coords]
  \tikzset{yxplane/.style={canvas is xy plane at z=#1}}
  \begin{scope}[yxplane=1]
    \coordinate (AT) at ({cos(  0)}, {sin(  0)});
    \coordinate (BT) at ({cos(120)}, {sin(120)});
    \coordinate (CT) at ({cos(240)}, {sin(240)});
    \coordinate (aT) at ({0.5*cos(  0)}, {0.5*sin(  0)});
    \coordinate (bT) at ({0.5*cos(120)}, {0.5*sin(120)});
    \coordinate (cT) at ({0.5*cos(240)}, {0.5*sin(240)});
    \draw (aT) -- (bT);
    \draw (bT) -- (cT);
    \draw (cT) -- (aT);
    \draw (aT) -- (AT);
    \draw (bT) -- (BT);
    \draw (cT) -- (CT);
  \end{scope}
  \begin{scope}[yxplane=-1]
    \coordinate (AB) at ({cos(  0)}, {sin(  0)});
    \coordinate (BB) at ({cos(120)}, {sin(120)});
    \coordinate (CB) at ({cos(240)}, {sin(240)});
    \coordinate (aB) at ({0.5*cos(  0)}, {0.5*sin(  0)});
    \coordinate (bB) at ({0.5*cos(120)}, {0.5*sin(120)});
    \coordinate (cB) at ({0.5*cos(240)}, {0.5*sin(240)});
    \draw (aB) -- (bB);
    \draw (bB) -- (cB);
    \draw (cB) -- (aB);
    \draw (aB) -- (AB);
    \draw (bB) -- (BB);
    \draw (cB) -- (CB);
  \end{scope}
  \draw (AB) -- (AT);
  \draw (BB) -- (BT);
  \draw (CB) -- (CT);
  \draw (aB) -- (aT);
  \draw (bB) -- (bT);
  \draw (cB) -- (cT);
\end{scope}
}}}= 
 \brak{\NB{\tikz[scale=0.5]{\tdplotsetmaincoords{75}{135}
\begin{scope}[scale = 1.5, tdplot_main_coords]
  \tikzset{yxplane/.style={canvas is xy plane at z=#1}}
  \begin{scope}[yxplane=1]
    \coordinate (AT) at ({cos(  0)}, {sin(  0)});
    \coordinate (BT) at ({cos(120)}, {sin(120)});
    \coordinate (CT) at ({cos(240)}, {sin(240)});
    \coordinate (aT) at ({0.5*cos(  0)}, {0.5*sin(  0)});
    \coordinate (bT) at ({0.5*cos(120)}, {0.5*sin(120)});
    \coordinate (cT) at ({0.5*cos(240)}, {0.5*sin(240)});
    \draw (aT) -- (bT);
    \draw (bT) -- (cT);
    \draw (cT) -- (aT);
    \draw (aT) -- (AT);
    \draw (bT) -- (BT);
    \draw (cT) -- (CT);
  \end{scope}
  \begin{scope}[yxplane=-1]
    \coordinate (AB) at ({cos(  0)}, {sin(  0)});
    \coordinate (BB) at ({cos(120)}, {sin(120)});
    \coordinate (CB) at ({cos(240)}, {sin(240)});
    \coordinate (aB) at ({0.5*cos(  0)}, {0.5*sin(  0)});
    \coordinate (bB) at ({0.5*cos(120)}, {0.5*sin(120)});
    \coordinate (cB) at ({0.5*cos(240)}, {0.5*sin(240)});
    \draw (aB) -- (bB);
    \draw (bB) -- (cB);
    \draw (cB) -- (aB);
    \draw (aB) -- (AB);
    \draw (bB) -- (BB);
    \draw (cB) -- (CB);
  \end{scope}
  \coordinate (M1) at (0,0, -0.3);
  \coordinate (M2) at (0,0,  0.3);
  \draw (M1) -- (M2);
  \draw (AB) -- (AT);
  \draw (BB) -- (BT);
  \draw (CB) -- (CT);
  \draw (aB) -- (M1);
  \draw (bB) -- (M1);
  \draw (cB) -- (M1);
  \draw (aT) -- (M2);
  \draw (bT) -- (M2);
  \draw (cT) -- (M2);
\end{scope}
}}}.
 \end{align*}
\end{prop}

Similar computations establish the next two propositions. 

\begin{prop}\label{prop:IHI}
 The following local identity holds:
 \begin{align*}
 \brak{\NB{\tikz[scale=0.5]{\tdplotsetmaincoords{70}{20}
\begin{scope}[scale = 1.5, tdplot_main_coords]
  \tikzset{yxplane/.style={canvas is xy plane at z=#1}}
  \begin{scope}[yxplane=1]
    \coordinate (AT) at ({cos(  45)}, {sin(  45)});
    \coordinate (BT) at ({cos(135)}, {sin(135)});
    \coordinate (CT) at ({cos(225)}, {sin(225)});
    \coordinate (DT) at ({cos(315)}, {sin(315)});
    \coordinate (aT) at (0 ,  0.3);
    \coordinate (bT) at (0 , -0.3);
    \draw (aT) -- (AT);
    \draw (aT) -- (BT);
    \draw (aT) -- (bT);
    \draw (CT) -- (bT);
    \draw (DT) -- (bT);
  \end{scope}
  \begin{scope}[yxplane=-1]
    \coordinate (AB) at ({cos(  45)}, {sin(  45)});
    \coordinate (BB) at ({cos(135)}, {sin(135)});
    \coordinate (CB) at ({cos(225)}, {sin(225)});
    \coordinate (DB) at ({cos(315)}, {sin(315)});
    \coordinate (aB) at (0 ,  0.3);
    \coordinate (bB) at (0 , -0.3);
    \draw (aB) -- (AB);
    \draw (aB) -- (BB);
    \draw (aB) -- (bB);
    \draw (CB) -- (bB);
    \draw (DB) -- (bB);
  \end{scope}
  \begin{scope}[yxplane=0, densely dotted]
    \coordinate (AM) at ({cos(  45)}, {sin(  45)});
    \coordinate (BM) at ({cos(135)}, {sin(135)});
    \coordinate (CM) at ({cos(225)}, {sin(225)});
    \coordinate (DM) at ({cos(315)}, {sin(315)});
    \coordinate (aM) at ( 0.3,0);
    \coordinate (bM) at (-0.3,0);
    \draw (aM) -- (AM);
    \draw (aM) -- (DM);
    \draw (aM) -- (bM);
    \draw (CM) -- (bM);
    \draw (BM) -- (bM);
  \end{scope}
  \coordinate (OT) at (0,0, 0.5);
  \coordinate (OB) at (0,0,-0.5);
  \draw (AB) -- (AT);
  \draw (BB) -- (BT);
  \draw (CB) -- (CT);
  \draw (DB) -- (DT);
  \draw (aB) -- (OB);
  \draw (bB) -- (OB);
  \draw (aT) -- (OT);
  \draw (bT) -- (OT);
  \draw (aM) -- (OB);
  \draw (bM) -- (OB);
  \draw (aM) -- (OT);
  \draw (bM) -- (OT);
\end{scope}
}}}= 
 \brak{\NB{\tikz[scale=0.5]{\tdplotsetmaincoords{70}{20}
\begin{scope}[scale = 1.5, tdplot_main_coords]
  \tikzset{yxplane/.style={canvas is xy plane at z=#1}}
  \begin{scope}[yxplane=1]
    \coordinate (AT) at ({cos(  45)}, {sin(  45)});
    \coordinate (BT) at ({cos(135)}, {sin(135)});
    \coordinate (CT) at ({cos(225)}, {sin(225)});
    \coordinate (DT) at ({cos(315)}, {sin(315)});
    \coordinate (aT) at (0 ,  0.3);
    \coordinate (bT) at (0 , -0.3);
    \draw (aT) -- (AT);
    \draw (aT) -- (BT);
    \draw (aT) -- (bT);
    \draw (CT) -- (bT);
    \draw (DT) -- (bT);
  \end{scope}
  \begin{scope}[yxplane=-1]
    \coordinate (AB) at ({cos(  45)}, {sin(  45)});
    \coordinate (BB) at ({cos(135)}, {sin(135)});
    \coordinate (CB) at ({cos(225)}, {sin(225)});
    \coordinate (DB) at ({cos(315)}, {sin(315)});
    \coordinate (aB) at (0 ,  0.3);
    \coordinate (bB) at (0 , -0.3);
    \draw (aB) -- (AB);
    \draw (aB) -- (BB);
    \draw (aB) -- (bB);
    \draw (CB) -- (bB);
    \draw (DB) -- (bB);
  \end{scope}
  \draw (AB) -- (AT);
  \draw (BB) -- (BT);
  \draw (CB) -- (CT);
  \draw (DB) -- (DT);
  \draw (aB) -- (aT);
  \draw (bB) -- (bT);
  \fill ($(BT)!0.5!(bT)$) circle (0.5mm);
\end{scope}
}}}+
 \brak{\NB{\tikz[scale=0.5]{\tdplotsetmaincoords{70}{20}
\begin{scope}[scale = 1.5, tdplot_main_coords]
  \tikzset{yxplane/.style={canvas is xy plane at z=#1}}
  \begin{scope}[yxplane=1]
    \coordinate (AT) at ({cos(  45)}, {sin(  45)});
    \coordinate (BT) at ({cos(135)}, {sin(135)});
    \coordinate (CT) at ({cos(225)}, {sin(225)});
    \coordinate (DT) at ({cos(315)}, {sin(315)});
    \coordinate (aT) at (0 ,  0.3);
    \coordinate (bT) at (0 , -0.3);
    \draw (aT) -- (AT);
    \draw (aT) -- (BT);
    \draw (aT) -- (bT);
    \draw (CT) -- (bT);
    \draw (DT) -- (bT);
  \end{scope}
  \begin{scope}[yxplane=-1]
    \coordinate (AB) at ({cos(  45)}, {sin(  45)});
    \coordinate (BB) at ({cos(135)}, {sin(135)});
    \coordinate (CB) at ({cos(225)}, {sin(225)});
    \coordinate (DB) at ({cos(315)}, {sin(315)});
    \coordinate (aB) at (0 ,  0.3);
    \coordinate (bB) at (0 , -0.3);
    \draw (aB) -- (AB);
    \draw (aB) -- (BB);
    \draw (aB) -- (bB);
    \draw (CB) -- (bB);
    \draw (DB) -- (bB);
  \end{scope}
  \draw (AB) -- (AT);
  \draw (BB) -- (BT);
  \draw (CB) -- (CT);
  \draw (DB) -- (DT);
  \draw (aB) -- (aT);
  \draw (bB) -- (bT);
  \fill ($(CT)!0.15!(bB)$) circle (0.5mm);
\end{scope}
}}}.
 \end{align*}
\end{prop}

\begin{prop}\label{prop:unzipzip}
 The following local identity holds:
 \begin{align*}
 \brak{\NB{\tikz[scale=0.5]{\tdplotsetmaincoords{70}{20}
\begin{scope}[scale = 1.5, tdplot_main_coords]
  \tikzset{yxplane/.style={canvas is xy plane at z=#1}}
  \begin{scope}[yxplane=1]
    \coordinate (AT) at ({cos(  45)}, {sin(  45)});
    \coordinate (BT) at ({cos(135)}, {sin(135)});
    \coordinate (CT) at ({cos(225)}, {sin(225)});
    \coordinate (DT) at ({cos(315)}, {sin(315)});
    \coordinate (aT) at (0 ,  0.3);
    \coordinate (bT) at (0 , -0.3);
    \draw (aT) -- (AT);
    \draw (aT) -- (BT);
    \draw (aT) -- (bT);
    \draw (CT) -- (bT);
    \draw (DT) -- (bT);
  \end{scope}
  \begin{scope}[yxplane=-1]
    \coordinate (AB) at ({cos(  45)}, {sin(  45)});
    \coordinate (BB) at ({cos(135)}, {sin(135)});
    \coordinate (CB) at ({cos(225)}, {sin(225)});
    \coordinate (DB) at ({cos(315)}, {sin(315)});
    \coordinate (aB) at (0 ,  0.3);
    \coordinate (bB) at (0 , -0.3);
    \draw (aB) -- (AB);
    \draw (aB) -- (BB);
    \draw (aB) -- (bB);
    \draw (CB) -- (bB);
    \draw (DB) -- (bB);
  \end{scope}
  \begin{scope}[yxplane=0, densely dotted]
    \coordinate (AM) at ({cos(  45)}, {sin(  45)});
    \coordinate (BM) at ({cos(135)}, {sin(135)});
    \coordinate (CM) at ({cos(225)}, {sin(225)});
    \coordinate (DM) at ({cos(315)}, {sin(315)});
    \coordinate (aM) at (0, 0.3);
    \coordinate (bM) at (0, -0.3);
    \draw (AM) .. controls (aM) and (bM) .. (DM);
	\draw (BM) .. controls (aM) and (bM) .. (CM);
\end{scope}
  \coordinate (OT) at (0,0, 0.3);
  \coordinate (OB) at (0,0,-0.3);
  \draw (aT) .. controls +(0,0,-0.3) and +(0,0.3,0) .. (OT) .. controls +(0,-0.3, 0) and + (0,0,-0.3) .. (bT);
  \draw (aB) .. controls +(0,0, 0.3) and +(0,0.3,0) .. (OB) .. controls +(0,-0.3, 0) and + (0,0, 0.3) .. (bB);
\draw (AB) -- (AT);
  \draw (BB) -- (BT);
  \draw (CB) -- (CT);
  \draw (DB) -- (DT);
\end{scope}
}}}= 
 \brak{\NB{\tikz[scale=0.5]{\tdplotsetmaincoords{70}{20}
\begin{scope}[scale = 1.5, tdplot_main_coords]
  \tikzset{yxplane/.style={canvas is xy plane at z=#1}}
  \begin{scope}[yxplane=1]
    \coordinate (AT) at ({cos(  45)}, {sin(  45)});
    \coordinate (BT) at ({cos(135)}, {sin(135)});
    \coordinate (CT) at ({cos(225)}, {sin(225)});
    \coordinate (DT) at ({cos(315)}, {sin(315)});
    \coordinate (aT) at (0 ,  0.3);
    \coordinate (bT) at (0 , -0.3);
    \draw (aT) -- (AT);
    \draw (aT) -- (BT);
    \draw (aT) -- (bT);
    \draw (CT) -- (bT);
    \draw (DT) -- (bT);
  \end{scope}
  \begin{scope}[yxplane=-1]
    \coordinate (AB) at ({cos(  45)}, {sin(  45)});
    \coordinate (BB) at ({cos(135)}, {sin(135)});
    \coordinate (CB) at ({cos(225)}, {sin(225)});
    \coordinate (DB) at ({cos(315)}, {sin(315)});
    \coordinate (aB) at (0 ,  0.3);
    \coordinate (bB) at (0 , -0.3);
    \draw (aB) -- (AB);
    \draw (aB) -- (BB);
    \draw (aB) -- (bB);
    \draw (CB) -- (bB);
    \draw (DB) -- (bB);
  \end{scope}
  \draw (AB) -- (AT);
  \draw (BB) -- (BT);
  \draw (CB) -- (CT);
  \draw (DB) -- (DT);
  \draw (aB) -- (aT);
  \draw (bB) -- (bT);
  \fill ($(AT)!0.2!(aB)$, 0) circle (0.5mm);
\end{scope}
}}}+
 \brak{\NB{\tikz[scale=0.5]{}}}.
 \end{align*}
\end{prop}

\begin{prop}[Handle removal]\label{prop:handleremoval}
 The following local identity holds:
 \begin{align*}
 \brak{\!\!\NB{\tikz[scale=0.8]{\begin{scope}
  \coordinate (A) at (0,0);
  \coordinate (B) at (2,0);
  \coordinate (C) at (2.5,0.5);
  \coordinate (D) at (0.5,0.5);
  \coordinate (d) at (1, 0.5);
  \coordinate (c) at (1.5, 0.5);
  \coordinate (m1) at (0.5, 0.25);
  \coordinate (m2) at (2, 0.25);
  \coordinate (T) at (1.25, 1.25);
  \coordinate (M1) at (0.9, 1);
  \coordinate (M2) at (1.6, 1);
  \draw[dotted] (c) -- (d);
  \draw (d) -- (D) -- (A) -- (B) -- (C) -- (c);
  \draw (m1)  .. controls +(0.2,0) and +(0, -0.2) .. (d)  .. controls +(0,0.2) and +(-1.5, 0) .. (T) 
   .. controls +(1.5,0) and +(0, 0.2) .. (c)  .. controls +(0,-0.2) and +(-0.2, 0) .. (m2);
  \draw (M1)  .. controls +(0.2,-0.1) and +(-0.2, -0.1) .. (M2) coordinate [pos =0.2] (n1) coordinate [pos= 0.8] (n2);
  \draw (n1)  .. controls +(0.16, 0.08) and +(-0.16, 0.08) .. (n2);
\end{scope}}}\!\!}= 
 \brak{\decorateddisk{\bullet\,\bullet}{0.7}{0.8}}
 +E_2
 \brak{\decorateddisk{ }{0.7}{0.8}}.
 \end{align*}
\end{prop}

\begin{proof}
The relation follows immediately from the neck-cutting relation.
\end{proof}

This proposition shows that a handle on a facet can be removed at the cost of multiplying  polynomial floating on the facet by $\bullet^2+E_2$. 

\begin{prop}[Bubble removal] \label{prop:bubbleremoval} Let  $F_{n,m}$ be obtained from an admissible pre-foam $F$ by adding a bubble that flows on a facet of $F$, with $n$ and $m$ dots, respectively,  
on the new facets, for $n,m\le 2$. 
Let $F_n$ be foam $F$ with $n$ dots added to the same facet of $F$. Then 
\begin{align*}
 \brak{F_{n,n}} & =  0, \ \ n\ge 0,   \\
  \brak{F_{0,1}} & =  \brak{F},  \\
 \brak{F_{0,2}} & =   \brak{F_1} + E_1 \brak{F},  \\
  \brak{F_{1,2}} & =  \brak{F_2}+ E_1 \brak{F_1} + E_2 \brak{F}, 
  \end{align*}
  where  corresponding facets of pre-foams $F_n$ and $F_{n,m}$ are depicted in Figure~\ref{fig:FiFnm}.
  \begin{figure}[ht]
  	\centering	
    \tikz{
    	\begin{scope}
    	\begin{scope}[scale = 0.5]
\draw (0,0) circle (1.5 and 0.3);
\draw (1.5, 0) arc (0:180:1.5);
\draw (1.5, 0) arc (0:-180:1.5);
\draw[white, dashed, thick] (1.5, 0) arc (0:-19:1.5);
\draw[white, dashed, thick] (-1.5, 0) arc (180:199:1.5);
\draw (-2,0.5) -- (3,0.5) -- (2, -0.5) -- (-3, -0.5) -- cycle;
\draw[white, thick, dashed] (160.3:1.49)-- (19.7:1.49);
\fill (0,-1) circle (1mm) node [scale = 0.7, right] {$n$};
\fill (0,1) circle (1mm) node [scale = 0.7, right] {$m$};
\end{scope}
       	\end{scope}
    	\begin{scope}[xshift = 5cm, yshift = -0.25cm]
    	\input{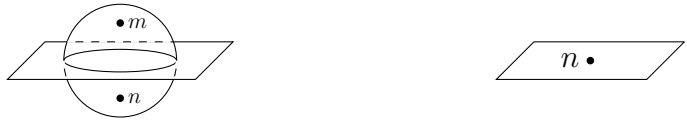}
       	\end{scope}
        }
    \caption{On the left, pre-foam $F_{n,m}$; on the right, pre-foam $F_n$.}
    \label{fig:FiFnm}
  \end{figure}
\end{prop}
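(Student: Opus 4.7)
The plan is to set up a direct bijection between admissible colorings of $F_{n,m}$ and pairs consisting of an admissible coloring $c$ of $F$ together with one of the two possible colorings of the bubble, then to compute the resulting change in $P$ and $Q$.

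First I would check the coloring bijection: at the new circular seam introduced by the bubble, the original facet (say of color $i$ under $c$) meets the two new disk facets, so pre-admissibility forces these two disks to receive the two colors $\{j,k\}=\{1,2,3\}\setminus\{i\}$ in one of the two possible orders. Admissibility is preserved in both directions, because the surfaces $F_{ab}(c')$ for $c'$ extending $c$ differ from $F_{ab}(c)$ only by removing/adding an open disk from the facet of color $i$ or by adjoining a disjoint 2-sphere in the case $\{a,b\}=\{j,k\}$, and orientability is unaffected by these operations.

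Next I would compute the changes in $P$ and $Q$ for a fixed $c\in\adm(F)$ with the distinguished facet colored $i$. Writing $c'_{jk}$ (resp.\ $c'_{kj}$) for the extension in which the $n$-dotted disk is colored $j$ (resp.\ $k$), we obtain
\[ P(F_{n,m},c'_{jk})=X_j^n X_k^m\,P(F,c),\qquad P(F_{n,m},c'_{kj})=X_k^n X_j^m\,P(F,c),\]
while the only surface whose Euler characteristic changes is $F_{jk}$, to which a 2-sphere has been adjoined, so $Q(F_{n,m},c')=(X_j+X_k)\,Q(F,c)$ in both cases. Summing the two contributions,
\[ \brak{F_{n,m},c'_{jk}}+\brak{F_{n,m},c'_{kj}}=\brak{F,c}\cdot \frac{X_j^n X_k^m+X_k^n X_j^m}{X_j+X_k}.\]

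The final step is to evaluate the symmetric ratio for the four cases of interest, using characteristic $2$. For $n=m$ the numerator is $2X_j^n X_k^n=0$, giving $\brak{F_{n,n}}=0$. For $(n,m)=(0,1)$ the ratio equals $1$. For $(0,2)$ it equals $X_j+X_k=E_1+X_i$, and for $(1,2)$ it equals $X_j X_k=E_2+E_1 X_i+X_i^2$ (from $E_1=X_i+X_j+X_k$ and $E_2=X_i(X_j+X_k)+X_j X_k$ in characteristic $2$). Since multiplying $\brak{F,c}$ by $X_i^r$ exactly produces $\brak{F_r,c}$ (adding $r$ dots to the facet colored $i$), summing over $c\in\adm(F)$ converts these polynomial identities into the four stated equalities. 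The only point requiring care is the characteristic-$2$ simplification $(X_j+X_k)^2=X_j^2+X_k^2$ needed for the $(0,2)$ case and the factorization $X_j X_k^2+X_k X_j^2=X_j X_k(X_j+X_k)$ needed for $(1,2)$; both are routine.
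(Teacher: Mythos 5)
Your proof is correct, and it is precisely the direct coloring computation that the paper leaves to the reader (its proof of Proposition~\ref{prop:bubbleremoval} is just ``Straightforward''), carried out in the same style as the paper's proofs of the neck-cutting and digon relations: the two extensions of a coloring $c$ contribute $\brak{F,c}\,(X_j^nX_k^m+X_k^nX_j^m)/(X_j+X_k)$, and the characteristic-two simplifications give the four identities. The bookkeeping of $P$, $Q$, and orientability under adding the bubble is accurate, so nothing is missing.
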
 
\begin{proof} Straightforward. 
\end{proof}

\begin{prop}\label{prop:handleremoval2}
 The following local identity holds:
 \begin{align*}
 \brak{\!\!\NB{\tikz[scale=0.8]{\begin{scope}
  \coordinate (A) at (0,0);
  \coordinate (B) at (2,0);
  \coordinate (C) at (2.5,0.5);
  \coordinate (D) at (0.5,0.5);
  \coordinate (d) at (1, 0.5);
  \coordinate (c) at (1.5, 0.5);
  \coordinate (m1) at (0.5, 0.25);
  \coordinate (m2) at (2, 0.25);
  \coordinate (T) at (1.25, 1.25);
  \coordinate (M1) at (0.9, 1);
  \coordinate (M2) at (1.6, 1);
  \filldraw[fill = gray!50!white]  (c) arc (0:-180: 0.25cm and 0.08cm);
  \filldraw[densely dotted, fill = gray!50!white]  (c) arc (0:180: 0.25cm and 0.08cm);
  \draw (d) -- (D) -- (A) -- (B) -- (C) -- (c);
  \draw (m1)  .. controls +(0.2,0) and +(0, -0.2) .. (d)  .. controls +(0,0.2) and +(-1.5, 0) .. (T) 
   .. controls +(1.5,0) and +(0, 0.2) .. (c)  .. controls +(0,-0.2) and +(-0.2, 0) .. (m2);
  \draw (M1)  .. controls +(0.2,-0.1) and +(-0.2, -0.1) .. (M2) coordinate [pos =0.2] (n1) coordinate [pos= 0.8] (n2);
  \draw (n1)  .. controls +(0.16, 0.08) and +(-0.16, 0.08) .. (n2);
\end{scope}}}\!\!}=
 \brak{\decorateddisk{\bullet}{0.7}{0.8}}
+ E_1 
\brak{\decorateddisk{ }{0.7}{0.8}}.
 \end{align*}
\end{prop}

\begin{proof} Straightforward. 
\end{proof} 

\begin{prop}[Dot migration] \label{prop:dotmigration}Let $F$ be an admissible pre-foam with a seam edge. 
Label three portions of facets of $F$ bounding the edge by $1,2,3$ and 
denote by $F_{(n_1,n_2,n_3)}$ for $i\in \{1,2,3\}$ and $n_i\ge 0$ the  
pre-foam given by adding $n_i$ dots to the facet portion of $F$
labeled $i$, see Figure~\ref{fig:vertexIdots}. Then 
\begin{align*}
\brak{F_{(1,0,0)}} + \brak{F_{(0,1,0)}} + \brak{F_{(0,0,1)}} & =  
E_1 \brak{F}, \\
\brak{F_{(2,0,0)}} + \brak{F_{(0,2,0)}} + \brak{F_{(0,0,2)}} & =  
E_1^2 \brak{F}, \\
\brak{F_{(1,1,0)}} + \brak{F_{(1,0,1)}} + \brak{F_{(0,1,1)}}& =  E_2 \brak{F}, \\
\brak{F_{(1,1,1)}} & =  E_3 \brak{F}. 
\end{align*}
The first relation is depicted diagrammatically in Figure~\ref{fig:singleMigration}. 
\begin{figure}[ht]
\tikz[scale=0.7]{\input{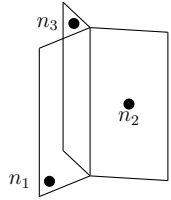}}
\caption{Pre-foam $F_{(n_1, n_2, n_3)}$.}\label{fig:vertexIdots}
\end{figure}

\begin{figure}[ht]
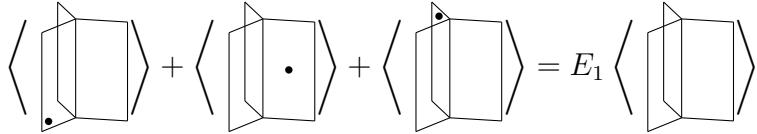

\[
\brak{\vertexIdottedonezerozero[0.7]} + 
\brak{\vertexIdottedzeroonezero[0.7]} + 
\brak{\vertexIdottedzerozeroone[0.7]} = E_1 \brak{\vertexInodot[0.7]}
\]
\caption{One of the dot migration relations}
\label{fig:singleMigration}
\end{figure}

\end{prop}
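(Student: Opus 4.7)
The plan is to verify all four identities coloring-by-coloring, using that adding dots affects only the monomial $P(F,c)$ in the evaluation and not the surfaces $F_{ij}(c)$ or the set of admissible colorings. Fix an admissible coloring $c$ of $F$. Since $c$ assigns colors to facets and is unaffected by dot placement, it canonically extends to an admissible coloring of every $F_{(n_1,n_2,n_3)}$, still denoted $c$, and every admissible coloring of $F_{(n_1,n_2,n_3)}$ arises this way. Thus $\adm(F)$ is in natural bijection with $\adm(F_{(n_1,n_2,n_3)})$ for every triple, which reduces the task to a per-coloring identity.

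Let $f_1, f_2, f_3$ denote the facets of $F$ meeting along the given seam edge, so that the three portions that receive the added dots lie on $f_1, f_2, f_3$ respectively. The pre-admissibility of $c$ forces $\{c(f_1), c(f_2), c(f_3)\} = \{1,2,3\}$ as sets. From the definition (\ref{eq:P}), adding $n_i$ dots to the portion labeled $i$ multiplies $P(F,c)$ by $X_{c(f_i)}^{n_i}$, so
\begin{align*}
P(F_{(n_1,n_2,n_3)}, c) \ = \ X_{c(f_1)}^{n_1} X_{c(f_2)}^{n_2} X_{c(f_3)}^{n_3} \ P(F,c).
\end{align*}
From (\ref{eq:Q}), the surfaces $F_{ij}(c)$ and hence $Q(F,c)$ are unchanged by adding dots, so $Q(F_{(n_1,n_2,n_3)}, c) = Q(F, c)$. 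Dividing yields $\brak{F_{(n_1,n_2,n_3)}, c} = X_{c(f_1)}^{n_1} X_{c(f_2)}^{n_2} X_{c(f_3)}^{n_3} \brak{F,c}$.

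Summing the four combinations in the statement over the three facets, and using that $(c(f_1), c(f_2), c(f_3))$ is a permutation of $(1,2,3)$, the $X$-coefficients simplify to the elementary symmetric functions: $X_1 + X_2 + X_3 = E_1$; $X_1^2 + X_2^2 + X_3^2 = E_1^2 - 2E_2 = E_1^2$ in characteristic two; $X_1 X_2 + X_1 X_3 + X_2 X_3 = E_2$; and $X_1 X_2 X_3 = E_3$. Each identity therefore holds coloring-by-coloring, and summing over $c \in \adm(F)$ gives the four relations.

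There is no real obstacle here, only bookkeeping. The two facts to check carefully are that the bicolored surfaces $F_{ij}(c)$ are independent of dot placement (immediate from the definition, since dots are interior points of facets and do not affect the topology) and that the coloring of the three relevant facet portions is, as a set, exactly $\{1,2,3\}$ (which is the pre-admissibility condition at a seam edge). The characteristic-two identity $X_1^2 + X_2^2 + X_3^2 = E_1^2$ is the only place where the base field plays a role.
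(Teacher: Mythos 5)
Your proof is correct and is exactly the direct computation the paper has in mind (its proof of this proposition is simply stated as ``Direct computation''): dots change only $P(F,c)$, the three facet portions at a seam carry the three distinct colors, and the per-coloring sums are the elementary symmetric functions, with $X_1^2+X_2^2+X_3^2=E_1^2$ using characteristic two. Nothing further is needed, beyond perhaps noting that if $\adm(F)=\emptyset$ all evaluations vanish and the identities hold trivially.
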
 

\begin{proof} Direct computation. 
\end{proof} 

\begin{prop} \label{prop:3dots}
The following local identity holds:
 \begin{align*}
 \brak{\decorateddisk{\bullet\bullet\bullet}{0.7}{0.8}}
= 
E_1 \brak{\decorateddisk{\bullet\,\,\bullet}{0.7}{0.8}}
+ E_2\brak{\decorateddisk{\bullet}{0.7}{0.8}}
+
E_3
 \brak{\decorateddisk{ }{0.7}{0.8}}.
\end{align*}
\end{prop}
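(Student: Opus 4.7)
The plan is to prove this identity directly from the pre-foam evaluation formula~\eqref{eq:eval}, by comparing the summands coloring by coloring. Observe that the four pre-foams appearing in the identity differ only by the number of dots placed on a single facet. Consequently, there is a canonical identification of admissible colorings of all four pre-foams; for a coloring $c$, denote by $i(c) \in \{1,2,3\}$ the color assigned to the distinguished facet. Writing $F_n$ for the pre-foam with $n$ dots on that facet, we have $P(F_n, c) = X_{i(c)}^n \cdot P(F_0, c)$, while $Q(F_n, c) = Q(F_0, c)$ is independent of $n$ since the surfaces $F_{ij}(c)$ do not see the dots.

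Thus it suffices to establish the scalar identity
\begin{align*}
X_{i(c)}^3 \;=\; E_1\, X_{i(c)}^2 \,+\, E_2\, X_{i(c)} \,+\, E_3
\end{align*}
inside $R'$, for each $i(c) \in \{1,2,3\}$. But each $X_i$ is a root of its characteristic polynomial $(X-X_1)(X-X_2)(X-X_3) = X^3 - E_1 X^2 + E_2 X - E_3$, and working over the field $\kk$ of characteristic two the signs are irrelevant, giving exactly the displayed identity. Dividing by $Q(F_0,c)$ and summing the resulting equality over all $c \in \adm(F_0)$ yields the claimed skein relation.

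The only step requiring any care is the verification that the combinatorics of colorings and of the surfaces $F_{ij}(c)$ are genuinely unchanged when dots are added or removed from a fixed facet; this is immediate from the definitions of $P$ and $Q$ in~\eqref{eq:P} and~\eqref{eq:Q}, since dots only enter $P$. There is no real obstacle here: the proposition is essentially the Cayley--Hamilton-type relation $X_i^3 + E_1 X_i^2 + E_2 X_i + E_3 = 0$ in characteristic two, packaged as a local relation on pre-foams.
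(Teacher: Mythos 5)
Your argument is correct and follows the same route as the paper's proof, which simply cites the identity $X_i^3 = E_1 X_i^2 + E_2 X_i + E_3$ (the characteristic-two Cayley--Hamilton relation for the polynomial $(X+X_1)(X+X_2)(X+X_3)$). You have merely supplied the bookkeeping the paper leaves implicit: dots enter $P(F,c)$ but not $Q(F,c)$, so the relation passes through the coloring-by-coloring sum defining $\brak{\bullet}$.
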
 

\begin{proof} This follows from the identity 
$X_i^3= E_1 X_i^2 + E_2 X_i + E_3$ that holds in the ring $R$ for 
$i=1,2,3$. 
\end{proof}

\subsection{Kronheimer--Mrowka evaluation for 
 foams} 
\label{sec:combinatorialKM}\

Assume there is a homomorphism 
$\psi: R \longrightarrow S$ of commutative rings. Define 
the $\psi$-evaluation of a closed pre-foam $F$ as 
$\psi(\brak{F}) \in S$, by composing the evaluation with values in 
$R$ with the homomorphism $\psi$. We denote $\psi$-evaluation 
by $\brak{F}_{\psi}$ and also call it $S$-evaluation and denote 
$\brak{F}_S$ when it's clear what $\psi$ is from the context. 
In this subsection we'll use homomorphism $\psi: R \lra \kk$ into 
the two-element ground field $\kk$ with $\psi(E_i)=0$ for $i=1,2,3$. Note that 
$\psi$ is a grading-preserving homomorphism, with $\kk$ necessarily in degree zero. 
   
Kronheimer and Mrowka \cite[Section 8.3]{KM1} suggest a combinatorial counterpart of their 
homology for planar graphs and conjecture that it is well-defined. Here we briefly review their approach and relate it to foam evaluation.

Let $F$ be a closed pre-foam. Kronheimer-Mrowka's  algorithm aims to define an element $J^\flat(F)\in \kk$ associated with $F$. 
\begin{enumerate}
\item\label{item:seamV} If $s(F)$ is not bipartite, set $J^\flat(F) =0$. If $s(F)$ is bipartite, choose a perfect matching of $s(F)$ and cancel all the seam vertices using the relation in Proposition~\ref{prop:verticesremoval}. Hall's Marriage Theorem implies that any regular 
bipartite graph has a perfect matching~\cite[Theorem 2.1.2 and Corollary 2.1.4]{Diestel}.   
This results in a new pre-foam $F'$. Set $J^\flat(F) = J^\flat(F')$. Suppose from now on that the pre-foam $F$ has no seam vertices.
\item\label{item:monodromy} Now $s(F)$ is a collection of circles. If the monodromy of the three facets along one of the  circles is non-trivial, set $J^\flat(F) = 0$. Suppose from now on that a regular neighborhood of $s(F)$ is homeomorphic to a disjoint union of $Y\times \SS^1$, where $Y$ is the standard tripod. 
\item\label{item:neckcut} For each component of the seam having a neighborhood of the form $\SS^1\times Y$, apply neck-cutting  \cite[Proposition 6.1]{KM1} on the three circles parallel to the seam in the three neighboring facets. Neck-cutting in this algorithm 
is the same as the specialization of neck-cutting 
in Proposition~\ref{prop:neckcutting} to the quotient ring $\kk$, where $E_1=E_2=E_3=0$.
In particular, there are only three terms on the right hand side of the relation in Proposition~\ref{prop:neckcutting}, since $E_1=E_2=0$. 
The neck-cutting relation reduces computing $J^\flat(F)$ to 
the case when $F$ is a collection of dotted theta pre-foams and  dotted closed surfaces.
\item\label{item:ThetaSurface} Set $J^\flat$ to be multiplicative under the disjoint union. For theta pre-foams $\theta(n_1,n_2,n_3)$  with $n_1$, $n_2$, and $n_3$ dots on the three disks and 
$n_1\ge n_2 \ge n_3$, define $J^{\flat}(\theta(2,1,0))=1$
and $J^{\flat}(\theta(n_1,n_2,n_3))=0$ for all other triples of non-increasing numbers. 
Define $J^{\flat}$ to be $1$ on a sphere with two dots and on a dotless torus and to be $0$ on all other closed connected surfaces
(that may carry dots). In particular, any unorientable surface  evaluates to $0$ under $J^{\flat}$. 
\end{enumerate}

\begin{conjecture}[{\cite[Conjecture 8.9]{KM1}}]
\label{conj:JflatKM}
The quantity $J^\flat(F)$ is well-defined: it does not depend on the choices made in step~(\ref{item:seamV}).
\end{conjecture}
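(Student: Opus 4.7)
The plan is to prove the conjecture in the restricted case where $F$ is a foam (a pre-foam embedded in $\R^3$); the authors have already indicated that the full conjecture fails for general pre-foams. The key idea is to show $J^\flat(F) = \brak{F}_\kk$ for every foam $F$, where $\brak{F}_\kk := \psi(\brak{F})$ and $\psi \colon R \to \kk$ is the grading-preserving homomorphism sending each $E_i$ to $0$. Since $\brak{F} \in R$ is canonically defined by Theorem~\ref{thm:evl-sym-pol}, its image $\brak{F}_\kk$ is uniquely determined, whence the output of the Kronheimer--Mrowka algorithm will automatically be independent of every choice made in step~(\ref{item:seamV}).

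The verification proceeds by showing that each algorithm step is compatible with $\brak{-}_\kk$. For step~(\ref{item:seamV}): if $s(F)$ is not bipartite, Proposition~\ref{prop:admis2bipartite} rules out admissible colorings, so $\brak{F}_\kk = 0 = J^\flat(F)$; otherwise each application of Proposition~\ref{prop:verticesremoval} preserves $\brak{-}$ on the nose, so $\brak{F}_\kk = \brak{F'}_\kk$ independently of the matching. For step~(\ref{item:monodromy}): a non-trivial monodromy around a seam circle forces two or three of the local facets to be globally identified, which obstructs every pre-admissible coloring and hence forces $\brak{F}_\kk = 0$. For step~(\ref{item:neckcut}): applying $\psi$ to the neck-cutting relation of Proposition~\ref{prop:neckcutting} annihilates the three terms carrying coefficients $E_1$ and $E_2$, leaving exactly the three-term relation used in the algorithm. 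Each of the local moves preserves embeddability in $\R^3$, so the intermediate pre-foam at every step remains a foam and Theorem~\ref{thm:evl-sym-pol} continues to apply.

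For step~(\ref{item:ThetaSurface}) I would verify the base-case identities between $\brak{-}_\kk$ and $J^\flat$ on dotted theta pre-foams and dotted closed surfaces; multiplicativity under disjoint union is immediate. The Schur-function formula from Example~\ref{exa:evaluation} gives $\brak{\theta(n_1,n_2,n_3)} = s_{n_1-2,n_2-1,n_3}(X_1,X_2,X_3)$, which has strictly positive degree unless $(n_1,n_2,n_3) = (2,1,0)$ up to $S_3$-permutation (in which case it equals $1$), so $\psi$ produces exactly the correct output. For $S^2$ with $n$ dots, $\brak{S^2_n} = h_{n-2}$ yields $\brak{S^2_n}_\kk = \delta_{n,2}$. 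For higher-genus closed orientable surfaces, the handle removal relation (Proposition~\ref{prop:handleremoval}) under $\psi$ drops its $E_2$ summand at each step and iteratively reduces to the spherical case, giving $\brak{\Sigma_g^n}_\kk = \delta_{n+2g,2}$; this picks out precisely the dotless torus and otherwise vanishes, matching $J^\flat$. Unorientable closed surfaces have no admissible coloring and so evaluate to $0$ in both settings.

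The main obstacle I anticipate is the bookkeeping for step~(\ref{item:ThetaSurface}): one must be sure that the base cases listed above exhaust every closed pre-foam the algorithm can output at this stage (a disjoint union of dotted theta pre-foams and dotted closed surfaces), and that the handle-removal reduction terminates at the right leaves. Once this compatibility is fully checked, the identity $J^\flat(F) = \brak{F}_\kk$ holds for every foam $F$, and the independence of $J^\flat$ on the choice of perfect matching follows immediately from the canonical nature of $\brak{F}$.
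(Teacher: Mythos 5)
Your proposal is correct, and its key idea is exactly the paper's: the conjecture as literally stated (for all pre-foams) is refuted in the paper by the example of Figure~\ref{fig:counterexample0}, and the positive statement you set out to prove is precisely Theorem~\ref{thm:JflatFoams}, established by identifying the algorithm's output with the canonical evaluation $\brak{F}_\kk$. The execution, however, differs. You simulate the algorithm directly: every rewriting step (vertex cancellation, the monodromy test, the three-term specialization of Proposition~\ref{prop:neckcutting}) is a relation satisfied by $\brak{\cdot}_\kk$, and since each local replacement is performed inside a ball, every intermediate object stays embedded in $\R^3$, so the local relations of Section~\ref{sec:rel-btwn-ev} (proved for admissible pre-foams) apply verbatim; one then only has to match terminal values on dotted theta foams and dotted closed surfaces, which you do. The paper instead proves Lemma~\ref{lem:emb-equal} by induction on the number of seam circles, and this route forces a separate treatment of degree-zero foams all of whose facets are annuli, where it needs the semi-orientation involution of Lemma~\ref{lem:collofann} to get $J^\flat=0$ and the fixed-point-free $S_3$-action of Lemma~\ref{lem:collofann2} to get $\brak{\cdot}_\kk=0$. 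Your route makes those two lemmas unnecessary in the embedded case (they are absorbed into the term-by-term comparison of the full neck-cutting expansion), at the cost of the extra generality the paper's lemmas provide for non-embeddable pre-foams (well-definedness and vanishing of $J^\flat$ for pre-foams of nonzero degree, with a non-orientable facet, or with only annular facets). Two points you should state explicitly to make the argument airtight, both also used implicitly by the paper: that cancelling a matched pair of seam vertices of a foam can be carried out inside a ball so the result is again a foam, and that after step~(\ref{item:seamV}) the algorithm involves no further choices (the parallel cut circles are disjoint, so the order of the neck-cuttings is immaterial), so that equality with the canonically defined $\brak{F}_\kk$ for every perfect matching is exactly what yields well-definedness.
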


Recall that we denote by $\brak{F}_{\kk}$ 
the image of $\brak{F}\in R$ under the ring homomorphism $R\lra \kk$ sending $E_1, E_2, E_3$ to $0$. This homomorphism kills
$R$ in all positive degrees, keeping only the ground field $\kk$, 
which is exactly the degree zero part of $R$. 

\begin{theorem}
\label{thm:JflatFoams}
If $F$ is embeddable in $\R^3$, then $J^\flat(F)$ is well-defined and equal to $\brak{F}_{\kk}$.
\end{theorem}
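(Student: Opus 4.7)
The plan is to trace through each step of the Kronheimer--Mrowka algorithm and verify that it preserves the specialization $\brak{\cdot}_{\kk}$, while showing that the terminal evaluations of dotted theta pre-foams and closed surfaces match those prescribed by $J^{\flat}$. Since $\brak{F}_{\kk}$ is unambiguously defined from $F$ with no choices involved, equality of the algorithm's output with $\brak{F}_{\kk}$ will simultaneously establish well-definedness of $J^{\flat}(F)$ and the claimed identity.

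For step~(\ref{item:seamV}), any foam embedded in $\R^3$ is admissible, so by Proposition~\ref{prop:admis2bipartite} either $s(F)$ is bipartite or $F$ admits no admissible coloring; in the latter case both $\brak{F}$ and $J^{\flat}(F)$ vanish. If $s(F)$ is bipartite then as a $4$-regular bipartite graph it admits a perfect matching, and successive applications of Proposition~\ref{prop:verticesremoval} along matched edges cancel the seam vertices in pairs while preserving $\brak{\cdot}$. For step~(\ref{item:monodromy}), a non-trivial monodromy around a seam circle forbids any pre-admissible coloring (the colors would not return to themselves after traversing the loop), so once more $\brak{F'} = 0 = J^{\flat}(F)$. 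For step~(\ref{item:neckcut}), the neck-cutting relation of Proposition~\ref{prop:neckcutting} specialized to $\kk$ (where $E_1 = E_2 = 0$) reduces to exactly three terms, matching the algorithm; cutting three circles parallel to each seam, one in each of its three adjacent facets, decouples a tubular neighborhood of the seam into a dotted theta pre-foam and leaves the remainder as a disjoint union of dotted closed surfaces.

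The base case in step~(\ref{item:ThetaSurface}) reduces to two explicit checks. For a theta pre-foam, Example~\ref{exa:evaluation}(\ref{it:theta-prefoam}) gives
\[
\brak{\Theta(n_1,n_2,n_3)} = s_{n_1-2,\, n_2-1,\, n_3}(X_1,X_2,X_3),
\]
which survives the specialization $E_i \mapsto 0$ exactly when $(n_1,n_2,n_3) = (2,1,0)$, in which case it equals $1 \in \kk$; all other dot distributions yield either a Schur function indexed by a non-partition (hence $0$) or one of strictly positive degree (which specializes to $0$). For closed orientable surfaces, only the two-dotted sphere (Corollary~\ref{cor:sphere-eval}) and the dotless torus produce non-zero degree-zero evaluations, the latter from a direct count of its three admissible monochromatic colorings; a handle-removal induction via Proposition~\ref{prop:handleremoval} confirms the vanishing for higher genus and for other dot counts. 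Unorientable surfaces have no admissible colorings and evaluate to $0$ on both sides.

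The main obstacle I anticipate is the geometric verification in step~(\ref{item:neckcut}): one must check carefully that cutting the three parallel circles around a trivial-monodromy seam circle disconnects the tubular neighborhood into a theta-shaped component, with dotted caps attached to the complement, and that iterating this procedure across all seam components terminates in a disjoint union of theta pre-foams and dotted closed surfaces. Once this is in hand, the remaining reductions are direct applications of the local skein relations of Section~\ref{sec:rel-btwn-ev} together with the explicit formulas of Example~\ref{exa:evaluation} and Corollary~\ref{cor:sphere-eval}.
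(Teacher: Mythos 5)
Your overall strategy --- check that every move of the Kronheimer--Mrowka algorithm preserves $\brak{\cdot}_{\kk}$ and that the terminal values of $J^\flat$ agree with $\brak{\cdot}_{\kk}$ on dotted theta pre-foams and closed surfaces --- is the same as the paper's, and your treatment of steps~(\ref{item:seamV}) and~(\ref{item:monodromy}) and of the terminal evaluations matches the paper's final argument. The gap is at step~(\ref{item:neckcut}), which you flag only as a geometric bookkeeping issue but which is where the real difficulty sits. Proposition~\ref{prop:neckcutting} is a relation among \emph{admissible} pre-foams. Once the first neck has been cut abstractly, the intermediate objects produced by the algorithm are in general no longer foams embedded in $\R^3$ (if a seam circle is knotted, the disk caps created by cutting the parallel circles cannot be realized by an embedded foam), so admissibility --- and hence the applicability of the neck-cutting relation, i.e.\ your claim that this step \enquote{preserves $\brak{\cdot}_{\kk}$} --- is not automatic, and you never verify it. This is exactly why the paper does not argue as you do: Lemma~\ref{lem:emb-equal} inducts on the number of seam circles and only cuts necks parallel to a seam circle bounding a disk facet, where all terms of the relation remain honest foams (\enquote{non-abstractly}), and the remaining case in which every facet is an annulus --- where no such embedded cut is available --- is disposed of by two separate vanishing arguments: the fixed-point-free involution on semi-orientations of the graph $G_F$ giving $J^\flat(F)=0$ (Lemma~\ref{lem:collofann}) and the parity-of-colorings argument giving $\brak{F}_{\kk}=0$ (Lemma~\ref{lem:collofann2}). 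The examples in the paper (the annuli glued to a Klein bottle in Figure~\ref{fig:anotherKleinBottle}, and the mismatched evaluations at the end of Section~\ref{sec:combinatorialKM}) show one cannot apply these skein manipulations outside the admissible setting without care.

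The gap is fixable without retreating to the paper's route: one can prove that every pre-foam arising in your expansion is admissible, since after any number of abstract neck cuts of an embedded foam each bicolored surface is a compact surface embedded in $\R^3$ (a piece of the original foam with boundary on the cut circles) with some boundary circles capped by disks, and capping boundary circles of an orientable surface preserves orientability. With that lemma supplied, your direct argument goes through and even avoids the all-annuli analysis of Lemmas~\ref{lem:collofann} and~\ref{lem:collofann2}. As written, however, the proposal asserts the crucial invariance of $\brak{\cdot}_{\kk}$ under step~(\ref{item:neckcut}) without justification, and identifies a different, comparatively routine point as the main obstacle.
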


\begin{proof} We start by proving a sequence of lemmas. 

\begin{lemma}\label{lem:Jflatdeg0}
If a pre-foam $F$ has a non-zero degree, $J^\flat(F)$ is well-defined and equal to 0. 
\end{lemma}

\begin{proof}
The non-deterministic rules given to evaluate $J^\flat$ respect the degree, i.e., at each step a pre-foam of a given degree is simplified into a linear combination of pre-foams of the same degree. At the last step, only elementary pre-foams of degree $0$ are evaluated to a non-zero value.
\end{proof}

\begin{lemma}\label{lem:JflatUnoriented}
If a pre-foam $F$ has a non-orientable facet, $J^\flat(F)$ is well-defined and equal to 0. 
\end{lemma}

\begin{proof}
The non-orientability of a facet is preserved by steps (\ref{item:seamV}), (\ref{item:monodromy}) and (\ref{item:neckcut}).  
\end{proof}

\begin{lemma}\label{lem:deg0foam}
Let $F$ be a connected pre-foam of degree $0$ with no seam vertices such that all its facets are orientable. Then one of the following holds:
\begin{itemize}
\item $F$ has a disk facet,
\item all facets of $F$ are annuli and $F$ carries no dots,
\item $F$ is a sphere with two dots,
\item $F$ is a dotless torus.
\end{itemize}
\end{lemma}

\begin{proof}
The degree of a pre-foam $F$ with no seam vertices is given by
\[
\deg(F) = 2\ |d(F)| - 2\sum_{f\in f(F)} \chi(f),
\]
see Definition~\ref{def:degree}. 
Let $F$ be a pre-foam as in the lemma.
If $F$ has no seam circles, it is a surface. This surface is orientable and therefore it is either a torus (with no dots) or a sphere with two dots. 
  
If $F$ is not a surface, all of its facets have boundaries. The only way for a facet to have positive Euler characteristic is to be a disk. Likewise, the only way for a facet to have zero Euler characteristic is to be an annulus. This shows that if none of the facets of $F$ is a disk, then all its facets are annuli and $F$ carries no dots. 
\end{proof}

\begin{lemma}\label{lem:collofann}
Let $F$ be a (non-empty) pre-foam without seam vertices such that every facet of $F$ is an annulus. Then $J^\flat(F)=0$. 
\end{lemma}

\begin{figure}[h]
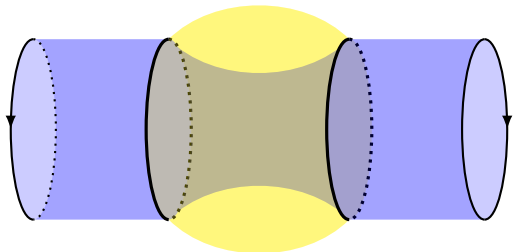

\centering
\tikz[scale = 0.6]{\begin{scope}[yscale =2,very thick, label distance = -0.1cm]
  \fill[opacity = 0.2, blue] (-1,1) -- (2,1) arc (90:270:0.5 and 1) --  (-1, -1) arc (270:90:0.5 and 1);
  \fill[opacity = 0.2, blue] (-1,1) -- (2,1) arc (90:-90:0.5 and 1) -- (-1, -1) arc (-90:90:0.5 and 1); 
  \draw[thick, black, ->-] (-1,1) arc (90:270:0.5 and 1);
  \draw[thick, black, dotted] (-1,1) arc (90:-90:0.5 and 1);
  \fill[opacity = 0.3, yellow] (2,1) .. controls +(1,  0.5) and +(-1, 0.5) .. (6,1) arc (90:-90:0.5 and 1) .. controls +(-1, -0.5) and +( 1, -0.5) .. (2, -1) arc (-90:90:0.5 and 1);
  \fill[opacity = 0.2, blue] (2,1) .. controls +(1, -0.5) and +(-1, -0.5) .. (6,1) arc (90:270:0.5 and 1) .. controls +(-1, +0.5) and +( 1, 0.5) .. (2, -1) arc (270:90:0.5 and 1);
  \draw[ black, dotted, name path= line 3] ( 2,1) arc (90:-90:0.5 and 1);
  \fill[opacity = 0.2, blue] (2,1) .. controls +(1, -0.5) and +(-1, -0.5) .. (6,1) arc (90:-90:0.5 and 1) .. controls +(-1, +0.5) and +( 1, 0.5) .. (2, -1) arc (-90:90:0.5 and 1);
  \fill[opacity = 0.3, yellow] (2,1) .. controls +(1, +0.5) and +(-1, +0.5) .. (6,1) arc (90:270:0.5 and 1) .. controls +(-1, -0.5) and +( 1, -0.5) .. (2, -1) arc (270:90:0.5 and 1);
  \draw[ black , name path = line 4] ( 2,1) arc (90:270:0.5 and 1) coordinate[pos= 0.25] (ep1);
  \fill[opacity = 0.2,blue] (6,1) -- (9,1) arc (90:-90:0.5 and 1) -- (6, -1) arc (-90:90:0.5 and 1);
  \draw[ black, dotted, name path = line 5] (6,1) arc (90:-90:0.5 and 1);
  \fill[opacity = 0.2, blue] (6,1) -- (9,1) arc (90:270:0.5 and 1) -- (6, -1) arc (270:90:0.5 and 1);
  \draw[ black, name path = line 6] (6,1) arc (90:270:0.5 and 1) coordinate[pos= 0.25] (ep2);
  \draw[black, thick, ->- ] (9,1) arc (90:-90:0.5 and 1);
  \draw[thick, black] (9,1) arc (90:270:0.5 and 1);
\end{scope}}
\caption{A Klein bottle (in blue) onto which is glued an annulus (in yellow) along two parallel circles.  This is an example of a pre-foam fulfilling the hypothesis of Lemma~\ref{lem:collofann} ( every facet is an annulus), which is not a web times $\SS^1$.}\label{fig:anotherKleinBottle}
\end{figure}

Before moving onto a proof, note that can take the direct product of a trivalent graph $G$, not necessarily planar, and $\SS^1$, to get such a pre-foam. More generally, choosing a trivalent graph $G$ and an element $\alpha$ of $H^1(G,\Z/2)$, one can form a pre-foam by taking 
the product of $\SS^1$ and the set of vertices of $G$ as the set of seam circles of pre-foam, and gluing annuli to the product, one for each edge of $G$. The gluing is such that for any 1-cycle $y$ in $G$ its preimage in the pre-foam is a 2-torus if $\alpha(y)=0$ and a Klein bottle in $\alpha(y)=1$. 

Such an example is depicted in Figure~\ref{fig:anotherKleinBottle}. The underlying graph is a $\theta$-graph. The cohomology class $\alpha$ is equal to $1$ on two simple cycles and zero on the third. 

\begin{proof}
We know that $J^{\flat}(F)$ is well-defined for pre-foams without seam vertices, since there are no choices to make in the evaluation algorithm.

Let $F$ be a pre-foam satisfying the hypothesis of the proposition. If $F$ carries some dots, then its degree is positive and therefore  $J^\flat(F) = 0$. If the monodromy of the facets along a circle is non-trivial, then $J^\flat(F)=0$ thanks to step~(\ref{item:monodromy}) in the algorithm.
Else, we consider the graph $G_F$ given by the following data:
\begin{itemize}
\item The vertices of $G_F$ are seam circles of $F$.
\item The edges of $G_F$ are facets of $F$. They join their two boundary components. 
\end{itemize}
Thus the graph $G_F$ is trivalent.
We allow a degenerate case of an annulus facet that bounds the same circle on both sides.
The closure of that facet is either a two-torus or a Klein bottle, but the facet itself it orientable).

Let us denote by $v$ (resp. $e$) the number of vertices (resp. edges) of $G_F$. We have $3v = 2e$. 
After performing step (\ref{item:neckcut}) in the Kronheimer--Mrowka algorithm, we end up with a sum $S$ of $3^{3v}$ terms. Each of these terms is a disjoint union of $v$ dotted theta pre-foams and $e$ dotted spheres. The evaluation of each of these terms is either $0$ or $1$. We want to prove that the number of terms which evaluate to $1$ is even.

A sphere evaluates to $0$ unless it carries exactly two dots and a theta pre-foam evaluates to $0$ unless its three facets carry exactly $0$, $1$ and $2$ dots. Hence $J^\flat(F)$ is equal (in $\kk$) to the number of terms in $S$ which are unions of spheres with two dots and $(2,1,0)$-theta pre-foams. 
  
On each annulus we perform two neck-cuttings, yielding nine terms with a dotted sphere. The sphere has two dots in only three out of this nine terms, and these three terms correspond to a single neck-cutting, in the middle of the annulus. Hence, instead of performing two neck-cutting operations per annulus, we can only perform one.

For each facet $f$ of $F$, let us encode the three terms in the neck-cutting relation by a semi-orientation of the corresponding edge $e$ in $G_F$. An oriented edge $e$ contributes to the cutting with the term which places two dots on the half-sphere bounding the circle corresponding to the target vertex relative to the orientation, and no  dots on
the opposite half-sphere. 
 If an edge is not oriented, it contributes one dot to each half-sphere into which the annulus  is split by neck-cutting. 
 
\[
\NB{
\tikz{
\begin{scope}[xshift= -0.5cm]
\tikzset{>={Latex[width=2.5mm,length=2.5mm]}}
\draw[->-] (0,0) -- +(2,0); 
\draw[->-] (2,1) -- +(-2,0); 
\draw[very thick, gray] (0,2) -- +(2,0); 
\end{scope}
\node at (2.5,0) {$\mapsto$};
\node at (2.5,1) {$\mapsto$};
\node at (2.5,2) {$\mapsto$};

\node at (6.5,0) {$\leftrightsquigarrow$};
\node at (6.5,1) {$\leftrightsquigarrow$};
\node at (6.5,2) {$\leftrightsquigarrow$};



\node at (8.5,0) {$\NB{\rotatebox{-90}{\thstwozero[0.7]}}$};
\node at (8.5,1) {$\NB{\rotatebox{-90}{\thszerotwo[0.7]}}$};
\node at (8.5,2) {$\NB{\rotatebox{-90}{\thsoneone[0.7]}}$};

\begin{scope}[xshift= 0.5cm]
\draw (3,0) -- +(0.5,0); 
\draw (3,1) -- +(0.5,0); 
\draw (3,2) -- +(0.5,0); 
\draw[dotted] (3.5,0) -- +(0.3,0); 
\draw[dotted] (3.5,1) -- +(0.3,0); 
\draw[dotted] (3.5,2) -- +(0.3,0); 
\draw[dotted] (4.5,0) -- +(-0.3,0); 
\draw[dotted] (4.5,1) -- +(-0.3,0); 
\draw[dotted] (4.5,2) -- +(-0.3,0); 
\draw (4.5,0) -- +(0.5,0); 
\draw (4.5,1) -- +(0.5,0); 
\draw (4.5,2) -- +(0.5,0); 
\fill (4.85,0) circle (0.5mm);
\fill (4.65,0) circle (0.5mm);
\fill (3.15,1) circle (0.5mm);
\fill (3.35,1) circle (0.5mm);
\fill (4.75,2) circle (0.5mm);
\fill (3.25,2) circle (0.5mm);
\end{scope}

}
}
\]

A term of $S$ which evaluates to $1$ is encoded by a semi-orientation of $G_F$ such that at each vertex one edge points in, one edge points out, and one edge is unoriented. We claim that the number of such semi-orientations is even. Indeed, there is an involution without fixed points on the set of such semi-orientations given by reversing the orientations of all edges (see Figure~\ref{fig:partialorientation}).  
  
\begin{figure}[ht]
\tikz{\input{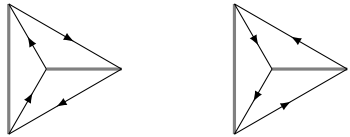}}
\caption{A graph $G_F$ and partial orientations related by the involution.}
\label{fig:partialorientation}
\end{figure}
\end{proof}

\begin{lemma}\label{lem:collofann2}
Let $F$ be a non-empty foam without seam vertices such that every facet of $F$ is an annulus. Then $\brak{F}_\kk=0$. 
\end{lemma}
  
\begin{proof}
If $F$ carries some dots, then its degree is non-zero and the result is immediate. Suppose that $F$ carries no dots. Then every bicolored surface is a collection of tori and have therefore Euler characteristic equal to $0$. This implies that $\brak{F}_\kk$ is equal to the number of admissible colorings modulo $2$. Since $F$ is non-empty, it contains at least one facet and hence contains one seam circle. There is an action of $S^3$ on the set of admissible colorings by permuting the colors. This action has no fixed point (it suffices to look at the color of the facets adjacent to a seam circle).
This proves that the number of admissible colorings of $F$ is even. Finally $\brak{F}_\kk=0$. 
\end{proof}
  
\begin{lemma}\label{lem:emb-equal}
For any pre-foam $F$ without seam vertices that admits an embedding in $\R^3$ 
\[J^\flat(F)= \brak{F}_{\kk}.\]
\end{lemma}

\begin{proof}  
Since both $J^\flat(\bullet)$ and $\brak{\bullet}_{\kk}$ are multiplicative with respect to the disjoint union, we can assume that the pre-foam $F$ is connected. Thanks to  Lemma~\ref{lem:Jflatdeg0}, we can suppose that $F$ has degree $0$.

We prove the lemma by induction on the number of seam circles. 
If there is no seam circle, $F$ is a collection of surfaces and the result is clear. Otherwise we apply Lemma~\ref{lem:deg0foam}: $F$ is either a theta pre-foam (in which case the result is clear), has only annulus-like facets (in this case the result follows from Lemmas~\ref{lem:collofann} and \ref{lem:collofann2}), or contains a disk. If it contains a disk, denote by $C$ the circle bounded by this disk. We can use the neck-cutting relation "non-abstractly", for foams rather than pre-foams,  on the three facets bounding $C$. This operation and the matching operation for $J^{\flat}(F)$ allows to express the values of $J^\flat(F)$ and $\brak{F}_{\kk}$ as sums of evaluations of the union of a dotted theta-foam and foams with fewer seam circles. We can now apply induction on the number of seam circles
to conclude that $J^\flat(F)=\brak{F}_{\kk}$ for $F$ as in the lemma. 
\end{proof}

To complete the proof of Theorem~\ref{thm:JflatFoams}, choose any foam $F$. If the graph $s(F)$ is not bipartite, 
$J^{\flat}(F)$ is well-defined, equal to $0$, and $\brak{F}_{\kk}=0$ as well, since $F$ has no admissible colorings thanks to Proposition~\ref{prop:admis2bipartite}. If $s(F)$ is bipartite, the evaluation $\brak{F}_{\kk}=\brak{F'}_{\kk}$ for any reduction of $F$ to a foam $F'$ without seam vertices via canceling of pairs of vertices along edges in a perfect matching in $s(F)$. In view of Lemma~\ref{lem:emb-equal} this matches the procedure in the algorithm, showing that 
$J^{\flat}(F) = \brak{F}_{\kk}$ for any such $F$. The theorem follows. 
\end{proof}

\begin{rmk} 
Instead of using a perfect matching in step (1) of the algorithm, one can choose to cancel pairs of vertices recursively, possibly via new edges created by earlier cancellations, slightly generalizing the algorithm and the invariance property of the evaluation. Theorem~\ref{thm:JflatFoams} holds for the generalized algorithm as well. 
\end{rmk}

\subsubsection*{Mismatched evaluations}

Consider the pre-foam $F$ constructed by adding three disks to a Klein bottle, as depicted in Figure~\ref{fig:counterexample0}. 
The tube depicts an annulus portion of the Klein bottle, and the oriented boundary circles are identified to match their orientations as shown in the figure, resulting in a Klein bottle. Three disks are attached to the Klein bottle, along the circles $c,c_1,c_2$ as shown. 
The circles $c_1$, $c_2$ are homotopic on the Klein bottle, while $c$ is contractible. The pre-foam $F$ carries a single dot, placed on the disk that bounds $c$. 
Facet portions $A_1$ and $A_2$ are parts of the same facet, denoted $A$. Note that all facets of $F$ are 
orientable: they consists of 7 disks and one annulus. 
   
The three circles have four intersection points, denoted $v_1$ through $v_4$. The graph $s(F)$ has four vertices $v_1, \dots, v_4$
and eight edges connecting them. We consider the edges denoted 
$e_1, e_2$, $e_3,e_4$, respectively, that span a four-cycle 
in the graph. These four edges together bound a square facet of the pre-foam and belong to circles $c, c_1, c, c_2$, respectively. The portion of the pre-foam shown is embeddable in $\R^3$, but not the entire pre-foam.  
   
\begin{figure}[ht]
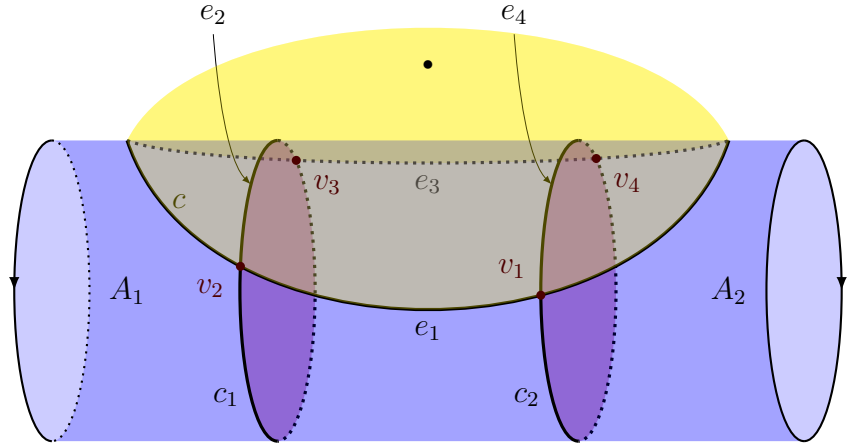

\tikz{\begin{scope}[yscale =2,very thick, label distance = -0.1cm]
\fill[yellow, opacity=0.3 ] (0,1) .. controls + (1, -0.2) and + (-1, -0.2) .. (8,1) .. controls +(-1, 1) and +(1, 1) .. (0,1) ;
  \draw[dotted, name path= line 1] (0,1) .. controls + (1, -0.2) and + (-1, -0.2) .. (8,1)
  node[pos =0.5, below] {$e_3$}; 
    \fill[opacity = 0.2, blue] (-1,1) -- (2,1) arc (90:-90:0.5 and 1) -- (-1, -1) arc (-90:90:0.5 and 1);
    \fill[opacity = 0.2, blue] (-1,1) -- (2,1) arc (90:-90:0.5 and 1) --  (-1, -1) arc (270:90:0.5 and 1);
    \fill[red, opacity = 0.2] (2,0) circle (0.5 and 1);
    \draw[thick, black, ->-] (-1,1) arc (90:270:0.5 and 1);
    \draw[thick, black, dotted] (-1,1) arc (90:-90:0.5 and 1);
    \fill[opacity = 0.2, blue] (2,1) -- (6,1) arc (90:-90:0.5 and 1) -- (2, -1) arc (-90:90:0.5 and 1);
    \draw[ black, dotted, name path= line 3] ( 2,1) arc (90:-90:0.5 and 1);
    \fill[opacity = 0.2, blue] (2,1) -- (6,1) arc (90:-90:0.5 and 1) -- (2, -1) arc (270:90:0.5 and 1);
    \draw[ black , name path = line 4] ( 2,1) arc (90:270:0.5 and 1) coordinate[pos= 0.25] (ep1) node[pos = 0.75, left] {$c_1$};;
    \draw[very thin, <-] (ep1) .. controls +(-0.4, 0.2) and +(0,0) .. +(-0.5,1) node [above] {$e_2$};
    \fill[red, opacity = 0.2] (6,0) circle (0.5 and 1);
    \fill[opacity = 0.2,blue] (6,1) -- (9,1) arc (90:-90:0.5 and 1) -- (6, -1) arc (-90:90:0.5 and 1);
    \draw[ black, dotted, name path = line 5] (6,1) arc (90:-90:0.5 and 1);
    \fill[opacity = 0.2, blue] (6,1) -- (9,1) arc (90:270:0.5 and 1) -- (6, -1) arc (270:90:0.5 and 1);
    \draw[ black, name path = line 6] (6,1) arc (90:270:0.5 and 1) coordinate[pos= 0.25] (ep2) node[pos = 0.75, left] {$c_2$};
    \draw[very thin, <-] (ep2) .. controls +(-0.4, 0.2) and +(0,0) .. +(-0.5,1) node [above] {$e_4$};
    \draw[black, thick, ->- ] (9,1) arc (90:-90:0.5 and 1);
    \draw[thick, black] (9,1) arc (90:270:0.5 and 1);
    \draw[name path= line 2] (0,1) .. controls + (1, -1.5) and + (-1, -1.5) .. (8,1) node[pos =0.5, below] {$e_1$} node[pos = 0.1, right] {$c$};
    \fill[yellow, opacity=0.3] (0,1) .. controls + (1, -1.5) and + (-1, -1.5) .. (8,1) .. controls +(-1, 1) and +(1, 1) .. (0,1) ;
 \path [name intersections={of=line 1 and line 3,by=v1}];
 \path [name intersections={of=line 1 and line 5,by=v2}];
 \path [name intersections={of=line 2 and line 4,by=v3}];
 \path [name intersections={of=line 2 and line 6,by=v4}];
\fill[red!30!black] (v1) circle (0.6mm and 0.3mm) node[label=-30:$v_3$] {};
\fill[red!30!black] (v2) circle (0.6mm and 0.3mm) node[label=-30:$v_4$] {};
\fill[red!30!black] (v3) circle (0.6mm and 0.3mm) node[label=-170:$v_2$] {};
\fill[red!30!black] (v4) circle (0.6mm and 0.3mm) node[label=120:$v_1$] {};
 \node[scale = 0.7] at (4,1.5) {$\bullet$}; 
 \node at (0,0) {$A_1$};
 \node at (8,0) {$A_2$};
\end{scope}}\caption{Pre-foam $F$; the annulus part of the Klein bottle is shown in blue and grey. Disk bounding $c$ and carrying a dot is shown in yellow.}
\label{fig:counterexample0}
\end{figure}

The graph $s(F)$ is bipartite. The edges $e_1,e_3$ constitute one 
possible perfect matching of $s(F)$, another is given by the edges $e_2,e_4$. 
Let us apply step~(\ref{item:seamV}) of the algorithm to the perfect matching $\{e_1,e_3\}$, 
canceling the vertices $(v_1,v_2)$ and $(v_3,v_4)$ in pairs along the seams $e_1$ and $e_3$. 
In the resulting pre-foam $F_0$, the facet $A$ acquires two additional strips, turning it into an unorientable surface with boundary, see Figure~\ref{fig:counterexample1}. Consequently, 
$J^{\flat}(F_0)=0$, since $F_0$ contains an unorientable facet.

\begin{figure}[ht]
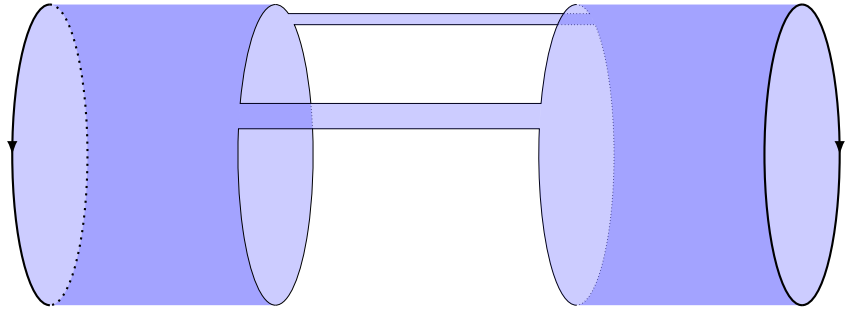

\tikz{\begin{scope}[yscale =2,very thick, label distance = -0.1cm]
    \fill[opacity = 0.2, blue] (-1,1) -- (2,1) arc (90:270:0.5 and 1) -- (-1, -1) arc (-90:90:0.5 and 1);
    \fill[opacity = 0.2, blue] (-1,1) -- (2,1) arc (90:-90:0.5 and 1) --  (-1, -1) arc (270:90:0.5 and 1);
    \draw[thick, black, ->-] (-1,1) arc (90:270:0.5 and 1);
    \draw[thick, black, dotted] (-1,1) arc (90:-90:0.5 and 1);
    \draw[ black, very thin ] ( 2,1) arc (90:70:0.5 and 1) coordinate (S1) -- ++ (3.67,0) ;
    \draw[ black, very thin, densely dotted] ( 6,1) arc (90:70:0.5 and 1) -- ++ (-0.33,0);
    \draw[ black, very thin, densely dotted] ( 6,-1) arc (-90:60:0.5 and 1)  -- ++ (-0.5,0);
    \draw[ black, very thin] ( 2,-1) arc (-90:10:0.5 and 1) coordinate (a1);%
    \draw[ black, very thin, densely dotted] (a1) arc (10:20:0.5 and 1) coordinate (a2);%
    \draw[ black, very thin] (a2) arc (20:60:0.5 and 1) -- ++ (3.5,0);
    \fill[opacity = 0.2, blue] (S1) arc (70:60:0.5 and 1) -- +(4,0) arc (60:70:0.5 and 1) -- +(-4,0);
    \draw[ black , very thin, name path = line 4] ( 2,1) arc (90:160:0.5 and 1) coordinate (S2) -- +(4,0) arc (160:90:0.5 and 1);
    \draw[ black , very thin, name path = line 4] ( 2,-1) arc (270:170:0.5 and 1) -- +(4,0) arc (170:270:0.5 and 1);
    \fill[opacity = 0.2, blue] (S2) arc (160:170:0.5 and 1) -- +(4,0) arc (170:160:0.5 and 1) -- +(-4,0);
    \fill[opacity = 0.2,blue] (6,1) -- (9,1) arc (90:-90:0.5 and 1) -- (6, -1) arc (-90:90:0.5 and 1);
    \fill[opacity = 0.2, blue] (6,1) -- (9,1) arc (90:270:0.5 and 1) -- (6, -1) arc (270:90:0.5 and 1);
    \draw[black, thick, ->- ] (9,1) arc (90:-90:0.5 and 1);
    \draw[thick, black] (9,1) arc (90:270:0.5 and 1);
\end{scope}}\caption{The facet $A$ in $F_0$.}
\label{fig:counterexample1}
\end{figure}

Now instead apply step~(\ref{item:seamV}) to the matching 
$\{e_2,e_4\}$, canceling vertices $(v_1,v_4)$ and $(v_2,v_3)$ in pairs, and denote the resulting pre-foam $F_1$, as shown in Figure~\ref{fig:counterexample2}. 

For step~(\ref{item:monodromy}), we see that the monodromy along each singular circle is trivial. We should next apply neck-cutting at the three circles near each singular circle. We can choose the order in which the neck-cutting is done. 

Note that $F_1$ has a $\Z/2$-symmetry $\tau$, which in the portion shown is given by reflecting about a vertical axis through the center. We'll be cutting along pairs of circles that are symmetric under $\tau$, each time resulting in nine possible terms that differ by numbers of dots. Six of these terms will come in $\tau$-symmetric pairs. Both terms of each pair will evaluate to the same element of the ground field $\kk$ and will always add up to $0$. 

Hence, each symmetric cutting along a pair of $\tau$-opposite circles only contributes three terms to the sum (some of which may be zero). Also, if a pre-foam has a facet with three or more dots, it evaluates to zero by definition of $J^\flat$. 

If we neck-cut along symmetric pairs in the order given in Figure~\ref{fig:counterexample2}, each time we sum reduces to exactly one non-trivial term. 

\begin{figure}[ht]
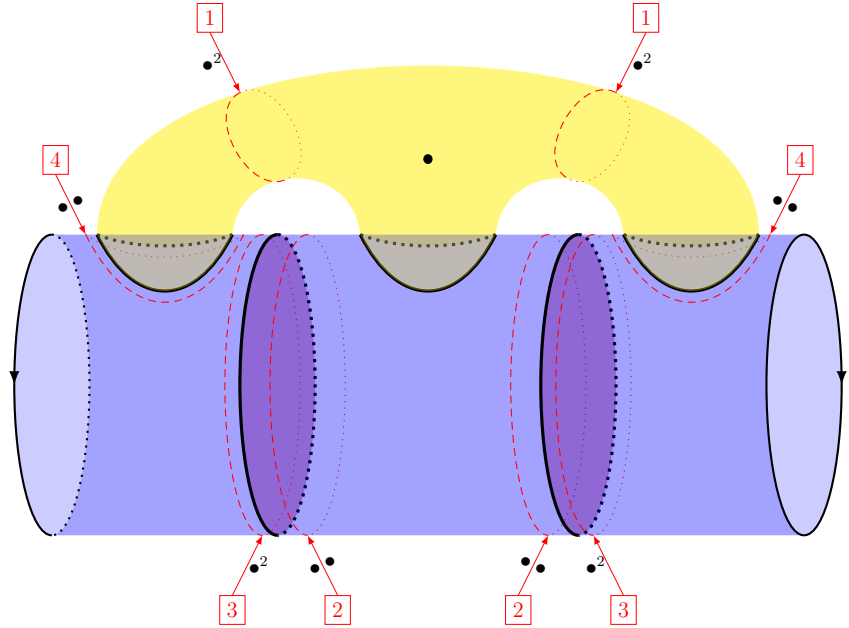

\tikz{\begin{scope}[yscale =2, very thick, label distance = -0.1cm]
\fill[yellow, opacity=0.3 ] (8.4,1) .. controls +(0, 1.5) and +(0, 1.5) .. 
   (-0.4,1) coordinate[pos=0.3] (RT) coordinate[pos=0.7] (LT) .. controls + (0.5, -0.1) and + (-0.5, -0.1) .. (1.4,1) 
   .. controls +(0.1, 0.5) and +(-0.1, 0.5) ..
   (3.1,1) coordinate[pos=0.45] (LB)  .. controls + (0.5, -0.1) and + (-0.5, -0.1) .. (4.9,1) 
   .. controls +(0.1, 0.5) and +(-0.1, 0.5) ..
   (6.6,1) coordinate[pos=0.55] (RB) .. controls + (0.5, -0.1) and + (-0.5, -0.1) .. (8.4,1); 
  \draw[dotted] (-0.4,1) .. controls + (0.5, -0.1) and + (-0.5, -0.1) .. (1.4,1);
  \draw[dotted] (3.1,1) .. controls + (0.5, -0.1) and + (-0.5, -0.1) .. (4.9,1); 
  \draw[dotted] (6.6,1) .. controls + (0.5, -0.1) and + (-0.5, -0.1) .. (8.4,1); 
  \draw[red, thin, dotted] (-0.55,1) .. controls + (0.6, -0.2) and + (-0.6, -0.2) .. (1.55,1);
  \draw[red, thin, dotted] ( 6.45,1) .. controls + (0.6, -0.2) and + (-0.6, -0.2) .. (8.55,1); 
  \draw[thin, dotted, red ] (LT)  .. controls +(0.5, 0.13) and +(0.5, 0.13) .. (LB);
  \draw[thin, dotted, red ] (RT)  .. controls +(0.5,-0.13) and +(0.5,-0.13) .. (RB);
    \fill[opacity = 0.2, blue] (-1,1) -- (2,1) arc (90:-90:0.5 and 1) -- (-1, -1) arc (-90:90:0.5 and 1);
    \fill[opacity = 0.2, blue] (-1,1) -- (2,1) arc (90:-90:0.5 and 1) --  (-1, -1) arc (270:90:0.5 and 1);
    \fill[red, opacity = 0.2] (2,0) circle (0.5 and 1);
    \draw[thick, black, ->-] (-1,1) arc (90:270:0.5 and 1);
    \draw[thick, black, dotted] (-1,1) arc (90:-90:0.5 and 1);
    \draw[red, thin, dotted] (1.8,1) arc (90:-90:0.5 and 1);
    \draw[red, thin, dotted] (2.4,1) arc (90:-90:0.5 and 1);
    \draw[red, thin, dotted] (5.6,1) arc (90:-90:0.5 and 1);
    \draw[red, thin, dotted] (6.2,1) arc (90:-90:0.5 and 1);
    \draw[red, very thin, <-] (RT) -- +(0.4, 0.4) node[sloped, above,draw, scale = 0.7] {1} coordinate[pos=0.5] (S1R);
    \draw[red, very thin, <-] (LT) -- +(-0.4, 0.4) node[above,draw, scale = 0.7] {1} coordinate[pos=0.5] (S1L);
    \draw[red, very thin, <-] (5.6,-1) -- +(-0.4, -0.4) node[below,draw, scale = 0.7] {2} coordinate[pos=0.5] (S2R);
    \draw[red, very thin, <-] (2.4,-1) -- +(0.4, -0.4) node[below,draw, scale = 0.7] {2} coordinate[pos=0.5] (S2L);
    \draw[red, very thin, <-] (6.2,-1) -- +(0.4, -0.4) node[below,draw, scale = 0.7] {3} coordinate[pos=0.5] (S3R);
    \draw[red, very thin, <-] (1.8,-1) -- +(-0.4, -0.4) node[below,draw, scale = 0.7] {3} coordinate[pos=0.5] (S3L);
    \draw[red, very thin, <-] (8.55, 1) -- +(0.4,  0.4) node[above,draw, scale = 0.7] {4} coordinate[pos=0.5] (S4R);
    \draw[red, very thin, <-] (-0.55, 1) -- +(-0.4,  0.4) node[above,draw, scale = 0.7] {4} coordinate[pos=0.5] (S4L);

    \fill[opacity = 0.2, blue] (2,1) -- (6,1) arc (90:-90:0.5 and 1) -- (2, -1) arc (-90:90:0.5 and 1);
    \draw[ black, dotted, name path= line 3] ( 2,1) arc (90:-90:0.5 and 1);
    \fill[opacity = 0.2, blue] (2,1) -- (6,1) arc (90:-90:0.5 and 1) -- (2, -1) arc (270:90:0.5 and 1);
    \draw[ black , name path = line 4] ( 2,1) arc (90:270:0.5 and 1) coordinate[pos= 0.25] (ep1);
    \fill[red, opacity = 0.2] (6,0) circle (0.5 and 1);
    \fill[opacity = 0.2,blue] (6,1) -- (9,1) arc (90:-90:0.5 and 1) -- (6, -1) arc (-90:90:0.5 and 1);
    \draw[ black, dotted, name path = line 5] (6,1) arc (90:-90:0.5 and 1);
    \fill[opacity = 0.2, blue] (6,1) -- (9,1) arc (90:270:0.5 and 1) -- (6, -1) arc (270:90:0.5 and 1);
    \draw[ black, name path = line 6] (6,1) arc (90:270:0.5 and 1) coordinate[pos= 0.25] (ep2);
    \draw[black, thick, ->- ] (9,1) arc (90:-90:0.5 and 1);
    \draw[thick, black] (9,1) arc (90:270:0.5 and 1);
  \draw (-0.4,1) .. controls + (0.5, -0.5) and + (-0.5, -0.5) .. (1.4,1);
  \draw ( 3.1,1) .. controls + (0.5, -0.5) and + (-0.5, -0.5) .. (4.9,1); 
  \draw ( 6.6,1) .. controls + (0.5, -0.5) and + (-0.5, -0.5) .. (8.4,1); 

  \draw[red, thin, densely dashed] (-0.55,1) .. controls + (0.6, -0.6) and + (-0.6, -0.6) .. (1.55,1);
  \draw[red, thin, densely dashed] ( 6.45,1) .. controls + (0.6, -0.6) and + (-0.6, -0.6) .. (8.55,1); 

    \fill[yellow, opacity=0.3 ] (8.4,1) .. controls +(0, 1.5) and +(0, 1.5) .. 
  (-0.4,1) .. controls + (0.5, -0.5) and + (-0.5, -0.5) .. (1.4,1)
   .. controls +(0.1, 0.5) and +(-0.1, 0.5) ..
  ( 3.1,1) .. controls + (0.5, -0.5) and + (-0.5, -0.5) .. (4.9,1) 
   .. controls +(0.1, 0.5) and +(-0.1, 0.5) ..
  ( 6.6,1) .. controls + (0.5, -0.5) and + (-0.5, -0.5) .. (8.4,1); 
  \draw[densely dashed, thin, red] (LT)  .. controls +(-0.5,-0.13) and +(-0.5,-0.13) .. (LB);
  \draw[densely dashed,  thin, red] (RT)  .. controls +(-0.5, 0.13) and +(-0.5, 0.13) .. (RB);
    \draw[red, thin, densely dashed] (1.8,1) arc (90:270:0.5 and 1);
    \draw[red, thin, densely dashed] (2.4,1) arc (90:270:0.5 and 1);
    \draw[red, thin, densely dashed] (5.6,1) arc (90:270:0.5 and 1);
    \draw[red, thin, densely dashed] (6.2,1) arc (90:270:0.5 and 1);
    \fill (4,1.5) circle (0.6mm and 0.3mm);
    \node[scale = 0.7] at (4,1.5) {$\bullet$};
  \node[scale = 0.7] at ($(S1R) + (0.15,0)$) {$\bullet^2$};
  \node[scale = 0.7] at ($(S1L) + (-0.17,0)$) {$\bullet^2$};

  \node[scale = 0.7] at ($(S2R) + (0.1,-0.025)$) {$\bullet$};
  \node[scale = 0.7] at ($(S2L) + (0.1,0.025)$) {$\bullet$};
  \node[scale = 0.7] at ($(S2R) + (-0.1,0.025)$) {$\bullet$};
  \node[scale = 0.7] at ($(S2L) + (-0.1,-0.025)$) {$\bullet$};

  \node[scale = 0.7] at ($(S3R) + (-0.17,0)$) {$\bullet^2$};
  \node[scale = 0.7] at ($(S3L) + (0.15,0)$) {$\bullet^2$};

  \node[scale = 0.7] at ($(S4R) + (0.1,-0.025)$) {$\bullet$};
  \node[scale = 0.7] at ($(S4L) + (0.1,0.025)$) {$\bullet$};
  \node[scale = 0.7] at ($(S4R) + (-0.1,0.025)$) {$\bullet$};
  \node[scale = 0.7] at ($(S4L) + (-0.1,-0.025)$) {$\bullet$};

\end{scope}}\caption{The pre-foam $F_1$. The red dashed lines are where the neck-cutting are performed, the number indicates the order, the $\bullet$ indicates the only terms which are non-zero when applying the neck-cutting relations.}
\label{fig:counterexample2}
\end{figure}

We start by cutting along the circle pair labeled $1$. The three non-canceling terms differ by distribution of dots, with either $0$, $1$ or $2$ dots added on each side to the cut central disk in the top of the picture. This results in adding $0$, $2$ or $4$ dots to a disk that already carries a dot. Unless no dots are added, the resulting pre-foam has a facet with at least 3 dots and evaluates to $0$. Consequently, only the term where two dots are added to each of the outer yellow disks survives in the sum. This situation is depicted by placing $\bullet^2$ on the corresponding side of the cut circle. 

We continue by cutting along the pair of circles labeled $2$. After the cuts there is a theta-foam in the middle, with one and zero dots, respectively, on the top and middle facets, requiring exactly two dots on the bottom facet for a nonzero evaluation. Hence, the only possible distribution is to place one dot on each side of each circle labeled $2$ upon the cuts. 

Next, performing cuts along circles labeled $3$ splits off two theta-foams, in a symmetric fashion. These theta-foams already have facets with $0$ and $1$ dots, requiring two dots to appear on the new facets 
after the cuts. This determines the unique distribution of dots for the third pair of cuts as well. 

The same argument shows that for cuts number $4$ the only distribution is to place one dot on each side of the cuts. These cuts will produce two theta-foams, each evaluating to $1$ (with these dot distributions) and a two-dotted sphere, evaluating to $1$ as well. 

Notice that the assumption that the order of cuts is inessential is built into the definition and the algorithm. We also bypass cutting along circles that already bounds disks after the previous cuts, since consistency for such cuts is an easy exercise going back to~\cite{SL3}. 

The computation results in  $J^{\flat}(F_1)=1$, 
which differs from $J^{\flat}(F_0)=0$. This shows that for pre-foam $F$ the value produced by the above algorithm depends on the choices made in step~(\ref{item:seamV}). Consequently, Conjecture 8.9 in~\cite{KM1} needs to be augmented for the evaluation to be well-defined. Theorem~\ref{thm:JflatFoams}  implies that one possible modification is to restrict to pre-foams embeddable in $\R^3$.

\section{Homology of webs} 
\label{sec:homology}
\subsection{Webs and their homology}\ 

A \emph{closed web}, or just a \emph{web}, is a trivalent oriented graph $\Gamma$, 
possibly with vertexless loops, embedded 
in $\R^2$ piecewise-linearly. 

We say that an oriented plane $T\cong\R^2$ in $\R^3$ intersects a (closed) foam 
$F$ \emph{generically} if $F\cap T$ is a web $\Gamma$ in $T$, no dots of $F$ are on $T$ and for a tubular neighborhood $N$ of $T$, $(N\cap F, N)$ is PL-homeomorphic to $(\Gamma \times ( -\epsilon, \epsilon ) , \R^2 \times (-\epsilon, \epsilon ) )$. 
Define a foam with boundary $V$ 
as the intersection of a closed foam $F$ and $T\times [0,1]\subset \R^3$ such 
that $T\times \{0\}$ and $T\times \{1\}$ intersect $F$ generically. 
We view foam $V$ with boundary as a cobordism between webs $\partial_i V \define 
V\cap T\times \{i\}$ for $i=0,1$ and assume the  standard embedding of 
$\R^2\times [0,1]\cong T\times[0,1]$ into $\R^3$. Sometimes we will call a foam with boundary simply a foam. 
Two foams are isomorphic if they are isotopic in $\R^2\times [0,1]$ through an 
isotopy which fixes all boundary points. 

For example, a closed foam is a foam with the empty boundary and gives a cobordism 
from the empty web to itself. 

The notions of admissible and pre-admissible coloring extend without difficulty to foams with boundary. A pre-admissible coloring of a foam $F$ induces a Tait coloring of its boundary. Note that since foams are properly embedded in $\R^2\times [0,1]$, any pre-admissible coloring of a foam with boundary is admissible. 

If $U$ and $V$ are two foams such that the webs $\partial_0 U, \partial_1 V$ are identical, define 
the composition $UV$ in the obvious way, by concatenating $U$ and $V$ along their common boundary (and rescaling). In this way we obtain a category $\catF$ 
with webs as objects and isomorphism classes of foams with boundary as morphisms. 

A foam $U$ is a morphism from $\partial_0U$ to $\partial_1 U$. If $\partial_0U$ 
is the empty foam, we say that $U$ is a foam or cobordism into $\partial_1U$. 
If $\partial_1U$ is the empty foam, we say that $U$ is a foam  out of 
$\partial_0U$. 

The category $\catF$ has an anti-involution $\omega$, which acts as the identity on objects and  on morphisms is given by 
reflecting a foam about $\R^2\times \{ \frac{1}{2}\}$. The category $\catF$ also has an involution given by reflecting a foam about 
$\ell \times [0,1]$, where $\ell$ is a line in $\R^2$. 

For a foam $U$ let $d(U)$ denote the set of dots on $U$, so that $|d(U)|$ is the total number of dots on $U$. Likewise, $|v(U)|$ is the number of 
seam vertices of $U$. 

Define the degree of a foam $U: \Gamma_0 \to \Gamma_1$ by 
\begin{align}
\label{eq:degree-with-bdy}
\deg(U) =  2 \ |d(U)| - 2 \ \chi(U) - \chi(s(U))
\end{align}
In particular, for any web $\Gamma$, foam  $\Id_\Gamma= \Gamma\times [0,1]$ has degree $0$.
Remark~\ref{rmk:degree} remains true in the context of foams with boundary.

\begin{prop} \label{prop:degree} For composable foams $U$ and $V$, 
\[\deg(UV) = \deg(U) + \deg(V).\] 
\end{prop}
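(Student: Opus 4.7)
The plan is to verify additivity directly from the definition by tracking how each of the three terms $2|d(U)|$, $2\chi(U)$, and $\chi(s(U))$ behaves under composition. Write $\Gamma = \partial_1 V = \partial_0 U$ for the common boundary web along which the concatenation takes place, and let $|v(\Gamma)|$ denote the number of trivalent vertices of $\Gamma$.

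First I would handle the easy term: dots lie in the interiors of facets away from the boundary, so $|d(UV)| = |d(U)| + |d(V)|$. For the Euler characteristic, inclusion-exclusion applied to the decomposition $UV = U \cup V$ with $U \cap V = \Gamma$ gives
\begin{align*}
\chi(UV) = \chi(U) + \chi(V) - \chi(\Gamma).
\end{align*}
For the seam graph, observe that $s(U)$ meets the boundary $\Gamma$ precisely at the trivalent vertices of $\Gamma$ (a seam approaching $T\times\{0\}$ from above must meet the web in a trivalent vertex, by the generic intersection condition), and similarly for $s(V)$. Hence $s(UV) = s(U) \cup s(V)$ with $s(U) \cap s(V) = v(\Gamma)$, a finite set of $|v(\Gamma)|$ points, so
\begin{align*}
\chi(s(UV)) = \chi(s(U)) + \chi(s(V)) - |v(\Gamma)|.
\end{align*}

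Plugging these into formula~(\ref{eq:degree-with-bdy}) yields
\begin{align*}
\deg(UV) = \deg(U) + \deg(V) + 2\chi(\Gamma) + |v(\Gamma)|,
\end{align*}
so it remains to show $2\chi(\Gamma) + |v(\Gamma)| = 0$. The key combinatorial fact is that $\Gamma$ is trivalent: every vertex has valence three, and the component decomposition of $\Gamma$ consists of a graph with $|v(\Gamma)|$ trivalent vertices and $\frac{3}{2}|v(\Gamma)|$ edges (since $3|v(\Gamma)| = 2|e(\Gamma)|$), together with a disjoint union of vertexless loops. The loops contribute zero to $\chi$, and the rest contributes $|v(\Gamma)| - \tfrac{3}{2}|v(\Gamma)| = -\tfrac{1}{2}|v(\Gamma)|$. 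Therefore $2\chi(\Gamma) = -|v(\Gamma)|$, and the correction term vanishes, yielding the desired identity.

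There is essentially no obstacle: the only subtlety is the clean bookkeeping of how $s(U)$ and $s(V)$ glue, and making sure one correctly identifies the overlap with the trivalent vertices of $\Gamma$ rather than with edges of $\Gamma$. Everything else is routine inclusion-exclusion combined with the Euler-characteristic computation for a trivalent graph.
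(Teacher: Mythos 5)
Your proof is correct and follows essentially the same route as the paper: both compute $\chi(UV)=\chi(U)+\chi(V)-\chi(\Gamma)$ and $\chi(s(UV))=\chi(s(U))+\chi(s(V))-|v(\Gamma)|$ by inclusion-exclusion on the glued CW-complexes, and then cancel the correction terms using $2\chi(\Gamma)=-|v(\Gamma)|$ for a trivalent graph (vertexless loops contributing zero). No issues.
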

\begin{proof} 
Consider foams $U:\Gamma_1 \to \Gamma_2$ and $V: \Gamma_0 \to \Gamma_1$. For a finite CW-complex $C$ obtained by gluing two CW-complexes $C_1$ and $C_2$ along a common CW-subcomplex $C_3$  one has $\chi(C)= \chi(C_1) + \chi(C_2) - \chi(C_3)$. Since $\Gamma_1$ is a trivalent graph, we have: $2\chi(\Gamma_1) = -|2v(\Gamma_1)|$. This gives:
\begin{align*}
\deg(UV) = & 2 |d(UV)| - 2\chi(UV) - \chi(s(UV)) \\
	     = & 2 |d(UV)| - 2\chi(U) -2 \chi(V) + 2 \chi(\Gamma_1) - \chi(s(U)) - \chi(s(V)) + |v(\Gamma_1)|\\
         = 	& \deg(U) + \deg(V) + 2\chi(\Gamma_1) + |v(\Gamma_1)|\\
         = &\deg(U) + \deg(V).         
\end{align*}
  
  \end{proof}
The proposition says that the degree of foams is well-behaved under composition.  The antiinvolution $\omega$ preserves the degree,  
$\deg(\omega(U))= \deg(U)$. 

We next define \emph{homology} or \emph{state space} $\brak{\Gamma}$ of a  web $\Gamma$ 
as a graded $R$-module spanned by all foams into $\Gamma$, modulo the evaluation relation. This definition, called the \emph{universal construction}, goes back to \cite{BHMV} and was used in~\cite{SL3} in the $sl(3)$ foam framework.

\begin{defn} The state space $\brak{\Gamma}$ is an $R$-module generated by symbols $\brak{U}$ 
for all foams $U$ from the empty foam $\emptyset$ to $\Gamma$. A relation 
$\sum_i a_i \brak{U_i}=0$ for $a_i\in R$ and $U_i \in \Hom_{\catF}(\emptyset,\Gamma)$ holds in $\brak{\Gamma}$ if and only if 
\[ \sum_i a_i \brak{V U_i} = 0 \]
for any foam $V$ from $\Gamma$ to the empty web. Here $\brak{VU_i}\in R$ is the 
evaluation of the closed foam $VU_i$. 
\end{defn}

It follows from the definition that the homology of the empty web is naturally isomorphic to the free $R$-module $R$, with the generator given by the empty foam. 

\begin{defn} Let $\Fo(\Gamma)$ be the free  $R$-module $\Fo(\Gamma)$ with the basis given by all foams into $\Gamma$, including foams decorated with dots and those which have connected components disjoint from $\Gamma$. 
\end{defn}

Assigning to a  foam its degree extends to a grading on $\Fo(\Gamma)$ and $\brak{\Gamma}$, turning them into graded $R$-modules over the graded ring $R$. 
$\Fo(\Gamma)$ is a free graded $R$-module. 

There is a canonical surjective graded $R$-module 
homomorphism 
\begin{align}\label{eq:h_gamma}
h_{\Gamma} \ : \ \Fo(\Gamma) \lra \brak{\Gamma}
\end{align}
induced by sending a foam $U$ into $\Gamma$ to 
$\brak{U}\in \brak{\Gamma}$. In particular,  $\brak{\Gamma}$ is isomorphic to a quotient of 
the free $R$-module $\Fo(\Gamma)$.

Given two foams $U$ and $V$ into $\Gamma$, consider the closed foam $\omega(V)U$ and evaluate it to $\brak{\omega(V)U} \in R$, where $\omega$ is the 
anti-involution, defined earlier, that reflects a foam about a horizontal plane. Extending bilinearly, 
one gets a map 
\[ \Fo(\Gamma) \times \Fo(\Gamma) \longrightarrow R\] 
that factors through the tensor product over $R$, 
\[ \Fo(\Gamma) \times \Fo(\Gamma) \longrightarrow \Fo(\Gamma)\otimes_R \Fo(\Gamma) \longrightarrow R, \]
equipping $\Fo(\Gamma)$ with a symmetric $R$-valued bilinear form $(,)$. 
This bilinear form is degree-preserving, relative to the above gradings on $\Fo(\Gamma)$ and $R$. 
The kernel $\mathrm{ker}((,))$ of this bilinear form is a graded 
$R$-submodule of $\Fo(\Gamma)$. 

\begin{prop} Homomorphism $h_{\Gamma}$ identifies 
$\brak{\Gamma}$ with the quotient of $\Fo(\Gamma)$ 
by the kernel $\mathrm{ker}((,))$ of the bilinear form:
\[ \brak{\Gamma} \cong \Fo(\Gamma)/\mathrm{ker}((,)).\]
\end{prop}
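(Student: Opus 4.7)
The plan is to show directly that the kernel of $h_\Gamma$ equals the kernel of the bilinear form $(,)$, so that $\brak{\Gamma} = \Fo(\Gamma)/\ker(h_\Gamma)$ coincides with $\Fo(\Gamma)/\ker((,))$. Since $h_\Gamma$ is already surjective by construction, this is the only thing that remains.

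First I would unpack the definition of $\ker(h_\Gamma)$. An element $x = \sum_i a_i U_i \in \Fo(\Gamma)$, with each $U_i$ a foam from $\emptyset$ to $\Gamma$, lies in $\ker(h_\Gamma)$ precisely when the relation $\sum_i a_i \brak{U_i} = 0$ holds in $\brak{\Gamma}$, which by definition of the state space means
\[
\sum_i a_i \brak{V U_i} = 0 \quad \text{in } R, \text{ for every foam } V \colon \Gamma \to \emptyset.
\]

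Next I would unpack the definition of $\ker((,))$. Since the pairing is $R$-bilinear and $\Fo(\Gamma)$ is free on foams $W \colon \emptyset \to \Gamma$, an element $x$ lies in $\ker((,))$ iff $(x, W) = 0$ for every such basis element $W$, i.e.,
\[
\sum_i a_i \brak{\omega(W) U_i} = 0 \quad \text{in } R, \text{ for every foam } W \colon \emptyset \to \Gamma.
\]

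The key step is then to observe that the anti-involution $\omega$ on $\catF$, which reflects a foam about $\R^2 \times \{\tfrac{1}{2}\}$, restricts to a bijection
\[
\omega \colon \Hom_{\catF}(\emptyset, \Gamma) \stackrel{\sim}{\longrightarrow} \Hom_{\catF}(\Gamma, \emptyset),
\]
since $\omega$ swaps the two boundary components of a foam and is involutive. Hence as $W$ ranges over all foams from $\emptyset$ to $\Gamma$, the composite $V := \omega(W)$ ranges over all foams from $\Gamma$ to $\emptyset$. Comparing the two conditions above shows they are identical, so $\ker(h_\Gamma) = \ker((,))$. Combined with surjectivity of $h_\Gamma$, the first isomorphism theorem yields
\[
\brak{\Gamma} \cong \Fo(\Gamma)/\ker((,)).
\]
There is no real obstacle here; the content is essentially the bookkeeping identification of the defining relations of $\brak{\Gamma}$ with the vanishing of all pairings, with the anti-involution $\omega$ providing the required bijection between foams into $\Gamma$ and foams out of $\Gamma$.
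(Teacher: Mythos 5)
Your proof is correct and is exactly the unwinding of definitions that the paper has in mind (it states the proposition is immediate from the definitions): the defining relations of $\brak{\Gamma}$ quantify over foams $V\colon\Gamma\to\emptyset$, the kernel of $(,)$ quantifies over foams $W\colon\emptyset\to\Gamma$, and the anti-involution $\omega$ identifies the two sets, so $\ker(h_\Gamma)=\ker((,))$.
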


The proposition is immediate from the definitions.
The form descends to a symmetric $R$-bilinear degree-preserving form
\[ (,) \ : \ \brak{\Gamma} \otimes_R \brak{\Gamma} \longrightarrow R \]
on $\brak{\Gamma}$ with values in $R$.  
This form is non-degenerate, that is, for any $a\in \brak{\Gamma},$ $a\not=0$ there is $b$ such that 
$(a,b)\not= 0$.

Bilinear form $(,): \lG \otimes_R \lG \lra R$ has degree $0$, when 
viewed as a map between graded $R$-modules, due to 
Proposition~\ref{prop:degree}. 

\begin{rmk}
We don't know whether the form is a perfect pairing for any $\Gamma$, that is, whether $\brak{\Gamma}$ is always a free graded $R$-module of finite rank with a homogeneous basis $b_1, \dots, b_m$ and a dual basis $b_1^{\ast}, \dots, b_m^{\ast}$ such that $(b_i,b_j^{\ast}) = \delta_{i,j}$. 
\end{rmk}

Denote the assignment of $\brak{\Gamma}$ to $
\Gamma$ by $\brak{\bullet}$. 
We can promote $\brak{\bullet}$ to a functor from the category $\catF$ of 
foams to the category of graded $R$-modules and homogeneous module homomorphisms. It assigns  
a graded $R$-module $\brak{\Gamma}$ to a web $\Gamma$ and a homogeneous 
$R$-module map $\brak{\partial_0 U}\lra \brak{\partial_1 U}$ of degree 
$\deg{U}$ to a foam $U$. This map can be first 
defined on the level of free modules, as the 
map 
$\Fo(\partial_0 U) \lra \Fo(\partial_1 U)$ taking 
a foam $V$ into $\Gamma$ (a basis element of 
$\Fo(\partial_0 U)$) to the composition 
$UV$, which is an element of the basis of $\Fo(\partial_1 U)$ and then extending by 
linearity. This homomorphism of free $R$-modules descends to the quotient map 
\[ \brak{U} \ : \ \brak{\partial_0 U} \lra \brak{\partial_1 U} .\] 

Relative to the bilinear form on $\brak{\Gamma}$, for various $\Gamma$, the $R$-linear map $\brak{U}$ is adjoint to the 
map $\brak{\omega(U)}: \brak{\partial_1 U} \lra \brak{\partial_0 U}$, since 
$ (\omega(U)W,V) = (W,UV) $
for any foam $V$ into $\partial_0 U$ and any foam $W$ into $\partial_1 U$.

\subsection{Boundary colorings and finitely-generated property}
\label{sec:fin-gen}\ 

We now extend the formula for evaluation of closed foams to foams $U$ with boundary, at least when the boundary is on one side of the foam, and use this extension to show that the state space $\brak{\Gamma}$ is a finitely-generated $R$-module. 
We can fix a Tait coloring $t$ of the boundary and form a suitable sum over all extensions of the coloring $t$  to a pre-admissible coloring $c$ of the foam. 

Just like in the closed case, by a pre-admissible coloring of a foam $U$ with boundary $\Gamma$ we mean an assignment of colors $\{1,2,3\}$ to components of $U$ such that along each seam edge the colors are distinct. A pre-admissible coloring of $U$ induces a Tait coloring of its boundary $\Gamma$. 

By admissible coloring of a foam $U$ with boundary, we mean an admissible coloring such that all bicolored surfaces are orientable. 

Note that, for any pre-admissible coloring $c$ of a foam $U\in \R^2\times[0,1]$ with boundary (even when both boundaries $\partial_0 U$, $\partial_1 U$ are non-empty), all surfaces $F_{ij}(c)$ are orientable, although some may have boundary. That's because we can compose $U$ with its reflection, forming the foam $\omega(U)U$ with identical top and bottom boundary $\partial_0 U$, and then closing it up into a foam $\widetilde{U}$ without boundary. The coloring $c$ extends to a pre-admissible coloring of $\widetilde{U}$, which is then necessarily admissible, since $\widetilde{U}$ is closed. Consequently, $c$ is admissible as well. 

Thus, for foams with boundary there is  no difference between pre-admissible and admissible colorings. Denote by $\adm(U)$ the set of admissible colorings of a foam $U$ with boundary. 

The surfaces $F_{ij}(c)$ are no longer always closed, although still orientable, and their Euler characteristic may be odd. In the extension of the formula, we would need to form square roots $(X_i+X_j)^{\frac{1}{2}}$ and their inverses. In characteristic two 
\[\sqrt{X_i + X_j} = \sqrt{X_i} + \sqrt{X_j}\] 
and 
\[\frac{1}{\sqrt{X_i+X_j}} = \frac{\sqrt{X_i}+\sqrt{X_j}}{X_i + X_j},\]
so it's enough to introduce square roots of generators $X_1, X_2, X_3$.

Recall that so far we have been using the chain of rings $R\subset R'\subset R''$, where 
\begin{align*}
R & =  \kk[E_1,E_2,E_3], \\
R' & =  \kk[X_1,X_2,X_3], \\
R'' & =  R'[(X_1+X_2)^{-1},(X_1+X_3)^{-1},(X_2+X_3)^{-1}].
\end{align*}

Form the ring $\widetilde{R}'$ by extending $R'$ by adding square roots of $X_1,X_2, X_3$, 
\[ \widetilde{R}'  = \kk[X_1^{ \frac{1}{2}}, 
X_2^{\frac{1}{2}},X_3^{\frac{1}{2}}].\]
Similarly, let 
\[\widetilde{R}''  =  \kk[X_1^{ \frac{1}{2}}, 
X_2^{\frac{1}{2}},X_3^{\frac{1}{2}},
(X_1+X_2)^{-1}, (X_1+X_3)^{-1}, (X_2+X_3)^{-1}].\] 
The ring $\widetilde{R}''$ is a free graded $R''$-module 
with a basis $\{X_1^{\epsilon_1}X_2^{\epsilon_2}X_3^{\epsilon_3}\}$, where $\epsilon_i\in \{0,\frac{1}{2}\}$, $i=1,2,3$. Indeed, this set generates $\widetilde{R}''$ as an $R''$-module and it is $R''$-linearly independent. To see this,  suppose that a $R''$-linear combination of these eight elements is zero. Multiplying by a power of $(X_1+X_2)(X_1+ X_3)(X_2+X_3)$, we can suppose that it is an $R'$-linear combination of elements of $\widetilde{R}'$. The result follows since $R''$ and $R'$ are domains and the above set is a basis of the free $R'$-module $\widetilde{R}'$. 
    
The diagram below depics inclusions of these five rings. 
\[
\begin{CD}
 &   &  \ \widetilde{R}' \ & \ \subset \ & \ \widetilde{R}'' \\
 &   &     \cup   &    &  \cup  \\
 R \ & \ \subset \ & \ R' \ & \ \subset \ & \ R'' 
\end{CD}
\]    
The ring $\widetilde{R}''$ is naturally isomorphic to the ring 
\[ \kk[Y_1,Y_2,Y_3,(Y_1+Y_2)^{-1},(Y_1+Y_3)^{-1},(Y_2+Y_3)^{-1}]\]
via the map that sends $Y_i$ to $X_i^{\frac{1}{2}}$ and 
$(Y_i+Y_j)^{-1}$ to $\frac{\sqrt{X_i}+\sqrt{X_j}}{X_i + X_j}$. 

Given an admissible coloring $c$ of a foam $U$ with 
boundary, we can form the monomial $P(U,c)$ as before, as product of $X_{c(f)}^{d(f)}$ over all facets $f$ of $U$. Likewise, define
\begin{align}\label{eq:Q2}
Q(U,c) = \prod_{1\le i < j \le 3} (X_i + X_j)^{\frac{\chi(F_{ij}(c))}{2}}  \ \in \widetilde{R}'' 
\end{align}
as an element of the bigger ring $\widetilde{R}''$ (for closed forms the product lies in the smaller ring $R''$). The ratio 
\[ \brak{U,c} = \frac{P(U,c)}{Q(U,c)}\]
is an element of $\widetilde{R}''$.

We write $c\supset t$ to indicate that a pre-admissible coloring $c$ of $U$ extends a Tait coloring $t$ of the web $\partial U = \partial_0 U \cup \partial_1 U$. 
Define 
\[\brak{U,t}_{\partial} =  \sum_{c \supset t}\brak{U, c}\in \widetilde{R}''.\]
This formula generalizes (\ref{eq:eval}) to 
foams $U$ with boundary. If $t$ does not extend to an admissible coloring of $U$ then $\brak{U,t}_{\partial}=0$. 

We now specialize to the case when $U$ has boundary only at the top, that is $U$ is a foam 
into a web $\Gamma= \partial_1 U$, with $\partial_0 U = \emptyset$. Fix a web $\Gamma$ and choose a Tait coloring $t$ of $\Gamma$. Consider any foam $U$ into $\Gamma$.  

The subgraph of $\Gamma$ which consists of all the vertices of $\Gamma$ and edges of $\Gamma$ which are colored $i$ or $j$ by $t$ is a collection of cycles, called the $ij$-cycles of $t$.

The ring $\widetilde{R}''$ contains $R'$ as a subring, and, when viewed as an $R'$-module, 
contains a collection of $R'$-submodules generated by elements 

\begin{align}\label{eq:u}
u(n_1,n_2,n_3) = (X_1+X_2)^{-\frac{n_1}{2}}(X_1+X_3)^{-\frac{n_2}{2}}(X_2 + X_3)^{-\frac{n_3}{2}} 
\end{align}
for any $n_1,n_2,n_3\in \Z$. 

\begin{prop} For any $\Gamma$, $t$ and $U$ as above, $\brak{U,t}_{\partial}\in R'u(m_{12},m_{13},m_{23})$, where $m_{ij}$ is the number of $ij$-cycles in $t$. 
\end{prop}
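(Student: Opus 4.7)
The plan is to adapt the proof of Theorem~\ref{thm:evl-sym-pol} to the bounded case. First I would observe that for any admissible coloring $c \supset t$, the bicolored surface $F_{ij}(c)$ is compact and orientable, with boundary equal to the union of $ij$-colored edges of $\Gamma$, which forms exactly $m_{ij}$ circles. Consequently $\chi(F_{ij}(c)) \equiv m_{ij} \pmod 2$, and one can write
\[
\frac{\chi(F_{ij}(c))}{2} \;=\; k_{ij}(c) \;+\; \frac{m_{ij}}{2}, \qquad k_{ij}(c) \in \Z.
\]
Substituting into (\ref{eq:Q2}) yields the factorization
\[
\brak{U,c} \;=\; u(m_{12},m_{13},m_{23}) \cdot N(U,c), \qquad N(U,c) \;:=\; \frac{P(U,c)}{\prod_{1 \le i < j \le 3}(X_i+X_j)^{k_{ij}(c)}} \;\in\; R''.
\]
The proposition thus reduces to showing $\sum_{c \supset t} N(U,c) \in R'$. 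Since fixing $t$ breaks the $S_3$-symmetry used in Theorem~\ref{thm:evl-sym-pol}, I would instead verify directly that this sum lies in each of $R''_{12}$, $R''_{13}$, $R''_{23}$ and invoke $R' = R''_{12} \cap R''_{13} \cap R''_{23}$ from (\ref{eq:ring_inc}).

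For the $R''_{12}$ check I would follow the grouping strategy of Lemma~\ref{lem:inR12}: partition $\{c \supset t\}$ by the set $S$ of facets colored $3$. For fixed $S$, the surface $\Sigma_S := F_{12}(c) = U \setminus \bigcup_{f \in S} f$ depends only on $S$ and $t$, and its connected components are of two types: \emph{boundary components}, each meeting one or more of the $m_{12}$ circles in $\partial U$, on which the $\{1,2\}$-coloring is forced by $t$; and \emph{interior components}, on which both $\{1,2\}$-colorings extend $t$ admissibly and are related by a $12$-Kempe move. Writing $n$ for the number of interior components and $m$ for the number of boundary components, one has $m \le m_{12}$ (each boundary component carries at least one of the $m_{12}$ boundary circles) and $|\adm(U,t,3{=}S)| = 2^n$.

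The Kempe-orbit computation from the proof of Lemma~\ref{lem:inR12} then applies verbatim, presenting the $S$-subsum of $N(U,c)$ as a single fraction whose numerator is a product of $n$ factors
\[
X_1^{p_1(a)}X_2^{p_2(a)} \;+\; X_2^{p_1(a)}X_1^{p_2(a)} \left(\frac{X_1+X_3}{X_2+X_3}\right)^{\ell_{\Sigma_a}},
\]
one per interior component $\Sigma_a$, each divisible by $X_1+X_2$ in characteristic two. The corresponding $(X_1+X_2)$-power in the denominator is $(X_1+X_2)^{k_{12}^0}$ with $k_{12}^0 = \chi(\Sigma_S)/2 - m_{12}/2$. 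Combining $\chi(\text{interior cpt}) \le 2$ with $\chi(\text{boundary cpt having } b \text{ boundary circles}) \le 2 - b$ and the identity $\sum b = m_{12}$ gives $\chi(\Sigma_S) \le 2n + 2m - m_{12}$, hence $k_{12}^0 \le n + m - m_{12} \le n$. The $n$ cancellations from the Kempe sum therefore suffice, placing the $S$-subsum in $R''_{12}$; summing over all $S$ gives $\sum_{c \supset t} N(U,c) \in R''_{12}$. The analogous arguments with color pairs $\{1,3\}$ and $\{2,3\}$ finish the proof.

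The main obstacle is the final Euler-characteristic bookkeeping: in the closed-foam case of Lemma~\ref{lem:inR12} the required bound $\chi(\Sigma_S)/2 \le n$ is immediate, but here the presence of the $m_{12}$ boundary circles shifts the raw estimate to $\chi(\Sigma_S)/2 \le n + m_{12}/2$, and one must combine this with the separate inequality $m \le m_{12}$ — itself a direct consequence of each boundary component of $\Sigma_S$ accounting for at least one of the $m_{12}$ boundary circles — to recover $k_{12}^0 \le n$ and ensure that enough $(X_1+X_2)$-factors from the Kempe sum are available to cancel the denominator.
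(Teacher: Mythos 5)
Your proof is correct and follows essentially the same strategy as the paper: factor out $u(m_{12},m_{13},m_{23})$, show the remaining sum lies in each $R''_{ij}$ by combining Kempe moves on the closed components of $U_{ij}(c)$ with an Euler-characteristic bound for the components meeting $\partial U$, and conclude via $R'=R''_{12}\cap R''_{13}\cap R''_{23}$. The paper's bookkeeping differs only cosmetically — it splits $U_{ij}(c)$ into $U^{\partial}_{ij}(c)\sqcup U^{\mathrm{o}}_{ij}(c)$ and uses the single inequality $\chi(U^{\partial}_{ij}(c))\le m_{ij}$ in place of your two-step combination of $\chi(\Sigma_a)\le 2-b_a$ with $m\le m_{ij}$ — but both arguments isolate the same key fact, namely that the closed components alone govern the excess $(X_i+X_j)$-power in the denominator and that excess is cancelled by the $n$ Kempe factors.
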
 

\begin{proof}
The proof is very similar to that of Theorem~\ref{thm:evl-sym-pol}. Indeed, if $\Gamma$ is empty and $U$ is then a closed foam, the proposition simply says that $\brak{U}$ is a polynomial. 

By definition, 
\[
\brak{U,t}_\partial= \sum_{c\supset t} \brak{U,c}
= \sum_{c\supset t} \frac{P(U,c)}{
(X_1 + X_2)^{\frac{\chi(U_{12},c)}{2}}
(X_1 + X_3)^{\frac{\chi(U_{13},c)}{2}}
(X_2 + X_3)^{\frac{\chi(U_{23},c)}{2}}
}
.
\]

For a pair of colors $(i,j)$ and $c\supset t$ form 
the surface $U_{ij}(c)$. Its boundary is the union of 
edges of $\Gamma$ colored by $i$ or $j$ by $t$. Denote by $U_{ij}^\partial(c)$, respectively $U_{ij}^\mathrm{o}(c)$, the union of all connected components of $U_{ij}(c)$ with non-empty, respectively empty, boundary. We have $\chi(U_{ij}^\partial)\leq m_{ij}$, since the Euler characteristic of a disk is $1$, and any other connected compact surface with boundary has Euler characteristic $0$ or less. Moreover, $\chi(U_{ij}^\partial(c))$ and $m_{ij}$
have the same parity. 

We have:
\begin{align*}
&  \brak{U,t}_\partial
 = \sum_{c\supset t} \brak{U,c} \\
&   =   \sum_{c\supset t} 
\frac{P(U,c)}{
(X_1 + X_2)^{\frac{\chi(U_{12},c)}{2}}
(X_1 + X_3)^{\frac{\chi(U_{13},c)}{2}}
(X_2 + X_3)^{\frac{\chi(U_{23},c)}{2}}
}   
\\
&  = u \sum_{c\supset t}
\frac{P(U,c) (X_1 + X_2)^{\frac{m_{12}-\chi(U^\partial_{12}(c))}{2}}
(X_1 + X_3)^{\frac{m_{13}-\chi(U^\partial_{13}(c))}{2}}
(X_2 + X_3)^{\frac{m_{23}-\chi(U^\partial_{23}(c))}{2}}
}{
(X_1 + X_2)^{\frac{\chi(U^{\mathrm{o}}_{12}(c))}{2}}
(X_1 + X_3)^{\frac{\chi(U^{\mathrm{o}}_{13}(c))}{2}}
(X_2 + X_3)^{\frac{\chi(U^{\mathrm{o}}_{23}(c))}{2}}
}
,
\end{align*}
where $u = u(m_{12}, m_{13}, m_{23})$ is given by formula (\ref{eq:u}). Note that 
each exponent in the numerator is non-negative, 
since $m_{ij}\ge \chi(U_{ij}^{\partial}(c)).$ 

Let $r$ be the number of connected components of $U_{ij}^{\mathrm{o}}(c)$ for a given $c\supset t$. We apply Kempe moves along these components and combine together $2^r$ terms in the above sum for 
the $2^r$ $ij$-Kempe-related colorings to 
pull out $(X_i+X_j)^r$ and cancel  
potentially positive exponent $(X_i+X_j)^\frac{\chi(U^{\mathrm{o}}_{ij}(c))}{2}$ 
in the denominator. 

Consequently, $\brak{U,t}_{\partial}$ belongs 
to the $R''_{ij}$-submodule of $\widetilde{R}''$ generated 
by $u$, where, recall,  
\[R''_{ij} = R' \left[\frac{1}{X_i+X_k},\frac{1}{X_j+X_k}\right] \]
and $\{ i,j,k\} = \{ 1, 2, 3\}$. 
Since the triple intersection of the rings $R''_{12}$, $R''_{13}$, and $R''_{23}$ is $R'$, the sum $\brak{U,t}_{\partial}$ belongs to 
$uR'$. 
\end{proof} 

Let $N(\Gamma)$ be the maximal number of 12-colored cycles in any Tait coloring of $\Gamma$ and 
$$u(\Gamma) = ((X_1+X_2)^{1/2}(X_1+X_3)^{1/2}(X_1+X_3)^{1/2})^{-N(\Gamma)}.$$ 
\begin{corollary} Fix a web $\Gamma$. 
For any foam $U$ into $\Gamma$ and any Tait coloring $t$ of $\Gamma$, the evaluation 
$\brak{U,t}_\partial$ belongs to $\widetilde{R}' u(\Gamma)$, that is, 
the $\widetilde{R}'$-submodule of $\widetilde{R}''$ generated by 
$u(\Gamma)$. In particular, $\brak{U,t}_\partial$, over all $t$, 
belong to a finitely-generated (and free of rank eight) $R'$-submodule 
of $\widetilde{R}''$. The degree of $\brak{U,t}_{\partial}$ is 
bounded below by $-3N(\Gamma)$. 
\end{corollary}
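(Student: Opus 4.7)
The plan is to deduce the corollary directly from the preceding proposition, with the only real work being to verify the homogeneity conventions and to exploit the characteristic-two identity $\sqrt{X_i+X_j}=\sqrt{X_i}+\sqrt{X_j}$ to stay inside $\widetilde{R}'$.

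First I would record that, by the permutation action of $S_3$ on the three colors, Tait colorings come in $S_3$-orbits that simultaneously permute the quantities $m_{12}(t), m_{13}(t), m_{23}(t)$. Hence the definition $N(\Gamma)=\max_t m_{12}(t)$ is independent of the chosen pair of colors, and for every Tait coloring $t$ of $\Gamma$ one has $m_{ij}(t)\le N(\Gamma)$ for all $1\le i<j\le 3$.

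Next I would invoke the preceding proposition to conclude $\brak{U,t}_\partial\in R'\,u(m_{12},m_{13},m_{23})$. Write
\[
u(m_{12},m_{13},m_{23})\;=\;u(\Gamma)\cdot(X_1+X_2)^{(N(\Gamma)-m_{12})/2}(X_1+X_3)^{(N(\Gamma)-m_{13})/2}(X_2+X_3)^{(N(\Gamma)-m_{23})/2}.
\]
The exponents are non-negative half-integers. Because the ground field has characteristic two, $(X_i+X_j)^{1/2}=X_i^{1/2}+X_j^{1/2}\in\widetilde{R}'$, so every factor on the right belongs to $\widetilde{R}'$. This immediately yields $\brak{U,t}_\partial\in\widetilde{R}'\,u(\Gamma)$.

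For the finite generation statement, multiplication by $u(\Gamma)$ is injective on $\widetilde{R}''$ since $u(\Gamma)$ is a unit there; hence $\widetilde{R}'\,u(\Gamma)\cong\widetilde{R}'$ as $R'$-modules. As noted in the excerpt, $\widetilde{R}'$ is a free graded $R'$-module of rank eight with basis $\{X_1^{\epsilon_1}X_2^{\epsilon_2}X_3^{\epsilon_3}\}_{\epsilon_i\in\{0,1/2\}}$, so $\widetilde{R}'\,u(\Gamma)$ is free of rank eight over $R'$, and it contains $\brak{U,t}_\partial$ for every $t$. For the degree bound, a direct count gives $\deg u(\Gamma)=-3N(\Gamma)$, and every non-zero homogeneous element of $\widetilde{R}'\,u(\Gamma)$ has degree at least $-3N(\Gamma)$ since $\widetilde{R}'$ is non-negatively graded. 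The sum $\brak{U,t}_\partial=\sum_{c\supset t}\brak{U,c}$ is homogeneous of degree $\deg(U)$ (by the boundary version of Remark~\ref{rmk:degree}, since all summands share this degree), so the bound applies whenever $\brak{U,t}_\partial\neq 0$, and is vacuous otherwise.

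There is no serious obstacle; the only point requiring a moment's attention is the characteristic-two identity that makes $(X_i+X_j)^{1/2}$ genuinely lie in $\widetilde{R}'$ rather than in some extension, which is what allows the three leftover half-integer powers to be absorbed into $\widetilde{R}'$ in the passage from $u(m_{12},m_{13},m_{23})$ to $u(\Gamma)$.
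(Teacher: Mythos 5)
Your proposal is correct and carries out the deduction that the paper leaves implicit (the paper states the corollary with no separate proof, treating it as an immediate consequence of the preceding proposition). The one thing worth flagging: the paper's displayed definition of $u(\Gamma)$ contains a typo — $(X_1+X_3)^{1/2}$ appears twice and $(X_2+X_3)^{1/2}$ is missing — and your identity
\[
u(m_{12},m_{13},m_{23})=u(\Gamma)\cdot(X_1+X_2)^{(N(\Gamma)-m_{12})/2}(X_1+X_3)^{(N(\Gamma)-m_{13})/2}(X_2+X_3)^{(N(\Gamma)-m_{23})/2}
\]
silently uses the corrected $S_3$-symmetric version $u(\Gamma)=\bigl((X_1+X_2)(X_1+X_3)(X_2+X_3)\bigr)^{-N(\Gamma)/2}$, which is the one that is actually needed (otherwise the $(X_2+X_3)$-exponent would stay negative and the argument would fail).
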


Recall that $\Fo(\Gamma)$ has a basis $\{ [F]\}_F$ given by all possible foams $F$ 
from the empty foam $\emptyset$ to $\Gamma$. 
Degree of $[F]$ is given by formula (\ref{eq:degree-with-bdy}). 

\vspace{0.1in}

For a web $\Gamma$ denote by $\adm(\Gamma)$ the set of Tait colorings of $\Gamma$. Consider the free 
graded $\widetilde{R}''$-module $M(\Gamma)$ of rank $|\adm(\Gamma)|$ with a basis $\{ 1_t\}_{t\in \adm(\Gamma)}$. We place each basis element in degree $0$. 

Assume that $U$ is a foam into $\Gamma$. Let us define   
\[\brak{U}_{\partial} = \sum_{t\in \adm(\Gamma)} \, \sum_{\substack{c\in \adm(U) \\ c \supset t}}\brak{U, \partial c}1_{t} \in M(\Gamma).\]
In this formula, each admissible coloring $c$ of $U$ contributes to the coefficient of $1_t$, where $t$ is the restriction of $c$ to $\Gamma$. 

Consider a symmetric bilinear form $(,)_M$ on $M(\Gamma)$ with values 
in $\widetilde{R}''$ which is orthogonal in the basis of $1_t$'s, so 
that $(1_t, 1_s)_M = \delta_{t,s}$. 

\begin{prop} \label{prop:bilin-equal} For foams $U$ and $U_1$ into $\Gamma$ one has 
\begin{align}\label{eq:bilin-equal}
(\brak{U_1}_{\partial}, \brak{U}_{\partial})_M = 
 ( \brak{U_1}, \brak{U}) = \brak{\omega(U_1)U} \in R.
 \end{align}
\end{prop}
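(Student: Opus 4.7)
The second equality $(\brak{U_1},\brak{U}) = \brak{\omega(U_1)U}$ is essentially tautological: it is exactly the defining formula of the bilinear form on $\brak{\Gamma}$ from Section~\ref{sec:homology} (the closed foam $\omega(U_1)U$ is the one whose evaluation determines the pairing). So the substance of the proposition is the first equality, and my plan is to prove it by unfolding both sides into sums over admissible colorings and matching terms bijectively.

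First I would expand the left-hand side using orthonormality of the basis $\{1_t\}$:
\begin{equation*}
(\brak{U_1}_\partial,\brak{U}_\partial)_M \;=\; \sum_{t\in\adm(\Gamma)}\; \sum_{\substack{c_1\in\adm(U_1)\\ c_1\supset t}}\; \sum_{\substack{c\in\adm(U)\\ c\supset t}} \brak{U_1,c_1}\,\brak{U,c}.
\end{equation*}
Next I would observe the fundamental bijection: admissible colorings of the closed foam $\omega(U_1)U$ are in one-to-one correspondence with triples $(t,c_1,c)$ where $t\in\adm(\Gamma)$ and $c_1\supset t$, $c\supset t$ are admissible colorings of $U_1$ and $U$ respectively. (Any pre-admissible coloring of a foam in $\R^2\times[0,1]$ with or without boundary is admissible, as explained in Section~\ref{sec:fin-gen}, so there is no distinction to worry about.) Under this bijection, dots are preserved, hence the monomials factor as $P(\omega(U_1)U,(c_1,c)) = P(U_1,c_1)\,P(U,c)$.

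The key geometric step is matching the $Q$-factors. For each pair $1\le i<j\le 3$, the bicolored surface $(\omega(U_1)U)_{ij}(c_1,c)$ is obtained by gluing the reflected surface $(U_1)_{ij}(c_1)$ to $U_{ij}(c)$ along their common boundary, which is precisely the union of the $ij$-cycles of $t$ in $\Gamma$. Since this common boundary is a disjoint union of circles and has Euler characteristic zero, additivity of Euler characteristic under gluing along a CW-subcomplex gives
\begin{equation*}
\chi\bigl((\omega(U_1)U)_{ij}(c_1,c)\bigr) \;=\; \chi\bigl((U_1)_{ij}(c_1)\bigr) + \chi\bigl(U_{ij}(c)\bigr).
\end{equation*}
Consequently $Q(\omega(U_1)U,(c_1,c)) = Q(U_1,c_1)\,Q(U,c)$ in $\widetilde{R}''$, and the apparently half-integer exponents in the factors combine to the honest integer exponents in the closed-foam expression. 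Combining with the $P$-factorization yields $\brak{\omega(U_1)U,(c_1,c)} = \brak{U_1,c_1}\brak{U,c}$, and summing over all $(t,c_1,c)$ recovers $\brak{\omega(U_1)U}$ on the nose.

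The main conceptual point to be careful about is the half-integer exponents: individually $\brak{U_1,c_1},\brak{U,c}\in\widetilde{R}''$ may involve $(X_i+X_j)^{1/2}$ factors, but the gluing argument on Euler characteristics shows these combine into integer exponents in the product, confirming that the sum lies in $R$ as it must. No other step should present difficulty, since everything reduces to the additivity of Euler characteristic across the shared boundary $\Gamma$ and the straightforward bijection on colorings.
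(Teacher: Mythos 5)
Your proof is correct and follows the same strategy as the paper: establish the bijection between admissible colorings of the closed foam $\omega(U_1)U$ and pairs $(c_1,c)$ of compatible colorings of $U_1$ and $U$, then match contributions termwise. The paper states the term-matching more tersely ("contributes the same quantity"), whereas you spell out the $P$-factorization and the Euler-characteristic additivity across the middle cross-section that makes $Q$ factor; this is exactly what the paper's claim rests on, so you have simply filled in the details.
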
 
In particular, the inner product for the braket evaluation takes 
values in the subring $R$ of $\widetilde{R}''$. 

\begin{proof} The evaluation $\brak{\omega(U_1)U}$ is given by summing over all admissible colorings of $\omega(U_1)U$. Each of these colorings restricts to a Tait coloring of $\Gamma$, which is the middle cross-section of $\omega(U_1)U$. Vice versa, a pair of colorings of $U$ and $U_1$ that restrict to the same coloring on their boundaries give rise to an admissible coloring of $\omega(U_1)U$. Each such pair of compatible colorings of $U$ and $U_1$ contributes the same quantity to the LHS and the RHS of the formula in the proposition. 
\end{proof} 
  
Now consider three graded $R$-modules: $\Fo(\Gamma)$, $M(\Gamma)$, 
and $\brak{\Gamma}$. Each of these comes with a symmetric bilinear 
form on it, which is $(,)_M$ for the second module and is given 
by the evaluation $\brak{\omega(U_1)U}$ on generating pairs for the
first and the third modules. The form takes values in $R$ for the 
first and third spaces and values in the bigger ring $\widetilde{R}''$ 
for the second module. The third space is the quotient of the first 
by the kernel of the bilinear form, and the forms on the first and second spaces are related by the formula (\ref{eq:bilin-equal}). 

Consider the $R$-submodule $M_R(\Gamma)$ of $M(\Gamma)$ generated over $R$ by $\brak{U}_{\partial}$ over all foams $U$ into $\Gamma$. Due 
to (\ref{eq:bilin-equal}), the restriction of the bilinear form 
$(,)_M$ to this submodule takes values in $R$ rather than in the 
bigger ring $\widetilde{R}''$. 

$M_R(\Gamma)$ is an $R$-submodule of 
the $\widetilde{R}'$-submodule 
\[\bigoplus_{t\in \adm(\Gamma)}\widetilde{R}' \ u(\Gamma)1_t\]
of $M(\Gamma)$. 

The latter submodule is a 
finitely-generated graded $R$-module (and also a free $R$-module), 
being a finite direct sum of free $\widetilde{R}'$-submodules 
generated by $u(\Gamma)1_t$, over all $t$. 

Finitely-generated property follows by considering the chain of 
subrings $R\subset R'\subset \widetilde{R}'$ and observing that 
$R'$ is a free graded finitely-generated $R$-module (of rank six), 
and $\widetilde{R}'$ is graded finitely-generated $R'$-module 
(in fact, a free rank eight module). 

Since $\Fo(\Gamma)$ is a free graded $R$-module generated by 
foams into $\Gamma$, there is a surjective $R$-module map 
$\Fo(\Gamma) \lra M_R(\Gamma)$ given by sending foam $U$ to $\brak{U}_\partial$, for all $U$. This homomorphism respects the bilinear 
forms, in view of Proposition~\ref{prop:bilin-equal}. Furthermore, 
all the bilinear forms considered respect the grading of our 
modules. 

Consequently, there is a unique homomorphism of graded $R$-modules
$\gamma_{\Gamma}: M_R(\Gamma) \lra \brak{\Gamma}$ that takes $\brak{U}_{\partial}$ 
to $\brak{U}$ for all foams $U$ into $\Gamma$, due to 
$\brak{\Gamma}$ being the quotient of $\Fo(\Gamma)$ by the kernel 
of the bilinear form. This homomorphism is surjective, leading 
at once to the following result. 

\begin{prop} \label{prop:fin-gen} Graded $R$-module $\brak{\Gamma}$ is finitely-generated, for any web $\Gamma$. 
\end{prop}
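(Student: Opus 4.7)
The plan is to factor the defining surjection $\Fo(\Gamma) \twoheadrightarrow \brak{\Gamma}$ through an intermediate $R$-module $M_R(\Gamma)$ that is visibly a submodule of a finitely-generated (free) $R$-module. All the ingredients are already assembled in the subsection preceding the statement: the auxiliary module $M(\Gamma)$ indexed by Tait colorings of $\Gamma$, the boundary-evaluation $\brak{U}_\partial \in M(\Gamma)$, its $R$-span $M_R(\Gamma)$, and Proposition~\ref{prop:bilin-equal} comparing the two bilinear forms.

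First, I would invoke the Corollary following Proposition~\ref{prop:bilin-equal} (really its immediate consequence): for any foam $U$ into $\Gamma$ and any Tait coloring $t$, the coefficient $\brak{U,t}_\partial$ lives in $\widetilde{R}' u(\Gamma)$. Hence
\[
M_R(\Gamma) \ \subset \ \bigoplus_{t \in \adm(\Gamma)} \widetilde{R}' \ u(\Gamma) \cdot 1_t.
\]
The right-hand side is a free $\widetilde{R}'$-module of finite rank $|\adm(\Gamma)|$, and $\widetilde{R}'$ is finitely-generated (in fact, free) as an $R$-module via the tower $R \subset R' \subset \widetilde{R}'$ (ranks $6$ and $8$ respectively). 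Since $R = \kk[E_1,E_2,E_3]$ is a Noetherian ring, any $R$-submodule of a finitely-generated $R$-module is finitely-generated; applying this to $M_R(\Gamma)$ gives that it is finitely-generated over $R$.

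Next, I would construct a surjective graded $R$-module map $\gamma_\Gamma : M_R(\Gamma) \twoheadrightarrow \brak{\Gamma}$ sending $\brak{U}_\partial$ to $\brak{U}$. Well-definedness is the key point, and it follows from Proposition~\ref{prop:bilin-equal}: if $\sum_i a_i \brak{U_i}_\partial = 0$ in $M_R(\Gamma)$, then pairing with $\brak{V}_\partial$ using $(\cdot,\cdot)_M$ yields
\[
0 \ = \ \Bigl(\sum_i a_i \brak{U_i}_\partial, \ \brak{V}_\partial\Bigr)_M \ = \ \sum_i a_i \brak{\omega(V) U_i}
\]
for every foam $V$ into $\Gamma$, so $\sum_i a_i [U_i]$ lies in the kernel of the form that defines $\brak{\Gamma}$; hence $\sum_i a_i \brak{U_i} = 0$ in $\brak{\Gamma}$. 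Surjectivity of $\gamma_\Gamma$ is immediate since the classes $\brak{U}$ generate $\brak{\Gamma}$. A quotient of a finitely-generated module is finitely-generated, which concludes the proof.

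The main obstacle, already disposed of by the Corollary, is proving that the boundary evaluations $\brak{U,t}_\partial$ stay within a fixed finitely-generated $\widetilde{R}'$-lattice as $U$ varies; this is a genuine bound, not a tautology, and uses the Kempe-move argument together with the intersection identity $R' = R''_{12}\cap R''_{13}\cap R''_{23}$ to control the possible denominators. Once that uniform bound is in hand, Noetherianity of $R$ takes care of the rest.
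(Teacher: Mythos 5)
Your proposal is correct and follows essentially the same route as the paper: factoring $\Fo(\Gamma)\twoheadrightarrow\brak{\Gamma}$ through $M_R(\Gamma)$, embedding $M_R(\Gamma)$ in the finitely-generated free $R$-module $\bigoplus_{t\in\adm(\Gamma)}\widetilde{R}'\,u(\Gamma)1_t$ via the tower $R\subset R'\subset\widetilde{R}'$, and using Proposition~\ref{prop:bilin-equal} to see that $\gamma_\Gamma$ is well-defined and surjective. You make the appeal to Noetherianity of $R$ explicit where the paper leaves it tacit, but the argument is the same.
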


\begin{proof} The $R$-module $\brak{\Gamma}$ is a quotient of the finitely-generated 
graded $R$-module $M(\Gamma)$. 
\end{proof}
We collect the modules and maps from the proof into the diagram below
\[
\begin{CD}
 &   &  \ M(\Gamma) \ &  &  \\
 &   &     \cup   &    &    \\
 \Fo(\Gamma) & \ \longrightarrow \ & \ M_R(\Gamma) \ & \ \stackrel{\gamma_{\Gamma}}{\longrightarrow} \ &  \lG  . 
\end{CD}
\]    

\vspace{0.1in}

Each element $b$ of $\brak{\Gamma}$ determines an 
$R$-linear map $\Fo(\Gamma)\longrightarrow R$ taking $a$ to $(b,a)$. The form $(,)$ is non-degenerate on $\brak{\Gamma}$ and this assignment is an injective $R$-module homomorphism 
$$\brak{\Gamma}\longrightarrow \Fo(\Gamma)^{\ast} = 
\Hom_R (\Fo(\Gamma),R)$$  
Since $\brak{\Gamma}$ is finitely generated over $R$, choose a finite collection of homogeneous generators $b_1, \dots, b_m$ of this $R$-module, 
giving a surjective $R$-module map $R^m \lra 
\brak{\Gamma}$. Then 
assigning to $a\in \brak{\Gamma}$ the element 
\[((b_1,a),\dots,(b_m,a))^T\in R^m\] 
is an injective $R$-module map
\[ \brak{\Gamma} \ \lra \ R^m.\]
The map is that of graded $R$-modules if we assign to the generator $(b_i,\ast)$ of $R^m$ degree $-\deg(b_i)$, $i=1, \dots, m$. We frame this into a proposition. 

\begin{prop} 
$\brak{\Gamma}$, for any web $\Gamma$, is isomorphic to a submodule of a free graded $R$-module of finite rank.  
\end{prop}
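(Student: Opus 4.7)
The plan is to use the non-degeneracy of the bilinear form $(,)$ on $\brak{\Gamma}$ together with the finite generation from Proposition~\ref{prop:fin-gen} to build an explicit injection into a free graded $R$-module. First I would invoke Proposition~\ref{prop:fin-gen} to pick a finite collection of homogeneous generators $b_1, \dots, b_m$ of $\brak{\Gamma}$ over $R$, with $\deg(b_i) = d_i$. This fixes the target: take the free graded $R$-module $R^m$ in which the $i$-th standard generator sits in degree $-d_i$, so that the bilinear pairing against $b_i$ is grading-preserving.

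Next I would define the candidate map
\[
\Phi \ : \ \brak{\Gamma} \ \lra \ R^m, \qquad \Phi(a) \ = \ ((b_1,a), (b_2,a), \dots, (b_m,a))^T.
\]
This is $R$-linear by $R$-bilinearity of $(,)$, and the choice of grading on $R^m$ above makes $\Phi$ a homogeneous $R$-module map of degree zero, since $(b_i, a) \in R$ has degree $d_i + \deg(a)$ and lies in the degree $\deg(a)$ part of the shifted $i$-th summand.

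The key point is injectivity, and it comes directly from non-degeneracy of the form on $\brak{\Gamma}$: if $\Phi(a) = 0$ then $(b_i, a) = 0$ for all $i$, and since the $b_i$ generate $\brak{\Gamma}$ over $R$ and $(,)$ is $R$-bilinear, this forces $(b, a) = 0$ for every $b \in \brak{\Gamma}$. By non-degeneracy we get $a = 0$. I do not foresee a real obstacle here, since both ingredients—finite generation and non-degeneracy—have already been established; the only thing to be careful about is making the degree shifts on the target $R^m$ consistent so that $\Phi$ is a morphism of graded $R$-modules, which is a bookkeeping matter.
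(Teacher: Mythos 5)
Your proposal is correct and follows essentially the same route as the paper: choose finitely many homogeneous generators $b_1,\dots,b_m$ (Proposition~\ref{prop:fin-gen}), map $a\mapsto ((b_1,a),\dots,(b_m,a))^T\in R^m$ with the $i$-th summand shifted to degree $-\deg(b_i)$, and use non-degeneracy of the pairing to get injectivity. Your spelled-out injectivity step (that vanishing against the generators forces vanishing against all of $\brak{\Gamma}$ by $R$-bilinearity) is exactly the point the paper leaves implicit.
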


\begin{corollary} Finitely-generated graded $R$-module $\brak{\Gamma}$ has no torsion. 
It's equipped with a symmetric graded $R$-valued bilinear form with the trivial kernel. 
\end{corollary}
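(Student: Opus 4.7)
The plan is to deduce both claims almost directly from what has just been established, so the proof will be short. The non-degeneracy of the bilinear form is essentially tautological given the construction of $\brak{\Gamma}$: recall that $\brak{\Gamma}$ was defined as the quotient $\Fo(\Gamma)/\mathrm{ker}((,))$ of the free module $\Fo(\Gamma)$ by the kernel of the evaluation form. Passing to the quotient annihilates precisely the radical of the form, so the induced $R$-bilinear form on $\brak{\Gamma}$ has trivial kernel by construction. This was noted in the text right after the descent of the form; the corollary simply records it.

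For torsion-freeness, I would invoke the preceding proposition, which produces an injective graded $R$-module homomorphism $\brak{\Gamma}\hookrightarrow R^m$ via $a\mapsto ((b_1,a),\dots,(b_m,a))^T$, where $b_1,\dots,b_m$ are homogeneous generators of $\brak{\Gamma}$. The ring $R=\kk[E_1,E_2,E_3]$ is a polynomial ring over the field $\kk$, hence an integral domain, so the free module $R^m$ is torsion-free as an $R$-module. Any submodule of a torsion-free module is torsion-free, so $\brak{\Gamma}$ inherits this property.

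Putting the two facts together gives both statements in the corollary. I do not anticipate any real obstacle: the only substantive inputs, namely the non-degeneracy of the form and the embedding into a free $R$-module of finite rank, are already in place. One subtle point worth double-checking is that the embedding $\brak{\Gamma}\hookrightarrow R^m$ is actually injective, but this follows from non-degeneracy of $(,)$: if $((b_i,a))_i=0$ for all $i$, then $(b,a)=0$ for every $R$-linear combination $b$ of the $b_i$, hence for every $b\in\brak{\Gamma}$, forcing $a=0$. Thus the two assertions of the corollary reinforce each other in a clean way, with no further computation required.
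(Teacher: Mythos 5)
Your proof is correct and follows exactly the route the paper intends: the trivial-kernel claim is built into the definition of $\brak{\Gamma}$ as the quotient by the radical of the form, and torsion-freeness comes from the embedding into $R^m$ established in the immediately preceding proposition together with $R$ being a domain. Your verification of injectivity of $a \mapsto ((b_i,a))_i$ is the right justification for the step the paper states without comment.
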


\subsection{Direct sum decompositions} 
\label{sub:dir_sum}\

In this subsection we will translate some of the relations satisfied by the local evaluation of foam given in Section~\ref{sec:rel-btwn-ev} into local relation satisfied by the homology. 

\begin{prop}
\label{prop:dec-circle}
If a graph $\Gamma'$ is obtained from a graph $\Gamma$ by adding an innermost circle, then there is a canonical isomorphism 
\[
\brak{\Gamma'} \simeq \brak{\Gamma}\{2\} \oplus \brak{\Gamma} \oplus \brak{\Gamma}\{-2\}
\]
given by maps in Figure~\ref{fig:dec-circle}.
\begin{figure}[ht]
\centering
\begin{tikzpicture}[scale=0.8]
\input{\imagesfolder/km_dec-circle}
\end{tikzpicture}
\caption{}
\label{fig:dec-circle}
\end{figure}
\end{prop}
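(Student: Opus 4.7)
The plan is a standard universal-construction argument: exhibit explicit inverse homomorphisms built from cups and caps decorated with dots, and verify their invertibility using the sphere evaluation (Corollary~\ref{cor:sphere-eval}) and the neck-cutting relation (Proposition~\ref{prop:neckcutting}). Because the added circle $\bigcirc$ is innermost, we may write $\Gamma' = \Gamma \sqcup \bigcirc$ and choose an embedded $2$-disk $D \subset \R^2$ bounding $\bigcirc$ and disjoint from $\Gamma$; cup and cap foams supported on $D$ are then well-defined morphisms in $\catF$.

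For $i = 0, 1, 2$, let $F_i \colon \Gamma \to \Gamma'$ denote the foam $\Gamma \times [0,1] \sqcup D$, with the disk $D$ attached at $\bigcirc \times \{1\}$ and decorated by $i$ dots. By formula~(\ref{eq:degree-with-bdy}) it has degree $2i - 2$. Setting $\alpha_i := \brak{F_i}$, after the appropriate grading shifts the $\alpha_i$ assemble into a degree-preserving homomorphism
\[
A \colon \lG\{2\} \oplus \lG \oplus \lG\{-2\} \longrightarrow \brak{\Gamma'},
\]
where the summand $\lG\{2-2i\}$ is the source of $\alpha_i$. Dually, let $V_k \colon \Gamma' \to \Gamma$ be the cap foam (the vertical mirror of $F_k$), decorated by $k$ dots, and define
\[
\beta_2 = V_0, \qquad \beta_1 = V_1 + E_1 V_0, \qquad \beta_0 = V_2 + E_1 V_1 + E_2 V_0.
\]
These are homogeneous of degrees $-2, 0, 2$ respectively, so they assemble into a degree-preserving map $B \colon \brak{\Gamma'} \to \lG\{2\} \oplus \lG \oplus \lG\{-2\}$.

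For the composite $B \circ A$, the foam inducing $\beta_j \circ \alpha_i$ is $\Id_\Gamma$ disjointly unioned with a $2$-sphere carrying $i + k$ dots, one such sphere appearing for each summand $V_k$ of $\beta_j$ (the cup disk from $F_i$ and the cap disk from $V_k$ share the common boundary $\bigcirc$ and glue into a sphere). By Corollary~\ref{cor:sphere-eval}, the evaluation of the $n$-dotted sphere is $s_n \in R$ with $s_0 = s_1 = 0$, $s_2 = 1$, $s_3 = E_1$, $s_4 = E_1^2 + E_2$; the coefficients appearing in $\beta_0, \beta_1, \beta_2$ are precisely the entries of the inverse (over $R$, in characteristic two) of the $3 \times 3$ matrix $(s_{i+k})_{i,k=0,1,2}$. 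A direct check then yields $\beta_j \alpha_i = \delta_{ij}\, \Id_{\lG}$, hence $B \circ A = \Id$.

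For the other composite $A \circ B$, expanding $\sum_{i=0}^{2} \alpha_i \beta_i$ produces the six cap-then-cup terms
\[
F_0 V_2 + F_1 V_1 + F_2 V_0 + E_1 (F_0 V_1 + F_1 V_0) + E_2\, F_0 V_0,
\]
which match, term-for-term and coefficient-for-coefficient, the right-hand side of the neck-cutting relation of Proposition~\ref{prop:neckcutting} applied to the tube $\bigcirc \times [0,1]$ sitting inside the identity cobordism $\Gamma' \times [0,1]$. Hence $A \circ B = \Id_{\brak{\Gamma'}}$. The only real obstacle is linear-algebra bookkeeping: setting up the grading shifts consistently and verifying that the chosen $R$-linear combinations defining $\beta_0, \beta_1, \beta_2$ invert the sphere-pairing matrix in characteristic two; the substantive geometric inputs are entirely absorbed into Corollary~\ref{cor:sphere-eval} and Proposition~\ref{prop:neckcutting}.
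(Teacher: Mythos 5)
Your proposal is correct and follows essentially the same route as the paper, whose proof simply invokes the neck-cutting relation (Proposition~\ref{prop:neckcutting}) together with the sphere evaluations (Corollary~\ref{cor:sphere-eval}); you have merely spelled out the cup/cap maps, the inversion of the sphere-pairing matrix in characteristic two, and the identification of $\sum_i \alpha_i\beta_i$ with the neck-cut identity cobordism. No gaps.
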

\begin{proof}
This follows directly from Proposition~\ref{prop:neckcutting} and Corollary~\ref{cor:sphere-eval}.  
\end{proof}

\begin{prop}
\label{prop:dec-digon}
If a graph $\Gamma'$ is obtained from a graph $\Gamma$ by adding a digon region, then there is a canonical isomorphism 
\[
\brak{\Gamma'} \simeq \brak{\Gamma}\{1\} \oplus \brak{\Gamma}\{-1\}
\]
given by Figure~\ref{fig:dec-digon}.
\begin{figure}[ht]
\centering
\begin{tikzpicture}[scale=0.8]
\begin{scope}
\node (I) at (5,0) {$\begin{array}{c}  \NB{\tikz{\draw (-0.5,0) -- (0.5,0)}} \, \{1\} \\ \oplus \\ \NB{\tikz{\draw (-0.5,0) -- (0.5,0)}}\, \{-1\}  \end{array}$};
  \node (Gamma) at (-5,0) {${\tikz{\draw (-0.5,0) -- (-0.3, 0) .. controls (0,0.3) .. (0.3, 0) -- (0.5, 0); \draw (-0.3, 0) .. controls (0,-0.3) .. (0.3, 0) ;
}}$};
  \draw[<-] (I) .. controls +(-2,1) and + (2,1) .. (Gamma) coordinate[pos=0.5] (top) {};
  \draw[->] (I) .. controls +(-2,-1) and + (2,-1) .. (Gamma) node[pos=0.5] (bottom) {};
  \node[above] (TOP) at (top) 
{
$\left(
\begin{array}{l}
  \digoncapone  \\[1em] \digoncapzero
\end{array} 
\right)
$
};
  \node[below] (BOT) at (bottom) {
$\left(
\digoncupzero \quad  \digoncupone
\right)$
};
\end{scope}
\end{tikzpicture}
\caption{}
\label{fig:dec-digon}
\end{figure}
\end{prop}
\begin{proof}
This follows directly from Propositions~\ref{prop:digonrel} and~\ref{prop:bubbleremoval}.
\end{proof}

\begin{prop}
\label{prop:dec-square}
Suppose a graph $\Gamma$ contains a square. Denote by $\Gamma_1$ and $\Gamma_2$ the two smoothings of the square of $\Gamma$. Then there is a canonical isomorphism 
\[
\brak{\Gamma} \simeq \brak{\Gamma_1} \oplus \brak{\Gamma_2}
\]
given by Figure~\ref{fig:dec-square}.
\begin{figure}[ht]
\centering
\begin{tikzpicture}
\begin{scope}
\node (Smoothings) at (5,0) {
$\begin{array}{c} 
\NB{\tikz[rotate= -20]{
\draw (-0.5, -0.5) ..controls +(0.2, 0.2) and +(0.2, -0.2).. (-0.5, 0.5);  
\draw ( 0.5, -0.5) ..controls +(-0.2, 0.2) and +(-0.2, -0.2).. ( 0.5, 0.5);  
}} \\ 
\oplus \\ 
\NB{\tikz[rotate= -20]{
\draw (-0.5, -0.5) ..controls +(0.2, 0.2) and +(-0.2, 0.2).. ( 0.5, -0.5);  
\draw (-0.5,  0.5) ..controls +( 0.2,-0.2) and +(-0.2, -0.2).. ( 0.5, 0.5);  
}}
\end{array}
$
};
\node (Square) at (-5,0) {
${\tikz[rotate= -20]{
\draw (-0.3,-0.3) -- (-0.3, 0.3) -- (0.3,0.3) -- (0.3, -0.3) -- cycle;
\draw (-0.3,-0.3) -- (-0.5, -0.5);
\draw (-0.3, 0.3) -- (-0.5,  0.5);
\draw ( 0.3,-0.3) -- ( 0.5, -0.5);
\draw ( 0.3, 0.3) -- ( 0.5,  0.5);
}}$
};
  \draw[<-] (Smoothings) .. controls +(-2,1) and + (2,1) .. (Square) coordinate[pos=0.5] (top) {};
  \draw[->] (Smoothings) .. controls +(-2,-1) and + (2,-1) .. (Square) node[pos=0.5] (bottom) {};
  \node[above] (TOP) at (top) 
{
$
\left(
\begin{array}{c} 
\squareTOsmoothone  \\[1em]
\squareTOsmoothtwo
\end{array} 
\right)
$
 };
  \node[below] (BOT) at (bottom) {
 $\left(
 \smoothoneTOsquare \quad  \smoothtwoTOsquare
 \right)$
};
\end{scope}
\end{tikzpicture}
\caption{}
\label{fig:dec-square}
\end{figure}
\end{prop}
\begin{proof}
This follows directly from Propositions~\ref{prop:squarerel} and~\ref{prop:bubbleremoval}.
\end{proof}

\begin{prop}
\label{prop:dec-triangle}
If a graph $\Gamma'$ is obtained from a graph $\Gamma$ by replacing a vertex by a triangle, then there is a canonical isomorphism 
\[
\brak{\Gamma'} \simeq \brak{\Gamma}
\]
given by Figure~\ref{fig:dec-triangle}.
\begin{figure}[ht]
\centering
\begin{tikzpicture}
\begin{scope}
\node (vertex) at (3,0) {
$
\NB{\tikz[rotate= -20]{
\draw (0,0) -- (  0:0.5cm);
\draw (0,0) -- (120:0.5cm);
\draw (0,0) -- (240:0.5cm);
}
}
$
};
\node (triangle) at (-3,0) {
${\tikz[rotate= -20]{
\draw (  0:0.25cm) -- (  0:0.5cm);
\draw (120:0.25cm) -- (120:0.5cm);
\draw (240:0.25cm) -- (240:0.5cm);
\draw  (120:0.25cm) -- (240:0.25cm) -- (0:0.25cm) -- (120:0.25cm);
}}$
};
  \draw[<-] (vertex) .. controls +(-2,1) and + (2,1) .. (triangle) coordinate[pos=0.5] (top) {};
  \draw[->] (vertex) .. controls +(-2,-1) and + (2,-1) .. (triangle) node[pos=0.5] (bottom) {};
  \node[above] (TOP) at (top) 
{
$
\triangleTOvertex[0.5]
$
 };
  \node[below] (BOT) at (bottom) {
 $
\vertexTOtriangle[0.5]
$
};
\end{scope}
\end{tikzpicture}
\caption{}
\label{fig:dec-triangle}
\end{figure}
\end{prop}
\begin{proof}
This follows directly from Propositions~\ref{prop:trivalentbubble} and \ref{prop:verticesremoval}.
\end{proof}

An edge in a graph $\Gamma$ is called a \emph{bridge} if removing the edge increases the number of connected components of $\Gamma$ (by one). 

\begin{prop} If a planar trivalent graph $\Gamma$ has a bridge, then $\brak{\Gamma}=0$. 
\end{prop}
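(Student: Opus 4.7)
The plan is to establish that a trivalent graph containing a bridge admits no Tait coloring, and then to translate this combinatorial fact into the vanishing of $\brak{\Gamma}$ via the universal-construction definition.

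First I would prove the key combinatorial lemma: \emph{if a trivalent graph $\Gamma$ has a bridge $e$, then $\Gamma$ admits no Tait coloring.} Let $v_1$ be an endpoint of $e$, and let $\Gamma_1$ be the connected component of $\Gamma\setminus\{e\}$ containing $v_1$. Suppose for contradiction that $t$ is a Tait coloring of $\Gamma$ with $t(e)=1$. Then every vertex of $\Gamma_1$ other than $v_1$ has degree $3$ and carries one edge of each color, while $v_1$ has degree $2$ in $\Gamma_1$ with its two incident edges colored $2$ and $3$. Thus the edges of $\Gamma_1$ colored $2$ form a perfect matching on $V(\Gamma_1)$, forcing $|V(\Gamma_1)|$ to be even; but the edges of $\Gamma_1$ colored $1$ form a perfect matching on $V(\Gamma_1)\setminus\{v_1\}$, forcing $|V(\Gamma_1)|$ to be odd, a contradiction. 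Degenerate cases (a loop based at a vertex, or vertexless circles within $\Gamma_1$) either already obstruct any Tait coloring or contribute no vertices to the parity count and are handled by a brief inspection.

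Next I would use the fact that any admissible coloring of a closed foam $F$ transverse to a plane meeting $F$ in a web $\Gamma$ restricts to a Tait coloring of $\Gamma$: at each point where a seam of $F$ crosses the plane, the three distinctly colored facets meeting along that seam give rise to three distinctly colored edges incident to the corresponding vertex of $\Gamma$. Consequently, if $\Gamma$ carries a bridge, no such $F$ admits an admissible coloring, and $\brak{F}=0$ by definition of the evaluation in formula~(\ref{eq:eval}).

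To finish, recall that $\brak{\Gamma} = \Fo(\Gamma)/\mathrm{ker}((,))$, where $(U_1,U_2) = \brak{\omega(U_2)U_1}$. For any two foams $U_1, U_2$ into $\Gamma$, the closed foam $\omega(U_2)U_1$ has $\Gamma$ as a generic cross-section along $\R^2 \times \{\tfrac{1}{2}\}$, so the previous step yields $\brak{\omega(U_2)U_1}=0$. Hence the bilinear form on $\Fo(\Gamma)$ vanishes identically, $\mathrm{ker}((,))=\Fo(\Gamma)$, and therefore $\brak{\Gamma}=0$. The only nontrivial step is the parity argument for the non-existence of a Tait coloring; the remainder is a straightforward unwinding of definitions.
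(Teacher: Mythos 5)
Your proof is correct and follows the same route as the paper: no Tait coloring on a bridged trivalent graph, hence no admissible coloring of any closed foam $\omega(U_1)U$, hence the bilinear form on $\Fo(\Gamma)$ vanishes identically and $\brak{\Gamma}=0$. The only difference is that the paper simply asserts the classical fact that a bridged cubic graph has no Tait coloring, whereas you supply the standard parity/perfect-matching argument for it.
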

\begin{proof} Such a graph $\Gamma$ has no Tait colorings. 
Consequently, for any two foams $U,U_1$ into $\Gamma$, the foam $\omega(U_1)U$ has no admissible colorings, since an admissible coloring of $\omega(U_1)U$ would restrict to a Tait coloring of $\Gamma$. The bilinear form on $\Fo(\Gamma)$ is 
identically $0$, and the  
state space $\brak{\Gamma}=0$. 
\end{proof}

Original definition of $sl(3)$-link homology~\cite{SL3} included constructing state spaces $\mathrm{H}(\Gamma)$ for planar trivalent bipartite graphs $\Gamma$ as an intermediate step. In that case, the state spaces are graded free abelian groups and their graded rank is the quantum $sl(3)$ invariant of the graph, a.k.a. the Kuperberg bracket of $\Gamma$~\cite{Ku}. 

An equivariant extension of $sl(3)$ link homology and of these state spaces has been constructed by Mackaay and Vaz~\cite{MaV}. Let 
\[R_{\Z} = \Z[E_1, E_2, E_3]\]
be the integral version of the ring $R$. Mackaay 
and Vaz~\cite{MaV} denote $E_1=a$, $E_2=b$ and $E_3=c$, so the ring $R_{\Z}\cong \Z[a,b,c]$.  

For a planar trivalent bipartite graph $\Gamma$
Mackaay-Vaz state space $\HMV(\Gamma)$ is a free graded  $R_{\Z}$-module of graded rank (over $R_{\Z}$) equal 
to the Kuperberg bracket of $\Gamma$. 

\begin{prop} For bipartite webs $\Gamma$ there are canonical isomorphisms of graded $R$-modules, 
respectively graded $\kk$-vector spaces 
\begin{align*} 
\brak{\Gamma}_{\kk} &\cong \mathrm{H}(\Gamma)\otimes_\Z \kk, \\
\brak{\Gamma} &\cong \HMV(\Gamma)\otimes_{R_{\Z}} R .
\end{align*}
 These isomorphisms commute with maps between these spaces induced by oriented foams in $\R^2\times [0,1]$. 
\end{prop}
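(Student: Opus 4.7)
The plan is to argue by induction on the complexity of the bipartite web $\Gamma$, using the local direct sum decompositions of Propositions~\ref{prop:dec-circle}, \ref{prop:dec-digon}, \ref{prop:dec-square} as the inductive step and a disjoint union of circles as the base case. First I would recall that the Mackaay--Vaz state spaces $\HMV(\bullet)$ satisfy analogous circle, digon, and square decompositions over $R_{\Z}$ (established in~\cite{MaV}), and that the original $sl(3)$ theory $\mathrm{H}(\bullet)$ of~\cite{SL3} satisfies the same decompositions over $\Z$. Tensoring these with $R$ (respectively $\kk$) gives precisely the decompositions satisfied by $\lG$ (respectively $\lG_{\kk}$).

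Second, I would invoke the classical reducibility result going back to Kuperberg~\cite{Ku}: any connected bipartite planar trivalent graph either is a circle or contains a digon or a square face. This follows from Euler's formula for planar trivalent graphs, $\sum_k (6-k) n_k = 12$ where $n_k$ is the number of $k$-gonal faces, together with bipartiteness forcing $k$ to be even, which makes $n_2$ and $n_4$ contribute positively and so at least one must be nonzero (or the graph is a circle). Iterating circle, digon, and square removals thus reduces any bipartite web to a disjoint union of circles in finitely many steps.

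For the base case, the empty web gives $\brak{\emptyset} = R = \HMV(\emptyset) \otimes_{R_{\Z}} R$, and iterating the circle decomposition shows that a disjoint union of $n$ isolated circles produces, on both sides, the same free graded module of graded rank $(q^{-2}+1+q^2)^n$. The inductive step then follows by applying matching decompositions to both sides of the claimed isomorphism, and analogously modulo $2$ for the comparison with $\mathrm{H}(\Gamma) \otimes_{\Z} \kk$.

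The main obstacle is showing that the resulting isomorphism is canonical and commutes with maps induced by oriented foams. The cleanest approach is to observe that the decomposition isomorphisms in Propositions~\ref{prop:dec-circle}, \ref{prop:dec-digon}, \ref{prop:dec-square} are realized by specific elementary foams (cups, caps, saddles, zip and unzip), and the corresponding decomposition isomorphisms for $\HMV$ and $\mathrm{H}$ are realized by the same elementary foams. Any oriented foam between bipartite webs factors, up to the local relations of Section~\ref{sec:rel-btwn-ev}, through compositions of these elementary pieces (standard movie-move arguments on oriented foams~\cite{SL3,MaV}). Consequently, the induced homomorphism between state spaces is independent of the chosen sequence of reductions, giving a well-defined canonical isomorphism that intertwines the foam cobordism maps on each side.
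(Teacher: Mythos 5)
Your induction does yield an abstract isomorphism of graded modules: both theories satisfy matching circle, digon and square decompositions, Euler's formula reduces any bipartite web to circles, and the base case is fine. But the actual content of the statement --- that the isomorphisms are \emph{canonical} and commute with the maps induced by arbitrary oriented foams --- is not established by what you wrote. Your isomorphism is only defined after choosing a reduction sequence, and the claim that it is independent of that choice and intertwines the map of an arbitrary foam does not follow from ``every foam factors through elementary pieces''; that is exactly the coherence statement that needs proof, and the appeal to movie moves leaves it as an assertion. In particular you would have to check commutation of your inductively built isomorphism with each elementary foam against unrelated reduction sequences, which is nowhere done.

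More fundamentally, nothing in your argument compares the two constructions at the level of their defining data, namely the evaluations of closed foams. $\mathrm{H}(\Gamma)$ and $\HMV(\Gamma)$ are universal constructions whose bilinear forms come from an algorithmically defined $\Z$- (resp.\ $R_{\Z}$-) valued evaluation of closed foams without seam vertices, while $\brak{\Gamma}_{\kk}$ and $\brak{\Gamma}$ are built from the formula (\ref{eq:eval}). Even your step ``the decomposition isomorphisms are realized by the same elementary foams on both sides'' secretly requires knowing that these evaluations agree after reduction modulo $2$ (restricted to vertex-free foams with the orientation conditions), because the splittings and dual elements in Propositions~\ref{prop:dec-circle}, \ref{prop:dec-digon}, \ref{prop:dec-square} and their Mackaay--Vaz counterparts are dictated by the pairings; this comparison is absent from your proposal. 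The paper proceeds the other way around: it defines $\Psi^{\Gamma}$ and $\Psi^{\Gamma}_{\mathrm{MV}}$ directly on generators, sending a foam to the same foam (so canonicity and compatibility with foam-induced maps are automatic), observes that the mod-$2$ reduction of the $\Z$/$R_{\Z}$-valued evaluation coincides with $\brak{\bullet}_{\kk}$, resp.\ $\brak{\bullet}$, on foams without seam vertices, and then uses the direct sum decompositions only to show that relations in either theory are detected by pairing against such vertex-free foams, which gives well-definedness, injectivity and surjectivity simultaneously. Your decomposition induction can play that last role, but without the evaluation comparison and a choice-free definition of the map, the ``canonical'' and functoriality claims remain unproved.
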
 

\begin{proof}  
The space $\mathrm{H}(\Gamma)$ (resp. $\HMV(\Gamma)$) is obtained by quotienting out the free graded $\Z$-module (resp. $R_\Z$-module) generated by foams without seam vertices and with suitable orientability conditions on seam lines by the kernel of a bilinear form. Just like in this paper, the bilinear form is given by the evaluation of closed foams. This evaluation is $\Z$-valued (resp. $R_\Z$-valued) and is given by an algorithm rather than a formula. One can easily show that, after reducing coefficients from $\Z$ to $\Z/2$, this evaluation is precisely the same as the one given by $\brak{\bullet}_\kk$ (resp. by$\brak{\bullet}$), restricted to foams without seam vertices and with orientability conditions.

The isomorphisms 
$$
\Psi^\Gamma: \mathrm{H}(\Gamma) \otimes_\Z \kk \to \brak{\Gamma}_\kk, \ \  \Psi^\Gamma_\mathrm{MV}: \HMV(\Gamma)\otimes_{R_\Z} R \to \brak{\Gamma}
$$
are given by mapping foams generating $\mathrm{H}(\Gamma)$, respectively  $\HMV(\Gamma)$, to foams seen as elements of $\brak{\Gamma}_\kk$, respectively $\brak{\Gamma}$.

To prove that these morphisms are well-defined, we need to show that all relations valid in $\mathrm{H}(\Gamma)\otimes_\Z \kk$ (resp. $\HMV(\Gamma)\otimes_\Z \kk$) are valid in $\brak{\Gamma}_\kk$ (resp. $\brak{\Gamma}$). 

This follows from  the direct sum decomposition given by Propositions~\ref{prop:dec-circle}, \ref{prop:dec-digon} and \ref{prop:dec-square}. Indeed, this proves that a relation $\sum_i a_i F_i=0$ holds in $\brak{\Gamma}_\kk$ (resp. $\brak{\Gamma}$) if and only if 
$\sum_i a_i \brak{G\circ F_i}_\kk=0$ (resp. $\sum_i a_i \brak{G\circ F_i}=0$) holds for $G$ without seams vertices and respecting the orientability conditions. This proves as well that the maps are injective.

The maps $\Psi^\Gamma$ and $\Psi^\Gamma_\mathrm{MV}$ are  isomorphisms, since the same direct sum decomposition results show that these maps are surjective.
\end{proof}

A related but different approach to equivariant 
$sl(3)$ link homology has been sketched by  Morrison and Nieh~\cite[Appendix]{MoNi}. Morrison and Nieh avoid dots on foam's facets at the cost of inverting $2$ and $3$, while Mackaay and Vaz~\cite{MaV} utilize dots and use $\Z$ as the degree zero term of the ground ring of the theory. For this reason the match of our state spaces $\brak{\Gamma}$ with those in Mackaay--Vaz~\cite{MaV} modulo two for  bipartite planar graphs is immediate, while the relation to Morrison--Nieh's approach seems less straightforward. 
Division by three in their formulas is not an issue when reducing modulo two, but formula (3.5) of \cite{MoNi} contains division by two, obstructing a naive attempt to define a version of their construction modulo two. 

\section{Base change, inverting the discriminant, and graded dimensions} 
\label{sec:basechange}

\subsection{Base change}\

One of the immediate questions that we can't answer is whether 
$\lG$ is a free graded $R$-module for any 
planar trivalent graph $\Gamma$. This is one of the 
reasons to introduce base changes and work over different 
commutative rings. 

Assume there is a homomorphism 
$\psi: R \longrightarrow S$ of commutative rings. Recall that 
we defined $\psi$-evaluation $\brak{F}_{\psi}$, also called  $S$-evaluation $\brak{F}_{S}$, by composing the
evaluation $\brak{F}$ of a closed foam $F$ with the homomorphism 
$\psi$. This can be naturally extended to define 
$\psi$-state spaces of graphs $\Gamma$. 

Consider the free $S$-module $\Fo(\Gamma)_{\psi}$, also 
denoted $\Fo(\Gamma)_S$, to have a basis of all 
foams from the empty graph into $\Gamma$. There is a natural 
isomorphism of $S$-modules 
$\Fo(\Gamma) \otimes_R S \cong \Fo(\Gamma)_S.$ 
If $S$ is $\Z$-graded and $\psi$ is a grading-preserving homomorphism, then $\Fo(\Gamma)_S$ is a free graded $S$-module, 
with the degree of the foam given by the same formula as in 
the original case of $\Fo(\Gamma)$. 

There is a symmetric $S$-valued bilinear form on $\Fo(\Gamma)_S$ 
given by 
\[(U_1,U)_S = \brak{\omega(U_1)U}_S = \psi(\brak{\omega(U_1)U})\]
on foams $U_1, U$ into $\Gamma$. Define $\brak{\Gamma}_{\psi}$ as 
the quotient of $\Fo(\Gamma)_S$ by the kernel of this bilinear 
form. Another notation for $\brak{\Gamma}_{\psi}$ is 
$\brak{\Gamma}_S$. 
  
Just like in the original case, a relation $\sum a_i \brak{U_i} = 0$ holds in $\brak{\Gamma}_S$, with 
$a_i \in S$, if for any foam $U$ out of $\Gamma$, 
 $\sum a_i \psi(\brak{UU_i})=0$. 
If the ring $S$ is graded and $\psi$ is a grading-preserving homomorphism, then $\lG_S$ is naturally 
a graded $S$-module. 

Note that any element of $\brak{\Gamma}_S$ is a linear 
combination of foams into $\Gamma$ with coefficients in $S$. 
Consequently, the isomorphism $\Fo(\Gamma) \otimes_R S \stackrel{\cong}{\lra} 
\Fo(\Gamma)_S$ descends to a surjective map 
 \begin{align}\label{eq:surjective}
 \brak{\Gamma} \otimes_R S \ \lra \ \brak{\Gamma}_S.
 \end{align}    
The map is surjective since all generators $\brak{U}$ of the $S$-module 
on the right-hand side are in the image of the homomorphism, coming from the corresponding set of generators on the left-hand side. 
      
We are mostly interested in the case where $S$ is $\Z$-graded and 
homomorphism $\psi$ preserves the grading. We refer to this 
as a \emph{graded} base change $\psi$ or $S$. In this case  $\brak{\Gamma}_S$ is naturally a $\Z$-graded $S$-module. 

\begin{prop} The $S$-module $\lG_S$ is finitely-generated for any 
base change $(\psi,S)$. 
For a graded base change $(\psi,S)$, $\lG_{S}$ is a finitely-generated graded $S$-module.  
\end{prop}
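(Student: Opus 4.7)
The plan is to deduce this finite-generation statement directly from the already-proved Proposition~\ref{prop:fin-gen} (finite generation of $\lG$ over $R$) together with the surjection (\ref{eq:surjective}). The core idea: finite generation is preserved by tensor product along a ring homomorphism, and then by passing to a quotient.

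First I would invoke Proposition~\ref{prop:fin-gen} to fix a finite set of homogeneous generators $b_1, \dots, b_m \in \lG$ over $R$. Under the canonical $S$-module homomorphism
\[
\lG \otimes_R S \ \lra \ \lG_S
\]
from (\ref{eq:surjective}), which is already known to be surjective, the elements $b_1 \otimes 1, \dots, b_m \otimes 1$ map to generators of $\lG_S$ as an $S$-module. Since $\lG \otimes_R S$ is generated over $S$ by $b_1 \otimes 1, \dots, b_m \otimes 1$ (tensor product preserves finite generation), their images span $\lG_S$ over $S$. This gives the first assertion.

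For the graded refinement, assume $S$ is $\Z$-graded and $\psi \colon R \lra S$ preserves the grading. Then $\Fo(\Gamma)_S$ is a graded free $S$-module via the degree formula (\ref{eq:degree-with-bdy}), the bilinear form $(U_1,U)_S = \psi(\brak{\omega(U_1)U})$ is homogeneous of degree $0$ (since $\brak{\omega(U_1)U} \in R$ sits in the degree $\deg(\omega(U_1)U)$ piece and $\psi$ preserves degrees), and hence its kernel is a graded $S$-submodule. Consequently $\lG_S$ inherits a $\Z$-grading. The generators $b_i$ of $\lG$ can be chosen homogeneous (by Proposition~\ref{prop:fin-gen}), so their images in $\lG_S$ are homogeneous as well; since the surjection (\ref{eq:surjective}) is visibly grading-preserving under these conventions, the images span $\lG_S$ as a graded $S$-module.

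I do not expect a serious obstacle here: the proposition is essentially a formal consequence of Proposition~\ref{prop:fin-gen} together with the surjectivity of (\ref{eq:surjective}), and the only point requiring a bit of care is the verification that the bilinear form used to define $\lG_S$ is degree-preserving in the graded case, which follows from Proposition~\ref{prop:degree} and the fact that $\psi$ is grading-preserving.
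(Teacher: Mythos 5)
Your proposal is correct and follows essentially the same route as the paper: finite generation of $\lG$ over $R$ (Proposition~\ref{prop:fin-gen}) plus the surjection (\ref{eq:surjective}) gives finite generation of $\lG_S$, with homogeneous generators in the graded case. The extra verification you include, that the $S$-valued form is degree-preserving so that $\lG_S$ is genuinely graded, is a correct and harmless elaboration of what the paper leaves implicit.
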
 

\begin{proof} The $R$-module on the left-hand side of map 
(\ref{eq:surjective}) is finitely generated. Choose a finite set of 
generators for that module. Their images under $\psi$ will 
span the $S$-module on the right-hand side. In the graded 
case, generators can be chosen to be homogeneous. 
\end{proof} 

\begin{prop} All direct sum decompositions for state spaces 
derived in Section~\ref{sub:dir_sum} hold with state spaces $\brak{\bullet}_S$  for any graded 
base change $(\psi,S)$. With grading shifts dropped from 
the relations, they hold 
for any base change. 
\end{prop}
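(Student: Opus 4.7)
The plan is to observe that each direct sum decomposition in Section~\ref{sub:dir_sum} was obtained by exhibiting explicit inclusion and projection foams (those drawn in Figures~\ref{fig:dec-circle}--\ref{fig:dec-triangle}) and checking that their various compositions satisfy the orthogonality and completeness relations of a split direct sum. By functoriality of $\brak{\bullet}_S$, these same foams induce $S$-module maps between the corresponding $S$-state spaces for any base change $\psi:R\to S$. Thus only the orthogonality and completeness identities themselves need to be re-verified over $S$.

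Each such verification is a consequence of the local skein relations of Section~\ref{sec:rel-btwn-ev}---neck-cutting, digon, square, trivalent bubble, vertex removal, and bubble removal---together with Corollary~\ref{cor:sphere-eval} and the theta evaluation. Every such skein relation is an equation of the form $\sum_i a_i\brak{F_i}=0$ with $a_i\in R$, that holds in $\brak{\Gamma}$ because it holds on all closed foam evaluations in $R$. Applying $\psi$ to any such identity gives $\sum_i \psi(a_i)\brak{F_i}_S=0$ in $\brak{\Gamma}_S$; this is immediate from the defining formula $(U_1,U)_S=\psi\bigl((U_1,U)\bigr)$ for the $S$-valued bilinear form on $\Fo(\Gamma)_S$. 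Hence every skein relation has an $S$-analogue with coefficients replaced by their $\psi$-images, and the derivations of Propositions~\ref{prop:dec-circle}--\ref{prop:dec-triangle} carry over verbatim.

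When $\psi$ is grading-preserving, the foams keep their $\Z$-degrees, so the decompositions retain the same grading shifts $\{n\}$; otherwise one forgets the grading and obtains an ungraded isomorphism of $S$-modules. I expect no real obstacle here: the only way this transfer could fail is if some step in Section~\ref{sec:rel-btwn-ev} relied on an operation in $R$ that is not preserved by ring homomorphisms (e.g.\ a division), but inspection shows every relation there is a polynomial identity in $E_1, E_2, E_3$ (or equivalently in $X_1,X_2,X_3$ summed over admissible colorings), and any such identity automatically descends through $\psi$. So the proof reduces to the observation that a ring homomorphism sends $R$-valued skein identities to their $S$-valued images.
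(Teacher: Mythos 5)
Your proof is correct and follows essentially the same route as the paper's, which is a one-line observation that all foam identities used to derive the decompositions are preserved under any ring homomorphism $\psi$. Your version simply spells out the mechanism (skein relations are $R$-valued identities that descend through $\psi$, grading shifts survive for graded $\psi$) in more detail than the paper bothers to.
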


\begin{proof} All identities on foams used to prove direct sum decompositions hold under any homomorphism $\psi$ as well. 
\end{proof} 

One base change that we have already encountered is 
the graded homomorphism $\psi_0: R \lra \kk$ with $\psi_0(E_i)=0$ 
for $i=1,2,3$, see Section~\ref{sec:combinatorialKM}. For this base change the $\kk$-state space $\lG_{\kk}$ is 
a finite-dimensional graded $\kk$-vector space. Our proof of 
Kronheimer--Mrowka Conjecture 8.9 for foams in $\R^3$ implies 
the following result. 

\begin{prop} There is a functorial isomorphism 
\[J^{\flat}(\Gamma) \cong \brak{\Gamma}_{\kk}\]
for all planar trivalent graphs $\Gamma$. 
\end{prop}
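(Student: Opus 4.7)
The plan is to reduce the statement to the universal-construction comparison and appeal to Theorem~\ref{thm:JflatFoams}. Both $J^{\flat}(\Gamma)$ and $\brak{\Gamma}_{\kk}$ are built by exactly the same universal recipe applied to two a priori different $\kk$-valued evaluations of closed foams in $\R^3$: one forms the free $\kk$-vector space $\Fo(\Gamma)_{\kk}$ on foams from the empty web to $\Gamma$, equips it with the symmetric bilinear form $(U_1,U)\mapsto e(\omega(U_1)U)$ where $e$ is the chosen closed-foam evaluation, and takes the quotient by the kernel.

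The key input is already in place: Theorem~\ref{thm:JflatFoams} asserts that for every closed foam $F\subset \R^3$ one has $J^{\flat}(F)=\brak{F}_{\kk}$, and in particular $J^{\flat}$ is well-defined on such foams. Since all foams considered in the state-space construction for a planar web $\Gamma$ live in $\R^2\times[0,1]\subset\R^3$, the foams $\omega(U_1)U$ that feed the bilinear form are closed foams in $\R^3$, so the two bilinear forms coincide on the nose as symmetric $\kk$-valued forms on $\Fo(\Gamma)_{\kk}$. Consequently their kernels agree, and the quotients $J^{\flat}(\Gamma)$ and $\brak{\Gamma}_{\kk}$ are canonically identified by the map $[U]\mapsto \brak{U}_{\kk}$.

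For functoriality, observe that both assignments send a foam $V:\Gamma_0\to\Gamma_1$ to the $\kk$-linear map on state spaces induced by the composition $U\mapsto VU$ at the level of foam generators. Since the underlying map on $\Fo(\Gamma_0)_{\kk}\to \Fo(\Gamma_1)_{\kk}$ is literally the same, and since the two quotients are identified by the identity on foam classes, the induced maps agree. Thus the canonical isomorphisms $J^{\flat}(\Gamma)\cong \brak{\Gamma}_{\kk}$ assemble into a natural isomorphism of functors $\catF\to \kk\Vect$.

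The only real obstacle is the one already overcome in Theorem~\ref{thm:JflatFoams}: ensuring that $J^{\flat}$ produces a well-defined $\kk$-value on every closed foam in $\R^3$ (so that the bilinear form is actually defined), and that this value matches $\brak{F}_{\kk}$. Once this is invoked, the remaining argument is a formal property of the universal construction and requires no further computation.
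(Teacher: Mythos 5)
Your argument is correct and follows the same line as the paper, which merely remarks that the proposition is implied by Theorem~\ref{thm:JflatFoams} together with the fact that $J^{\flat}(\Gamma)$ is defined in~\cite[Section~8.3]{KM1} by the same universal construction applied to the $J^{\flat}$-evaluation of closed foams. You have simply spelled out why equality of the two closed-foam evaluations in $\R^3$ forces the bilinear forms, their kernels, and the induced cobordism maps to coincide, which is exactly the intended reasoning.
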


The isomorphisms are functorial relative to maps in these 
two homology theories induced by foams with boundary. The homology 
theory $J^{\flat}$ for planar trivalent graphs is defined in~\cite[Section 8.3]{KM1} assuming their Conjecture 8.9 for foams in $\R^3$.

\begin{prop} For any base change $(\psi,S)$ the $S$-module 
$\brak{\Gamma}_S$ is a submodule of a free 
$S$-module of finite rank. For a graded base change, 
$\brak{\Gamma}_S$ is a submodule of a graded free $S$-module 
of finite rank. 
\end{prop}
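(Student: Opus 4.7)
The plan is to adapt almost verbatim the argument used earlier for $\lG$ itself (the proposition following Proposition~\ref{prop:fin-gen}). The two ingredients needed are finite generation of $\lG_S$ and non-degeneracy of the induced bilinear form; both are already in place.

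First, by the previous proposition in this subsection, $\lG_S$ is a finitely-generated $S$-module, and a finitely-generated graded $S$-module in the graded case. Choose a finite generating set $b_1,\dots,b_m$ of $\lG_S$ over $S$, taking the $b_i$ homogeneous when the base change is graded. Next, recall that $\lG_S$ is by definition the quotient of $\Fo(\Gamma)_S$ by the kernel of the $S$-valued bilinear form $(\cdot,\cdot)_S$, so the induced bilinear form $(\cdot,\cdot)_S$ on $\lG_S$ is non-degenerate: for any nonzero $a\in\lG_S$ there is some $c\in\lG_S$ with $(c,a)_S\neq 0$.

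Now define an $S$-linear map
\[ \phi \ : \ \lG_S \ \lra \ S^m, \qquad \phi(a) \ = \ \bigl((b_1,a)_S,\dots,(b_m,a)_S\bigr)^T. \]
If $\phi(a)=0$, then $(b_i,a)_S=0$ for all $i$. Since the $b_i$ generate $\lG_S$ over $S$ and the bilinear form is $S$-linear in each variable, this forces $(c,a)_S=0$ for every $c\in\lG_S$, hence $a=0$ by non-degeneracy. Therefore $\phi$ is injective, and $\lG_S$ embeds into the free $S$-module $S^m$ of finite rank.

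In the graded case, the bilinear form $(\cdot,\cdot)_S$ is degree-preserving, so $(b_i,a)_S$ has degree $\deg(b_i)+\deg(a)$ whenever $a$ is homogeneous. Declaring the $i$-th generator of $S^m$ to sit in degree $-\deg(b_i)$ turns $\phi$ into a degree-zero homomorphism of graded $S$-modules, and the target is then a graded free $S$-module of finite rank. The only step requiring any care is the non-degeneracy of the form on $\lG_S$, but this is built into the definition of $\lG_S$ as the quotient by the kernel of $(\cdot,\cdot)_S$, so there is no real obstacle.
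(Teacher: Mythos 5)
Your proof is correct and follows essentially the same route as the paper's: choose a finite (homogeneous) generating set of $\lG_S$, pair against it to define a map $\lG_S\to S^m$, and invoke non-degeneracy of the descended bilinear form for injectivity. The only cosmetic difference is that the paper obtains its generators by pushing forward homogeneous $R$-generators of $\lG$ under the surjection $\lG\otimes_R S\twoheadrightarrow\lG_S$, whereas you take generators of $\lG_S$ directly; both are equally valid.
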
 

\begin{proof} Fix a planar graph $\Gamma$ and 
choose a collection of homogeneous generators $a_1, \dots, a_n$ of 
the $R$-module $\brak{\Gamma}$. The elements 
$b_1, \dots, b_n$, where $b_i = \psi(a_i)$, generate the
$S$-module $\brak{\Gamma}_S$. The bilinear pairing $(,)_S$ on 
$\brak{\Gamma}_S$ is non-degenerate, and an element $b\in \brak{\Gamma}_S$ 
is determined by its couplings $(b,b_i)_S \in S$ over 
$i=1, \dots, n$. Thus, to each $b$ in $b\in \brak{\Gamma}_S$ 
we can assign an element of $S^n$, namely
\[((b,b_1)_S, (b,b_2)_S, \dots, (b,b_n)_S)^T.\]
This assignment is an injective $S$-module map 
$\brak{\Gamma}_S \lra S^n$, realizing $\brak{\Gamma}_S$ as 
a submodule of a free $S$-module of finite rank. 

If $(\psi,S)$ is graded, the inclusion is that of graded 
modules. One can be more precise and write $S^f$ instead of 
$S^n$ where  
$f = \sum_{i=1}^n q^{-m_i}$, with $m_i$ the degree of $a_i$. 
Here the degrees of generators of a free module are encoded via 
sum of powers of $q$.  
\end{proof}

\vspace{0.2in}

If a commutative ring $S$ has no zero divisors, then 
$S$-module $\brak{\Gamma}_S$ is torsion-free, that is, 
$am=0$ for $a\in S$ and $m\in \brak{\Gamma}_S$ implies that 
$a=0$ or $m=0$. 

Recall that PID stands for 'principal ideal domain'. 

\begin{prop} \label{prop:S_free} If $S$ is a (graded) PID then $\lG_{S}$ is a finitely-generated (graded) free $S$-module. 
\end{prop}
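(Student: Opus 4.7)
The plan is to combine the previous proposition, which realizes $\lG_S$ as a submodule of a finitely generated free $S$-module, with the structure theorem for finitely generated modules over a PID.

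First, I would invoke the previous proposition to obtain an injective $S$-module homomorphism
\[ \lG_S \ \hookrightarrow \ S^n \]
for some integer $n$ (equal to the number of homogeneous generators $a_1, \dots, a_n$ of the $R$-module $\lG$, mapped via $\psi$ to generators $b_1, \dots, b_n$ of $\lG_S$). In the graded setting, this inclusion is a morphism of graded $S$-modules after shifting each summand by $-\deg(a_i)$, so $\lG_S$ embeds into a finitely generated graded free $S$-module.

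Next, I would invoke the standard fact that over a PID, every submodule of a finitely generated free module is itself free of finite rank (and of rank at most that of the ambient free module). Since $\lG_S$ is known to be finitely generated, this immediately shows that $\lG_S$ is a finitely generated free $S$-module in the ungraded case.

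For the graded case, I would invoke the graded analogue of this structure theorem: if $S$ is a graded PID (i.e., a commutative graded ring in which every homogeneous ideal is principal, generated by a homogeneous element), then every graded submodule of a finitely generated graded free $S$-module is itself graded free of finite rank. This can be proved in the same fashion as the ungraded version by induction on the rank, at each stage splitting off a graded cyclic direct summand using a homogeneous generator of a suitable ideal of leading coefficients. Applying this to the graded embedding $\lG_S \hookrightarrow S^f$ yields the graded freeness. The only delicate point is to ensure the graded structure theorem applies in the generality we need; but since the typical examples of interest (such as $S = R[\D^{-1}]$ localized at a homogeneous element, or graded polynomial rings in one variable) are graded PIDs in the required sense, this poses no real obstacle.
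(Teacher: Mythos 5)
Your proof is correct and reaches the same conclusion, but enters the PID structure theory through a slightly different door than the paper does. The paper's proof is a one-liner: it observes (from the discussion immediately preceding the proposition) that $\lG_S$ is finitely generated and torsion-free over a domain $S$, and a finitely generated torsion-free module over a PID is free. You instead appeal to the previous proposition, which embeds $\lG_S$ into a finitely generated free $S$-module, and then invoke the fact that a submodule of a finitely generated free module over a PID is free of finite rank. Both are standard corollaries of the classification of finitely generated modules over a PID and both hypotheses (torsion-free, respectively embeds in a free module) were already established in the paper at this point, so the two routes are logically equivalent and of comparable length; the paper's is marginally more economical since torsion-freeness is the weaker input. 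Your extra care in the graded case (spelling out what a graded PID means and why the graded structure theorem applies) is sound and a bit more explicit than the paper, which treats the graded case as implicit.
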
 

\begin{proof} $\lG_{S}$ is a finitely-generated $S$-module 
with no torsion, necessarily free of finite rank. 
\end{proof}

In Section~\ref{sec:base_change_pid} we will consider base changes into graded principal ideal domains. In the next section we will use the graded base change 
\[\psi_{\D} \ : \ R \lra R[\D^{-1}],\]
where $\D=E_1E_2 + E_3$ is the discriminant of the 
polynomial $x^3+E_1 x^2 + E_2 x + E_3 \in R[x]$. 
This polynomial factors into $(x+X_1)(x+X_2)(x+X_3)$ in 
the larger ring $R'[x]$, and its discriminant $\D$ is given by:
\begin{align}\label{eq:discriminant}
\D \define (X_1+X_2)(X_1+X_3)(X_2+X_3) = E_1E_2 + E_3. 
\end{align}
We denote the state space for this base change by $\brak{\Gamma}_{\D}$.

\subsection{Facet decorations of negative degrees}\ 
\label{sec:facet_neg}

In this subsection, we introduce some additional decoration which can float on faces of foams. For doing so we need to work over a ring slightly larger than $R$. We invert
the discriminant $\D$ given by equation (\ref{eq:discriminant}) and work over the 
ring 
\begin{align}\label{eq:discriminant-ring}
R_\D\define R[\D^{-1}] = \kk[E_1, E_2, E_3, \D^{-1}].
\end{align} 

We introduce two additional decorations on foams which, just like dots, freely float on facets: the \emph{triangle} (denoted by $\triangle$) and the \emph{square} (denoted by $\square$). We extend the evaluation to foams having these extra decorations. The triangle and 
square 
invert the expressions 
$\bullet + E_1$ and $\bullet^2+E_2$, respectively, 
where $\bullet$ denotes a dot on a facet: 
\[
\triangle \leftrightsquigarrow \frac{1}{
\bullet + E_1} \quad \textrm{and} \quad
\square \leftrightsquigarrow \frac{1}{
\bullet^2 + E_1}.
\]  
For a given coloring $c$, each triangle on a 
facet colored $i$ contributes $(X_{i}+E_1)^{-1}$ to the 
product and each square on an $i$-colored facet contributes 
$(X_i^2+E_2)^{-1}$ to the product term for $c$. 
Note that 
\begin{align*}
X_i + E_1   = & X_j + X_k, \\
X_i^2+E_2  = &  (X_i + X_j)(X_i + X_k), 
\end{align*}
where $j,k$ are the remaining colors, 
since 
\begin{align*}
E_1 = X_i + X_j + X_k, \ \ E_2 = X_i X_j + X_i X_k + X_j X_k.
\end{align*} 
The product 
\begin{align}\label{eq:product-D}
(X_i + E_1)(X_i^2+E_2) = (X_j + X_k)(X_i + X_j)(X_i + X_k)  = E_1 E_2 + E_3 = \D 
\end{align}
is symmetric in $X_i,X_j,X_k$ and equals the discriminant 
$\D$. This discriminant will appear in the denominators 
of our product terms, so to make sense out of floating triangles and squares it suffices to invert $\D$ in 
the ring $R$ and work in the localized ring $R_{\D}$.
Note that the localized ring is still $\Z$-graded, with $\D^{-1}$ in degree $-6$. As a graded $R$-module or even $R_{\D}$-module, ring $R_{\D}$ is periodic with period $6$, via the multiplication by $\D^{\pm 1}$. 
  
Allowing floating triangles and squares on facets, the definition (\ref{eq:Q}) of $Q(F,c)$ remains unchanged, while the definition of $P(F,c)$ becomes:
\[
P(F,c) = \prod_{f  \in f(F)}  \frac{X^{d(f)}_{c(f)}}{(X_{c(f)} + E_1)^{t(f)}(X_{c(f)}^2 + E_2)^{s(f)}}, 
\]
where $t(f)$ and $s(f)$ are respectively the number of triangles and squares on facet $f$.

Finally, define 
\[\brak{F}_{\D} = \sum_{c \in \adm(F)} \frac{P(F,c)}{Q(F,c)}. \]

The degrees of $\triangle$ and $\square$ are $-2$ and $-4$ respectively. For a foam $F$ without triangles or squares, $\brak{F}_{\D}= \brak{F}$. 

Note that $P(F,c)$ is no longer a polynomial, hence Theorem~\ref{thm:evl-sym-pol} does not hold anymore for $\brak{\bullet}_\D$. However, if $F$ is a foam of degree $d$, $\brak{F}_{\D}$ is an homogeneous element of $R_\D$ of degree $d$.

\begin{example}
A sphere with one triangle evaluates to $\D^{-1}$. 
\end{example}

The next lemma says that the triangle and the square decoration can be expressed as a linear combination of dots (provided $\D^{-1}$ exists). Hence they do not enriched the theory, but as we shall see, it is convenient to have them in computations. 

\begin{lemma}\label{lem:simplify-square-triangle}
The following local relations hold:
\begin{align*}
\brak{\decorateddisk{\triangle}{0.5}{0.8} }_\D &=
\frac{1}{\D} \left(
\brak{\decorateddisk{\bullet\,\,\bullet}{0.7}{0.8} }_\D 
+ E_2
\brak{\decorateddisk{ }{0.7}{0.8} }_\D
\right), 
\\
\brak{\decorateddisk{\square}{0.5}{0.8} }_\D &=
\frac{1}{\D} \left(
\brak{\decorateddisk{\bullet }{0.7}{0.8} }_\D 
+ E_1
\brak{\decorateddisk{ }{0.7}{0.8} }_\D
\right), \\
\brak{\decorateddisk{\square\,\triangle}{0.5}{0.8} }_\D &=
\frac{1}{\D} 
\brak{\decorateddisk{}{0.7}{0.8} }_\D. 
\end{align*}
\end{lemma}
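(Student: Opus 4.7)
The plan is to reduce all three identities to the single algebraic fact
\[
(X_i + E_1)(X_i^2 + E_2) = \D,
\]
which is equation (\ref{eq:product-D}) and holds for each color $i \in \{1,2,3\}$. Since $\D$ is symmetric in $X_1, X_2, X_3$ and the equality is valid independently of the chosen color, after inverting $\D$ we obtain two rewriting rules
\[
\frac{1}{X_i + E_1} = \frac{X_i^2 + E_2}{\D}, \qquad \frac{1}{X_i^2 + E_2} = \frac{X_i + E_1}{\D},
\]
valid in $R_\D$ for every color $i$ simultaneously. These rules are the entire algebraic content of the lemma.

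I would then unfold the evaluation formula facetwise. Let $F_\triangle$, $F_\square$, $F_{\square\triangle}$, $F_\bullet$, $F_{\bullet\bullet}$, and $F$ denote foams identical outside a prescribed facet $f_0$ and carrying the indicated decorations (or none) on $f_0$. For each admissible coloring $c$ with $i := c(f_0)$, the monomial $P(F_\triangle, c)$ differs from $P(F,c)$ precisely by the factor $(X_i + E_1)^{-1}$; similarly $P(F_\square, c)$ differs by $(X_i^2 + E_2)^{-1}$ and $P(F_{\square\triangle},c)$ by $\D^{-1}$. The denominator $Q(F,c)$, being determined by Euler characteristics of bicolored surfaces, is unchanged by decoration choices. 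Applying the two rewriting rules coloring-by-coloring re-expresses $\brak{F_\triangle, c}_\D$ as $\D^{-1}\bigl(\brak{F_{\bullet\bullet}, c}_\D + E_2 \brak{F, c}_\D\bigr)$, and analogously for the square case, since adding $k$ dots to $f_0$ multiplies $P(F,c)$ by $X_i^k$. Summing over $c \in \adm(F)$ and pulling the constant scalar $\D^{-1} \in R_\D$ out of the sum yields the first two stated identities as local relations among evaluations.

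The third identity follows at once, either from direct substitution using (\ref{eq:product-D}) or by iterating the first two rules: two decorations on the same facet multiply their contributions in $P(F,c)$, so a $\square\triangle$ contributes $\bigl((X_i+E_1)(X_i^2+E_2)\bigr)^{-1} = \D^{-1}$, uniformly in $i$. No real obstacle arises; the argument is routine local bookkeeping in the summands of (\ref{eq:eval}), and the passage to the localized ring $R_\D$ is exactly what makes the division by $\D$ meaningful.
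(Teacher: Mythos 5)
Your proof is correct and follows essentially the same route as the paper: the paper's proof consists of the single sentence that the relations ``follow directly from the identity $\D = (E_1 + X_i)(E_2 + X_i^2)$ for any $i$,'' and your argument is precisely a careful coloring-by-coloring unfolding of that one-line justification. Nothing to add.
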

\begin{proof}
These relations follow directly from the identity
\begin{align*}
\D = (E_1 + X_i)(E_2 + X_i^2),
\end{align*}
for any $i$ in $\{1,2,3\}$. The same 
equation implies the last relation.  
\end{proof}

The next lemma says that the square decoration $\square$ added to a facet can be interpreted as the inverse of forming the  connected sum with a two-torus along the facet, that is, adding a handle. Likewise, 
adding the triangle $\triangle$ to a facet is 
the inverse of connected sum with a 
capped torus. In other words, it's the inverse of 
forming a connected sum with a torus and gluing on a disk along the connecting circle. 
\begin{lemma}
The following local relations hold:
\begin{align*}
\brak{\!
\NB{\tikz[scale = 0.8]{\node[scale= 0.7] at(1.25, 0.2) {$\square$}; }}
\!}_\D &= \brak{\decorateddisk{}{0.7}{0.8}}_\D,
\\
\brak{\!
\NB{\tikz[scale = 0.8]{\node[scale= 0.7] at(1.25, 0.18) {$\triangle$}; }}
\!}_\D &= \brak{\decorateddisk{}{0.7}{0.8}}_\D, \\
\brak{\,
\NB{\tikz[scale = 0.8]{\begin{scope}
  \coordinate (A) at (0,0);
  \coordinate (B) at (2,0);
  \coordinate (C) at (2.5,0.5);
  \coordinate (D) at (0.5,0.5);
  \coordinate (d) at (1, 0.5);
  \coordinate (c) at (1.5, 0.5);
  \coordinate (dm) at (0.5, 0.875);
  \coordinate (cm) at (2, 0.875);
  \coordinate (dT) at (1, 1.25);
  \coordinate (cT) at (1.5, 1.25);
  \coordinate (dM) at (0.5, 1.625);
  \coordinate (cM) at (2, 1.625);
  \coordinate (TT) at (1.25, 2);
  \coordinate (m1) at (0.5, 0.25);
  \coordinate (m2) at (2, 0.25);
  \coordinate (mt1) at (0.5, 0.25);
  \coordinate (mt2) at (2, 0.25);
  \coordinate (p1) at (1.5, 1.25);
  \coordinate (p2) at (1, 1.25);
  \coordinate (M1) at (0.9, 0.9);
  \coordinate (M2) at (1.6, 0.9);
  \coordinate (M3) at (0.9, 1.65);
  \coordinate (M4) at (1.6, 1.65);
  \draw[dotted] (c) --(d);
  \fill[fill = gray!50!white] (cT) arc (0:361:0.25cm and 0.08cm);
  \draw[]  (cT) arc (0:-180: 0.25cm and 0.08cm);
  \draw[densely dotted, fill = gray!50!white]  (cT) arc (0:180: 0.25cm and 0.08cm);
  \draw (d) -- (D) -- (A) -- (B) -- (C) -- (c);
  \draw (m1)  .. controls +(0.2,0) and 
+(0, -0.2) .. (d)  .. controls +(0,0.2) and 
+(0, -0.2) .. (dm)  .. controls +(0,0.2) and 
+(0, -0.2) .. (dT)  .. controls +(0,0.2) and 
+(0, -0.2) .. (dM)  .. controls +(0,0.2) and 
+(-0.5, 0) .. (TT)  .. controls +(0.5,0) and 
+(0, 0.2) .. (cM)  .. controls +(0,-0.2) and 
+(0, 0.2) .. (cT)  .. controls +(0,-0.2) and 
+(0, 0.2) .. (cm)  .. controls +(0,-0.2) and 
+(0, 0.2) .. (c)   .. controls +(0,-0.2) and 
+(-0.2, 0) .. (m2);  
\draw (M1)  .. controls +(0.2,-0.1) and +(-0.2, -0.1) .. (M2) coordinate [pos =0.2] (n1) coordinate [pos= 0.8] (n2);
  \draw (n1)  .. controls +(0.16, 0.08) and +(-0.16, 0.08) .. (n2);
  \draw (M3)  .. controls +(0.2,-0.1) and +(-0.2, -0.1) .. (M4) coordinate [pos =0.2] (n3) coordinate [pos= 0.8] (n4);
  \draw (n3)  .. controls +(0.16, 0.08) and +(-0.16, 0.08) .. (n4);
\end{scope} }}
\,}_\D &= \D\brak{\decorateddisk{}{0.7}{0.8}}_\D.
\end{align*}
\end{lemma}
\begin{proof}
Straightforward
\end{proof}
Notice that the connected sum with both torus and capped torus is equivalent to connected sum with a genus two surface capped by a disk along a separating curve in the middle. This operation, upon foam evaluation, is equivalent to multiplication by $\D= E_1E_2+E_3$, which is an element in $R$, so it does not matter which facet to apply it to.  

\begin{lemma}\label{lem:triangle-mig}
The following local relation holds:
\[
\brak{\vertexIdottedonetrianglezero}_\D +
\brak{\vertexIdottedzerotriangleone}_\D =
\brak{\vertexInodot}_\D
\]
\end{lemma}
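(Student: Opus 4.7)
The plan is to prove the identity by computing both sides coloring-by-coloring, using the definition of the triangle decoration and the characteristic-two identity $X_i + E_1 = X_j + X_k$ valid whenever $\{i,j,k\} = \{1,2,3\}$.

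First, I would fix notation. The three pre-foams involved differ only by decorations placed on the three facets meeting at the displayed seam, so they share the same underlying admissible colorings. Given an admissible coloring $c$, let $a$, $b$, $d$ denote the three (distinct) colors assigned to facets $1$, $2$, $3$ respectively. Since the triangle decoration always sits on facet $2$ in both terms on the left, it contributes the factor $(X_b + E_1)^{-1} = (X_a + X_d)^{-1}$ to $P(F,c)$; here I am using the identity $X_b + E_1 = X_a + X_d$ in the ring $R'$, which holds in characteristic two.

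Next I would read off the contributions. Compared with the undecorated pre-foam on the right, the first summand $\brak{\vertexIdottedonetrianglezero}_\D$ contributes, for the coloring $c$, the extra factor $X_a/(X_a + X_d)$ (one dot on facet $1$ in the numerator, and the triangle in the denominator as above). The second summand $\brak{\vertexIdottedzerotriangleone}_\D$ contributes instead the extra factor $X_d/(X_a + X_d)$. Adding these factors gives $(X_a + X_d)/(X_a + X_d) = 1$, so the two coloring contributions on the left sum to exactly $\brak{F,c}$, the corresponding contribution on the right. Summing over all admissible colorings $c$ yields the claimed equality of $R_\D$-valued evaluations.

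Strictly speaking, this equality of evaluations implies the equality of classes in state spaces in the usual way: capping off any test foam $V$ out of the boundary web with either side of the identity produces closed foams whose $\D$-evaluations agree by the local computation above. Hence by the definition of $\brak{\bullet}_\D$ as the quotient of $\Fo(\bullet)_{R_\D}$ by the kernel of the bilinear form, the two sides represent the same element of the state space. The main ``obstacle'' is really just the bookkeeping of which factor lives where; once one writes $X_b + E_1 = X_a + X_d$ the computation is a one-line cancellation, and no appeal to Kempe moves or the more delicate arguments of Section~\ref{sec:rel-btwn-ev} is needed.
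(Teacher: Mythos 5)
Your proof is correct and follows essentially the same route as the paper: fix an admissible coloring, use the characteristic-two identity $X_k + E_1 = X_i + X_j$ to rewrite the triangle's contribution, and observe that the two dotted terms sum to $1$ coloring by coloring. The paper also notes an alternative derivation via Lemma~\ref{lem:simplify-square-triangle} and Proposition~\ref{prop:dotmigration}, but the direct computation you give is the primary argument; your closing paragraph about state spaces is harmless but unnecessary, since the lemma is stated as a local relation on closed-foam evaluations.
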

\begin{proof}
This follows immediately from the definition of the evaluation of closed foam. Indeed for a fixed coloring  the identity reads:
\[
\frac{X_i}{E_1 + X_k} + \frac{X_j}{E_1+ X_k} =
\frac{X_i}{X_i + X_j} + \frac{X_j}{X_i+ X_j} 
= 1, 
\]
for $\{i,j,k\} =\{1,2,3\}$ and $k$ the color of the triangle-decorated facet.
One can as well deduce this identity from Lemma~\ref{lem:simplify-square-triangle} and Proposition~\ref{prop:dotmigration}.
\end{proof}
\begin{lemma} \label{lem:doublesaddle}
The following local relation holds:
\begin{align*}
\brak{\Gamma_0}_\D &= 
\brak{\Gamma_1}_\D +
\brak{\Gamma_2}_\D,
\end{align*}
where:
\[
\NB{\tikz{\begin{scope}[scale =0.47]
  \begin{scope}[xshift = -8cm]
\begin{scope}
    \draw[draw= gray, line width = 2mm] (-1.5,-4.5) -- +(0,9); 
    \draw[draw= gray, line width = 2mm] ( 1.5,-4.5) -- +(0,9); 
    \draw[draw= gray, very thick] (-1.5,-4.5) -- +(3,0); 
    \draw[draw= gray, very thick] (-1.5,-1.5) -- +(3,0); 
    \draw[draw= gray, very thick] (-1.5, 1.5) -- +(3,0); 
    \draw[draw= gray, very thick] (-1.5, 4.5) -- +(3,0); 
    \draw[draw= white, dotted, line width =1.2mm]  (-1.5,-4.5) -- +(0,9); 
    \draw[draw= white, dotted, line width =1.2mm]  ( 1.5,-4.5) -- +(0,9); 
  \end{scope}
\node at (-3.7,0) {$\Gamma_0 :=$};
\node at (2, 0) {,};
  \begin{scope}[yshift= 3 cm]
    \coordinate (A) at (-1,-1);
    \coordinate (B) at (1,-1);
    \coordinate (C) at (1,1);
    \coordinate (D) at (-1,1);
    \coordinate (a) at (-.5,-.5);
    \coordinate (b) at (.5,-.5);
    \coordinate (c) at (.5,.5);
    \coordinate (d) at (-.5,.5);
    \draw (A) ..controls (a) and (b) .. (B);
    \draw (C) ..controls (c) and (d) .. (D);
  \end{scope}
 \begin{scope}[yshift= 0 cm]
    \coordinate (A) at (-1,-1);
    \coordinate (B) at (1,-1);
    \coordinate (C) at (1,1);
    \coordinate (D) at (-1,1);
    \coordinate (a) at (-.5,-.5);
    \coordinate (b) at (.5,-.5);
    \coordinate (c) at (.5,.5);
    \coordinate (d) at (-.5,.5);
    \draw (A) ..controls (a) and (b) .. (B);
    \draw (C) ..controls (c) and (d) .. (D);
  \end{scope}
 \begin{scope}[yshift= -3 cm]
    \coordinate (A) at (-1,-1);
    \coordinate (B) at (1,-1);
    \coordinate (C) at (1,1);
    \coordinate (D) at (-1,1);
    \coordinate (a) at (-.5,-.5);
    \coordinate (b) at (.5,-.5);
    \coordinate (c) at (.5,.5);
    \coordinate (d) at (-.5,.5);
    \draw (A) ..controls (a) and (b) .. (B);
    \draw (C) ..controls (c) and (d) .. (D);
  \end{scope}    
  \end{scope}
\begin{scope}
    \draw[draw= gray, line width = 2mm] (-1.5,-4.5) -- +(0,9); 
    \draw[draw= gray, line width = 2mm] ( 1.5,-4.5) -- +(0,9); 
    \draw[draw= gray, very thick] (-1.5,-4.5) -- +(3,0); 
    \draw[draw= gray, very thick] (-1.5,-1.5) -- +(3,0); 
    \draw[draw= gray, very thick] (-1.5, 1.5) -- +(3,0); 
    \draw[draw= gray, very thick] (-1.5, 4.5) -- +(3,0); 
    \draw[draw= white, dotted, line width =1.2mm]  (-1.5,-4.5) -- +(0,9); 
    \draw[draw= white, dotted, line width =1.2mm]  ( 1.5,-4.5) -- +(0,9); 
  \end{scope}
\node at (-3.7,0) {$\Gamma_1 :=$};
  \begin{scope}[yshift= 3 cm]
    \coordinate (A) at (-1,-1);
    \coordinate (B) at (1,-1);
    \coordinate (C) at (1,1);
    \coordinate (D) at (-1,1);
    \coordinate (a) at (-.5,-.5);
    \coordinate (b) at (.5,-.5);
    \coordinate (c) at (.5,.5);
    \coordinate (d) at (-.5,.5);
    \draw (A) ..controls (a) and (b) .. (B) node[scale = 0.7, midway] {$\square$};
    \draw (C) ..controls (c) and (d) .. (D);
  \end{scope}
 \begin{scope}[yshift= 0 cm]
    \coordinate (A) at (-1,-1);
    \coordinate (B) at (1,-1);
    \coordinate (C) at (1,1);
    \coordinate (D) at (-1,1);
    \coordinate (a) at (-.5,-.5);
    \coordinate (b) at (.5,-.5);
    \coordinate (c) at (.5,.5);
    \coordinate (d) at (-.5,.5);
    \draw (A) ..controls (a) and (d) .. (D);
    \draw (C) ..controls (c) and (b) .. (B);
  \end{scope}
 \begin{scope}[yshift= -3 cm]
    \coordinate (A) at (-1,-1);
    \coordinate (B) at (1,-1);
    \coordinate (C) at (1,1);
    \coordinate (D) at (-1,1);
    \coordinate (a) at (-.5,-.5);
    \coordinate (b) at (.5,-.5);
    \coordinate (c) at (.5,.5);
    \coordinate (d) at (-.5,.5);
    \draw (A) ..controls (a) and (b) .. (B);
    \draw (C) ..controls (c) and (d) .. (D);
  \end{scope}
  \begin{scope}[xshift = 10cm]
    \begin{scope}
    \draw[draw= gray, line width = 2mm] (-1.5,-4.5) -- +(0,9); 
    \draw[draw= gray, line width = 2mm] ( 1.5,-4.5) -- +(0,9); 
    \draw[draw= gray, very thick] (-1.5,-4.5) -- +(3,0); 
    \draw[draw= gray, very thick] (-1.5,-1.5) -- +(3,0); 
    \draw[draw= gray, very thick] (-1.5, 1.5) -- +(3,0); 
    \draw[draw= gray, very thick] (-1.5, 4.5) -- +(3,0); 
    \draw[draw= white, dotted, line width =1.2mm]  (-1.5,-4.5) -- +(0,9); 
    \draw[draw= white, dotted, line width =1.2mm]  ( 1.5,-4.5) -- +(0,9); 
  \end{scope}
\node at (-5.9,0) {and};
\node at (-3.7,0) {$\Gamma_2 :=$};
  \begin{scope}[yshift= 3 cm]
    \coordinate (A) at (-1,-1);
    \coordinate (B) at (1,-1);
    \coordinate (C) at (1,1);
    \coordinate (D) at (-1,1);
    \coordinate (a) at (-.5,-.5);
    \coordinate (b) at (.5,-.5);
    \coordinate (c) at (.5,.5);
    \coordinate (d) at (-.5,.5);
    \draw (A) ..controls (a) and (b) .. (B);
    \draw (C) ..controls (c) and (d) .. (D);
  \end{scope}
 \begin{scope}[yshift= 0 cm]
    \coordinate (A) at (-1,-1);
    \coordinate (B) at (1,-1);
    \coordinate (C) at (1,1);
    \coordinate (D) at (-1,1);
    \coordinate (a) at (-.5,0);
    \coordinate (b) at (.5,-.5);
    \coordinate (c) at (.5, 0);
    \coordinate (d) at (-.5,.5);
    \draw (A)  -- (a) -- (D);
    \draw (C) -- (c) -- (B);
    \draw (c) -- (a) node[scale = 0.7, midway] {$\triangle$};
  \end{scope}
 \begin{scope}[yshift= -3 cm]
    \coordinate (A) at (-1,-1);
    \coordinate (B) at (1,-1);
    \coordinate (C) at (1,1);
    \coordinate (D) at (-1,1);
    \coordinate (a) at (-.5,-.5);
    \coordinate (b) at (.5,-.5);
    \coordinate (c) at (.5,.5);
    \coordinate (d) at (-.5,.5);
    \draw (A) ..controls (a) and (b) .. (B);
    \draw (C) ..controls (c) and (d) .. (D);
  \end{scope}
\node at (2, 0) {.};
  \end{scope}
\end{scope}

}}
\]
In other words, $\Gamma_0$ is locally the identity on two strands, $\Gamma_1$ is a square-decorated double-saddle on these two strands and $\Gamma_2$ is the composition of a zip and an unzip, with a triangle decoration on the inner disk.
\end{lemma}
\begin{proof}
One can prove the relation directly. The computations are similar to the ones in the proof of Proposition~\ref{prop:squarerel}.  
Alternatively, via Lemma~\ref{lem:simplify-square-triangle}, one can rewrite the square and the triangle in terms of dots and $\D^{-1}$, then use the local relations of Subsection~\ref{sec:rel-btwn-ev} to complete the proof.
\end{proof}

\subsection{The square of four-end graphs} \ 
\label{sec:four_end}

Let us consider four webs $(\Gamma_i)_{i\in \{I,H,=,||\}}$ which are identical except in a small 2-dimensional ball where they are given by:
\[
\Gamma_I = 
\NB{\tikz[scale = 0.5]{\draw[dotted] (0,0) circle (1cm); 
\draw   (45:1cm) -- (0, 0.3); 
\draw  (135:1cm) -- (0, 0.3); 
\draw   (0,-0.3) -- (0, 0.3);
\draw  (-45:1cm) -- (0,-0.3);
\draw (-135:1cm) -- (0,-0.3);
}}
,\quad
\Gamma_H = 
\NB{\tikz[scale = 0.5]{\draw[dotted] (0,0) circle (1cm); 
\draw   (45:1cm) -- ( 0.3, 0); 
\draw  (-45:1cm) -- ( 0.3, 0); 
\draw   (-0.3,0) -- ( 0.3, 0);
\draw ( 135:1cm) -- (-0.3, 0);
\draw (-135:1cm) -- (-0.3, 0);
}}
,\quad
\Gamma_{=} = 
\NB{\tikz[scale = 0.5]{\draw[dotted] (0,0) circle (1cm); 
\draw   (45:1cm) ..controls (0.3, 0) and (-0.3,0) ..  (135:1cm);
\draw  (-45:1cm) ..controls (0.3, 0) and (-0.3,0) .. (-135:1cm);
}}
\quad \textrm{and} \quad
\Gamma_{||} = 
\NB{\tikz[scale = 0.5]{\draw[dotted] (0,0) circle (1cm); 
\draw   (45:1cm) ..controls (0, 0.3) and (0,-0.3) ..  (-45:1cm);
\draw  (135:1cm) ..controls (0, 0.3) and (0,-0.3) .. (-135:1cm);
}}.
\]
We consider four cobordisms:
\begin{itemize}
\item the neighborhood of a seam vertex from $\Gamma_I$ to $\Gamma_H$ denoted by $F_{I\to H}$, of degree~$1$,
\item an unzip from $\Gamma_H$ to $\Gamma_=$ denoted by $F_{H\to =}$, of degree $1$,
\item a saddle from $\Gamma_=$ to $\Gamma_{||}$ denoted by $F_{=\to ||}$, of degree $2$,
\item and a zip from $\Gamma_{||}$ to $\Gamma_I$ denoted by $F_{||\to I}$, of degree $1$.
\end{itemize}
These foams are depicted in Figure~\ref{fig:foam-for-exact-square}.
\begin{figure}[ht]
\centering
\tikz[scale = 1]{
\input{
\imagesfolder/km_foam-for-exact-square
}
}
\caption{From left to right: $F_{I\to H}$, $F_{H\to =}$, $F_{=\to ||}$ and $F_{|| \to I}$.}\label{fig:foam-for-exact-square}
\end{figure}

\begin{lemma}\label{lem:compinsuqareis0}
	Compositions $F_{H\to =} \circ F_{I\to H}$, $F_{=\to ||} \circ F_{H\to =}$, $F_{||\to I} \circ F_{=\to ||}$ and $F_{I\to H} \circ F_{||\to I}$ are mapped to $0$ by the functor \brak{\bullet}.
\end{lemma}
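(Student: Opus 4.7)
The plan is to verify each of the four identities by showing the composite foam induces the zero map on state spaces. By the definition of state space, this amounts to showing that for every pair of foams $U\colon\emptyset\to\Gamma_0$ and $V\colon\Gamma_2\to\emptyset$ (where $\Gamma_0,\Gamma_2$ are the source and target of the composite in question), the closed foam $V\circ F_j\circ F_i\circ U$ evaluates to zero in $R$.

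For the first composition, $F_{H\to=}\circ F_{I\to H}\colon\Gamma_I\to\Gamma_=$, the key structural observation concerns the seam complex of the composite. The two interior seams that ascend from the unique seam vertex of $F_{I\to H}$ reach the two vertices of the intermediate cross-section $\Gamma_H$, and in the unzip $F_{H\to=}$ these two vertices are connected through the foam by a single seam arc. After composition, the two ascending seams together with this arc form a seam loop based at the original seam vertex, bounding an interior disk-facet (the ``opening disk'' of the unzip). The plan is to apply neck-cutting (Proposition~\ref{prop:neckcutting}) across this disk, followed by vertex removal (Proposition~\ref{prop:verticesremoval}) and bubble removal (Proposition~\ref{prop:bubbleremoval}), to reduce the composite to a sum of simpler foams. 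Each summand should either contain a bubble whose dot pattern forces its evaluation to vanish, or should have no admissible coloring because the resulting seam graph fails the bipartite condition of Proposition~\ref{prop:admis2bipartite}. Composition~4, $F_{I\to H}\circ F_{||\to I}$, with the roles of zip and unzip swapped, is treated by an entirely analogous argument. Compositions 2 and 3 involve a saddle; the composite then has a simpler seam structure (no interior seam vertex) and the key input in place of neck-cutting is Lemma~\ref{lem:doublesaddle}, which encodes the relevant skein identity between saddle compositions and zip/unzip configurations.

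The main obstacle will be the careful combinatorial bookkeeping of the many terms produced by the successive skein reductions, and verifying that they all vanish individually. A conceptually cleaner alternative would be to exhibit, for each of the four composites, a fixed-point-free involution on the set of admissible colorings of $V\circ F_j\circ F_i\circ U$ that preserves the weight $P/Q$; since $\mathrm{char}(\kk)=2$, the contributions then pair up and cancel. A natural candidate is a Kempe-type move (Lemma~\ref{lem:colKempe}) along a bicolored surface containing the distinguished disk-facet of the composite. The hard part would be to verify that such an involution is uniformly well-defined and fixed-point-free across all four compositions.
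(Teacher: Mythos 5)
Your plan circles around the key fact without stating it: each of the four composite foams has \emph{no admissible coloring at all} (in fact not even a pre-admissible coloring). Once one notices this, the lemma is immediate --- any admissible coloring of a closed foam $W\circ(F_j\circ F_i)\circ U$ would restrict to an admissible coloring of $F_j\circ F_i$, so every such closed foam has an empty index set in the evaluation sum~(\ref{eq:eval}) and evaluates to $0\in R$, giving $\brak{F_j\circ F_i}=0$. This one-line observation is the entire content of the paper's proof; none of the skein apparatus you invoke (neck-cutting, vertex removal, bubble removal, Lemma~\ref{lem:doublesaddle}) is needed, and no case-by-case bookkeeping of skein terms is required.

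The reason the composites admit no pre-admissible coloring is an elementary local observation, and it is worth separating two cases because the bipartiteness idea you gesture at only covers half of them. For $F_{H\to =}\circ F_{I\to H}$ and $F_{I\to H}\circ F_{||\to I}$, each side facet of the (un)zip glues onto two of the four leg facets of the cone neighborhood of the seam vertex, and these two facets meet that vertex at \emph{non-opposite} corners; since a pre-admissible coloring assigns the three colors to the three pairs of opposite corners, those two corners must carry distinct colors, yet after gluing they lie on a single facet --- a contradiction. (Equivalently, the composed seam graph acquires a loop based at the seam vertex, an odd cycle, so it is not bipartite; this is the version of the argument you mention.) For $F_{=\to ||}\circ F_{H\to =}$ and $F_{||\to I}\circ F_{=\to ||}$, however, the composite has no seam vertex at all and its seam graph is a single arc, which is trivially bipartite, so the bipartiteness criterion tells you nothing, and Lemma~\ref{lem:doublesaddle} (a statement about the $\D$-localized theory with auxiliary decorations) is not the relevant input. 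What fails instead is that the saddle is a single facet that glues onto \emph{both} side facets of the (un)zip, so along the remaining seam arc only two distinct global facets meet, whereas pre-admissibility requires three distinct colors there. Finally, your proposed fixed-point-free involution is vacuously present precisely because the set of admissible colorings is empty --- that emptiness is the actual content, and you should observe it directly rather than search for an involution on the empty set.
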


\begin{proof}
	In each case, the foam obtained by composing the two cobordisms has no admissible coloring. 
\end{proof}

We consider the following square of webs and web cobordisms 
\begin{align}
\ensuremath{\vcenter{\hbox{
\begin{tikzpicture}[scale =2]
  \node (A) at (0,1) {$\Gamma_I$};
  \node (B) at (2,1) {$\Gamma_H$};
  \node (C) at (2,0) {$\Gamma_=$};
  \node (D) at (0,0) {$\Gamma_{||}$};
  \draw[-to] (A)--(B) node [midway,above] {$F_{I\to H}$};
  \draw[-to] (D)--(A) node [midway,left] {$F_{||\to I}$};
  \draw[-to] (B)--(C) node [midway,right] {$F_{H \to =}$};
  \draw[-to] (C)--(D) node [midway,below] {$F_{=\to ||}$};
\end{tikzpicture}
}}}\label{eq:square0}
\end{align}

Applying the functor $\brak{\bullet}$ results 
in a 4-periodic complex 
 \begin{align}
\ensuremath{\vcenter{\hbox{
\begin{tikzpicture}[scale =2]
  \node (A) at (0,1) {$\brak{\Gamma_I}$};
  \node (B) at (2,1) {$\brak{\Gamma_H}$};
  \node (C) at (2,0) {$\brak{\Gamma_=}$};
  \node (D) at (0,0) {$\brak{\Gamma_{||}}$};
  \draw[-to] (A)--(B) node [midway,above] {$\brak{F_{I\to H}}$};
  \draw[-to] (D)--(A) node [midway,left] {$\brak{F_{||\to I}}$};
  \draw[-to] (B)--(C) node [midway,right] {$\brak{F_{H \to =}}$};
  \draw[-to] (C)--(D) node [midway,below] {$\brak{F_{=\to ||}}$};
\end{tikzpicture}
}}}\label{eq:square}
\end{align}
of graded $R$-modules. Differential in this complex is homogeneous relative to internal grading, of degrees $1, 1, 1, 2$ respectively, going clockwise starting from the map on the left. 
It is not clear whether this square is always 
exact.

A similar square (a 4-periodic complex) can be obtained by applying the functor $\brak{\bullet}_S$ to square (\ref{eq:square0}) for any base change $\psi:R\lra S$. 

\begin{prop}\label{prop:exact_sq}
The square obtained for the base change $(\psi_\D, R[\D^{-1}])$ is exact.
\end{prop}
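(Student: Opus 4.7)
The plan is to show exactness of the 4-periodic complex~(\ref{eq:square}) by exhibiting explicit contracting homotopies. By Lemma~\ref{lem:compinsuqareis0}, applying $\brak{\bullet}_\D$ to~(\ref{eq:square0}) already yields a chain complex, so the task reduces to constructing $R[\D^{-1}]$-module maps $h_{\Gamma_X} : \brak{\Gamma_X}_\D \to \brak{\Gamma_{X-1}}_\D$, one at each vertex of the square (indices taken cyclically), such that
\begin{align*}
\brak{F_{X-1 \to X}}_\D \circ h_{\Gamma_X} + h_{\Gamma_{X+1}} \circ \brak{F_{X \to X+1}}_\D = \Id_{\brak{\Gamma_X}_\D}.
\end{align*}
From such a homotopy, exactness follows formally: if $v \in \ker(\brak{F_{X\to X+1}}_\D)$ then $v = \brak{F_{X-1\to X}}_\D(h_{\Gamma_X}(v))$, showing $v$ lies in the image of $\brak{F_{X-1\to X}}_\D$.

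I would take each homotopy $h_{\Gamma_X}$ to be induced by the time-reversal of the forward foam $F_{X-1 \to X}$, carrying either a $\triangle$- or a $\square$-decoration on the inner facet created by the reversal. A quick degree count forces $\triangle$ for the three reversals of the degree-$1$ foams (the two seam-vertex cobordisms, the zip and the unzip) and $\square$ for the reversal of the degree-$2$ saddle.

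The core algebraic input for the identity $dh+hd=\Id$ is Lemma~\ref{lem:doublesaddle}, which, locally on two strands, decomposes the identity foam as the sum of a $\square$-decorated double-saddle and a $\triangle$-decorated zip-unzip. Placed inside the ball where the four webs differ, this immediately supplies the required decompositions of $\Id_{\Gamma_=}$ and $\Id_{\Gamma_{||}}$: one summand factors through the adjacent saddle neighbor in the square, the other through the adjacent zip/unzip neighbor, and these two neighbors are exactly the two webs adjacent to the given vertex in the square. For the remaining two vertices $\Gamma_I$ and $\Gamma_H$, whose local picture contains a seam vertex rather than two disjoint strands, the analogous decomposition must be derived by combining Lemma~\ref{lem:doublesaddle} with the vertex-removal relation (Proposition~\ref{prop:verticesremoval}) and the decoration/dot-migration identities of Subsection~\ref{sec:facet_neg}.

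The main obstacle is precisely this last step, namely producing a balanced decomposition of $\Id_{\Gamma_I}$ and $\Id_{\Gamma_H}$ in a form compatible with the reversed-foam-plus-decoration description of the $h_{\Gamma_X}$. Inverting $\D$ is indispensable: the decorations $\triangle$ and $\square$ contribute factors $(X_i + E_1)^{-1}$ and $(X_i^2 + E_2)^{-1}$ to the evaluation, and their product across the three colors along a seam edge is exactly $\D = E_1E_2+E_3$. This is why the complex is only expected to be exact after inverting $\D$ and fails over the base ring $R$ itself.
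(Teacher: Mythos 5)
Your overall strategy is the paper's: you contract the 4-periodic complex by explicit homotopies given by the time-reversed foams decorated with a $\triangle$ (for the three degree-$1$ maps) or a $\square$ (for the saddle), and you correctly get the two homotopy identities at $\Gamma_=$ and $\Gamma_{||}$ from Lemma~\ref{lem:doublesaddle}. The formal deduction of exactness from $dh+hd=\Id$ (together with Lemma~\ref{lem:compinsuqareis0}) is also fine. But the proof is not complete: the two remaining identities,
\begin{align*}
\brak{\Id_{\Gamma_I}}_\D &= \brak{G_{H\to I}\circ F_{I\to H}}_\D + \brak{F_{||\to I}\circ G_{I\to ||}}_\D,\\
\brak{\Id_{\Gamma_H}}_\D &= \brak{G_{=\to H}\circ F_{H\to =}}_\D + \brak{F_{I\to H}\circ G_{H\to I}}_\D,
\end{align*}
are exactly what you label ``the main obstacle'' and you do not prove them; you only gesture at combining Lemma~\ref{lem:doublesaddle} with the vertex-removal relation (Proposition~\ref{prop:verticesremoval}) and decoration identities. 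That is a genuine gap, and the toolkit you name is not the one that actually closes it: the vertex-removal relation plays no role here, and Lemma~\ref{lem:doublesaddle} only handles the vertex-free webs $\Gamma_=$ and $\Gamma_{||}$.

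What is actually needed for $\Gamma_I$ (the case $\Gamma_H$ is symmetric) are the local relations of Propositions~\ref{prop:IHI} and~\ref{prop:unzipzip}: with the $\triangle$ riding on the facet bounding the internal edge, $\brak{G_{H\to I}\circ F_{I\to H}}_\D$ expands as a sum of two $\triangle$-decorated identity foams with a dot on one of the two upper/lower edges, and $\brak{F_{||\to I}\circ G_{I\to ||}}_\D$ expands similarly. Adding the two expansions, one dotted term occurs twice and cancels because $\mathrm{char}(\kk)=2$, and the two surviving terms (a dot on each of the two edges adjacent to the $\triangle$-decorated internal facet) sum to $\brak{\Id_{\Gamma_I}}_\D$ by the triangle-migration relation of Lemma~\ref{lem:triangle-mig}. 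Note also that the degree count only forces the total decoration, not its placement: the identities require the $\triangle$ to sit specifically on the internal facet, so ``reversal plus some $\triangle$'' is not yet a well-defined homotopy until that placement is specified and the cancellation above is carried out.
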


\begin{proof}
We define four cobordisms (or foams) $G_{H\to I}$, $G_{I\to ||}$, $G_{=\to H}$, and $G_{||\to =}$ (the source and the target of these cobordisms should be clear from the notation) and we prove the following identities:
\begin{align}
\brak{\Id_{\Gamma_I   }}_\D &= \brak{G_{H \to I } \circ F_{I \to H }}_\D + \brak{F_{|| \to I }\circ G_{I \to ||}}_\D, \label{eq:idI}\\
\brak{\Id_{\Gamma_H   }}_\D &= \brak{G_{= \to H } \circ F_{H \to = }}_\D + \brak{F_{I  \to H }\circ G_{H \to I }}_\D, \label{eq:idH}\\
\brak{\Id_{\Gamma_=   }}_\D &= \brak{G_{||\to = } \circ F_{= \to ||}}_\D + \brak{F_{H  \to = }\circ G_{= \to H }}_\D, \label{eq:idh}\\
\brak{\Id_{\Gamma_{||}}}_\D &= \brak{G_{I \to ||} \circ F_{||\to I }}_\D + \brak{F_{=  \to ||}\circ G_{||\to = }}_\D. \label{eq:idv}
\end{align}
\begin{itemize}
 \item The foam $G_{H \to I }$ is the neighborhood of a seam vertex with one triangle on the facet bounding the internal edge of the $H$. It has degree $-1$.
 \item The foam $G_{I \to ||}$ is an unzip with one triangle on the facet bounding the internal edge of the $I$. It has degree $-1$.
 \item The foam $G_{|| \to =}$ is a saddle with one square. It has degree $-2$.
 \item The foam $G_{= \to H}$ is a zip with one triangle on the facet bounding the internal edge of the $I$. It has degree $-1$.
\end{itemize}
These foams are depicted in Figure~\ref{fig:homotopies}. 
\begin{figure}[ht]
\centering
\tikz[scale = 1]{
\input{
\imagesfolder/km_homot-for-exact-square
}
}
\caption{From left to right: $G_{H\to I}$, $G_{I\to ||}$, $G_{||\to =}$ and $G_{= \to H}$.}\label{fig:homotopies}
\end{figure}
  
The identities (\ref{eq:idv}) and (\ref{eq:idh}) are given by Lemma~\ref{lem:doublesaddle}.
The identities (\ref{eq:idI}) and (\ref{eq:idH}) are essentially the same, so we only prove (\ref{eq:idI}). 
  
In this proof, we consider (pieces of) foams which are diffeomorphic to (pieces of) webs times an interval. In order to represent such foams in the computations, we will only draw the (pieces of) webs and indicate the dots on the edges.

Thanks to Proposition~\ref{prop:IHI}, we have:
\[
\brak{G_{H \to I } \circ F_{I \to H }} = 
\brak{
\NB{\tikz[scale = 0.6]{\draw[dotted] (0,0) circle (1cm); 
\draw  (135:1cm) -- (0, 0.3) coordinate[pos= 0.5] (P); 
\draw   (45:1cm) -- (0, 0.3); 
\draw   (0,-0.3) -- (0, 0.3) coordinate[pos= 0.5] (M); 
\draw  (-45:1cm) -- (0,-0.3);
\draw (-135:1cm) -- (0,-0.3);
\fill (P) circle (0.1cm);
\node[scale =0.5] at (M) {$\triangle$};
}}
}
+
\brak{
\NB{\tikz[scale = 0.6]{\draw[dotted] (0,0) circle (1cm); 
\draw   (45:1cm) -- (0, 0.3);
\draw  (135:1cm) -- (0, 0.3); 
\draw   (0,-0.3) -- (0, 0.3)  coordinate[pos= 0.5] (M); 
\draw (-135:1cm) -- (0,-0.3)  coordinate[pos= 0.5] (P); 
\draw  (-45:1cm) -- (0,-0.3);
\fill (P) circle (0.1cm);
\node[scale =0.5] at (M) {$\triangle$};
}}
}
.
\]
Thanks to Proposition~\ref{prop:unzipzip}, 
\[
\brak{F_{|| \to I } \circ G_{I \to || }} = 
\brak{
\NB{\tikz[scale = 0.6]{\draw[dotted] (0,0) circle (1cm); 
\draw  (135:1cm) -- (0, 0.3) coordinate[pos= 0.5] (P); 
\draw   (45:1cm) -- (0, 0.3); 
\draw   (0,-0.3) -- (0, 0.3) coordinate[pos= 0.5] (M); 
\draw  (-45:1cm) -- (0,-0.3);
\draw (-135:1cm) -- (0,-0.3);
\fill (P) circle (0.1cm);
\node[ scale =0.5] at (M) {$\triangle$};
}}
}
+
\brak{
\NB{\tikz[scale = 0.6]{\draw[dotted] (0,0) circle (1cm); 
\draw   (45:1cm) -- (0, 0.3);
\draw  (135:1cm) -- (0, 0.3); 
\draw   (0,-0.3) -- (0, 0.3) coordinate[pos= 0.5] (M); 
\draw (-135:1cm) -- (0,-0.3) ;
\draw  (-45:1cm) -- (0,-0.3) coordinate[pos= 0.5] (P); 
\fill (P) circle (0.1cm);
\node[ scale =0.5] at (M) {$\triangle$};
}}
}
.
\]
This gives:
\begin{align*}
\brak{G_{H \to I } \circ F_{I \to H }} + \brak{F_{|| \to I }\circ G_{I \to ||}} &=
\brak{
\NB{\tikz[scale = 0.6]{\draw[dotted] (0,0) circle (1cm); 
\draw  (135:1cm) -- (0, 0.3);
\draw   (45:1cm) -- (0, 0.3); 
\draw   (0,-0.3) -- (0, 0.3) coordinate[pos= 0.5] (M); 
\draw  (-45:1cm) -- (0,-0.3);
\draw (-135:1cm) -- (0,-0.3) coordinate[pos= 0.5] (P); 
\fill (P) circle (0.1cm);
\node[scale =0.5] at (M) {$\triangle$};
}}
}
+
\brak{
\NB{\tikz[scale = 0.6]{\draw[dotted] (0,0) circle (1cm); 
\draw   (45:1cm) -- (0, 0.3);
\draw  (135:1cm) -- (0, 0.3); 
\draw   (0,-0.3) -- (0, 0.3) coordinate[pos= 0.5] (M); 
\draw (-135:1cm) -- (0,-0.3) ;
\draw  (-45:1cm) -- (0,-0.3) coordinate[pos= 0.5] (P); 
\fill (P) circle (0.1cm);
\node[scale =0.5] at (M) {$\triangle$};
}}
} 
=  \brak{\Id_{\Gamma_I}},
\end{align*}
where the second equality comes from Proposition~\ref{lem:triangle-mig}. 
\end{proof}

Let $Q(R)$ be the field of fractions of $R$. 
Note that $Q(R)$ is naturally isomorphic to the 
field of fractions of $R_{\D}$ as well. 
Given a projective $R_{\D}$-module $P$, define 
its rank $\rk(P)$ as the dimension 
of the $Q(R)$-vector space $P\otimes_{R_{\D}}Q(R)$, 
\[ \rk(P)  = \dim_{Q(R)}(P\otimes_{R_{\D}}Q(R)).\] 
 $P$ is finitely-generated iff it's rank is finite. 

\begin{prop} \label{prop:proj_rank} The state space $\lG_{\D}$ is a projective $R_{\D}$-module 
of rank equal to the number $|\adm(\Gamma)|$ of Tait colorings of $\Gamma$. 
\end{prop}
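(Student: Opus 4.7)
The proof combines the 4-periodic exact complex of Proposition~\ref{prop:exact_sq} with the local direct sum decompositions of Section~\ref{sub:dir_sum}. The first observation is that the identities (\ref{eq:idI})--(\ref{eq:idv}) used to prove exactness actually provide an explicit chain null-homotopy over $R_\D$. Combined with Lemma~\ref{lem:compinsuqareis0}, a direct computation shows that
$$p := G_{H\to I}\circ F_{I\to H}, \qquad q := F_{\|\to I}\circ G_{I\to \|}$$
are commuting idempotents in $\operatorname{End}_{R_\D}(\lG_\D)$ with $p+q = \operatorname{id}$. Consequently $\lG_\D = \operatorname{im}(p)\oplus \operatorname{im}(q)$, with $\operatorname{im}(p)$ a retract of $\lG_H$ and $\operatorname{im}(q)$ a retract of $\lG_\|$.

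I plan to induct on the number of vertices of $\Gamma$. The base case is when $\Gamma$ is a disjoint union of $n$ circles: iterating Proposition~\ref{prop:dec-circle} shows $\lG_\D$ is a free graded $R_\D$-module of rank $3^n = |\adm(\Gamma)|$. If $\Gamma$ has a bridge then both sides vanish. If $\Gamma$ has a digon, triangle, or square face, Propositions~\ref{prop:dec-digon}, \ref{prop:dec-triangle}, or \ref{prop:dec-square} reduce the problem to a web with strictly fewer vertices, and the inductive hypothesis applies.

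In the remaining case I apply the exact square to any edge of $\Gamma$. The webs $\Gamma_=$ and $\Gamma_\|$ have two fewer vertices each, so by induction $\lG_=$ and $\lG_\|$ are projective of the asserted ranks. The retract decomposition above then immediately gives that $\operatorname{im}(q)$ is projective. To deal with $\operatorname{im}(p)$ (which is only a retract of $\lG_H$), I use the symmetry of the local move under a $90^{\circ}$ rotation: viewing $\Gamma_H$ itself as an ``$I$''-configuration yields an analogous decomposition $\lG_H = \operatorname{im}(p')\oplus \operatorname{im}(q')$ with $\operatorname{im}(p')$ a retract of $\lG_=$ and $\operatorname{im}(q')$ a retract of $\lG_\D$. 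The two decompositions, together with the graded module isomorphism $\lG_\D \oplus \lG_= \cong \lG_H \oplus \lG_\|$ coming from contractibility of the 4-complex, suffice to conclude projectivity of both $\lG_\D$ and $\lG_H$. For the rank, exactness over the field of fractions $Q(R)$ gives $\operatorname{rank}(\lG_\D)+\operatorname{rank}(\lG_=)=\operatorname{rank}(\lG_H)+\operatorname{rank}(\lG_\|)$, while the analogous combinatorial identity $|\adm(\Gamma_I)|+|\adm(\Gamma_=)|=|\adm(\Gamma_H)|+|\adm(\Gamma_\|)|$ is verified by a direct case analysis of the eighty-one boundary Tait colorings of the small disk where the move takes place. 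Combining this with the rotated-square equation and the inductive values for the simpler graphs pins down $\operatorname{rank}(\lG_\D) = |\adm(\Gamma)|$.

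The main difficulty is that $\Gamma_H$ has the same vertex count as $\Gamma$, so naive induction on the number of vertices does not individually control $\lG_H$. The resolution is to treat the pair $(\lG_\D,\lG_H)$ jointly via the two rotated exact squares: each supplies a retract relation and a rank equation, and together they decouple the analysis so that only the strictly simpler webs $\Gamma_=$ and $\Gamma_\|$ need to be assumed projective of known rank to run the induction.
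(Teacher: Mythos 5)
Your setup (the idempotents $p,q$ coming from the null-homotopies, the induction on vertices with the circle/bridge/digon/triangle/square reductions, and the rank identity over $Q(R)$) matches the paper's proof. The gap is at the one step where the paper is careful and you are not: you apply the exact square to \emph{any} edge and then try to handle $\brak{\Gamma_H}_{\D}$ by ``rotating'' the square. That rotation buys nothing new. The rotated square involves the same four webs with the cyclic order reversed, and its maps are (up to the triangle/square decorations) exactly the backward maps already used as homotopies in the original square; in particular it yields the \emph{same} rank equation $\rk\brak{\Gamma_I}_{\D}+\rk\brak{\Gamma_=}_{\D}=\rk\brak{\Gamma_H}_{\D}+\rk\brak{\Gamma_{\|}}_{\D}$, so you have one equation in the two unknowns $\rk\brak{\Gamma_I}_{\D}$ and $\rk\brak{\Gamma_H}_{\D}$ and cannot ``pin down'' either. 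The same circularity persists on the module level: from the original square the unknown summand of $\brak{\Gamma_I}_{\D}$ is a retract of $\brak{\Gamma_H}_{\D}$, and from the rotated square the unknown summand of $\brak{\Gamma_H}_{\D}$ is a retract of $\brak{\Gamma_I}_{\D}$; all of your isomorphisms, including $\brak{\Gamma_I}_{\D}\oplus\brak{\Gamma_=}_{\D}\cong\brak{\Gamma_H}_{\D}\oplus\brak{\Gamma_{\|}}_{\D}$, remain consistent with $\brak{\Gamma_I}_{\D}$ and $\brak{\Gamma_H}_{\D}$ sharing a common non-projective summand, so projectivity of neither is forced. In effect you need the ``three out of four projective implies the fourth'' lemma, and with an arbitrary edge you only know two of the four.

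The paper breaks the symmetry combinatorially rather than homologically: in the remaining case every face has at least five sides, so by planarity there is a pentagonal face, and the edge for the square move is chosen on that pentagon. Then $\Gamma_H$ acquires a square face, so Proposition~\ref{prop:dec-square} splits $\brak{\Gamma_H}_{\D}$ into state spaces of webs with strictly fewer vertices, and induction gives projectivity and the Tait-coloring rank for $\Gamma_H$ independently of $\Gamma_I$. With $\Gamma_H$, $\Gamma_=$, $\Gamma_{\|}$ all known, the three-out-of-four lemma and the rank identity (matched against $|\adm(\Gamma_{\|})|+|\adm(\Gamma_H)|=|\adm(\Gamma_I)|+|\adm(\Gamma_=)|$) finish the induction. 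To repair your argument, replace ``any edge'' by ``an edge of a pentagonal face'' and justify the existence of such a face via Euler's formula once faces with at most four sides have been excluded; the rotated-square device can then be dropped.
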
 
\begin{proof} The data of maps and homotopies 
in the localized theory, see proof of Proposition~\ref{prop:exact_sq}, can be encoded by 
the diagram below, with $R_{\D}$-modules $V_0, \dots, V_3$. 

\[
\NB{
\tikz[scale =2]{
\node (V1) at (-1, 1) {$V_1$};
\node (V2) at ( 1, 1) {$V_2$};
\node (V3) at ( 1,-1) {$V_3$};
\node (V4) at (-1,-1) {$V_0$};
\draw[-to] (V1) ..controls +( 0.4, 0.2) and +(-0.4, 0.2) .. (V2) node[midway, above] {$\alpha_1$};
\draw[to-] (V1) ..controls +( 0.4,-0.2) and +(-0.4,-0.2) .. (V2) node[midway, below] {$\beta_1$};
\draw[-to] (V2) ..controls +( 0.2,-0.4) and +( 0.2, 0.4) .. (V3) node[midway, right] {$\alpha_2$};
\draw[to-] (V2) ..controls +(-0.2,-0.4) and +(-0.2, 0.4) .. (V3) node[midway, left] {$\beta_2$};
\draw[-to] (V3) ..controls +(-0.4,-0.2) and +( 0.4,-0.2) .. (V4) node[midway, below] {$\alpha_3$};
\draw[to-] (V3) ..controls +(-0.4, 0.2) and +( 0.4, 0.2) .. (V4) node[midway, above] {$\beta_3$};
\draw[-to] (V4) ..controls +(-0.2, 0.4) and +(-0.2,-0.4) .. (V1) node[midway, left] {$\alpha_0$};
\draw[to-] (V4) ..controls +( 0.2, 0.4) and +( 0.2,-0.4) .. (V1) node[midway, right] {$\beta_0$};
}
}
\]
with homogeneous maps $\alpha_i,\beta_i$ 
 and index $i\in \Z/4$ understood modulo $4$, that satisfy 
 \begin{align*}
 \alpha_{i+1}\alpha_i   = &  0, \\
 \beta_i\beta_{i+1}  = & 0, \\ 
 \beta_{i+1}\alpha_{i+1} + \alpha_i \beta_i   = & 1_{V_i} . 
 \end{align*}
 Both $\alpha$'s and $\beta$'s are differentials, and the four-periodic complex is both $\alpha$-exact and $\beta$-exact. 
Maps $\beta_{i}\alpha_i$ and $\alpha_{i-1}\beta_{i-1}$ are mutually-orthogonal idempotents in $\mathrm{End}(V_i)$. 
These projections decompose $V_i$ into the direct sum of two subspaces, 
\[ V_i \cong \mathrm{im}(\beta_{i}\alpha_i) \oplus \mathrm{im}(\alpha_{i-1}\beta_{i-1}). \]
The complex decomposes into the direct sum of four exact 
complexes 
\[ 0 \lra \mathrm{im}(\beta_{i}\alpha_i) \stackrel{\cong}{\lra} \mathrm{im}(\alpha_i\beta_i) 
\lra 0. \]
There is a canonical isomorphism of 
$R_{\D}$-modules 
\[V_0 \oplus V_2 \ \cong \ V_1 \oplus V_3 ,\]
given by the mutually-inverse matrices  of maps 
\[\left( {\begin{array}{cc}
   \alpha_0 & \beta_1 \\
   \beta_3 & \alpha_2 \\
  \end{array} } \right) , \ \ 
  \left( {\begin{array}{cc}
   \beta_0 & \alpha_3\\
   \alpha_1 & \beta_2 \\
  \end{array} } \right) . 
  \]
Furthermore, $V_i$ is isomorphic to the direct sum of two terms that are isomorphic to direct summands of $V_{i-1}$ and $V_{i+1}$,  and the 
following lemma holds. 

\begin{lemma} If three out of the four $V_i$'s are projective graded $R_{\D}$-modules, then the fourth one is a projective graded $R_{\D}$-module as well. The following equality on their ranks holds 
\begin{align}\label{eq:sum-eq}
\rk(V_0) + \rk(V_2)  =  
 \rk(V_1) + \rk(V_3).
 \end{align}
\end{lemma}

We can now prove Proposition~\ref{prop:proj_rank} by induction on the number of vertices of $\Gamma$.  Proposition~\ref{prop:proj_rank} is 
clear for $\Gamma$ with no vertices (such graph is a union of circles). Such $\Gamma$ has $3^m$ 
Tait colorings, where $m$ is the number of circles 
in $\Gamma$, and the rank of free $R_{\D}$-module 
$\lG_{\D}$ is $3^m$, in view of Proposition~\ref{prop:dec-circle}, which holds in the localized theory as well. 

Hence, for a graph $\Gamma$ without vertices, 
$ \rk(\lG_{\D}) = |\adm(\Gamma)|$. 

If graph $\Gamma$ 
has a bridge, $ \rk(\lG_{\D}) = 0 = |\adm(\Gamma)|$, since the localized state space 
$\lG_{\D}=0$, as well as the state space itself, 
$\lG=0$. At the same time, $\Gamma$ has no 
Tait colorings. 

If $\Gamma$ has $n>0$ vertices and a region with at most $4$ sides, propositions 
in Section~\ref{sub:dir_sum}, which remain true in the localized theory, show that $\lG_{\D}$ is 
isomorphic to $\brak{\Gamma'}_{\D}$ or a direct sum of two such state spaces for graphs $\Gamma'$ with fewer vertices. 

Otherwise, any region of $\Gamma$ has at least five sides. The graph $\Gamma$ being planar, there necessarily exists a region with exactly five sides. Take one of the edges of a pentagon region, and modify a neighborhood of this edge to form three other graphs in the square (\ref{eq:square0}) so that $\Gamma_I = \Gamma$. 
Then graph $\Gamma_{H}$ contains a square region, 
so the statement of the proposition holds for it. 
Likewise, the remaining two graphs have fewer vertices than $\Gamma$ and satisfy the 
statement of the proposition. This implies the same property for $\Gamma$. 
  
The degree equality follows from the equation (\ref{eq:sum-eq}). The same equation is satisfied 
by the number of Tait colorings of the four graphs:
\[ |\adm(\Gamma_{||})| + |\adm(\Gamma_{H})| = 
|\adm(\Gamma_{I})| + |\adm(\Gamma_{=})|, \]
see also \cite[Definition 2.1]{FenKru2009} for 
the corresponding defining relations on the chromatic polynomial of planar trivalent graphs 
at $Q=4$. 

These observations together complete the induction base and step and prove the proposition. 
\end{proof}

 \subsection{Graded dimension}\ 
 \label{sec:qdim}
 
To get a numerical invariant out of our construction, we assign to a planar trivalent graph $\Gamma$ the 
 graded dimension of the graded finite-dimensional $\kk$-vector space $\brak{\Gamma}_{\kk}$, 
 \[ \qdim(\Gamma) = \gdim(\brak{\Gamma}_{\kk}).\] 
The graded dimension of a graded finite-dimensional vector space $V = \oplusop{n\in \Z} V_n$ is given by
 \[ \gdim(V) = \sum_{n\in \Z} \dim(V_n)q^n.\]
  We call the invariant $\qdim(\Gamma)$ the \emph{quantum dimension} of $\Gamma$ (or 
  quantum $SO(3)$ dimension). 
 Quantum dimension takes values in the semiring $\Z_+[q,q^{-1}]$. Direct sum decompositions 
 from Section~\ref{sub:dir_sum} allow to express the quantum dimension of a graph that contains a facet with four or fewer edges as a linear combination of quantum dimensions of its simplifications. 
 
If a graph $\Gamma$ is bipartite, it contains a facet with at most four edges and its reductions are bipartite as well, so that $\qdim(\Gamma)$ can be computed inductively. 

\begin{prop} For any bipartite web $\Gamma$, 
the quantum dimension $\qdim(\Gamma)$ equals the 
Kuperberg bracket of $\Gamma$. 

\end{prop}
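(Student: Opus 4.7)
The plan is to proceed by induction on the number of vertices of $\Gamma$, reducing everything to the recursive definition of the Kuperberg bracket. The key input is that all three direct sum decompositions in Section~\ref{sub:dir_sum} (Propositions~\ref{prop:dec-circle}, \ref{prop:dec-digon}, \ref{prop:dec-square}) carry over to $\brak{\bullet}_{\kk}$ under base change. Taking graded dimension of both sides, these yield exactly the three defining local relations of the Kuperberg bracket, namely
\[
\qdim(\Gamma \sqcup \bigcirc) = [3]\, \qdim(\Gamma),\qquad \qdim(\text{digon}) = [2]\,\qdim(\text{edge}),\qquad \qdim(\text{square}) = \qdim(\Gamma_1) + \qdim(\Gamma_2),
\]
with $[2]=q+q^{-1}$, $[3]=q^2+1+q^{-2}$. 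The base case is the empty web: $\brak{\emptyset}_{\kk}\cong \kk$ sits in degree $0$, so $\qdim(\emptyset)=1$, matching the Kuperberg value.

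For the inductive step, I would show that any nonempty bipartite planar trivalent graph $\Gamma$ contains either a vertexless loop, a digon face, or a square face. Vertexless loops can be stripped off by Proposition~\ref{prop:dec-circle}. Otherwise, let $\Gamma'$ be a connected component with $V>0$ vertices; Euler's formula together with $3V=2E$ gives $F = 2 + E/3$. Bipartiteness of $\Gamma'$ forces every face to have even length, hence length $\ge 2$. If every face had length $\ge 6$, then $2E = \sum_f \deg(f) \ge 6F = 12 + 2E$, a contradiction, so some face has length $2$ or $4$. Apply Proposition~\ref{prop:dec-digon} or \ref{prop:dec-square} at such a face: in each case the resulting web has strictly fewer vertices than $\Gamma$.

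The one point that requires checking is that the digon and square reductions preserve bipartiteness, so that induction remains inside the class of bipartite webs. Fix a 2-coloring of $\Gamma$: walking around a digon or a square, colors alternate, so each external edge emanating from the face crosses between color classes. Replacing a digon by a single edge then joins two vertices of opposite colors; resolving a square by either pair of arcs simply concatenates pairs of external edges into single arcs, again connecting vertices of opposite colors. Thus bipartiteness is preserved under either reduction, and the induction goes through. The main obstacle in writing this out is keeping this bipartiteness bookkeeping honest; the rest is a clean match of recurrences, since both $\qdim$ and the Kuperberg bracket are uniquely determined on bipartite webs by the three local relations together with the value on the empty web.
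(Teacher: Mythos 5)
Your proof is correct and follows essentially the same route as the paper: the paper's argument is exactly that the three decompositions of Section~\ref{sub:dir_sum} (which survive the base change to $\kk$) give recursions for $\qdim$ identical to Kuperberg's defining relations, together with the observation, stated just before the proposition, that a bipartite web always has a face with at most four edges and that its reductions stay bipartite. You have merely written out the Euler-characteristic count and the bipartiteness bookkeeping that the paper leaves implicit, so there is nothing to correct.
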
 
\begin{proof} 
This is immediate, since the recursive relations are identical. Kuperberg bracket of $\Gamma$, defined in~\cite{Ku}, is normalized here to lie in $\Z_+[q,q^{-1}]$, the 
same normalization as in~\cite{SL3}. 
Kuperberg bracket is also the graded dimension of the $sl(3)$ link homology groups of $\Gamma$, 
see~\cite{SL3}. 
The latter space can be defined over any field, with the graded dimension independent of the field. 
 \end{proof}

When $\Gamma$ is bipartite, its quantum dimension lies either in $\Z_+[q^2,q^{-2}]$ or in $q\Z_+[q^2,q^{-2}]$, that is, either only even or only odd powers of $q$ have nonzero coefficients. The parity equals the parity of 
$v(\Gamma)/2$, where $v(\Gamma)$ is the number of vertices of $\Gamma$, necessarily even. 

\begin{example} Here is an example 
of a graph where graded dimension fails the
parity property. 

\begin{align*}
\qdim\left(\brak{\NB{
\!\!\!\begin{tikzpicture}[scale=0.25]
\coordinate (A) at (-2,1.5);
\coordinate (B) at (2,1.5);
\coordinate (C) at (2,-1.5);
\coordinate (D) at (-2,-1.5);
\coordinate (a) at (-1,.5);
\coordinate (b) at (1,.5);
\coordinate (c) at (1,-.5);
\coordinate (d) at (-1,-.5);
\coordinate (mt) at (0,.5);
\coordinate (mb) at (0,-.5);
\coordinate (ML) at (-2, 0);
\coordinate (MR) at (2, 0);
\draw (a) -- (mt) -- (b) --(c) -- (mb) -- (d) -- (a);
\draw (A) -- (B) -- (MR) -- (C) -- (D) -- (ML) -- (A);
\draw (a) -- (A);
\draw (b) -- (B);
\draw (c) -- (C);
\draw (d) -- (D);
\draw (mt) -- (mb);
\draw (ML) ..  controls +(-2, 0) and +(-4,0) .. (0,2.5)  .. controls +(4,0) and +(2,0) .. (MR);
\end{tikzpicture}\!\!\!
}}_\kk\right)
&=
\qdim\left(\brak{\NB{\!\!\!
\begin{tikzpicture}[scale=0.25]
\coordinate (A) at (-2,1.5);
\coordinate (B) at (2,1.5);
\coordinate (C) at (2,-1.5);
\coordinate (D) at (-2,-1.5);
\coordinate (a) at (-1,.5);
\coordinate (b) at (1,.5);
\coordinate (c) at (1,-.5);
\coordinate (d) at (-1,-.5);
\coordinate (mt) at (0,.5);
\coordinate (mb) at (0,-.5);
\coordinate (ML) at (-2, 0);
\coordinate (MR) at (2, 0);
\draw (b) -- (c);
\draw (A) -- (B) -- (MR) -- (C) -- (D) -- (ML) -- (A);
\draw (A) .. controls (a) and (d) .. (D);
\draw (b) .. controls (mt) and (mb) .. (c);
\draw (b) -- (B);
\draw (c) -- (C);
\draw (ML) ..  controls +(-2, 0) and +(-4,0) .. (0,2.5)  .. controls +(4,0) and +(2,0) .. (MR);
\end{tikzpicture} \!\!\!
}}_\kk\right)
+
\qdim\left(\brak{\NB{\!\!\!
\begin{tikzpicture}[scale=0.25]
\coordinate (A) at (-2,1.5);
\coordinate (B) at (2,1.5);
\coordinate (C) at (2,-1.5);
\coordinate (D) at (-2,-1.5);
\coordinate (a) at (-1,.5);
\coordinate (b) at (1,.5);
\coordinate (c) at (1,-.5);
\coordinate (d) at (-1,-.5);
\coordinate (mt) at (0,.5);
\coordinate (mb) at (0,-.5);
\coordinate (ML) at (-2, 0);
\coordinate (MR) at (2, 0);
\draw (b) -- (c);
\draw (A) -- (B) -- (MR) -- (C) -- (D) -- (ML) -- (A);
\draw (A) .. controls (a) and (mt) .. (b);
\draw (c) .. controls (mb) and (d) .. (D);
\draw (b) -- (B);
\draw (c) -- (C);
\draw (ML) ..  controls +(-2, 0) and +(-4,0) .. (0,2.5)  .. controls +(4,0) and +(2,0) .. (MR);
\!\!\!\end{tikzpicture}
}
}_\kk\right)
\\ 
&=
 [2]
\qdim\left(\brak{\NB{
\!\!\!\begin{tikzpicture}[scale=0.25]
\coordinate (A) at (-2,1.5);
\coordinate (B) at (2,1.5);
\coordinate (C) at (2,-1.5);
\coordinate (D) at (-2,-1.5);
\coordinate (a) at (-1,.5);
\coordinate (b) at (1,.5);
\coordinate (c) at (1,-.5);
\coordinate (d) at (-1,-.5);
\coordinate (mt) at (0,.5);
\coordinate (mb) at (0,-.5);
\coordinate (ML) at (-2, 0);
\coordinate (MR) at (2, 0);
\draw (A) -- (B) -- (MR) -- (C) -- (D) -- (ML) -- (A);
\draw (A) .. controls (a) and (d) .. (D);
\draw (B) .. controls (b) and (c) .. (C);
\draw (ML) ..  controls +(-2, 0) and +(-4,0) .. (0,2.5)  .. controls +(4,0) and +(2,0) .. (MR);
\end{tikzpicture}\!\!\!
}}_\kk\right) +
\qdim\left(\brak{\NB{
\!\!\!\begin{tikzpicture}[scale=0.25]
\coordinate (A) at (-1.5,1.5);
\coordinate (B) at (1.5,1.5);
\coordinate (C) at (1.5,-1.5);
\coordinate (D) at (-1.5,-1.5);
\coordinate (a) at (-1,.5);
\coordinate (b) at (1,.5);
\coordinate (c) at (1,-.5);
\coordinate (d) at (-1,-.5);
\coordinate (mt) at (0,1.5);
\coordinate (mb) at (0,-1.5);
\coordinate (ML) at (-1.5, 0);
\coordinate (MR) at (1.5, 0);
\draw (mt) -- (mb);
\draw (mt) .. controls (A) and (A) .. (ML);
\draw (mt) .. controls (B) and (B) .. (MR);
\draw (mb) .. controls (D) and (D) .. (ML);
\draw (mb) .. controls (C) and (C) .. (MR);
\draw (ML) ..  controls +(-2, 0) and +(-4,0) .. (0,2.5)  .. controls +(4,0) and +(2,0) .. (MR);
\end{tikzpicture}\!\!\!
}}_\kk\right)
\\ &=
([2] +1)
\qdim\left(\brak{\NB{
\!\!\!\begin{tikzpicture}[scale=0.25]
\coordinate (A) at (-1.5,1.5);
\coordinate (B) at (1.5,1.5);
\coordinate (C) at (1.5,-1.5);
\coordinate (D) at (-1.5,-1.5);
\coordinate (a) at (-1,.5);
\coordinate (b) at (1,.5);
\coordinate (c) at (1,-.5);
\coordinate (d) at (-1,-.5);
\coordinate (mt) at (0,1);
\coordinate (mb) at (0,-1);
\coordinate (ML) at (-1.5, 0);
\coordinate (MR) at (1.5, 0);
\draw (ML) ..  controls (A) and (B) .. (MR);
\draw (ML) ..  controls (D) and (C) .. (MR);
\draw (ML) ..  controls +(-2, 0) and +(-4,0) .. (0,2)  .. controls +(4,0) and +(2,0) .. (MR);
\end{tikzpicture}\!\!\!
}}_\kk\right)
\\ & = ([2] + 1)[2][3],
\end{align*}
where $[n] = \frac{q^n-q^{-n}}{q-q^{-1}}$.
In particular, the graded dimension of this graph above does not satisfy the parity property. 
\end{example}
  
There are potential variations on the quantum dimension $\qdim(\Gamma)$ given by using the 
original ring $R$ or base changes other than 
the graded homomorphism $R\lra \kk$. One could 
define the quantum dimension as the graded 
dimension of $\lG$ rather than $\lG_{\kk}$, 
normalized by dividing by the graded dimension 
of $R$; the latter is $((1-q^2)(1-q^4)(1-q^6))^{-1}$. One can also first resolve $\lG$ into 
a complex of free graded $R$-modules and then take its graded Euler characteristic. For $\Gamma$ such that $\lG$ is a free graded $R$-module all these definitions would result in the same graded dimension, but we don't know whether $\lG$ has this property for any $\Gamma$. Lacking enough information about the structure of graded $R$-module $\lG$ beyond the bipartite case we 
chose to restrict here to just one version of the quantum dimension. 
  
When a graph $\Gamma$ is reducible using the rules given by Propositions~\ref{prop:dec-circle}, \ref{prop:dec-digon}, \ref{prop:dec-square} and \ref{prop:dec-triangle}, $\brak{\Gamma}$ is a free graded $R$-module, and the reduced theory 
$\brak{\Gamma}_{\kk}$ has graded dimension 
equal to the graded rank of $\brak{\Gamma}$. The quantum dimension of such $\Gamma$ can be computed using the relations from the decategorified versions of the above propositions. 
  
Even restricting to such graphs, 
the quantum dimension is not given by the Yamada polynomial, which is the  invariant coming from planar networks built out of the 3-dimensional irreducible representation $V$ of quantum $sl(2)$ and the one-dimensional space of invariants in the third tensor power of $V$, see~\cite{FenKru2009,FenKru2010}.

\subsection{Base change into PID} 
\label{sec:base_change_pid}\

We don't know whether $\brak{\Gamma}$ is always a graded free module, and it makes sense to consider 
graded base changes $\psi: R\lra S$ with $S$ a PID. 
There is a family of such base changes given by 
taking $S=\kf[E]$ where $\kf$ is a field of characteristic two (a field extension of $\kk$) and 
$\deg(E)=2$. A degree-preserving homomorphism 
\[\psi_{\lambda} \ : \ R \lra \kf[E]\]
is given by 
\[\psi_{\lambda}(E_1)  =\lambda_1 E, \ \ 
\psi_{\lambda}(E_2)  =\lambda_2 E^2, \ \ 
\psi_{\lambda}(E_1)  =\lambda_3 E^3 \]
where $\lambda_1,\lambda_2,\lambda_3\in \kf$, 
and we denote $\lambda=(\lambda_1,\lambda_2,\lambda_3)$. Homomorphism $\psi_{\lambda}$ is 
surjective iff $\lambda_1 \not= 0$ and $\kf=\kk$. 

For the base change $\psi_{\lambda}$ we can form 
the corresponding state space $\lG_{\psi_{\lambda}}$ or just $\lG_{\lambda}$. It's a finitely-generated 
graded $\kf[E]$-module.  Propositions~\ref{prop:S_free} implies that 
$\lG_{\lambda}$ is a finitely-generated free graded 
$\kf[E]$-module. Its graded rank $\qdim_{\lambda}(\Gamma)$ is an invariant of $\Gamma$. 

When $\lambda=(0,0,0)$, which is a degenerate case, the map $\psi_{(0,0,0)}$ factors through $\kk$:
\[
\tikz{
\node (R) at (0,0) {$R$};
\node (k) at (2,0) {$\kk$};
\node (kp) at (4,0) {$\kk'$};
\node (kpE) at (6,0) {$\kk'[E]$};
\draw[-to] (R) -- (k) node[midway, above, font=\scriptsize] {$E_i \mapsto 0$};
\draw[right hook-to] (k)-- (kp);
\draw[right hook-to] (kp)-- (kpE);
\draw[-to] (R) .. controls +(1,-1)  and + (-1,-1) .. (kpE) node[below, midway] {$\psi_{(0,0,0)}$};
}
\]
Hence,  $\brak{\Gamma}_{(0,0,0)}\cong \brak{\Gamma}_\kk \otimes_{\kk} \kk'[E]$. If $\lG$ is a free 
graded $R$-module, the quantum dimension $\qdim_{\lambda}(\Gamma)$ is the same for all $\lambda$ 
and equals $\qdim(\Gamma)$ defined in 
Section~\ref{sec:qdim}. 

We say that $\psi_{\lambda}$ or $\lambda$ is $\D$-localizable if $\psi_{\lambda}(\D)\not=0$.  
This is equivalent to $\lambda_3+\lambda_1\lambda_2\not= 0$. 

\begin{prop} If $\lambda$ is $\D$-localizable, 
the state space $\lG_{\lambda}$ is a free $\kf[E]$-module of rank equal to the number $|\adm(\Gamma)|$ of Tait colorings of $\Gamma$. 
\end{prop}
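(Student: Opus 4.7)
The plan is as follows. Since $\psi_\lambda$ is grading-preserving and $\kf[E]$ is a graded principal ideal domain, Proposition~\ref{prop:S_free} gives that $\lG_\lambda$ is a finitely generated free graded $\kf[E]$-module, so it suffices to compute its rank, or equivalently $\dim_{\kf(E)}(\lG_\lambda\otimes_{\kf[E]}\kf(E))$, where $\kf(E)$ denotes the fraction field of $\kf[E]$. A direct clearing-of-denominators argument will show that the kernel of the bilinear form is compatible with this localization: if $x \in \Fo(\Gamma)_{\kf(E)}$ satisfies $b_{\kf(E)}(x,-)=0$, we write $x = x_0/f$ with $x_0 \in \Fo(\Gamma)_{\kf[E]}$, and then $b_{\kf[E]}(x_0,-)$ must vanish on all of $\Fo(\Gamma)_{\kf[E]}$, so $x_0 \in \ker(b_\lambda)$. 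This yields $\lG_\lambda\otimes_{\kf[E]}\kf(E) \cong \lG_{\kf(E)}$, where the right side is the state space associated to the composition $R \xrightarrow{\psi_\lambda} \kf[E] \hookrightarrow \kf(E)$; it therefore suffices to compute $\dim_{\kf(E)}\lG_{\kf(E)}$.

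Under this composition, the discriminant $\D = E_1 E_2 + E_3$ maps to $(\lambda_1\lambda_2 + \lambda_3) E^3$, which is nonzero by the $\D$-localizability of $\lambda$ and hence a unit in $\kf(E)$. Consequently all constructions of Section~\ref{sec:four_end}---triangle and square decorations, and the homotopies used in the proof of Proposition~\ref{prop:exact_sq}---descend to the $\kf(E)$-theory, and in particular the 4-periodic complex (\ref{eq:square}) is exact after this base change.

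We will then prove $\dim_{\kf(E)}\lG_{\kf(E)} = |\adm(\Gamma)|$ by induction on the number of vertices of $\Gamma$, running the argument of Proposition~\ref{prop:proj_rank} verbatim in the new coefficient ring: the base case (a disjoint union of circles) follows from Proposition~\ref{prop:dec-circle}; graphs with a bridge vanish on both sides; a graph with a face of at most four edges reduces via the decompositions of Section~\ref{sub:dir_sum}, which hold for any base change; and when every face has at least five edges, a pentagonal face exists by Euler's formula, and the 4-periodic exact sequence applied to one of its edges yields an isomorphism $\lG_{\kf(E),I}\oplus \lG_{\kf(E),=}\cong \lG_{\kf(E),H}\oplus \lG_{\kf(E),||}$, reducing to graphs with either strictly fewer vertices or a newly created square face, and matching the identity $|\adm(\Gamma_I)|+|\adm(\Gamma_=)|=|\adm(\Gamma_H)|+|\adm(\Gamma_{||})|$ on Tait colorings. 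The bulk of the work is in this induction, while the only new technical point compared to Proposition~\ref{prop:proj_rank} is the clearing-of-denominators comparison $\lG_\lambda\otimes_{\kf[E]} \kf(E) \cong \lG_{\kf(E)}$ from the first paragraph.
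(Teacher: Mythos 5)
Your proposal is correct and takes essentially the same route as the paper: freeness of $\lG_{\lambda}$ comes from Proposition~\ref{prop:S_free}, and the rank is computed by further inverting $\D$ (the paper passes to $\kf[E,E^{-1}]$ and cites the analogues of Propositions~\ref{prop:exact_sq} and~\ref{prop:proj_rank}, while you pass to the fraction field $\kf(E)$ and rerun the same induction there). The only substantive addition is your explicit clearing-of-denominators identification $\lG_{\lambda}\otimes_{\kf[E]}\kf(E)\cong \lG_{\kf(E)}$, which makes precise a localization compatibility the paper leaves implicit.
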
 

\begin{proof} 
Note that the proposition is about the rank of $\lG_{\lambda}$, not its graded rank. For a 
$\D$-localizable $\lambda$, compose $\psi_{\lambda}$ with the inclusion of rings $\kf[E]\subset \kf[E,E^{-1}]$ to get a homomorphism 
\[ \psi_{\lambda,\D} \ : \ R \lra \kf[E,E^{-1}].\]
This homomorphism factors through the inclusion 
$R\subset R[\D^{-1}]$, since the image of $\D$ under 
$\psi_{\lambda}$ is a nonzero multiple of $E^3$, so invertible in $\kf[E,E^{-1}]$. Hence, the analogues 
of results of Section~\ref{sec:four_end}, including 
Propositions~\ref{prop:exact_sq} and~\ref{prop:proj_rank} hold 
for the base change $\psi_{\lambda,\D}$. Any graded 
projective module over $\kf[E,E^{-1}]$ is graded free. In particular, $\lG_{\lambda}$ is a free 
module of rank -- the number of Tait colorings of $\Gamma$. 
\end{proof} 

A similar result holds for the homomorphism 
\[ \phi \ : \ R \lra \kf[E], \ \ \deg(E) = 6\] 
given by 
\[ \phi(E_1) = \phi(E_2) = 0, \ \  \phi(E_3) = E.\]
The image $\phi(\D) = \phi(E_3 + E_1 E_2) = E$ is invertible in the localized ring $\kf[E,E^{-1}]$ 

\begin{prop}  
The state space $\lG_{\phi}$ is a free $\kf[E]$-module of rank equal to the number $|\adm(\Gamma)|$ of Tait colorings of $\Gamma$. 
\end{prop}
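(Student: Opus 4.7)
The plan is to reduce the statement to Proposition $\ref{prop:proj_rank}$ by passing to the localization of $\kf[E]$ in which $\phi(\D)$ becomes a unit. Note that $\phi(\D) = \phi(E_1 E_2 + E_3) = E$, which is invertible in $\kf[E, E^{-1}]$ but not in $\kf[E]$. Let $\phi_{\D}: R \to \kf[E, E^{-1}]$ denote the composition of $\phi$ with the flat inclusion $\kf[E] \hookrightarrow \kf[E, E^{-1}]$; this homomorphism factors through $R[\D^{-1}]$.

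First I would invoke Proposition $\ref{prop:S_free}$: since $\kf[E]$ is a graded PID, the $\kf[E]$-module $\lG_{\phi}$ is automatically finitely-generated graded free. The only remaining issue is to compute its rank. Second I would identify this rank after localization. Because the kernel of the evaluation pairing that defines $\lG_{\phi}$ is a submodule of the free module $\Fo(\Gamma)_{\phi}$, and because localization $\kf[E] \to \kf[E, E^{-1}]$ is exact, kernels commute with this base change; more precisely, the natural surjection of (\ref{eq:surjective}) for the composed base change is an isomorphism, yielding
\[
\lG_{\phi} \otimes_{\kf[E]} \kf[E, E^{-1}] \ \cong \ \lG_{\phi_{\D}}.
\]
In particular, the rank of $\lG_{\phi}$ over $\kf[E]$ equals the rank of $\lG_{\phi_{\D}}$ over $\kf[E, E^{-1}]$.

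Third I would run the inductive argument of Proposition $\ref{prop:proj_rank}$ for the base change $\phi_{\D}$. Since $\phi_{\D}$ factors through $R[\D^{-1}]$, the four-periodic square (\ref{eq:square}) remains exact after applying $\brak{\bullet}_{\phi_{\D}}$: this is Proposition $\ref{prop:exact_sq}$ followed by the further base change $R[\D^{-1}] \to \kf[E, E^{-1}]$, and exactness is preserved by any base change in which the homotopy identities of that proof still hold. The direct sum decompositions of Section $\ref{sub:dir_sum}$ and the bridge vanishing both hold universally over any base. Over the graded ring $\kf[E, E^{-1}]$, in which every nonzero homogeneous element is a unit, finitely-generated projective graded modules are graded free. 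The induction on the number of vertices of $\Gamma$, identical to the one in the proof of Proposition $\ref{prop:proj_rank}$ (base case: disjoint unions of circles have $3^m$ Tait colorings and matching localized state space; otherwise reduce either by a face of size $\le 4$ or by a pentagon and the exact square), shows that $\lG_{\phi_{\D}}$ is free of rank $|\adm(\Gamma)|$ over $\kf[E, E^{-1}]$.

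Combining the three steps gives the result. There is no genuinely hard step; the main point requiring care is the base-change identification $\lG_{\phi} \otimes_{\kf[E]} \kf[E, E^{-1}] \cong \lG_{\phi_{\D}}$, since $\lG_{\bullet}$ is defined as a quotient by the kernel of a bilinear form rather than as a tensor product, and one needs flatness of the localization to conclude that the kernel commutes with base change. Once this is settled, the entire argument is a direct transfer of the proof of the preceding proposition.
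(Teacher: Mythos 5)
Your proposal is correct and matches the paper's intended argument. The paper gives essentially no explicit proof for this proposition (it merely remarks beforehand that $\phi(\D) = E$ is invertible in $\kf[E,E^{-1}]$), and the analogous proof for $\psi_\lambda$ establishes the rank of $\lG_{\lambda,\D}$ over $\kf[E,E^{-1}]$ and then silently transfers this to a statement about $\lG_\lambda$ over $\kf[E]$; you correctly identify and fill in the missing base-change identification $\lG_{\phi} \otimes_{\kf[E]} \kf[E, E^{-1}] \cong \lG_{\phi_{\D}}$, which is the only point requiring care.

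One small refinement: invoking flatness alone is not quite enough to show that the kernel of the bilinear form commutes with base change, since that kernel is cut out by infinitely many conditions (one per foam out of $\Gamma$) and $\Fo(\Gamma)$ is infinitely generated. What saves the argument is that $\kf[E] \hookrightarrow \kf[E,E^{-1}]$ is an injective localization of a domain: any element of $\Fo(\Gamma)_{\kf[E,E^{-1}]}$ can be written as $x/E^n$ with $x \in \Fo(\Gamma)_{\kf[E]}$, and then $x/E^n$ lies in the extended kernel iff $(x,[U])$ vanishes in $\kf[E,E^{-1}]$ for every foam $U$, which happens iff $(x,[U])$ vanishes already in $\kf[E]$. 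This gives $\ker((,)_{\kf[E,E^{-1}]}) = \ker((,)_{\kf[E]}) \cdot \kf[E,E^{-1}]$ and hence the isomorphism you want.
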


It's a very interesting problem, related to the four-color theorem, to understand the graded ranks 
of $\lG_{\lambda}$, $\lG_{\phi}$, and, more generally, the structure of $R$-modules $\lG$.

\bibliographystyle{alphaurl}
\bibliography{biblio}

\end{document}